\newcommand{\xLtwo}{{\mathrm{L}^2}}
\newcommand{\xHone}{{\mathrm{H}^1}}
\newcommand{\xHn}[1]{{\mathrm{H}^{#1}}}
\newcommand{\xCzero}{{\mathrm{C}^0}}
\newcommand{\xCone}{{\mathrm{C}^1}}
\newcommand{\xWn}[1]{{\mathrm{W}^{#1}}}
\DeclareMathOperator{\xim}{{\mathrm{im}}}
\newcommand{\xdif}{{\mathrm{d}}}
\newcommand{\xA}{{\mathbb{A}}}
\newcommand{\xN}{{\mathbb{N}}}
\newcommand{\xR}{{\mathbb{R}}}
\newcommand{\xC}{{\mathbb{C}}}
\newcommand{\etc}{{\textit{etc}}}
\newcommand{\afortiori}{{\textit{a~fortiori}}}
\newcommand{\cf}{{\textit{cf.}}}
\newcommand{\ie}{{\textit{i.e.}}}
\newcommand{\vg}{{\textit{v.g.}}}
\let\Rref=\ref
\newtheorem{thrm}{Theorem}[section]
\newtheorem{prpstn}[thrm]{Proposition}
\newtheorem{lmm}[thrm]{Lemma}
\newtheorem{crllr}[thrm]{Corollary}
\newenvironment{acknowledgement}{{\it Acknowledgement.\quad}}{}
\DeclareMathOperator{\curl}{{\mathbf{curl}}}
\let\divg=\divergence
\DeclareMathOperator{\grad}{{\mathbf{grad}}}
\theoremstyle{definition}
\newtheorem{hpthss}{Hypothesis}
\newtheorem{rmrk}[thrm]{Remark}
\newtheorem{dfntn}[thrm]{Definition}
\newcommand{\ii}{{\mathrm{i}}}
\newcommand{\ee}{{\mathrm{e}}}
\newcommand{\III}{|\!|\!|}
\title{Linearised electrodynamics and stabilization\\ of a cold magnetized plasma}
\author{Simon Labrunie$^1$ and Ibtissem Zaafrani$^{1,2}$\thanks{The second author thanks the Campus France Eiffel Excellence Programme for its financial support.}\\
$^1$ Universit\'e de Lorraine, CNRS, IECL, F-54000 Nancy, France \\
$^2$ Universit\'e de Sousse, Laboratoire de Math\'ematiques, \\
\'Ecole Sup\'erieure des Sciences et de la technologie,\\ 4011 Hammam Sousse, Tunisie}
\date{May 26, 2021}
\begin{document}
\maketitle

\begin{abstract}
We consider a linearized Euler--Maxwell model for the propagation and absorption of electromagnetic waves in a magnetized plasma. We present the derivation of the model, and we show its well-posedeness, its strong and polynomial stability under suitable and fairly general assumptions, its exponential stability in the same conditions as the Maxwell system, and finally its convergence to the time-harmonic regime. No homogeneity assumption is made, and the topological and geometrical assumptions on the domain are minimal. These results appear strongly linked to the spectral properties of various matrices describing the anisotropy and other plasma properties.
\smallbreak
\noindent\textbf{Keywords:}\quad Maxwell equations, plasma, hydrodynamic models, stabilisation, absorbing boundary condition, evolution semigroups, strong stability, exponential stability.
\smallbreak
\noindent\textbf{Mathematics Subject Classification.}\quad 35L04, 35Q61, 35B35, 35B40, 37L15, 47D06, 93D20, 93D22, 76N30.
\end{abstract}

\section{Introduction}
\label{sec-intro}
Electromagnetic wave propagation in plasmas, especially magnetized ones, is a vast subject~\cite{S92}. Even in a linear framework, the equations that describe it are generally highly anisotropic and, in many practical settings, highly inhomogeneous as well. The bewildering array of phenomena and parameters involved in this modelling requires to derive simplified models tailored to the phenomenon under study, and to the theoretical or computational purpose of this study.

\medbreak

Wave-plasma interaction is of paramount importance, for instance, in tokamak technology. According to their frequency, electromagnetic waves can be used in a wide range of processes: to stabilize or heat the plasma and thus bring it closer to the conditions needed for nuclear fusion, for instance, or to probe various properties such as density and temperature. These interactions involve many phenomena, such as propagation, absorption, refraction, scattering, \etc. The basic physics is well understood~\cite{S92}; nevertheless, efficient and robust mathematical models have to be derived in order to do reliable numerical simulations in realistic settings, or to properly interpret experimental results. 

\medbreak

A first, time-harmonic model focused on propagation and absorption has been derived in~\cite{BHL+15,H+14}. This article constitutes the time-dependent counterpart of those works. We consider a general linearized Euler--Maxwell model describing the interaction between a strongly magnetized, pressureless, totally ionized gas and an electromagnetic wave; it can be particularized to various physical settings. The waves accelerate the charged particles that make up the plasma, and transfer some of their energy to them through collisions, which act as friction. We study the well-posedness of the model, investigate various stability properties (strong, polynomial, and exponential), and finally check that no inconsistency stems from the time-harmonic modelling. The latter appears, as expected, as a particular solution and a limit of the general solution (under reasonable physical assumptions) in presence of a time-harmonic forcing. Not only the time-dependent model is more general, but it also appears more robust. The well-posedness of the time-harmonic model rests upon absorption; more exactly, the proof fails in the absence of absorption, and serious qualitative arguments suggest that the limiting model is actually ill-posed~\cite{BHL+15}.

\medbreak

On the other hand, we shall see that the time-dependent model is well-posed even without absorption. Nevertheless, and unsurprisingly, the convergence (exponential or polynomial) of the time-dependent model toward the time-harmonic one does depend on absorption. The mathematical tools used in this analysis are well-known theorems on semigroups and operator spectra~\cite{AB88,LV88,H85,P84}.
The main difficulties are: first, the resolvent of the evolution operator is not compact; then, absorption only acts on some variables, namely, the hydrodynamic ones; finally, one has to handle with various technicalities linked to inhomogeneity, anisotropy, and topology.
More or less similar models have been studied by various authors~\cite{N+12, N+18, VKLS+13}; but they did not include anisotropy or inhomogeneity, and they generally considered simpler topologies or boundary conditions than we do. On the contrary, we have tried to keep our model as general as possible, by assuming neither any homogeneity in the plasma properties, nor in the external magnetic field, nor any strong topological or geometrical condition on the domain. 

\medbreak

Generally speaking, stabilization and controllability of Maxwell's equations and coupled models involving them may be rooted in two main physical mechanisms: boundary stabilization of the sourceless Maxwell system through an absorbing boundary condition on all or part of the boundary \cite[among others]{BaHa97,Komo94,Phung00,ELNi02,NiPi03}; or internal stabilization by some resistive source term on all or part of the domain, typically that given by Ohm's law \cite[among others]{Phung00}. 
Boundary absorption is usually sufficient to have energy decay and convergence toward an equilibrium state (strong stability~\cite{BaHa97}), but the precise decay rate (polynomial or exponential stability) strongly depends on the global shape of the domain (various star-shapedness conditions such as~\cite{Komo94,NiPi03}) and/or the absorbing part of the boundary (geometric control condition~\cite{Phung00}). Internal absorption, when it only holds on part of the domain, also requires a geometric control condition~\cite{Phung00}.
Mathematically, the issue has been tackled by several approaches: the multiplier method~\cite[etc.]{Komo94,ELNi02}, microlocal analysis~\cite[etc.]{Phung00}, or frequency-domain analysis~\cite{N+18}. In this paper we use the later method, and we focus on internal stabilisation; the internal absorption mechanism, however, is different from Ohm's law. This allows us to consider fairly arbitrary geometries and topologies, provided some physically reasonable (in the framework of tokamak plasmas) assumption holds (Hypothesis~\Rref{hyp-2} below; cf.~\cite[Remark~4.1]{BHL+15}). 
The complementary approach, where the physical hypotheses are weakened at the price of stricter conditions on the geometry, is reserved for future work.

\medbreak

The outline of the article is as follows. 
In \S\Rref{sec-model}, we present the derivation of the model, and recall some classical results on the functional analysis of Maxwell's equations in~\S\Rref{sec-prelim}. Section~\Rref{sec-well-posed} is devoted to the proof of the well-posedness of the model, in three variants: with a perfectly conducting condition on the whole boundary, and with a Silver--M\"uller one (homogeneous or not) on part of it.
Section~\Rref{sec-prelim-2} recalls or introduces some more advanced results of functional analysis, which are needed in the sequel. In~\S\Rref{sec-spectral}, we study the spectral properties of various matrices describing the anisotropy and other plasma properties, which will be essential in the stability proofs of \S\S\Rref{sec-strong-stab} and~\Rref{sec-expoly-stab}. The former is dedicated to strong stability, the latter to unconditional polynomial and conditional exponential stability. Though it happens that the polynomial stability does not entail stronger hypotheses than strong one, we have chosen to present them sequentially: the results which allow us to prove strong stability are also the starting point for the finer properties needed to prove polynomial and exponential stability. The stability part is also divided into perfectly conducting and Silver--M\"uller boundary conditions. As an application, we conclude with a result of  convergence to the time-harmonic regime when the Silver--M\"uller boundary data is time-harmonic.

\section{The model}
\label{sec-model}
The physical system we are interested in is a plasma or totally ionized gas, pervaded by a strong, external, static magnetic field $\boldsymbol{B}_{\text{ext}}(\boldsymbol{x})$, which makes the medium anisotropic. The sources of this field are assumed to be outside the plasma. Such a medium can be described as a collection of charged particles (electrons and various species of ions) which move in vacuum and create electromagnetic fields which, in turn, affect their motion. Electromagnetic fields are, thus, governed by the usual Maxwell's equations in vacuum:
\begin{eqnarray}
\curl \boldsymbol{\mathcal{E}} = - \frac{\partial \boldsymbol{\mathcal{B}}}{\partial t}, && c^2\, \curl \boldsymbol{\mathcal{B}} = \frac{\boldsymbol{\mathcal{J}}}{\varepsilon_0} + \frac{\partial \boldsymbol{\mathcal{E}}}{\partial t} ,
\label{maxrot}\\
\divg \boldsymbol{\mathcal{E}} = \frac{\varrho}{\varepsilon_0}, && \divg \boldsymbol{\mathcal{B}} = 0 .
\label{maxdiv}
\end{eqnarray}
Here $\boldsymbol{\mathcal{E}}$ and~$\boldsymbol{\mathcal{B}}$ denote the electric and magnetic fields; $\varrho$~and $\boldsymbol{\mathcal{J}}$ the electric charge and current densities; $\varepsilon_0$~is the electric permittivity, and $c$ the speed of light, in vacuum.

\medbreak

The electromagnetic field is the sum of a static part and a small perturbation caused by the penetration of an electromagnetic wave. To simplify the discussion, we assume
the plasma to be in mechanical and electrostatic equilibrium in the absence of the wave. Thus, the electric and magnetic fields can be written as:
\begin{equation*}
\boldsymbol{\mathcal{E}}(t,\boldsymbol{x})= \epsilon\, \boldsymbol{E}(t,\boldsymbol{x}), \quad \text{and} \quad \boldsymbol{\mathcal{B}}(t,\boldsymbol{x})=\boldsymbol{B}_{\text{ext}}(\boldsymbol{x})+\epsilon\, \boldsymbol{B}(t,\boldsymbol{x}),
\end{equation*}
where $\epsilon\ll 1$ is the perturbation parameter.  The total charge and current densities associated with the fields are those due to the perturbation
\begin{eqnarray}
\varrho(t,\boldsymbol{x})=\epsilon\, \rho(t,\boldsymbol{x}), \quad \text{and} \quad \boldsymbol{\mathcal{J}}(t,\boldsymbol{x})= \epsilon\, \boldsymbol{J}(t,\boldsymbol{x}).
\label{varr J}
\end{eqnarray}
The static parts of $\boldsymbol{\mathcal{E}}$, $\varrho$ and $\boldsymbol{\mathcal{J}}$ are zero by the equilibrium assumption. 

\medbreak

Furthermore, we assume the plasma to be cold, \ie, the thermal agitation of particles, and thus their pressure, is negligible.  We shall designate the particles species (electrons and various species of ions) with the index~$s$. We denote as $q_s$ the charge of one particle and $m_s$ its mass. The momentum conservation equation of particles of the species~$s$ writes:
\begin{equation}
m_s\,\frac{\partial \boldsymbol{\mathcal{U}}_s}{\partial t} + m_s\,(\boldsymbol{\mathcal{U}}_s \cdot \nabla)\, \boldsymbol{\mathcal{U}}_s - q_s\, (\boldsymbol{\mathcal{E}}+\,\boldsymbol{\mathcal{U}}_s \times \mathcal{B}) + m_s \,\nu_s\, \boldsymbol{\mathcal{U}}_s=0,
\label{fg}
\end{equation}
where $\boldsymbol{\mathcal{U}}_s$ denotes the fluid velocity and $\nu_s\geq0$ is  the collision frequency which only depends on the variable $\boldsymbol{x}$. The charge and current densities can be  expressed as a function of the particle densities $n_s(t,\boldsymbol{x})$ and the fluid velocities:
\begin{equation*}
\varrho =\sum_s \varrho_s = \sum_s q_s\, n_s,  \qquad \boldsymbol{\mathcal{J}}= \sum_s \boldsymbol{\mathcal{J}}_s =\sum_s q_s\,n_s\,\boldsymbol{\mathcal{U}}_s.
\end{equation*}
Now, multiplying Equation~\eqref{fg} by $\frac{n_s\,q_s}{m_s}$, we get 
\begin{equation}
\frac{\partial \boldsymbol{\mathcal{J}}_s}{\partial t} + \frac{1}{\varrho_s}\,(\boldsymbol{\mathcal{J}}_s \cdot \nabla)\, \boldsymbol{\mathcal{J}}_s - \frac{q_s}{m_s}\,(\varrho_s\,\boldsymbol{\mathcal{E}} + \boldsymbol{\mathcal{J}}_s \times \mathcal{B}) + \nu_s\, \boldsymbol{\mathcal{J}}_s = 0.
\label{J}
\end{equation}
We now linearize Equation~\eqref{J}. From the above discussion, we can assume, for each species~$s$,
\begin{eqnarray*}
\varrho_s(t,\boldsymbol{x})=q_s\,n^0_s(\boldsymbol{x})\,+\,\epsilon\, \rho_s(t,\boldsymbol{x}), \quad \text{and} \quad \boldsymbol{\mathcal{J}}_s(t,\boldsymbol{x})= \epsilon\, \boldsymbol{J}_s(t,\boldsymbol{x}),
\end{eqnarray*}
where $n^0_s$ is the equilibrium particle density, assumed to depend on~$\boldsymbol{x}$ only. On the left-hand side of~\eqref{J}, the terms of order~$0$ in~$\epsilon$ vanish. To express the terms of order~$1$, we introduce the plasma and cyclotron frequencies for the species~$s$, respectively:
\begin{equation}
\omega_{ps} := \sqrt{\frac{n^0_s\, q^2_s}{ \varepsilon_0\,m_s}},\qquad \varOmega_{cs} := \frac{q_s\,|\boldsymbol{B}_{\text{ext}}|}{m_s} \,;
\label{rrt}
\end{equation}
they only depend on the space variable~$\boldsymbol{x}$. Observe that the cyclotron frequency is signed: it has the same sign as the charge~$q_s$.  Finally, denoting $\boldsymbol{b} = \dfrac{\boldsymbol{B}_{\text{ext}}}{|\boldsymbol{B}_{\text{ext}}|}$ the unit vector aligned with the external magnetic field, we obtain the linearized equation:
\begin{equation}
\frac{\partial \boldsymbol{J}_s}{\partial t} - \varepsilon_0\,\omega^2_{ps}\,\boldsymbol{E} - \varOmega_{cs}\,\boldsymbol{J}_s\times \boldsymbol{b}+\nu_s\, \boldsymbol{J}_s=0.
\label{pp1}
\end{equation}

The perturbative electromagnetic field $(\boldsymbol{E},\boldsymbol{B})$ satisfies, at order 1 in $\epsilon$, the usual Maxwell equations derived form \eqref{maxrot} and~\eqref{maxdiv}, namely the evolution equations:
\begin{equation*}
\curl \boldsymbol{E} = -\frac{\partial \boldsymbol{B}}{\partial t}, \qquad c^2\,\curl  \boldsymbol{B} = \frac{\boldsymbol{J}}{\varepsilon_0} + \frac{\partial \boldsymbol{E}}{\partial t},\quad \text{where:}\quad \boldsymbol{J} := \sum_{s} \boldsymbol{J}_{s} \,,
\end{equation*}
and the divergence equations:
\begin{eqnarray}
\divg \boldsymbol{E} &=& \frac{\rho}{\varepsilon_0},\quad \text{where:}\quad \rho = \sum_s \rho_s \,,
\label{div E}\\
\divg\boldsymbol{B} &=& 0.
\label{div B}
\end{eqnarray}
Indeed, as its sources are outside the plasma, $\boldsymbol{B}_{\text{ext}}(\boldsymbol{x})$ is  curl- and divergence-free.

\medbreak

For the sake of simplicity, we assume that there only are two species of particles in the plasma: the electrons ($s=1$) and one kind of ions ($s=2$). Obviously, the whole discussion can be extended to an arbitrary number of species, provided they all carry an electric charge (no neutral atoms).

\medbreak

All in all, the model which will be the object of this article is the following. 
Let $\Omega$ be a domain in $\xR^3$,  \ie, a bounded, open and connected subset of~$\xR^3$ with a Lipschitz boundary $\Gamma := \partial \Omega$. The evolution equation for the hydrodynamic and electromagnetic variables are:
\begin{eqnarray} 
\frac{\partial \boldsymbol{J}_1}{\partial t} &=& \varepsilon_0\,\omega_{p1}^2\,\boldsymbol{E} + \varOmega_{c1}\,  \boldsymbol{J}_1 \times \boldsymbol{b} - \nu_1\, \boldsymbol{J}_1 \,,\quad \text{in } \Omega\times\xR_{>0};
\label{uu}
\\
\frac{\partial \boldsymbol{J}_2}{\partial t} &=& \varepsilon_0\,\omega_{p2}^2\,\boldsymbol{E} + \varOmega_{c2}\,  \boldsymbol{J}_2 \times \boldsymbol{b} - \nu_2\, \boldsymbol{J}_2  \,,\quad \text{in } \Omega\times\xR_{>0};
\label{ii}
\\
\frac{\partial \boldsymbol{E}}{\partial t} &=& c^2\, \curl\,\boldsymbol{B} - \frac1{\varepsilon_0}\, \sum_s  \boldsymbol{J}_{s} \,,\quad \text{in } \Omega\times\xR_{>0};
\label{oo}
\\
\frac{\partial \boldsymbol{B}}{\partial t} &=& - \curl\boldsymbol{E}  \label{pp2} \,,\quad \text{in } \Omega\times\xR_{>0};
\end{eqnarray}
with the initial conditions at $t=0$:
\begin{equation}
\boldsymbol{J}_1(0)=\boldsymbol{J}_{1,0};~\boldsymbol{J}_2(0)=\boldsymbol{J}_{2,0};~ \boldsymbol{E}(0)=\boldsymbol{E}_0 ;~ \boldsymbol{B}(0)=\boldsymbol{B}_0,
\quad \text{in } \Omega.
\label{init cond 0}
\end{equation}
The boundary $\Gamma$ is split into two parts $\Gamma = \overline{\Gamma_A} \cup \overline{\Gamma_P}$, with $\Gamma_A \cap \Gamma_P = \varnothing$; both $\Gamma_P$ and~$\Gamma_A$ may be empty. On~$\Gamma_P$, there holds a usual perfectly conducting boundary (metallic) condition. On~$\Gamma_A$, there holds a Silver--M\"uller boundary condition: 
\begin{eqnarray}
\boldsymbol{E} \times \boldsymbol{n} &=& 0, \quad \text{on } \Gamma_P\times\xR_{>0} \,, 
\label{lk}\\
\boldsymbol{E}\times \boldsymbol{n}+ c\,\boldsymbol{B}_\top &=& \boldsymbol{g}, \quad \text{on }  \Gamma_A\times\xR_{>0} \,, 
\label{lm}
\end{eqnarray}
where $\boldsymbol{n}$ denotes the outward unit normal vector to $\Gamma$, $\boldsymbol{B}_\top$ is the component of~$\boldsymbol{B}$ tangent to the boundary~$\Gamma$, and $\boldsymbol{g}$ is a data defined on $\Gamma_A\times\xR_{>0}$. If $\boldsymbol{g}=0$, this is an \emph{absorbing} or \emph{outgoing wave} condition, meaning that the electromagnetic energy can freely leave the domain through~$\Gamma_A$. If $\boldsymbol{g}\neq0$, this is an \emph{incoming wave} condition, modelling  the injection of an electromagnetic wave into the plasma, and $\Gamma_A$ is interpreted as an antenna (see Fig.~\Rref{fig2} for a possible configuration). 

\smallbreak

The subsets $\overline{\Gamma_A}$ and~$\overline{\Gamma_P}$ are compact Lipschitz submanifolds of~$\Gamma$. When both $\Gamma_P\neq\varnothing$ and $\Gamma_A\neq\varnothing$, we do not necessarily suppose that $\partial\Gamma_A \cap \partial\Gamma_P = \varnothing$ (\ie, we consider both truncated exterior and interior problems), but we do assume that $\Gamma_A$ is not too irregular. A sufficient condition is to assume it either smooth, or polyhedral without so-called \emph{pathological vertices} \cite[p.~204]{ACL+17}. This requirement is not very stringent; it can always be satisfied in the outgoing wave case, where $\Gamma_A$ appears as an artificial boundary, whose exact location and shape are to some extent arbitrary.

\smallbreak

Otherwise, our assumptions on the domain are minimal. We do not assume $\Omega$ to be topologically trivial (but we do assume that it does not have an \emph{infinitely} multiple topology), nor $\Gamma,\ \Gamma_A,\ \Gamma_P$ to be connected (though we do assume that they have a \emph{finite} number of connected components). The perfectly conducting boundary~$\Gamma_P$, if not empty, is just assumed to be Lipschitz.

\medbreak

The solution to the system \eqref{uu}--\eqref{pp2} with boundary conditions~\eqref{lk}--\eqref{lm} can be shown to satisfy Equation~\eqref{div B}~in $\Omega$ for all $t\geq 0$, as well the boundary condition
\begin{equation}
\boldsymbol{B}\cdot \boldsymbol{n} = 0 \quad \text{on } \Gamma_P\times\xR_{>0} ,
\label{lkm}
\end{equation}
provided they hold at $t=0$. These properties will appear crucial for the derivation of the most suitable functional framework for stabilization. Similarly, Equation~\eqref{div E} can be recovered if it holds at $t=0$ and the charge conservation equation $\partial_t \rho + \divg\boldsymbol{J}=0$ is verified; yet, the latter is an immediate consequence of the continuity equations for the various species, \emph{viz.}, $\partial_t \rho_s + \divg\boldsymbol{J}_s=0$.

\medbreak

In this article we are interested in to cases: first when $\Gamma_A=\varnothing$, \ie, we have a perfect conductor condition on the whole boundary; the second case when $\Gamma_A$~is non-empty and so Equation~\eqref{lm} holds. In the second case, conditions on and statements about~$\Gamma_P$ are of course void if~$\Gamma_P=\varnothing$.

\medbreak

As alluded to in~\S\Rref{sec-intro}, the model \eqref{uu}--\eqref{pp2} has been studied by many authors \cite{N+12, N+18, VKLS+13} when the medium is homogeneous and isotropic, \ie, $\varOmega_{cs}\equiv0$, and $\nu_s$ and $\omega_{ps}$ are constants. The dispersive medium model with perfectly conducting boundary condition on the whole boundary has been studied in~\cite{N+12,VKLS+13}, and it was proven in~\cite{N+12} that it is polynomially stable. In~\cite{N+18}, the differential equation~\eqref{pp1} is set in a subset of the full domain, and the Silver--M\"uller boundary condition is imposed on the entire exterior boundary; it was shown that the model is strongly stable. 
Similarly, some works on the boundary stabilization of the sourceless Maxwell system (\vg,~\cite{ELNi02,NiPi03}) allow for some inhomogeneity or anisotropy of the permittivity and permeability coefficients, and some works on the internal stabilization by Ohm's law (\vg,~\cite{Phung00}) allow for an inhomogeneous conductivity. But these inhomogeneous or anisotropic terms occur in Maxwell's equations themselves, not in a coupled ODE as in our model. Furthermore, most of the works which consider mixed boundary conditions actually assume $\partial\Gamma_A \cap \partial\Gamma_P = \varnothing$ (physically, a truncated exterior problem).

\smallbreak

Therefore, our goal in the present work is to investigate the stabilization of the model in an inhomogeneous and anisotropic medium with space variable coefficients $\nu_s$, $\omega_{ps}$ and~$\varOmega_{cs}$, for both types of boundary conditions and both types of truncated problems.
We will give sufficient conditions on these coefficients that guarantee first the strong stability, and then the polynomial or exponential stability of the energy.

\begin{figure}
\centering \includegraphics[height=5.0cm]{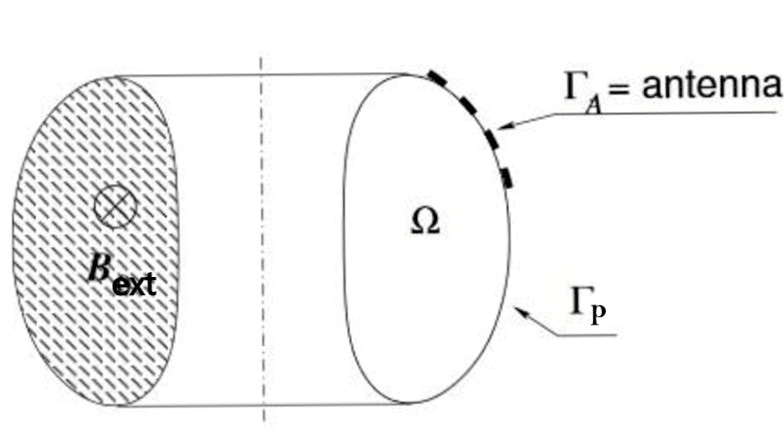}
\caption{A cross--section of an example of a domain which represents the plasma volume in a tokamak.} 
\label{fig2}
\end{figure}

\section{Preliminaries}
\label{sec-prelim}
In this section, we introduce the Hilbert spaces needed in the study of Maxwell's equations, and the relevant Green's formulas used in the sequel. 

\medbreak

The Sobolev spaces of vector fields $\mathbf{L}^2(\Omega):=(\xLtwo(\Omega))^3$, $\mathbf{H}^1(\Omega):=(\xHone(\Omega))^3$ and $\mathbf{H}^{\ell}(\Gamma):=(\xHn{\ell}(\Gamma))^3$ for $\ell\in\lbrace  \frac1{2},-\frac1{2}\rbrace$ are defined as usual. We denote $(\cdot \mid \cdot)$ the inner products of both $\xLtwo(\Omega)$ and $\mathbf{L}^2(\Omega)$, and $\|\cdot\|$ the associated norm. As usual, $\xHone_0(\Omega)$ is the subspace of $\xHone(\Omega)$ whose elements vanish on the boundary~$\Gamma$. 
The space $\widetilde{\mathbf{H}}^{\frac1{2}}(\Gamma_A)$ is the subspace of $\mathbf{H}^{\frac1{2}}(\Gamma_A)$ (the trace space of $\mathbf{H}^1(\Omega)$ on~$\Gamma_A$) made of fields defined on $\Gamma_A$ such that their extension by zero to~$\Gamma_P$ belongs to~$\mathbf{H}^{\frac1{2}}(\Gamma)$. The space $\widetilde{\mathbf{H}}^{-\frac1{2}}(\Gamma_A)$ is the dual space of $\widetilde{\mathbf{H}}^{\frac1{2}}(\Gamma_A)$. 

\smallbreak

On the other hand, for any Hilbert space $\mathcal{W}$ other than $\xLtwo(\Omega)$ or $\mathbf{L}^2(\Omega)$, its inner product will be denoted by $(\cdot,\cdot)_{\mathcal{W}}$ and its norm by~$\| \cdot \|_{\mathcal{W}}$. The duality pairing between $\mathcal{W}$ and its dual space is written as~$\langle \cdot,\cdot \rangle_{\mathcal{W}}$; the subscript designates the space to which the \emph{second} variable belongs.

\medbreak

The spaces $\mathbf{H}(\divg ; \Omega)$ and $\mathbf{H}(\curl ; \Omega)$ are the usual ones in electromagnetics; they are endowed with their canonical norm. The respective subspaces of fields with vanishing normal (resp.~tangential) trace are denoted $\mathbf{H}_0(\divg;\Omega)$ (resp.~$\mathbf{H}_0(\curl;\Omega)$).
The ranges of the tangential trace mapping $\gamma_\top:\boldsymbol{v} \mapsto \boldsymbol{v} \times \boldsymbol{n}$ and the tangential component mapping $\pi: \boldsymbol{v} \mapsto \boldsymbol{v}_\top:= \boldsymbol{n} \times(\boldsymbol{v}\times\boldsymbol{n})$ from $\mathbf{H}(\curl;\Omega)$ are denoted by 
\begin{eqnarray*}
\mathbf{TT}(\Gamma):=\lbrace \boldsymbol{\varphi} \in \mathbf{H}^{-\frac1{2}}(\Gamma)\,:\,\exists \boldsymbol{v} \in \mathbf{H}(\curl ;\Omega),~ \boldsymbol{\varphi}=\boldsymbol{v} \times \boldsymbol{n}_{|\Gamma}\rbrace ,
\end{eqnarray*}
\\[-35pt]
\begin{eqnarray*}
\mathbf{TC}(\Gamma):=\lbrace \boldsymbol{\lambda} \in \mathbf{H}^{-\frac1{2}}(\Gamma)\,:\,\exists \boldsymbol{v} \in \mathbf{H}(\curl ;\Omega),~ \boldsymbol{\lambda}=\boldsymbol{v}_{\top{|\Gamma}} \rbrace. \quad  ~
\end{eqnarray*}
These two spaces have been described in \cite{BC+1}, where they are respectively denote $\mathbf{H}_{\parallel}^{-\frac1{2}}(\divg_{\Gamma}, \Gamma)=\mathbf{TT}(\Gamma)$ and $\mathbf{H}_{\perp}^{-\frac1{2}}(\curl_{\Gamma}, \Gamma)=\mathbf{TC}(\Gamma)$. Furthermore \cite{BC+2}, they are in duality with respect to the pivot space 
\begin{eqnarray*}
\mathbf{L}^2_t(\Gamma):=\{ \boldsymbol{v} \in \mathbf{L}^2(\Gamma)\,:\, \boldsymbol{v} \cdot \boldsymbol{n}=0 \}.
\end{eqnarray*}
Therefore, one can prove the following  formula:
\begin{equation}
\forall(\boldsymbol{v},\boldsymbol{w})\in\mathbf{H}(\curl;\Omega)^2,\quad
(\boldsymbol{v}\mid\curl \boldsymbol{w})-(\curl\boldsymbol{v}\mid \boldsymbol{w})=\langle \boldsymbol{v}\times \boldsymbol{n} , \boldsymbol{w}_\top \rangle_{\mathbf{TC}(\Gamma)}.
\label{green rot rot}
\end{equation}
The spaces $\mathbf{TT}(\Gamma_A)$ and $\mathbf{TC}(\Gamma_A)$ denote respectively the ranges of $\gamma_\top$ and $\pi_\top$, restricted on the part $\Gamma_A$ of the boundary. In \cite{BC+1}, they are called $\mathbf{H}_{\parallel,00}^{-\frac1{2}}(\divg_{\Gamma_A}, \Gamma_A)$ and $\mathbf{H}_{\perp,00}^{-\frac1{2}}(\curl_{\Gamma_A}, \Gamma_A)$. The subspace of elements of $\mathbf{H}(\curl;\Omega)$ such that the tangential trace vanishes on the part $\Gamma_P$ of the boundary is denoted by
\begin{equation*}
\mathbf{H}_{0,\Gamma_P}(\curl ; \Omega)= \lbrace \boldsymbol{v}\in\mathbf{H}(\curl;\Omega)\,:\, \boldsymbol{v} \times \boldsymbol{n}_{|_{\Gamma_P}}=0  \rbrace .
\end{equation*}
Then, the range of the trace mappings on $\Gamma_A$ from $\mathbf{H}_{0,\Gamma_P}(\curl ; \Omega)$ are denoted 
\begin{eqnarray*}
\widetilde{\mathbf{TT}}(\Gamma_A)
&:=& \{\boldsymbol{\varphi} \in \mathbf{H}^{-\frac1{2}}(\Gamma_A)\,:\,\exists \boldsymbol{v} \in \mathbf{H}_{0,\Gamma_P}(\mathbf{\curl}; \Omega),~ \boldsymbol{\varphi}=\boldsymbol{v} \times \boldsymbol{n}_{|_{\Gamma_A}}\}
\\
&=& \{\boldsymbol{\varphi} \in \mathbf{TT}(\Gamma_A)\,:\,  \text{the extension of } \boldsymbol{\varphi} \text{ by $0$ to } \Gamma \text{ belongs to } \mathbf{TT}(\Gamma)\} \,;
\\
\widetilde{\mathbf{TC}}(\Gamma_A)
&:=& \{\boldsymbol{\lambda} \in \mathbf{H}^{-\frac1{2}}(\Gamma_A)\,:\,\exists \boldsymbol{v} \in \mathbf{H}_{0,\Gamma_P}(\mathbf{\curl}; \Omega),~ \boldsymbol{\lambda}={\boldsymbol{v}_\top}_{|_{\Gamma_A}}\} 
\\
&=& \{\boldsymbol{\lambda} \in \mathbf{TC}(\Gamma_A)\,:\,  \text{the extension of } \boldsymbol{\lambda} \text{ by $0$ to } \Gamma \text{ belongs to } \mathbf{TC}(\Gamma)\} \,;
\end{eqnarray*}
they are respectively called $\mathbf{H}_{\parallel}^{-\frac1{2}}(\divg_{\Gamma_A}^0, \Gamma_A)$ and $\mathbf{H}_{\perp}^{-\frac1{2}}(\curl_{\Gamma_A}^0, \Gamma_A)$ in \cite{BC+1}. 
The spaces $\widetilde{\mathbf{TT}}(\Gamma_A)$ and $\mathbf{TC}(\Gamma_A)$ are in duality with respect to the pivot space $\mathbf{L}_t^2(\Gamma_A)$, and similarly for 
$\mathbf{TT}(\Gamma_A)$ and $\widetilde{\mathbf{TC}}(\Gamma_A)$. We denote the duality product between those spaces as ${}_{\gamma^0_A}\langle \cdot,\cdot \rangle_{\pi_A}$ or ${}_{\gamma_A}\langle \cdot , \cdot \rangle_{\pi^0_A}$. 
This allows one to derive the following integration by parts formula:
\begin{eqnarray}
\forall (\boldsymbol{v},\boldsymbol{w}) \in\mathbf{H}(\curl;\Omega) \times \mathbf{H}_{0,\Gamma_P}(\curl;\Omega),\quad (\boldsymbol{v}\mid\curl \boldsymbol{w})-(\curl\boldsymbol{v}\mid \boldsymbol{w})={}_{\gamma_A}\langle \boldsymbol{v}\times \boldsymbol{n}, \boldsymbol{w}_\top \rangle_{\pi^0_A}.
\label{green rot 0 Gamma p}
\end{eqnarray}
If $\Gamma_P=\varnothing$, \ie, $\Gamma_A=\Gamma$, then $\mathbf{H}_{0,\Gamma_P}(\curl;\Omega) = \mathbf{H}(\curl;\Omega)$, while $\widetilde{\mathbf{TT}}(\Gamma_A) = \mathbf{TT}(\Gamma)$ and $\widetilde{\mathbf{TC}}(\Gamma_A) = \mathbf{TC}(\Gamma)$. 
In the case of a truncated exterior problem, \ie, $\partial\Gamma_A \cap \partial\Gamma_P = \varnothing$, it holds again that $\widetilde{\mathbf{TT}}(\Gamma_A) = \mathbf{TT}(\Gamma_A)$ and $\widetilde{\mathbf{TC}}(\Gamma_A) = \mathbf{TC}(\Gamma_A)$.

\smallbreak

A profound and useful property of these spaces is: in the absence of pathological vertices, $ \widetilde{\mathbf{TT}}(\Gamma_{A})\cap \mathbf{TC}(\Gamma_{A})$ is included in~$ \mathbf{L}^2_t(\Gamma_{A})$, see~\cite{BaHa97} for the truncated exterior problem in a smooth domain and~\cite[Remarks 5.1.5 and~5.1.8]{ACL+17} in the general case.
This is the framework of this article.

\smallbreak

We shall also use the basic integration by parts formula between $\mathbf{H}(\curl;\Omega)$ and~$\mathbf{H}^1(\Omega)$:
\begin{eqnarray}
\forall (\boldsymbol{v},\boldsymbol{w}) \in \mathbf{H}(\curl;\Omega) \times \mathbf{H}^1(\Omega),\quad
(\boldsymbol{v}\mid\curl \boldsymbol{w})-(\curl\boldsymbol{v}\mid \boldsymbol{w})=\langle \boldsymbol{v}\times \boldsymbol{n} , \boldsymbol{w} \rangle_{\mathbf{H}^{\frac1{2}}(\Gamma)}.
\label{green rot H1}
\end{eqnarray}

Finally, let us recall some useful subspaces of $\mathbf{H}(\curl;\Omega)$ and $\mathbf{H}(\divg;\Omega)$:
\begin{eqnarray*}
\mathbf{H}(\divg 0; \Omega) &=& \lbrace \boldsymbol{v}\in\mathbf{L}^2(\Omega) : \divg\boldsymbol{v}=0 \rbrace ,
\\
\mathbf{H}_0(\divg 0; \Omega) &=& \mathbf{H}(\divg 0; \Omega)\cap \mathbf{H}_0(\divg ; \Omega) ,
\\
\mathbf{H}_{0,\Gamma_P}(\divg ; \Omega) &=& \lbrace \boldsymbol{v}\in\mathbf{H}(\divg;\Omega) : \boldsymbol{v} \cdot \boldsymbol{n}_{|\Gamma_P}=0  \rbrace ,
\\
\mathbf{H}_{0,\Gamma_P}(\divg 0; \Omega) &=& \mathbf{H}(\divg 0; \Omega)\cap \mathbf{H}_{0,\Gamma_P}(\divg ; \Omega) ,
\\
\mathbf{H}(\curl 0; \Omega) &=& \lbrace \boldsymbol{v}\in\mathbf{L}^2(\Omega) : \curl\boldsymbol{v}=0 \rbrace ,
\\
\mathbf{H}_0(\curl 0; \Omega) &=& \mathbf{H}(\curl 0; \Omega)\cap \mathbf{H}_0(\curl ; \Omega) ,
\\
\quad\mathbf{H}_{0,\Gamma_P}(\curl 0; \Omega) &=& \mathbf{H}(\curl 0; \Omega)\cap \mathbf{H}_{0,\Gamma_P}(\curl ; \Omega) .
\end{eqnarray*}

\section{Well-posedness of the model}
\label{sec-well-posed}
In the whole article, we shall make the following\dots 
\begin{hpthss}
We suppose that there exists strictly positive real numbers $\nu^*$, $\varOmega^*$ and $\omega^*$ such that, for almost all $\boldsymbol{x}\in\Omega$ and for each species $s$ (ions and electrons), one has:
\begin{eqnarray}
0 \leq \nu_s(\boldsymbol{x}) \leq \nu^*,\label{2}\\
 |\varOmega_{cs}(\boldsymbol{x})|\leq \varOmega^*,\label{3}\\
0 < \omega_{ps}(\boldsymbol{x}) \leq \omega^*. \label{4}
\end{eqnarray}
\label{hyp-1}
\end{hpthss}
For $ s\in \{ 1,\ 2\}$ and $\boldsymbol{x} \in \Omega$ fixed, the mapping $\boldsymbol{v}\mapsto \varOmega_{cs}(\boldsymbol{x})\, \boldsymbol{b}(\boldsymbol{x})\times \boldsymbol{v} + \nu_s(\boldsymbol{x})\, \boldsymbol{v}$ defined from $\xR^3$ (or~$\xC^3$) to itself is linear. So, there exists a matrix $\mathbb M_s(\boldsymbol{x})\in \mathcal{M}_{3}(\xR)$ such that: 
\begin{equation}
\varOmega_{cs}(\boldsymbol{x})\, \boldsymbol{b}(\boldsymbol{x})\times \boldsymbol{v}+\nu_s(\boldsymbol{x})\, \boldsymbol{v} = \mathbb M_s(\boldsymbol{x}) \, \boldsymbol{v}, \quad \forall \boldsymbol{v} \in \xR^3 \text{ or } \xC^3.
\end{equation}
We denote by $\III \cdot \III_{\mathcal{M}}$ the operator norm on the space~$\mathcal{M}_{3}(\xC)$ induced by the Hermitian norm of $\xC^3$. 
\begin{prpstn}
There exists $\lambda > 0$ such that $\III \lambda \mathbb{M}_s(\boldsymbol{x}) \III_{\mathcal{M}} < 1$, for all $s\in\lbrace 1,\ 2 \rbrace$ and $\boldsymbol{x}\in \Omega$. Therefore, the matrix $\,\mathbb{I}+\lambda \mathbb{M}_s$ is invertible for all $s\in\lbrace 1,\ 2 \rbrace$ and $\boldsymbol{x}\in \Omega$, where $\mathbb{I}$ is the identity matrix, and its inverse is uniformly bounded on $\Omega$.
\label{lambda inverse} 
\end{prpstn}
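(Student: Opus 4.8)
The plan is to get a uniform bound on $\III \mathbb{M}_s(\boldsymbol{x}) \III_{\mathcal{M}}$, then to choose $\lambda$ small enough, and finally to invoke the Neumann series for the inverse.

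First I would write $\mathbb{M}_s(\boldsymbol{x}) = \varOmega_{cs}(\boldsymbol{x})\,\mathbb{B}(\boldsymbol{x}) + \nu_s(\boldsymbol{x})\,\mathbb{I}$, where $\mathbb{B}(\boldsymbol{x})$ is the matrix of the linear map $\boldsymbol{v}\mapsto \boldsymbol{b}(\boldsymbol{x})\times\boldsymbol{v}$. For $\boldsymbol{v}\in\xC^3$, decomposing $\boldsymbol{v}=\boldsymbol{a}+\ii\,\boldsymbol{c}$ with $\boldsymbol{a},\boldsymbol{c}\in\xR^3$ and using that $\boldsymbol{b}(\boldsymbol{x})$ is real, one gets $|\boldsymbol{b}(\boldsymbol{x})\times\boldsymbol{v}|^2=|\boldsymbol{b}(\boldsymbol{x})\times\boldsymbol{a}|^2+|\boldsymbol{b}(\boldsymbol{x})\times\boldsymbol{c}|^2$, and since $|\boldsymbol{b}(\boldsymbol{x})\times\boldsymbol{w}|^2=|\boldsymbol{b}(\boldsymbol{x})|^2|\boldsymbol{w}|^2-(\boldsymbol{b}(\boldsymbol{x})\cdot\boldsymbol{w})^2\le|\boldsymbol{w}|^2$ for real $\boldsymbol{w}$ (because $|\boldsymbol{b}(\boldsymbol{x})|=1$), this yields $\III\mathbb{B}(\boldsymbol{x})\III_{\mathcal{M}}\le 1$. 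Hence, by the triangle inequality and Hypothesis~\Rref{hyp-1},
\[
\III \mathbb{M}_s(\boldsymbol{x}) \III_{\mathcal{M}} \le |\varOmega_{cs}(\boldsymbol{x})|\,\III\mathbb{B}(\boldsymbol{x})\III_{\mathcal{M}} + \nu_s(\boldsymbol{x}) \le \varOmega^* + \nu^* =: C ,
\]
for all $s\in\{1,2\}$ and almost all $\boldsymbol{x}\in\Omega$; since $\mathbb{M}_s$ is anyway only defined up to a negligible set, we may assume the bound holds everywhere.

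Then I would set $\lambda := 1/(2C) > 0$, so that $\III \lambda\mathbb{M}_s(\boldsymbol{x})\III_{\mathcal{M}} \le \tfrac12 < 1$ for all $s$ and $\boldsymbol{x}$, which is the first assertion. Consequently $\mathbb{I}+\lambda\mathbb{M}_s(\boldsymbol{x})$ is invertible, its inverse being given by the convergent Neumann series $\sum_{k\ge0}(-\lambda\mathbb{M}_s(\boldsymbol{x}))^k$, whence
\[
\III (\mathbb{I}+\lambda\mathbb{M}_s(\boldsymbol{x}))^{-1}\III_{\mathcal{M}} \le \sum_{k\ge0}\III\lambda\mathbb{M}_s(\boldsymbol{x})\III_{\mathcal{M}}^{\,k} \le \frac{1}{1-\lambda C} = 2 ,
\]
which is the claimed uniform bound on the inverse.

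There is no real obstacle here: the argument is a one-line operator-norm estimate followed by a Neumann series. The only point worth a short justification is the bound $\III\mathbb{B}(\boldsymbol{x})\III_{\mathcal{M}}\le1$ over $\xC^3$ with its Hermitian norm, handled above by the real/imaginary decomposition (equivalently, one may note that $\mathbb{B}(\boldsymbol{x})$ is real skew-symmetric, hence normal with eigenvalues $0,\pm\ii$, so its operator norm equals its spectral radius $=1$). Everything is explicit and uniform in $s$ and $\boldsymbol{x}$, which is all that the statement requires; note also that any $\lambda\in(0,1/C)$ would do equally well.
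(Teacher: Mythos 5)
Your proof is correct and is, in substance, exactly the argument the paper intends: the paper's own ``proof'' merely describes it as ``a simple perturbation argument'' and delegates the details to Propositions~3.1 and~3.2 of the cited preprint of Labrunie--Zaafrani, which amount to the same uniform bound $\III \mathbb{M}_s(\boldsymbol{x}) \III_{\mathcal{M}} \le \varOmega^* + \nu^*$ followed by a Neumann-series inversion. The one point genuinely worth writing down --- that the cross-product map $\boldsymbol{v}\mapsto\boldsymbol{b}(\boldsymbol{x})\times\boldsymbol{v}$ has operator norm at most $1$ on $\xC^3$ with the Hermitian norm --- you justify correctly (twice, in fact), so nothing is missing.
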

\begin{proof}
This is an easy consequence of Hypothesis~\Rref{hyp-1}, by a simple perturbation argument. See~\cite[Propositions~3.1 and 3.2]{LZ+17}.
\end{proof} 
%
\begin{dfntn}
Let $\lambda$ be given by Proposition \Rref{lambda inverse}.  Let 
$\mathbb{D}_{\lambda}:\Omega\longrightarrow \mathcal{M}_{3}(\xR)$ be the matrix 
\begin{eqnarray}
\mathbb{D}_{\lambda}(\boldsymbol{x}) := \sum_s {\omega_{ps}^2(\boldsymbol{x})}(\mathbb{I}+ \lambda \mathbb{M}_s(\boldsymbol{x}))^{-1}, \quad \text{for}\quad \boldsymbol{x}\in\Omega.
\label{matrix D wel pos}
\end{eqnarray}
\end{dfntn}
By convention, sums on the variable $s$ run on all particle species, \ie, from $s=1$ to~$2$ in our model.
\begin{prpstn}
The matrix $\mathbb{D}_{\lambda}(\boldsymbol{x})$ is positive for all $\boldsymbol{x}$ in $\Omega$. Moreover, there exists $\xi > 0$ such that 
\begin{eqnarray*}
\sup_{\boldsymbol{x}\in\Omega} \III \mathbb{D}_{\lambda}(\boldsymbol{x}) \III_{\mathcal{M}} \leq\xi.
\end{eqnarray*}
\label{D lamda positive}
\end{prpstn}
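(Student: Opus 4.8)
The plan is to exploit the algebraic structure of $\mathbb{M}_s$. Let $\mathbb{J}(\boldsymbol{x})\in\mathcal{M}_3(\xR)$ be the matrix of the linear map $\boldsymbol{v}\mapsto\boldsymbol{b}(\boldsymbol{x})\times\boldsymbol{v}$; it is skew-symmetric, so that $(\mathbb{J}(\boldsymbol{x})\,\boldsymbol{v}\mid\boldsymbol{v})\in\ii\,\xR$ for every $\boldsymbol{v}\in\xC^3$, where $(\cdot\mid\cdot)$ denotes the Hermitian inner product of $\xC^3$ and $|\cdot|$ the associated norm. Since $\mathbb{M}_s(\boldsymbol{x}) = \nu_s(\boldsymbol{x})\,\mathbb{I} + \varOmega_{cs}(\boldsymbol{x})\,\mathbb{J}(\boldsymbol{x})$, this gives, using \eqref{2},
\begin{equation*}
\mathrm{Re}\,(\mathbb{M}_s(\boldsymbol{x})\,\boldsymbol{v}\mid\boldsymbol{v}) = \nu_s(\boldsymbol{x})\,|\boldsymbol{v}|^2 \geq 0 , \qquad \forall\,\boldsymbol{v}\in\xC^3 ,\ \forall\,\boldsymbol{x}\in\Omega ,
\end{equation*}
i.e.\ each $\mathbb{M}_s(\boldsymbol{x})$ is accretive. (Here, as is standard, ``positive'' for a not-necessarily-Hermitian matrix means that the Hermitian part of its quadratic form is positive definite.)

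Next I would transfer this to the resolvent matrix $(\mathbb{I}+\lambda\mathbb{M}_s(\boldsymbol{x}))^{-1}$, which exists by Proposition~\Rref{lambda inverse}. Fix $\boldsymbol{x}\in\Omega$, $s\in\{1,2\}$ and $\boldsymbol{w}\in\xC^3\setminus\{0\}$, and put $\boldsymbol{v}:=(\mathbb{I}+\lambda\mathbb{M}_s(\boldsymbol{x}))^{-1}\boldsymbol{w}\neq0$, so that $\boldsymbol{w}=(\mathbb{I}+\lambda\mathbb{M}_s(\boldsymbol{x}))\,\boldsymbol{v}$. Substituting in both slots of the inner product and using $\lambda>0$ together with the previous inequality,
\begin{equation*}
\mathrm{Re}\,\bigl((\mathbb{I}+\lambda\mathbb{M}_s(\boldsymbol{x}))^{-1}\boldsymbol{w}\bigm|\boldsymbol{w}\bigr) = |\boldsymbol{v}|^2 + \lambda\,\mathrm{Re}\,(\mathbb{M}_s(\boldsymbol{x})\,\boldsymbol{v}\mid\boldsymbol{v}) \geq |\boldsymbol{v}|^2 > 0 .
\end{equation*}
Since $\omega_{ps}^2(\boldsymbol{x})>0$ for each $s$ by \eqref{4}, summing the definition \eqref{matrix D wel pos} over $s$ yields $\mathrm{Re}\,(\mathbb{D}_\lambda(\boldsymbol{x})\,\boldsymbol{w}\mid\boldsymbol{w})>0$ for every $\boldsymbol{w}\neq0$, which is the asserted positivity of $\mathbb{D}_\lambda(\boldsymbol{x})$ at each $\boldsymbol{x}\in\Omega$.

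For the uniform bound I would merely combine the triangle inequality in $\mathcal{M}_3(\xC)$ with the two ingredients already at hand: the uniform bound on the inverses furnished by Proposition~\Rref{lambda inverse}, say $\III(\mathbb{I}+\lambda\mathbb{M}_s(\boldsymbol{x}))^{-1}\III_{\mathcal{M}}\leq C$ for all $s$ and all $\boldsymbol{x}\in\Omega$, and the pointwise bound \eqref{4} on $\omega_{ps}$. This gives, for every $\boldsymbol{x}\in\Omega$,
\begin{equation*}
\III\mathbb{D}_\lambda(\boldsymbol{x})\III_{\mathcal{M}} \leq \sum_s \omega_{ps}^2(\boldsymbol{x})\,\III(\mathbb{I}+\lambda\mathbb{M}_s(\boldsymbol{x}))^{-1}\III_{\mathcal{M}} \leq 2\,C\,(\omega^*)^2 =: \xi .
\end{equation*}

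I expect no real obstacle here: the argument is a one-line computation for positivity and a triangle inequality for the bound. The only point that deserves a moment's care is the passage to \emph{complex} vectors, namely that the skew-symmetric (cross-product) contribution $\varOmega_{cs}\mathbb{J}$ does not affect the real part of the quadratic form; once that is noted, Proposition~\Rref{lambda inverse} supplies everything else. (If a quantitative coercivity constant were wanted — it is not needed for this statement, but is convenient in the stability proofs — one could additionally use $|\boldsymbol{w}|\leq(1+\lambda(\nu^*+\varOmega^*))\,|\boldsymbol{v}|$, which follows from $\III\mathbb{J}\III_{\mathcal{M}}\leq1$, to obtain $\mathrm{Re}\,(\mathbb{D}_\lambda(\boldsymbol{x})\,\boldsymbol{w}\mid\boldsymbol{w}) \geq \bigl(\textstyle\sum_s\omega_{ps}^2(\boldsymbol{x})\bigr)\,(1+\lambda(\nu^*+\varOmega^*))^{-2}\,|\boldsymbol{w}|^2$, which is however not uniformly bounded below since $\omega_{ps}$ is not.)
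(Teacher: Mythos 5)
Your proposal is correct and follows essentially the same route as the paper: positivity of $\mathbb{I}+\lambda\mathbb{M}_s$ from the vanishing of the cross-product term in the quadratic form, transferred to the inverse by the substitution $\boldsymbol{w}=(\mathbb{I}+\lambda\mathbb{M}_s)\boldsymbol{v}$, then summed with the positive weights $\omega_{ps}^2$; the uniform bound is the same triangle-inequality consequence of Hypothesis~\Rref{hyp-1} and Proposition~\Rref{lambda inverse}. The only (harmless) differences are that you work with complex vectors and real parts where the paper argues over $\xR^3$, and that you make the constant $\xi=2C(\omega^*)^2$ explicit.
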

\begin{proof}
Let $\boldsymbol{x}\in\Omega$. To show the positivity of $\mathbb{D}_{\lambda}$, it is enough to prove that the matrix $(\mathbb{I}+ \lambda \mathbb{M}_s(\boldsymbol{x}))^{-1}$ is positive for $s\in\lbrace 1,\ 2 \rbrace$. Given $\boldsymbol{v}\in\xR^3$, we have
\begin{equation*}
(\mathbb{I}+\lambda\, \mathbb{M}_s(\boldsymbol{x})) \boldsymbol{v}  \cdot\boldsymbol{v} 
= \boldsymbol{v} \cdot \boldsymbol{v} +\lambda\,  \mathbb{M}_s(\boldsymbol{x})\boldsymbol{v}\cdot \boldsymbol{v} 
= | \boldsymbol{v} |^2 + \lambda \nu_s (\boldsymbol{x})|\boldsymbol{v}|^2  \stackrel{\eqref{2}}{\geq} | \boldsymbol{v} |^2  \ge 0.
\end{equation*}
Then, $\mathbb{I}+\lambda \mathbb{M}_s(\boldsymbol{x})$ is positive. Next, given $\boldsymbol{w}\in\xR^3$,  there exists $\boldsymbol{\eta} \in \xR^3 $ such that  $ \boldsymbol{w}= (\mathbb{I}+\lambda \mathbb{M}_s(\boldsymbol{x}))\boldsymbol{\eta} $. Hence, it follows that 
\begin{eqnarray*}
(\mathbb{I}+ \lambda\, \mathbb{M}_s(\boldsymbol{x}))^{-1}\boldsymbol{w} \cdot\boldsymbol{w} 
&=&\boldsymbol{\eta} \cdot(\mathbb{I}+ \lambda \mathbb{M}_s(\boldsymbol{x}))\,\boldsymbol{\eta} \geq | \boldsymbol{\eta} |^2  \ge 0.
\end{eqnarray*}
So, the matrix $(\mathbb{I}+ \lambda \mathbb{M}_s(\boldsymbol{x}))^{-1}$ is positive. 
The uniform boundedness of $\mathbb{D}_{\lambda}$ is an easy consequence of Hypothesis~\Rref{hyp-1} and Proposition~\Rref{lambda inverse}.
%
\end{proof}

\medbreak

To prove the well-posedness, we rewrite the system \eqref{uu}--\eqref{init cond 0} with boundary condition \eqref{lk}--\eqref{lm} as the first order evolution equation
\begin{eqnarray}
\left\{\begin{array}{lll}
\partial_{t}\boldsymbol{U}+\xA\,\boldsymbol{U} 
&=& 0,\\ 
\boldsymbol{U}(0)
&=&\boldsymbol{U}{_0},
\end{array}
\right.
\label{pb-evol-abstrait}
\end{eqnarray}
where the vector $\boldsymbol{U}$ is 
$$ \boldsymbol{U} = \left( \boldsymbol{J}_1,\boldsymbol{J}_2,\boldsymbol{E},\boldsymbol{B} \right)^\top \,, $$
and $\xA$ is a linear operator formally given by the expression
\begin{eqnarray}
\xA =
\begin{pmatrix}
\mathbb M_{1} & 0 & -\varepsilon_0\,\omega_{p1}^2\ & 0 \cr
0&\mathbb M_{2} & -\varepsilon_0\,\omega_{p2}^2\ & 0 \cr
\frac1{\varepsilon_0} & \frac1{\varepsilon_0} & 0 &-c^2 \curl \cr
0 & 0 & \curl& 0
\end{pmatrix}.
\label{matrix A}
\end{eqnarray}
The existence and uniqueness of the solution to Problem~\eqref{pb-evol-abstrait} follows from the classical Lumer--Phillips theorem~\cite{P83,CH98}, as we shall se later.

\medbreak

We introduce the weighted $L^2$ spaces associated to each species index~$s$:
\begin{equation}
\mathbf{L}^2_{(s)}(\Omega) := \left\{  
\boldsymbol{w} : \Omega \to \xC \text{ measurable, s.t. } \int_{\Omega} \left| \frac{\boldsymbol{w}}{\omega_{ps}} \right|^2\, \xdif\Omega < + \infty,
\right\}
\end{equation}
\ie, $\boldsymbol{w} \in \mathbf{L}^2_{(s)}(\Omega)$ iff  $\boldsymbol{w}/\omega_{ps} \in \mathbf{L}^2(\Omega)$, endowed with their canonical norm
\begin{equation}
\| \boldsymbol{w} \|_{(s)} := \| \boldsymbol{w} \|_{\mathbf{L}^2_{(s)}(\Omega)} := \left\| \frac{\boldsymbol{w}}{\omega_{ps}} \right\|.
\end{equation}
In view of the bound~\eqref{4}, one immediately deduces a basic useful result:
\begin{lmm}
\label{lem-weight-basic}
For each~$s$:
\begin{enumerate}[label=(\roman*)]
\item The space $\mathbf{L}^2_{(s)}(\Omega)$ is continuously embedded into~$\mathbf{L}^2(\Omega)$. 
\item For any $\boldsymbol{w} \in \mathbf{L}^2(\Omega)$, it holds that $\omega_{ps}^2\, \boldsymbol{w} \in \mathbf{L}^2_{(s)}(\Omega)$.
\end{enumerate}
\end{lmm}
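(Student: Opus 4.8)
The plan is to deduce both assertions directly from the two-sided pointwise control $0 < \omega_{ps}(\boldsymbol{x}) \le \omega^*$ provided by Hypothesis~\Rref{hyp-1}, \ie, the bound~\eqref{4}, together with the measurability of $\omega_{ps}$.

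For item~(i), I would take $\boldsymbol{w} \in \mathbf{L}^2_{(s)}(\Omega)$, so that by definition $\boldsymbol{w}/\omega_{ps} \in \mathbf{L}^2(\Omega)$. Writing $\boldsymbol{w} = \omega_{ps}\,(\boldsymbol{w}/\omega_{ps})$ and invoking the upper bound $\omega_{ps} \le \omega^*$ almost everywhere, one gets $|\boldsymbol{w}(\boldsymbol{x})| \le \omega^*\,|\boldsymbol{w}(\boldsymbol{x})/\omega_{ps}(\boldsymbol{x})|$ for a.e.\ $\boldsymbol{x}\in\Omega$, whence $\boldsymbol{w}\in\mathbf{L}^2(\Omega)$ with $\|\boldsymbol{w}\| \le \omega^*\,\|\boldsymbol{w}/\omega_{ps}\| = \omega^*\,\|\boldsymbol{w}\|_{(s)}$. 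This inequality is precisely the continuity of the inclusion $\mathbf{L}^2_{(s)}(\Omega)\hookrightarrow\mathbf{L}^2(\Omega)$, with embedding constant~$\omega^*$.

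For item~(ii), given $\boldsymbol{w}\in\mathbf{L}^2(\Omega)$, I would verify the defining condition of $\mathbf{L}^2_{(s)}(\Omega)$ for the field $\omega_{ps}^2\,\boldsymbol{w}$: since $(\omega_{ps}^2\,\boldsymbol{w})/\omega_{ps} = \omega_{ps}\,\boldsymbol{w}$ and $|\omega_{ps}\,\boldsymbol{w}| \le \omega^*\,|\boldsymbol{w}|$ almost everywhere, the field $\omega_{ps}\,\boldsymbol{w}$ belongs to $\mathbf{L}^2(\Omega)$, so that $\omega_{ps}^2\,\boldsymbol{w}\in\mathbf{L}^2_{(s)}(\Omega)$, with the quantitative bound $\|\omega_{ps}^2\,\boldsymbol{w}\|_{(s)} = \|\omega_{ps}\,\boldsymbol{w}\| \le \omega^*\,\|\boldsymbol{w}\|$.

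I do not expect any genuine obstacle here: the only point requiring a word of care is that the products $\omega_{ps}\,\boldsymbol{w}$ and $\boldsymbol{w}/\omega_{ps}$ are measurable, which holds because $\omega_{ps}$ is a strictly positive, essentially bounded, measurable function of~$\boldsymbol{x}$ by definition~\eqref{rrt} and the bounds~\eqref{4}; everything else reduces to the elementary monotonicity of the $\mathbf{L}^2$ norm under pointwise domination.
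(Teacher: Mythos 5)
Your proof is correct and follows exactly the route the paper intends: both items are immediate consequences of the upper bound $\omega_{ps}\le\omega^*$ from~\eqref{4}, which the paper invokes without further detail. The quantitative embedding constants you supply are a harmless bonus.
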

Next, we introduce the energy space 
\begin{equation*}
\mathbf{X}=\mathbf{L}^2_{(1)}(\Omega)\times\mathbf{L}^2_{(2)}(\Omega)\times\mathbf{L}^2(\Omega)\times\mathbf{L}^2(\Omega),
\end{equation*}
and we endow it with the inner product defined for all $\boldsymbol{U}=(\boldsymbol{U}_1,\boldsymbol{U}_2,\boldsymbol{U}_3,\boldsymbol{U}_4)^{\top}$ and $\boldsymbol{V}=(\boldsymbol{V}_1,\boldsymbol{V}_2,\boldsymbol{V}_3,\boldsymbol{V}_4)^{\top}$ by
\begin{equation}
\left(\boldsymbol{U}, \boldsymbol{V}\right)_{\mathbf{X}} := \frac1{\varepsilon_0}\sum_s \left( \frac{\boldsymbol{U}_s}{\omega_{ps}} \Biggm| \frac{\boldsymbol{V}_s}{\omega_{ps}}  \right) + \varepsilon_0\,(\boldsymbol{U}_3 \mid \boldsymbol{V}_3) + c^2 \varepsilon_0\,(\boldsymbol{U}_4 \mid \boldsymbol{V}_4),
\end{equation}
and the associated norm $\|\cdot\|_{\mathbf{X}}$.

\subsection{Perfectly conducting case}
Here, we suppose that $\Gamma_A$ is empty. The domain $\Omega$ is encased in a perfect conductor, which means: 
\begin{equation*}
\forall t>0,\quad \boldsymbol{E}(t) \times \boldsymbol{n} = 0, \quad \boldsymbol{B}(t)\cdot \boldsymbol{n} = 0, \quad \text{on } \Gamma.
\end{equation*}
Now, we define the linear unbounded operator $\xA_1 : D(\xA_1)\subset \mathbf{X}\rightarrow \mathbf{X}$ as 
\begin{eqnarray}
&&D(\xA_1):=\mathbf{L}^2_{(1)}\Omega)\times\mathbf{L}^2_{(2)}(\Omega)\times\mathbf{H}_0(\curl;\Omega)\times\mathbf{H}(\curl;\Omega) ,
\nonumber 
\\
&& \xA_1 \boldsymbol{U}:=\xA\,\boldsymbol{U}, \quad \forall \boldsymbol{U}\in D(\xA_1).
\label{A1 definition}
\end{eqnarray}
The fact that $\xim(\xA_1)\subset \mathbf{X}$ follows from Proposition~\Rref{lambda inverse} and Lemma~\Rref{lem-weight-basic}. The abstract evolution equation \eqref{pb-evol-abstrait} writes:
\begin{equation}
\partial_{t}\boldsymbol{U}(t) + \xA_1\boldsymbol{U}(t)=0,\, \text{ for }  t > 0,  \qquad  \boldsymbol{U}(0)=\boldsymbol{U}_0.
\label{evol prob cp}
\end{equation} 
\begin{prpstn}
The operator $\xA_1$ is maximal monotone. 
\label{maximonot 1}
\end{prpstn}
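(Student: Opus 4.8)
# Proof Proposal

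The plan is to verify the two defining properties of a maximal monotone operator in the sense required by the Lumer--Phillips theorem: first, that $\xA_1$ is monotone, i.e.\ $\mathrm{Re}(\xA_1\boldsymbol{U},\boldsymbol{U})_{\mathbf{X}} \geq 0$ for all $\boldsymbol{U} \in D(\xA_1)$; and second, that $\xA_1$ is maximal, i.e.\ $\xim(\mathbb{I}+\xA_1) = \mathbf{X}$ (equivalently, that $\mathbb{I}+\xA_1$ is onto). The weighting of the inner product $(\cdot,\cdot)_{\mathbf{X}}$ has evidently been chosen precisely so that the coupling terms between the hydrodynamic variables and the electric field cancel, leaving only the dissipative collision term, so the monotonicity computation should be short.

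For monotonicity, I would take $\boldsymbol{U} = (\boldsymbol{J}_1,\boldsymbol{J}_2,\boldsymbol{E},\boldsymbol{B})^\top \in D(\xA_1)$ and compute $(\xA_1\boldsymbol{U},\boldsymbol{U})_{\mathbf{X}}$ term by term using the explicit form~\eqref{matrix A}. The rows give: from the first two components, $\frac1{\varepsilon_0}\sum_s \bigl(\tfrac{\mathbb{M}_s\boldsymbol{J}_s}{\omega_{ps}} \bigm| \tfrac{\boldsymbol{J}_s}{\omega_{ps}}\bigr) - \sum_s(\boldsymbol{E} \mid \tfrac{\boldsymbol{J}_s}{\omega_{ps}^2}\omega_{ps}^2)$-type terms; from the $\boldsymbol{E}$-row, $\varepsilon_0\bigl(\tfrac1{\varepsilon_0}\sum_s\boldsymbol{J}_s - c^2\curl\boldsymbol{B} \mid \boldsymbol{E}\bigr)$; from the $\boldsymbol{B}$-row, $c^2\varepsilon_0(\curl\boldsymbol{E}\mid\boldsymbol{B})$. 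The terms $\pm\sum_s(\boldsymbol{J}_s\mid\boldsymbol{E})$ cancel in pairs, and the $\curl$ terms combine via the Green formula~\eqref{green rot rot}: since $\boldsymbol{E}\in\mathbf{H}_0(\curl;\Omega)$, the boundary term $\langle\boldsymbol{E}\times\boldsymbol{n},\boldsymbol{B}_\top\rangle$ vanishes, so $-c^2\varepsilon_0(\curl\boldsymbol{B}\mid\boldsymbol{E}) + c^2\varepsilon_0(\curl\boldsymbol{E}\mid\boldsymbol{B}) = 0$. What survives is $\frac1{\varepsilon_0}\sum_s\bigl(\tfrac{\mathbb{M}_s\boldsymbol{J}_s}{\omega_{ps}}\bigm|\tfrac{\boldsymbol{J}_s}{\omega_{ps}}\bigr)$, whose real part equals $\frac1{\varepsilon_0}\sum_s\int_\Omega \nu_s\,|\boldsymbol{J}_s/\omega_{ps}|^2\,\xdif\Omega \geq 0$ because the skew-symmetric cross-product part $\varOmega_{cs}\,\boldsymbol{b}\times\cdot$ contributes nothing to the real part (as already used in the proof of Proposition~\Rref{D lamda positive}). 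Hence $\xA_1$ is monotone.

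For maximality, I would fix $\boldsymbol{F} = (\boldsymbol{F}_1,\boldsymbol{F}_2,\boldsymbol{F}_3,\boldsymbol{F}_4)^\top \in \mathbf{X}$ and solve $(\mathbb{I}+\xA_1)\boldsymbol{U} = \boldsymbol{F}$. The first two equations, $\boldsymbol{J}_s + \mathbb{M}_s\boldsymbol{J}_s - \varepsilon_0\omega_{ps}^2\boldsymbol{E} = \boldsymbol{F}_s$, can be solved pointwise for $\boldsymbol{J}_s$ in terms of $\boldsymbol{E}$ using the invertibility of $\mathbb{I}+\mathbb{M}_s$ — here I would replace $\lambda$ by $1$ in Proposition~\Rref{lambda inverse}; either the argument there already covers $\lambda=1$ or I invoke the same perturbation reasoning, noting $\mathbb{I}+\mathbb{M}_s$ is invertible since its symmetric part $(\mathbb{I}+\nu_s)\mathbb{I}$ is positive definite. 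Substituting $\boldsymbol{J}_s = (\mathbb{I}+\mathbb{M}_s)^{-1}(\boldsymbol{F}_s + \varepsilon_0\omega_{ps}^2\boldsymbol{E})$ into the $\boldsymbol{E}$-equation, and using the $\boldsymbol{B}$-equation $\boldsymbol{B} + \curl\boldsymbol{E} = \boldsymbol{F}_4$ to eliminate $\boldsymbol{B} = \boldsymbol{F}_4 - \curl\boldsymbol{E}$, reduces the system to a single second-order equation for $\boldsymbol{E}$ of the form
\begin{equation*}
\boldsymbol{E} + \tfrac{1}{\varepsilon_0}\,\widetilde{\mathbb{D}}\,\boldsymbol{E} + c^2\curl\curl\boldsymbol{E} = \widetilde{\boldsymbol{F}} \quad\text{in }\Omega, \qquad \boldsymbol{E}\times\boldsymbol{n}=0\text{ on }\Gamma,
\end{equation*}
where $\widetilde{\mathbb{D}} = \sum_s\varepsilon_0\omega_{ps}^2(\mathbb{I}+\mathbb{M}_s)^{-1}$ is a bounded matrix field with positive symmetric part (same computation as in Proposition~\Rref{D lamda positive}), and $\widetilde{\boldsymbol{F}}\in\mathbf{L}^2(\Omega)$. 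This is a coercive variational problem on $\mathbf{H}_0(\curl;\Omega)$: the bilinear form $a(\boldsymbol{E},\boldsymbol{E}') = (\boldsymbol{E}\mid\boldsymbol{E}') + \frac1{\varepsilon_0}(\widetilde{\mathbb{D}}\boldsymbol{E}\mid\boldsymbol{E}') + c^2(\curl\boldsymbol{E}\mid\curl\boldsymbol{E}')$ is continuous and coercive on $\mathbf{H}_0(\curl;\Omega)$ (the $\widetilde{\mathbb{D}}$-term has nonnegative real part, and the $\|\boldsymbol{E}\|^2 + c^2\|\curl\boldsymbol{E}\|^2$ part dominates), so Lax--Milgram yields a unique $\boldsymbol{E}\in\mathbf{H}_0(\curl;\Omega)$; then $c^2\curl\curl\boldsymbol{E} = \widetilde{\boldsymbol{F}} - \boldsymbol{E} - \frac1{\varepsilon_0}\widetilde{\mathbb{D}}\boldsymbol{E} \in\mathbf{L}^2(\Omega)$ shows $\curl\boldsymbol{B}\in\mathbf{L}^2(\Omega)$, so $\boldsymbol{B}\in\mathbf{H}(\curl;\Omega)$, and recovering $\boldsymbol{J}_s\in\mathbf{L}^2_{(s)}(\Omega)$ via Lemma~\Rref{lem-weight-basic} gives $\boldsymbol{U}\in D(\xA_1)$ with $(\mathbb{I}+\xA_1)\boldsymbol{U}=\boldsymbol{F}$.

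The main obstacle is the surjectivity step: one must set up the reduction to the $\curl\curl$ equation cleanly (the pointwise elimination of $\boldsymbol{J}_s$ and $\boldsymbol{B}$ is routine but needs the uniform bounds to stay in the right spaces) and then verify coercivity on $\mathbf{H}_0(\curl;\Omega)$ — the point being that, unlike for the pure Maxwell operator, the zeroth-order terms $\boldsymbol{E} + \frac1{\varepsilon_0}\widetilde{\mathbb{D}}\boldsymbol{E}$ are present and make the form coercive without needing any Friedrichs/compactness argument or gauge condition, so no topological hypothesis on $\Omega$ enters here. Monotonicity, by contrast, is essentially immediate once the inner product weights are accounted for.
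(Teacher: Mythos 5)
Your proposal is correct and follows essentially the same route as the paper: monotonicity from the cancellation built into the weighted inner product plus Green's formula~\eqref{green rot rot} with the boundary condition, and maximality by eliminating $\boldsymbol{J}_1,\boldsymbol{J}_2,\boldsymbol{B}$ and solving a coercive variational problem for $\boldsymbol{E}$ in $\mathbf{H}_0(\curl;\Omega)$ by Lax--Milgram (the paper works with the small $\lambda$ of Proposition~\Rref{lambda inverse} rather than $\lambda=1$, but your positive-symmetric-part argument for the invertibility of $\mathbb{I}+\mathbb{M}_s$ is equally valid and the choice of $\lambda$ is immaterial for maximality). The only slip is cosmetic: your $\widetilde{\boldsymbol{F}}$ contains $c^2\curl\boldsymbol{F}_4$ and is therefore not in $\mathbf{L}^2(\Omega)$, so that term must be carried as $(\boldsymbol{F}_4\mid\curl\boldsymbol{v})$ in the antilinear form, exactly as in~\eqref{l coer well posi cp} --- which is what your final recovery of $\curl\boldsymbol{B}\in\mathbf{L}^2(\Omega)$ implicitly does anyway.
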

\begin{proof}
First, we check that  $\xA_1$ is monotone. Given $\boldsymbol{U}=\left(\boldsymbol{U}_1, \boldsymbol{U}_2, \boldsymbol{U}_3, \boldsymbol{U}_4 \right)^\top \in D(\xA_1)$, one finds, by the definition of $\xA$, 
\begin{equation*}
(\xA\, \boldsymbol{U}, \boldsymbol{U})_{\mathbf{X}}
= \frac{1}{\varepsilon_0}\sum_s \left( \frac{\mathbb{M}_s\boldsymbol{U}_s}{\omega_{ps}} \Biggm| \frac{\boldsymbol{U}_s}{\omega_{ps}} \right) - \sum_s(\boldsymbol{U}_3\mid\boldsymbol{U}_s) + \sum_s(\boldsymbol{U}_s\mid\boldsymbol{U}_3) 
 -{\varepsilon_0}({c^2\, \curl\boldsymbol{U}_4}\mid \boldsymbol{U}_3) + {\varepsilon_0}{c^2\,}({ \curl\boldsymbol{U}_3}\mid \boldsymbol{U}_4 ).
\end{equation*}
By Lemma~\Rref{lem-weight-basic}, $\boldsymbol{U}_s \in \mathbf{L}^2(\Omega)$ for $s=1,\ 2$ too.
Taking the real part of this inner product, one gets:
\begin{equation}
\Re ( \xA\, \boldsymbol{U}, \boldsymbol{U} )_{\mathbf{X}}= \frac{1}{\varepsilon_0}\,\sum_s \Re \left( \frac{\mathbb{M}_s\boldsymbol{U}_s}{\omega_{ps}} \Biggm| \frac{\boldsymbol{U}_s}{\omega_{ps}} \right)
-{\varepsilon_0}c^2\, \Re \left[ ({\curl\boldsymbol{U}_4}\mid \boldsymbol{U}_3)-({\curl\boldsymbol{U}_3}\mid \boldsymbol{U}_4) \right].
\label{real part A1u.u} 
\end{equation} 
But, for all $s=1,\ 2$, on has, according to the definition of $\mathbb{M}_s$,
\begin{equation}
\Re \left( \frac{\mathbb{M}_s\boldsymbol{U}_s}{\omega_{ps}} \Biggm| \frac{\boldsymbol{U}_s}{\omega_{ps}} \right)
= \Re \left(\frac{\nu_s\,\boldsymbol{U}_s}{\omega_{ps}} \Biggm| \frac{\boldsymbol{U}_s}{\omega_{ps}} \right) + \Re \left( \varOmega_{cs}\, \boldsymbol{b} \times \frac{\boldsymbol{U}_s}{\omega_{ps}} \Biggm| \frac{\boldsymbol{U}_s}{\omega_{ps}} \right) 
= \left( \frac{\nu_s\,\boldsymbol{U}_s}{\omega_{ps}} \Biggm| \frac{\boldsymbol{U}_s}{\omega_{ps}} \right).
\label{real part Msu.u}
\end{equation}
Thus, plugging \eqref{real part Msu.u} into~\eqref{real part A1u.u} and  using Green's formula~\eqref{green rot rot}, the boundary condition $\boldsymbol{U}_3\times\boldsymbol{n}=0$ on $\Gamma$ and the condition~\eqref{2}, one obtains
\begin{equation}
\Re \left({\xA_1}{\boldsymbol{U}}, \boldsymbol{U}\right)_{\mathbf{X}}=
\frac{1}{\varepsilon_0}\,\sum_s \left( \frac{\nu_s\,\boldsymbol{U}_s}{\omega_{ps}} \Biggm| \frac{\boldsymbol{U}_s}{\omega_{ps}} \right) \geq 0.
\label{resul monot A1}
\end{equation} 
Hence the monotonicity of $\xA_1$.

\medbreak

Let us proceed to the maximal character. Let  $\lambda > 0$ given by Proposition~\Rref{lambda inverse}. Given any $\boldsymbol{F}=\left(\boldsymbol{F}_1, \boldsymbol{F}_2, \boldsymbol{F}_3, \boldsymbol{F}_4 \right)^\top \in \mathbf{X}$, we look for $\boldsymbol{U}=\left(\boldsymbol{U}_1, \boldsymbol{U}_2, \boldsymbol{U}_3, \boldsymbol{U}_4 \right)^\top \in D(\xA_1)$ such that $(\mathbb{I}+\lambda\xA_1)\, \boldsymbol{U}=\boldsymbol{F}$.
More explicitly, this equation writes:
\begin{eqnarray}
\boldsymbol{U}_1+\lambda\, \mathbb M_{1}\boldsymbol{U}_1 - \lambda {{\varepsilon_0}\omega_{p1}^2}\, \boldsymbol{U}_3 &=&\boldsymbol{F}_1 , 
 \label{u+lAu-1}\\
\boldsymbol{U}_2+\lambda\,  \mathbb M_{2}\boldsymbol{U}_2 - \lambda {{\varepsilon_0}\omega_{p2}^2}\, \boldsymbol{U}_3 &=& \boldsymbol{F}_2 , \label{u+lAu-2}\\
 \boldsymbol{U}_3+ \frac{\lambda}{\varepsilon_0}\, \boldsymbol{U}_1 + \frac{\lambda}{\varepsilon_0}\, \boldsymbol{U}_2 - \lambda c^2\, \curl\boldsymbol{U}_4 &=& \boldsymbol{F}_3 ,
\label{u+lAu-3}\\
 \boldsymbol{U}_4+\lambda\, \curl\boldsymbol{U}_3 &=& \boldsymbol{F}_4 . \label{u+lAu-4}
\end{eqnarray}
Assuming that a solution $\boldsymbol{U}$ of \eqref{u+lAu-1}--\eqref{u+lAu-4} exists, we can eliminate the equations \eqref{u+lAu-1}, \eqref{u+lAu-2} and \eqref{u+lAu-4} respectively:
\begin{eqnarray}
\boldsymbol{U}_1
&=&(\mathbb{I}+ \lambda \mathbb M_{1})^{-1}(\boldsymbol{F}_1+\lambda \varepsilon_0\,\omega_{p1}^2\,\boldsymbol{U}_3),
\label{ua well posedness cp}\\ 
\boldsymbol{U}_2 
&=&
(\mathbb I + \lambda \mathbb M_{2})^{-1}(\boldsymbol{F}_2+\lambda \varepsilon_0\,\omega_{p2}^2\,\boldsymbol{U}_3),
\label{ub well posedness cp}\\ 
\boldsymbol{U}_4
&=& \boldsymbol{F}_4 -\lambda\, \curl\boldsymbol{U}_3.
\label{ud well posedness cp}
\end{eqnarray} 
Inserting these three expressions into \eqref{u+lAu-3}, one obtains, in function of $\boldsymbol{U}_3$,  
\begin{eqnarray}
\boldsymbol{U}_3 + \lambda^2{c^2} \curl\curl\boldsymbol{U}_3 + \lambda^2\, \mathbb{D}_\lambda\, \boldsymbol{U}_3
= \boldsymbol{F}_3 + \lambda c^2 \curl \boldsymbol{F}_4 -\frac{\lambda}{\varepsilon_0}\,\sum_s\,(\mathbb{I} + \lambda \mathbb{M}_{s})^{-1}\boldsymbol{F}_s. 
\label{u_3-E-1}
\end{eqnarray}
Multiplying this identity by a test-function $\boldsymbol{v}\in \mathbf {H}_0(\curl;\Omega)$ and applying the Green formula~\eqref{green rot rot}, one finds the following variational formulation:\\
\emph{Find $ \boldsymbol{U}_3\in\mathbf{H}_0(\curl;\Omega)$ such that}
\begin{eqnarray}
\qquad \qquad a(\boldsymbol{U}_3,\boldsymbol{v})=L(\boldsymbol{v}), \quad \forall \boldsymbol{v}\in\mathbf{H}_0(\curl;\Omega)  
\label{varia-E-1}
\end{eqnarray} 
where the forms $a$ and $L$ are defined on $\mathbf{H}(\curl;\Omega)$ as:
\begin{eqnarray}
a(\boldsymbol{w},\boldsymbol{v})&:=&\left(\boldsymbol{w}\mid \boldsymbol{v}\right)+\lambda^2{c^2}\left({\curl\boldsymbol{w}}\mid {\curl \boldsymbol{v}} \right)+{\lambda}^2\left(\mathbb{D}_{\lambda}\boldsymbol{w}\mid \boldsymbol{v}\right),
\label{a coer well posi cp}
\\
L(\boldsymbol{v})&:=&\left(\boldsymbol{F}_3\mid \boldsymbol{v} \right)+{\lambda}{c^2}\left(\boldsymbol{F}_4 \mid 
{\curl \boldsymbol{v}} \right)
-\frac{\lambda}{\varepsilon_0}\sum_s\left((\mathbb I + \lambda \mathbb M_{s})^{-1}\boldsymbol{F}_s \mid \boldsymbol{v}\right).
\label{l coer well posi cp}
\end{eqnarray}
The problem \eqref{varia-E-1} is well-posed. Indeed, thanks to Proposition \Rref{D lamda positive},  the sesquilinear form $a$ is continous and coercive on $\mathbf{H}_0(\curl;\Omega)$. The form $L$ is anti-linear, and by Proposition~\Rref{lambda inverse} and Lemma~\Rref{lem-weight-basic}, it obviously continuous on $\mathbf{H}_0(\curl;\Omega)$. Then, we conclude by the Lax--Milgram theorem the existence of a unique solution $\boldsymbol{U}_3\in\mathbf{H}_0(\curl;\Omega)$ to the formulation~\eqref{varia-E-1}.

\medbreak

Returning to the problem \eqref{u+lAu-1}--\eqref{u+lAu-4}, we define $\boldsymbol{U}_1$ and $\boldsymbol{U}_2$ by \eqref{ua well posedness cp} and~\eqref{ub well posedness cp}. Again, by Proposition~\Rref{lambda inverse} and Lemma~\Rref{lem-weight-basic}, they respectively belong to $\mathbf{L}^2_{(1)}(\Omega)$ and~$\mathbf{L}^2_{(2)}(\Omega)$. Also, we define $\boldsymbol{U}_4$ by \eqref{ud well posedness cp}; it belongs to~$\mathbf{L}^2(\Omega)$. Next, if we take  $\boldsymbol{v}\in\boldsymbol{\mathcal{D}}(\Omega)$ as a test function in the formulation \eqref{varia-E-1} and  use the Green formula~\eqref{green rot rot}, we obtain Equation~\eqref{u_3-E-1}. So, by the definition of~$\boldsymbol{U}_4$, we can write this equation as 
\begin{equation}
\boldsymbol{U}_3-\lambda {c^2}\curl\boldsymbol{U}_4+{\lambda}^2\mathbb{D}_{\lambda}\boldsymbol{U}_3
= \boldsymbol{F}_3 - \frac{\lambda}{\varepsilon_0}\,\sum_s\,(\mathbb{I} + \lambda \mathbb{M}_{s})^{-1}\boldsymbol{F}_s. 
\label{u_3-E-11...}
\end{equation}
This equation, on the one hand, implies that $\curl\boldsymbol{U}_4\in\mathbf{L}^2(\Omega)$, and on the other hand is equivalent to \eqref{u+lAu-3} (just replace  $\mathbb{D}_{\lambda}$ with its expression). Hence, the quadruple $(\boldsymbol{U}_1,\boldsymbol{U}_2,\boldsymbol{U}_3,\boldsymbol{U}_4)$ belongs to $D(\xA_1)$ and it solves the equations \eqref{u+lAu-1}--\eqref{u+lAu-4}. The proof is  completed.  
\end{proof}
\begin{thrm}
The operator $-\xA_{1}$ generates a $\xCzero$-semigroup of contractions  $\,(\mathit{T}_1(t))_{t\geq0}\,$ on the energy space $\mathbf{X}$. Thus, for all $\boldsymbol{U}_0 \in \mathbf{X}$, there exists a unique solution $\boldsymbol{U} \in \xCzero(\xR_{\ge0}; \mathbf{X})$, given by $\,\boldsymbol{U}(t)=\mathit{T}_{1}(t)\,\boldsymbol{U}_0$, which solves the problem~\eqref{evol prob cp}. 
\smallbreak
Moreover, if $\boldsymbol{U}_0 \in D(\xA_{1})$, then 
\begin{eqnarray*}
\boldsymbol{U} \in \xCone(\xR_{\ge0}; \mathbf{X})\cap \xCzero(\xR_{\ge0};D(\xA_{1})).
\end{eqnarray*}
Furthermore, we have 
$\| \boldsymbol{U} (t) \|_{\mathbf{X}}\leq \| \boldsymbol{U}_0 \|_{\mathbf{X}}$  and  $\| \partial_t\boldsymbol{U}(t) \|_{\mathbf{X}}\leq \| \xA_{1}\boldsymbol{U}_0 \|_{\mathbf{X}}.$
\label{thm well posed cp}
\end{thrm}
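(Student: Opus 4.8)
The plan is to obtain Theorem~\Rref{thm well posed cp} as a direct consequence of the abstract theory of contraction semigroups, since the substantive work is already contained in Proposition~\Rref{maximonot 1}. Recall that, on a Hilbert space, a linear operator $\xA_1$ is maximal monotone if and only if $-\xA_1$ generates a $\xCzero$-semigroup of contractions $(\mathit{T}_1(t))_{t\ge0}$; this is the Hille--Yosida--Lumer--Phillips theorem~\cite{P83,CH98}, whose hypotheses are exactly ``$\xA_1$ densely defined, $\xA_1$ dissipative ($\Re(\xA_1\boldsymbol{U},\boldsymbol{U})_{\mathbf{X}}\ge0$), and $\mathbb{I}+\lambda\xA_1$ surjective for some $\lambda>0$''. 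Dissipativity and surjectivity of $\mathbb{I}+\lambda\xA_1$ are precisely the monotone and maximal characters established in Proposition~\Rref{maximonot 1}, so the only remaining verification is the density of $D(\xA_1)$ in $\mathbf{X}$.

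For density, note that the first two slots of $D(\xA_1)=\mathbf{L}^2_{(1)}(\Omega)\times\mathbf{L}^2_{(2)}(\Omega)\times\mathbf{H}_0(\curl;\Omega)\times\mathbf{H}(\curl;\Omega)$ are already the full spaces, while $\boldsymbol{\mathcal{D}}(\Omega)\subset\mathbf{H}_0(\curl;\Omega)\subset\mathbf{H}(\curl;\Omega)$ and $\boldsymbol{\mathcal{D}}(\Omega)$ is dense in $\mathbf{L}^2(\Omega)$; hence $D(\xA_1)\supset\mathbf{L}^2_{(1)}(\Omega)\times\mathbf{L}^2_{(2)}(\Omega)\times\boldsymbol{\mathcal{D}}(\Omega)\times\boldsymbol{\mathcal{D}}(\Omega)$, which is dense in $\mathbf{X}$. (Alternatively, linear maximal monotone operators on a Hilbert space are automatically densely defined, so this step could even be skipped.) Applying the quoted theorem then yields the $\xCzero$-semigroup of contractions $(\mathit{T}_1(t))_{t\ge0}$ with generator $-\xA_1$; the existence, uniqueness and continuity of the mild solution $\boldsymbol{U}(t)=\mathit{T}_1(t)\boldsymbol{U}_0\in\xCzero(\xR_{\ge0};\mathbf{X})$ for every $\boldsymbol{U}_0\in\mathbf{X}$, and the upgraded regularity $\boldsymbol{U}\in\xCone(\xR_{\ge0};\mathbf{X})\cap\xCzero(\xR_{\ge0};D(\xA_1))$ when $\boldsymbol{U}_0\in D(\xA_1)$ (so that $\boldsymbol{U}$ is a classical solution of~\eqref{evol prob cp}), are standard properties of $\xCzero$-semigroups and their generators.

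Finally, the two estimates are read off immediately: the contraction property gives $\|\boldsymbol{U}(t)\|_{\mathbf{X}}=\|\mathit{T}_1(t)\boldsymbol{U}_0\|_{\mathbf{X}}\le\|\boldsymbol{U}_0\|_{\mathbf{X}}$; and for $\boldsymbol{U}_0\in D(\xA_1)$, since $\xA_1$ commutes with $\mathit{T}_1(t)$ on $D(\xA_1)$, one has $\partial_t\boldsymbol{U}(t)=-\xA_1\boldsymbol{U}(t)=-\mathit{T}_1(t)\,\xA_1\boldsymbol{U}_0$, whence $\|\partial_t\boldsymbol{U}(t)\|_{\mathbf{X}}\le\|\xA_1\boldsymbol{U}_0\|_{\mathbf{X}}$, again by contraction. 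As for the main obstacle: there is essentially none at this stage — the real difficulties (monotonicity via the Green formula~\eqref{green rot rot} and the spectral bound of Proposition~\Rref{D lamda positive}; maximality via the Lax--Milgram treatment of~\eqref{varia-E-1}) were already overcome in Proposition~\Rref{maximonot 1}, and the present theorem is a bookkeeping corollary; the only point deserving a second's attention is the density of $D(\xA_1)$, handled above.
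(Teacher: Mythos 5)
Your proposal is correct and follows essentially the same route as the paper: maximal monotonicity from Proposition~\Rref{maximonot 1}, density of $D(\xA_1)$ (which the paper gets from the automatic-density result \cite[Theorem~1.4.6]{P83}, i.e.\ your parenthetical alternative, while you also give a direct dense-subspace argument), then the Lumer--Phillips theorem and the standard semigroup estimates. No gaps.
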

\begin{proof}
From the previous proposition, the operator $-\xA_1$ is maximal dissipative. The domain $D(\xA_1)$ of $-\xA_1$ is dense in $\mathbf{X}$ according to \cite[Theorem~1.4.6]{P83}. Then, we can apply the Lumer--Phillips theorem (see  \cite[Theorem~1.4.3]{P83}) to obtain the result. 
\end{proof}

\subsection{Silver--M\"uller case}
Now, we assume that $\Gamma_A$ is non-empty. A Silver--M\"uller boundary condition holds on the antenna $\Gamma_A$, and  a perfect conductor boundary condition on the rest of the boundary~$\Gamma_P$:
\begin{equation}
\left\{\begin{array}{lll}
\boldsymbol{E}(t) \times \boldsymbol{n} &=&0 \quad \text{on } \Gamma_P,\ t>0,\\
\boldsymbol{E}(t) \times \boldsymbol{n}+c\boldsymbol{B}_\top(t) &=&\boldsymbol{g}(t) \quad \text{on } \Gamma_A,\   t>0.
\end{array}
\right.
\label{cond}
\end{equation}
Our goal is to solve Problem~\eqref{pb-evol-abstrait} with the boundary condition~\eqref{cond}. First, we will start with the homogeneous (or absorbing, outgoing wave) case, $\boldsymbol{g}=0$, and next we proceed to the general (incoming wave) case, \ie, $\boldsymbol{g}\neq0$.

\subsubsection{Homogeneous (absorbing) boundary condition}
Let us define the linear unbounded operator $\xA_2:D(\xA_2)\subset \mathbf{X}\rightarrow \mathbf{X}$ as 
\begin{equation*}
D(\xA_2):=\mathbf{L}^2_{(1)}(\Omega)\times\mathbf{L}^2_{(2)}(\Omega)\times \mathcal{H},
\end{equation*}
where
\begin{equation*}
\mathcal{H}= \{(\boldsymbol{V}_3, \boldsymbol{V}_4)\in \mathbf{H}_{0,\Gamma_P}(\curl; \Omega)\times \mathbf{H}(\curl;\Omega) : \boldsymbol{V}_3 \times \boldsymbol{n} +c\,{\boldsymbol{V}_4}_\top=0 \text{ on } \Gamma_A \},
\end{equation*}
and
\begin{equation}
\xA_2\boldsymbol{U}:=\xA\, \boldsymbol{U}, \quad \forall \boldsymbol{U}\in D(\xA_2).
\label{A2 definition}
\end{equation}
Therefore, the abstract evolution equation \eqref{pb-evol-abstrait} writes:
\begin{equation}
\partial_{t}\boldsymbol{U}(t) + \xA_2\boldsymbol{U}(t)=0,\, \text{ for }  t > 0,  \qquad  \boldsymbol{U}(0)=\boldsymbol{U}_0.
\label{evol prob sm homo}
\end{equation} 
We shall need the following Hilbert space
\begin{equation}
\mathcal{V}:= \{ \boldsymbol{v} \in \mathbf{H}_{0,\Gamma_P}(\curl;\Omega):  \boldsymbol{v}_\top\in\mathbf{L}^2(\Gamma_{A})\}
\label{def-space-V}
\end{equation}
equipped with the inner product
\begin{eqnarray}
(\boldsymbol{w} , \boldsymbol{v})_{\mathcal {V}}:=(\boldsymbol{w}\mid\boldsymbol{v})+ (\curl\boldsymbol{w}\mid\curl\boldsymbol{v})+(\boldsymbol{w}_\top\mid\boldsymbol{v}_\top)_{\Gamma_A}.
\label{inner product V} 
\end{eqnarray}
Above, $(\cdot \mid \cdot)_{\Gamma_A}$ denotes the inner product in $\mathbf{L}^2(\Gamma_A)$.

\begin{prpstn}
The operator $\xA_2$ is maximal monotone. 
\label{maximonot 2}
\end{prpstn}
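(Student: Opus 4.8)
\textbf{Plan for the proof of Proposition~\Rref{maximonot 2}.} The argument parallels the perfectly conducting case (Proposition~\Rref{maximonot 1}), with the extra boundary term coming from the Silver--M\"uller condition, which must be shown to be absorbing rather than exactly dissipative. The plan is to split the proof into monotonicity and the maximal (surjectivity) property.

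\emph{Monotonicity.} Given $\boldsymbol{U}=(\boldsymbol{U}_1,\boldsymbol{U}_2,\boldsymbol{U}_3,\boldsymbol{U}_4)^\top\in D(\xA_2)$, one computes $\Re(\xA\boldsymbol{U},\boldsymbol{U})_{\mathbf{X}}$ exactly as in~\eqref{real part A1u.u}--\eqref{real part Msu.u}. The hydrodynamic terms again reduce to $\frac1{\varepsilon_0}\sum_s(\nu_s\boldsymbol{U}_s/\omega_{ps}\mid\boldsymbol{U}_s/\omega_{ps})\geq0$ by~\eqref{2}. For the electromagnetic terms, instead of Green's formula~\eqref{green rot rot} I would use~\eqref{green rot 0 Gamma p} (with $\boldsymbol{v}=\boldsymbol{U}_4$, $\boldsymbol{w}=\boldsymbol{U}_3\in\mathbf{H}_{0,\Gamma_P}(\curl;\Omega)$): the boundary pairing reduces to an integral over $\Gamma_A$ only, where $\boldsymbol{U}_3\times\boldsymbol{n}$ and ${\boldsymbol{U}_4}_\top$ are genuine $\mathbf{L}^2(\Gamma_A)$ functions because $(\boldsymbol{U}_3,\boldsymbol{U}_4)\in\mathcal{H}$. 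Using the constraint $\boldsymbol{U}_3\times\boldsymbol{n}=-c\,{\boldsymbol{U}_4}_\top$ on $\Gamma_A$, the boundary contribution becomes (up to the $\varepsilon_0 c^2$ factor) $c\,\|{\boldsymbol{U}_4}_\top\|_{\mathbf{L}^2(\Gamma_A)}^2\geq0$ — equivalently $\frac1c\,\|\boldsymbol{U}_3\times\boldsymbol{n}\|_{\mathbf{L}^2(\Gamma_A)}^2$. Hence
\begin{equation*}
\Re(\xA_2\boldsymbol{U},\boldsymbol{U})_{\mathbf{X}}=\frac1{\varepsilon_0}\sum_s\left(\frac{\nu_s\boldsymbol{U}_s}{\omega_{ps}}\Biggm|\frac{\boldsymbol{U}_s}{\omega_{ps}}\right)+\varepsilon_0 c^3\,\|{\boldsymbol{U}_4}_\top\|_{\mathbf{L}^2(\Gamma_A)}^2\geq0,
\end{equation*}
which gives monotonicity.

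\emph{Maximality.} As before, fix $\lambda$ from Proposition~\Rref{lambda inverse} and, given $\boldsymbol{F}\in\mathbf{X}$, seek $\boldsymbol{U}\in D(\xA_2)$ with $(\mathbb{I}+\lambda\xA_2)\boldsymbol{U}=\boldsymbol{F}$. The equations~\eqref{u+lAu-1}--\eqref{u+lAu-4} are unchanged, so $\boldsymbol{U}_1,\boldsymbol{U}_2,\boldsymbol{U}_4$ are eliminated via~\eqref{ua well posedness cp}--\eqref{ud well posedness cp} and one arrives again at~\eqref{u_3-E-1} for $\boldsymbol{U}_3$. The difference is the functional setting: the natural test space is now $\mathcal{V}$ defined in~\eqref{def-space-V}, and multiplying~\eqref{u_3-E-1} by $\boldsymbol{v}\in\mathcal{V}$ and integrating by parts via~\eqref{green rot 0 Gamma p} produces an extra boundary term on $\Gamma_A$; using the relation $\boldsymbol{U}_4={\boldsymbol{F}_4}-\lambda\curl\boldsymbol{U}_3$ and the desired constraint $\boldsymbol{U}_3\times\boldsymbol{n}=-c\,{\boldsymbol{U}_4}_\top$ on $\Gamma_A$, this term becomes $\lambda c\,({\boldsymbol{U}_3}_\top\mid{\boldsymbol{v}}_\top)_{\Gamma_A}$ plus a known term involving $\boldsymbol{F}_4$ (here I use that on $\Gamma_A$ the equality $\boldsymbol{U}_3\times\boldsymbol{n}=-c\,{\boldsymbol{U}_4}_\top$ can be rewritten, after applying $\boldsymbol{n}\times\cdot$, as a relation between ${\boldsymbol{U}_3}_\top$ and $\boldsymbol{U}_4\times\boldsymbol{n}$). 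This yields a variational problem of the form $\tilde a(\boldsymbol{U}_3,\boldsymbol{v})=\tilde L(\boldsymbol{v})$ for all $\boldsymbol{v}\in\mathcal{V}$, where
\begin{equation*}
\tilde a(\boldsymbol{w},\boldsymbol{v}):=(\boldsymbol{w}\mid\boldsymbol{v})+\lambda^2 c^2(\curl\boldsymbol{w}\mid\curl\boldsymbol{v})+\lambda^2(\mathbb{D}_\lambda\boldsymbol{w}\mid\boldsymbol{v})+\lambda c\,({\boldsymbol{w}}_\top\mid{\boldsymbol{v}}_\top)_{\Gamma_A}.
\end{equation*}
By Proposition~\Rref{D lamda positive} the first three terms are coercive on $\mathbf{H}_{0,\Gamma_P}(\curl;\Omega)$ and the boundary term is nonnegative, so $\tilde a$ is continuous and coercive on $\mathcal{V}$; the right-hand side $\tilde L$ is continuous on $\mathcal{V}$ (using the continuity of the tangential trace into $\mathbf{L}^2(\Gamma_A)$ built into the $\mathcal{V}$-norm, Proposition~\Rref{lambda inverse} and Lemma~\Rref{lem-weight-basic}). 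Lax--Milgram then gives a unique $\boldsymbol{U}_3\in\mathcal{V}$. Finally, testing against $\boldsymbol{v}\in\boldsymbol{\mathcal{D}}(\Omega)$ recovers~\eqref{u_3-E-1} in the distributional sense, hence $\curl\boldsymbol{U}_4\in\mathbf{L}^2(\Omega)$ and $\boldsymbol{U}_4\in\mathbf{H}(\curl;\Omega)$; testing against general $\boldsymbol{v}\in\mathcal{V}$ and comparing with~\eqref{green rot 0 Gamma p} forces the boundary identity $\boldsymbol{U}_3\times\boldsymbol{n}+c\,{\boldsymbol{U}_4}_\top=0$ on $\Gamma_A$, so $(\boldsymbol{U}_3,\boldsymbol{U}_4)\in\mathcal{H}$ and $\boldsymbol{U}\in D(\xA_2)$ solves $(\mathbb{I}+\lambda\xA_2)\boldsymbol{U}=\boldsymbol{F}$.

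\emph{Main obstacle.} The delicate point is not the coercivity (which is straightforward once $\mathcal{V}$ is chosen with the $\mathbf{L}^2(\Gamma_A)$ tangential term in its norm) but the recovery of the boundary condition $\boldsymbol{U}_3\times\boldsymbol{n}+c\,{\boldsymbol{U}_4}_\top=0$ on $\Gamma_A$ from the variational formulation. This requires the duality machinery recalled in~\S\Rref{sec-prelim} — in particular the pairing ${}_{\gamma_A}\langle\cdot,\cdot\rangle_{\pi^0_A}$ and the fact (valid in the absence of pathological vertices) that $\widetilde{\mathbf{TT}}(\Gamma_A)\cap\mathbf{TC}(\Gamma_A)\subset\mathbf{L}^2_t(\Gamma_A)$ — to make sense of the tangential traces as $\mathbf{L}^2(\Gamma_A)$ functions and to justify that the $\Gamma_A$-boundary pairing coming from Green's formula~\eqref{green rot 0 Gamma p}, applied with $\boldsymbol{w}=\boldsymbol{U}_3$ already known to lie in $\mathcal{V}$, coincides with the $\mathbf{L}^2(\Gamma_A)$ inner product appearing in $\tilde a$. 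Care is also needed to check that the identification holds for arbitrary test functions $\boldsymbol{v}\in\mathcal{V}$, whose tangential traces span a dense enough subspace to conclude the pointwise a.e. boundary identity.
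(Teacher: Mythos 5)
Your proposal is correct and follows essentially the same route as the paper: the same energy identity yielding the extra term $\varepsilon_0 c^3\|\boldsymbol{U}_{4\top}\|^2_{\mathbf{L}^2(\Gamma_A)}$ for monotonicity, and the same variational formulation on $\mathcal{V}$ with the added form $\lambda c(\boldsymbol{w}_\top\mid\boldsymbol{v}_\top)_{\Gamma_A}$, solved by Lax--Milgram, for maximality. The "main obstacle" you flag is exactly where the paper spends its effort --- it recovers the Silver--M\"uller condition by lifting an arbitrary $\boldsymbol{\omega}\in\widetilde{\mathbf{H}}^{\frac12}(\Gamma_A)$ to a test function in $\mathbf{H}^1(\Omega)\cap\mathcal{V}$, concluding first in $\widetilde{\mathbf{H}}^{-\frac12}(\Gamma_A)$ and then in $\mathbf{L}^2(\Gamma_A)$ by density --- and the duality machinery you cite is precisely what makes that step work.
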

\begin{proof}
Let us start by proving the monotonicity of $\xA_2$. Pick any $\boldsymbol{U}=\left( \boldsymbol{U}_1, \boldsymbol{U}_2, \boldsymbol{U}_3, \boldsymbol{U}_4 \right)^\top$ in $D(\xA_2)$. 
The equality~\eqref{real part A1u.u} still holds; it only relies on the expression of~$\xA$ in~$\Omega$, not on the boundary conditions.
As $\boldsymbol{U}_3\in\mathbf{H}_{0,\Gamma_P}(\curl;\Omega)$, we get by the integration-by-parts formula~\eqref{green rot 0 Gamma p}:
\begin{eqnarray*}
\Re[({\curl\boldsymbol{U}_4}\mid \boldsymbol{U}_3)-({\curl\boldsymbol{U}_3}\mid \boldsymbol{U}_4)] = \Re({}_{\gamma^0_A}\langle \boldsymbol{U}_3 \times \boldsymbol{n}, \boldsymbol{U}_{4\top} \rangle_{\pi_A}).
\end{eqnarray*}
Now, we use the Silver--M\"uller boundary condition $\boldsymbol{U}_3 \times \boldsymbol{n} + c\, \boldsymbol{U}_{4\top}=0 $ on $ \Gamma_{A}$. We remark that both $\boldsymbol{U}_3 \times \boldsymbol{n}$ and $\boldsymbol{U}_{4\top}$  belong to $ \widetilde{\mathbf{TT}}(\Gamma_{A})\cap \mathbf{TC}(\Gamma_{A}) \subset \mathbf{L}^2_t(\Gamma_{A})$ as said in~\S\Rref{sec-prelim}.
Then, it holds that 
\begin{eqnarray*}
\Re({}_{\gamma^0_A}\langle \boldsymbol{U}_3 \times \boldsymbol{n}, \boldsymbol{U}_{4\top} \rangle_{\pi_A}) = -c\, \| \boldsymbol{U}_{4\top} \|^2_{\mathbf{L}^2(\Gamma_A)}.
\end{eqnarray*}
We thus conclude from \eqref{real part A1u.u} that 
\begin{eqnarray}
\Re ({\xA_2}{\boldsymbol{U}}, \boldsymbol{U} )_{\mathbf{X}}= \frac{1}{\varepsilon_0}\,\sum_s  \left( \frac{\nu_s\, \boldsymbol{U}_s}{\omega_{ps}} \Biggm| \frac{\boldsymbol{U}_s}{\omega_{ps}} \right) + {\varepsilon_0} c^3\, \| \boldsymbol{U}_{4\top} \|^2_{\mathbf{L}^2(\Gamma_A)} \geq 0,
\label{monotonie A2 sm homo}
\end{eqnarray}
which yields the monotonicity of the operator ~$\xA_2$.

\medbreak

Now we show the maximality of~$\xA_2$. Again, we use the same $\lambda > 0$ given by Proposition \Rref{lambda inverse}. 
Given any $\boldsymbol{F}=\left(\boldsymbol{F}_1, \boldsymbol{F}_2, \boldsymbol{F}_3, \boldsymbol{F}_4 \right)^\top \in \mathbf{X}$, we look for $\boldsymbol{U}=\left(\boldsymbol{U}_1, \boldsymbol{U}_2, \boldsymbol{U}_3, \boldsymbol{U}_4 \right)^\top \in D(\xA_2)$ such that $(\mathbb{I}+\lambda\xA_2)\,\boldsymbol{U}=\boldsymbol{F}$,
which is equivalent to the system \eqref{u+lAu-1}--\eqref{u+lAu-4} (plus boundary conditions). Following the same argument as in Proposition~\Rref{maximonot 1}, we can eliminate $\boldsymbol{U}_1$, $\boldsymbol{U}_2$ and~$\boldsymbol{U}_4$, and they are given respectively by \eqref{ua well posedness cp}, \eqref{ub well posedness cp} and \eqref{ud well posedness cp}, while $\boldsymbol{U}_3$ verifies the equation:
\begin{eqnarray}
\boldsymbol{U}_3+\lambda^2{c^2}\curl {\curl\boldsymbol{U}_3}+{\lambda}^2\mathbb{D}_{\lambda}\boldsymbol{U}_3
= \boldsymbol{F}_3 +{\lambda}{c^2}\curl \boldsymbol{F}_4 -\frac{\lambda}{\varepsilon_0}\,\sum_s\,(\mathbb{I} + \lambda \mathbb{M}_{s})^{-1}\boldsymbol{F}_s. 
\label{u_3-E-1 sm}
\end{eqnarray}
Thus,  multiplying \eqref{u_3-E-1 sm} by a test-function $\boldsymbol{v}\in\mathcal{V}$, applying Green's formula~\eqref{green rot 0 Gamma p}, and using the  Silver--M\"uller boundary condition and the expression~\eqref{ud well posedness cp}, we arrive at the variational formulation:\\ 
\emph{Find $\boldsymbol{U}_3\in\mathcal{V}$ such that}
\begin{eqnarray}
\qquad  \widetilde{a}(\boldsymbol{U}_3,\boldsymbol{v})=L(\boldsymbol{v}), \quad \forall \boldsymbol{v}\in\mathcal{V} 
\label{varia-E-1 sm}
\end{eqnarray} 
with the sesquilinear form $\widetilde{a}$ defined as:
\begin{eqnarray}
\qquad \widetilde{a}(\boldsymbol{w},\boldsymbol{v}):= a(\boldsymbol{w},\boldsymbol{v})+ \lambda c \left(\boldsymbol{w}_\top\mid \boldsymbol{v}_\top\right)_{\Gamma_{A}},
\label{a coer well posi sm}
\end{eqnarray}
and the forms $a$ and $L$ given respectively by \eqref{a coer well posi cp} and \eqref{l coer well posi cp}.

\medbreak
 
As the form $a$ is coercive on $\mathbf{H}(\curl;\Omega)$ (Proposition~\Rref{maximonot 1}), the form $\widetilde{a}$ is coercive on~$\mathcal{V}$. 
So, by Lax--Milgram theorem, Problem~\eqref{varia-E-1 sm} admits a unique solution $\boldsymbol{U}_3\in \mathcal{V}$. 
Defining $\boldsymbol{U}_1$, $\boldsymbol{U}_2$ and $\boldsymbol{U}_4$ respectively by \eqref{ua well posedness cp}, \eqref{ub well posedness cp} and \eqref{ud well posedness cp}, they respectively belong to $\mathbf{L}^2_{(1)}(\Omega)$, $\mathbf{L}^2_{(2)}(\Omega)$ and~$\mathbf{L}^2(\Omega)$; taking a test-function $\boldsymbol{v}\in \boldsymbol{\mathcal{D}}(\Omega)$ in \eqref{varia-E-1 sm}, we find the equation \eqref{u_3-E-1 sm} which is equivalent to \eqref{u+lAu-3}. Thus $\boldsymbol{U}=(\boldsymbol{U}_1,\boldsymbol{U}_2,\boldsymbol{U}_3,\boldsymbol{U}_4)$ formally satisfies $(\mathbb{I}+\lambda\xA)\,\boldsymbol{U}=\boldsymbol{F}$, and in order to prove that $D(\xA_2)$ it remains only to check the homogeneous Silver--M\"uller boundary condition on $\Gamma_A$. To this end, using the integration-by-parts formula \eqref{green rot 0 Gamma p} in~\eqref{varia-E-1 sm} and the definition of~$\boldsymbol{U}_4$, it follows from the identity~\eqref{u_3-E-1 sm} that:
\begin{equation} 
\lambda c \left(\boldsymbol{U}_{3\top}\mid \boldsymbol{v}_\top\right)_{\Gamma_{A}}-\lambda c^2 {}_{\gamma_{A}}\langle \boldsymbol{U}_4 \times \boldsymbol{n}, \boldsymbol{v}_\top \rangle_{\pi^0_{A}}=0, \quad \forall \boldsymbol{v} \in \mathcal{V}.
\label{int}
\end{equation}
Let $ \boldsymbol{\omega} \in \widetilde{\mathbf{H}}^{\frac1{2}}(\Gamma_A)$ and $\widetilde{\boldsymbol{\omega}} \in \mathbf{H}^\frac1{2}(\Gamma)$ be its extension by~$0$ to the whole boundary. By the surjectivity of the trace mapping, there exists $ \boldsymbol{v} \in \mathbf{H}^1(\Omega)$ such that $\widetilde{\boldsymbol{\omega}} =\boldsymbol{v}_{|_\Gamma}$; clearly, $\boldsymbol{v} \in \mathcal{V}$. From the integration by parts formulas~\eqref{green rot 0 Gamma p},~\eqref{green rot H1} applied to $\boldsymbol{U}_4$ and~$\boldsymbol{v}$, we get that 
\begin{equation} 
{}_{\gamma_{A}}\langle \boldsymbol{U}_4 \times \boldsymbol{n}, \boldsymbol{v}_\top \rangle_{\pi^0_{A}}=\langle \boldsymbol{U}_4 \times \boldsymbol{n}, \boldsymbol{v} \rangle_{\widetilde{\mathbf{H}}^{\frac1{2}}(\Gamma_A)}.
\label{integ by part 2 form}
\end{equation} 
Recalling  that   $ \boldsymbol{v}_{|_\Gamma}=\boldsymbol{v}_\top+ (\boldsymbol{n} \cdot\boldsymbol{v})\,\boldsymbol{n}$,  it follows that  $\left(\boldsymbol{U}_{3\top}\mid \boldsymbol{v}_\top \right)_{\Gamma_{A}}=\left(\boldsymbol{U}_{3\top}\mid \boldsymbol{v} \right)_{\Gamma_{A}} $. 
Using \eqref{integ by part 2 form} and the previous identity, Equation~\eqref{int} becomes: 
\begin{equation}
\left(\boldsymbol{U}_{3\top}\mid \boldsymbol{v} \right)_{\Gamma_{A}}- c\, \langle \boldsymbol{U}_4\times\boldsymbol{n}, \boldsymbol{v} \rangle_{\widetilde{\mathbf{H}}^{\frac1{2}}(\Gamma_A)}=0.
\label{rrh}
\end{equation}
As $\boldsymbol{v}_{|_{\Gamma_A}}={\boldsymbol{\omega}}$ and ${\boldsymbol{\omega}}$ is arbitrary in $\widetilde{\mathbf{H}}^{\frac1{2}}(\Gamma_A)$, one concludes from~\eqref{rrh} that $\boldsymbol{U}_{3\top}-  c\, \boldsymbol{U}_4 \times \boldsymbol{n}=0\,$ in $\,\widetilde{\mathbf{H}}^{-{\frac1{2}}}(\Gamma_A) $ which is equivalent to  $\boldsymbol{U}_3 \times \boldsymbol{n} + c\, \boldsymbol{U}_{4\top}=0$ in $\widetilde{\mathbf{H}}^{-{\frac1{2}}}(\Gamma_A) $, and therefore also in 
$\mathbf{L^2}(\Gamma_A)$ because $\boldsymbol{U}_{3\top}$ is in $\mathbf{L}^2(\Gamma_A)$ (plus a density argument). This proves the Silver--M\"uller boundary condition, and the proof of the Proposition is complete.
\end{proof}
\begin{thrm}
The operator $-\xA_{2}$  generates a $\xCzero$-semigroup of contractions  $(\mathit{T}_2(t))_{t\geq0}\,$ on the energy space $\mathbf{X}$. Thus, for all $\boldsymbol{U}_0 \in \mathbf{X}$, there exists a unique solution $\boldsymbol{U} \in \xCzero(\xR_{\ge0}; \mathbf{X})$, given by $\boldsymbol{U}(t)=\mathit{T}_{2}(t)\boldsymbol{U}_0$,  which solves the problem~\eqref{evol prob sm homo}. 
\smallbreak
Moreover, if $\boldsymbol{U}_0 \in D(\xA_{2})$, then 
\begin{equation*}
\boldsymbol{U} \in \xCone(\xR_{\ge0}; \mathbf{X})\cap \xCzero(\xR_{\ge0};D(\xA_{2})).
\end{equation*}
Furthermore, we have 
$\| \boldsymbol{U} (t) \|_{\mathbf{X}}\leq \| \boldsymbol{U}_0 \|_{\mathbf{X}}$  and  $\| \partial_t\boldsymbol{U}(t) \|_{\mathbf{X}}\leq \| \xA_{2}\boldsymbol{U}_0 \|_{\mathbf{X}}.$
\label{bonne posi sm homo}
\end{thrm}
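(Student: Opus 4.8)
The plan is to mirror exactly the strategy used in Theorem~\Rref{thm well posed cp} for the perfectly conducting case. By Proposition~\Rref{maximonot 2}, the operator $\xA_2$ is maximal monotone, so $-\xA_2$ is maximal dissipative on $\mathbf{X}$. The only additional ingredient needed to invoke the Lumer--Phillips theorem is the density of $D(\xA_2)$ in $\mathbf{X}$. Once that is established, \cite[Theorem~1.4.3]{P83} gives that $-\xA_2$ generates a $\xCzero$-semigroup of contractions $(\mathit{T}_2(t))_{t\ge0}$, and the standard semigroup theory (\cf~\cite[Ch.~4]{P83} or~\cite{CH98}) yields the regularity statements $\boldsymbol{U}\in\xCzero(\xR_{\ge0};\mathbf{X})$ for $\boldsymbol{U}_0\in\mathbf{X}$, and $\boldsymbol{U}\in\xCone(\xR_{\ge0};\mathbf{X})\cap\xCzero(\xR_{\ge0};D(\xA_2))$ for $\boldsymbol{U}_0\in D(\xA_2)$.

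For the density of $D(\xA_2)$, I would note that $D(\xA_2) = \mathbf{L}^2_{(1)}(\Omega)\times\mathbf{L}^2_{(2)}(\Omega)\times\mathcal{H}$, and that the first two factors coincide with the first two factors of $\mathbf{X}$, so it suffices to show that $\mathcal{H}$ is dense in $\mathbf{L}^2(\Omega)\times\mathbf{L}^2(\Omega)$. Since $\boldsymbol{\mathcal{D}}(\Omega)\times\boldsymbol{\mathcal{D}}(\Omega)$ is dense in $\mathbf{L}^2(\Omega)\times\mathbf{L}^2(\Omega)$, and any pair $(\boldsymbol{V}_3,\boldsymbol{V}_4)$ of compactly supported smooth fields has vanishing tangential traces on all of $\Gamma$ (hence \afortiori\ satisfies $\boldsymbol{V}_3\times\boldsymbol{n}=0$ on $\Gamma_P$ and $\boldsymbol{V}_3\times\boldsymbol{n}+c\,\boldsymbol{V}_{4\top}=0$ on $\Gamma_A$), we have $\boldsymbol{\mathcal{D}}(\Omega)\times\boldsymbol{\mathcal{D}}(\Omega)\subset\mathcal{H}$, and the density follows immediately. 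Alternatively, one may simply cite \cite[Theorem~1.4.6]{P83} exactly as in the proof of Theorem~\Rref{thm well posed cp}, since a maximal monotone operator on a Hilbert space is automatically densely defined.

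The contraction estimates are then read off from the monotonicity: for $\boldsymbol{U}_0\in D(\xA_2)$, differentiating $\tfrac12\|\boldsymbol{U}(t)\|_{\mathbf{X}}^2$ and using~\eqref{monotonie A2 sm homo} gives $\tfrac{d}{dt}\|\boldsymbol{U}(t)\|_{\mathbf{X}}^2 = -2\Re(\xA_2\boldsymbol{U}(t),\boldsymbol{U}(t))_{\mathbf{X}}\le 0$, whence $\|\boldsymbol{U}(t)\|_{\mathbf{X}}\le\|\boldsymbol{U}_0\|_{\mathbf{X}}$; the bound extends to all $\boldsymbol{U}_0\in\mathbf{X}$ by density. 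Since $\partial_t\boldsymbol{U} = -\xA_2\boldsymbol{U}$ and $\xA_2$ commutes with the semigroup, $\partial_t\boldsymbol{U}(t) = -\mathit{T}_2(t)\,\xA_2\boldsymbol{U}_0$, so $\|\partial_t\boldsymbol{U}(t)\|_{\mathbf{X}}\le\|\xA_2\boldsymbol{U}_0\|_{\mathbf{X}}$.

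I do not expect any serious obstacle here: all the real work was done in Proposition~\Rref{maximonot 2}, and the remainder is the routine application of the Lumer--Phillips theorem together with bookkeeping of the regularity of semigroup orbits. The proof can therefore be given in a couple of lines, essentially identical to that of Theorem~\Rref{thm well posed cp}.

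\begin{proof}
By Proposition~\Rref{maximonot 2}, the operator $-\xA_2$ is maximal dissipative on~$\mathbf{X}$; being maximal monotone, $\xA_2$ is in particular densely defined (\cf~\cite[Theorem~1.4.6]{P83}; alternatively, $\boldsymbol{\mathcal{D}}(\Omega)\times\boldsymbol{\mathcal{D}}(\Omega)\subset\mathcal{H}$ yields at once the density of $D(\xA_2)$ in~$\mathbf{X}$). The Lumer--Phillips theorem~\cite[Theorem~1.4.3]{P83} then shows that $-\xA_2$ generates a $\xCzero$-semigroup of contractions $(\mathit{T}_2(t))_{t\ge0}$ on~$\mathbf{X}$, and the asserted existence, uniqueness and regularity of the solution $\boldsymbol{U}(t)=\mathit{T}_2(t)\boldsymbol{U}_0$ of~\eqref{evol prob sm homo} are standard consequences of semigroup theory (see~\cite[Ch.~4]{P83},~\cite{CH98}). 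The estimate $\|\boldsymbol{U}(t)\|_{\mathbf{X}}\le\|\boldsymbol{U}_0\|_{\mathbf{X}}$ is the contraction property; and since $\partial_t\boldsymbol{U}(t)=-\xA_2\mathit{T}_2(t)\boldsymbol{U}_0=-\mathit{T}_2(t)\xA_2\boldsymbol{U}_0$ for $\boldsymbol{U}_0\in D(\xA_2)$, the same contraction property gives $\|\partial_t\boldsymbol{U}(t)\|_{\mathbf{X}}\le\|\xA_2\boldsymbol{U}_0\|_{\mathbf{X}}$.
\end{proof}
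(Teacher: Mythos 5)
Your proof is correct and follows exactly the same route as the paper, which simply invokes Proposition~\Rref{maximonot 2} together with the density of the domain of a maximal monotone operator (\cite[Theorem~1.4.6]{P83}) and the Lumer--Phillips theorem, just as in Theorem~\Rref{thm well posed cp}. The extra details you supply (the explicit density argument via $\boldsymbol{\mathcal{D}}(\Omega)\times\boldsymbol{\mathcal{D}}(\Omega)\subset\mathcal{H}$ and the derivation of the contraction estimates) are correct but routine.
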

\begin{proof}
Entirely similar to Theorem~\Rref{thm well posed cp}.
\end{proof}

\subsubsection{General (non-homogeneous) boundary condition}
\label{sm-nonhomo}
Here, we suppose that $\boldsymbol{g}\neq 0$ in~\eqref{cond}. We shall solve the evolution problem by using a lifting of the boundary data~$\boldsymbol{g}$. To this end, we introduce the mapping:
\begin{eqnarray*}
Z_A \,:\, \mathbf{H}_{0,\Gamma_P}(\mathbf {\curl}; \Omega)\times \mathbf{H}(\mathbf {\curl};\Omega)&\rightarrow & \widetilde{\mathbf{TT}}(\Gamma_{A})+{\mathbf{TC}}(\Gamma_{A})\\
 (\boldsymbol{v},\boldsymbol{w}) &\mapsto &  \gamma_\top(\boldsymbol{v})+ c\pi_\top(\boldsymbol{w}).
\end{eqnarray*}
It is clear that $Z_A$ is linear and continuous, and due to the surjectivity of $\gamma_\top$ and~$\pi_\top$ (see the definition of $\widetilde{\mathbf{TT}}$ and~${\mathbf{TC}}$), $Z_A$~is also surjective. Then, we deduce that $Z_A$ is bijective from $(\ker Z_A)^{\perp}$ to $\widetilde{\mathbf{TT}}(\Gamma_{A})+{\mathbf{TC}}(\Gamma_{A})$ and we denote its inverse by~$R_A$. By the Banach--Schauder theorem, $R_A$~is continuous. 

\medbreak

We assume the following regularity on the boundary data~$\boldsymbol{g}$:
\begin{eqnarray}
\boldsymbol{g} \in \xWn{{2,1}}(\xR_{>0}; \widetilde{\mathbf{TT}}(\Gamma_{A}) + \mathbf{TC}(\Gamma_{A})).
\label{regu g sm}
\end{eqnarray}
According to the previous paragraph, for any $t\ge0$ there exists $(\boldsymbol{g}_3(t),\boldsymbol{g}_4(t))\in \mathbf{H}_{0,\Gamma_P}(\curl;\Omega)\times \mathbf{H}(\curl;\Omega)$ such that  
\begin{equation}
(\boldsymbol{g}_3(t),\boldsymbol{g}_4(t)) = R_A[\boldsymbol{g}(t)], \quad \text{\ie,}\quad \boldsymbol{g}_3(t)\times \boldsymbol{n} + c\,\boldsymbol{g}_{4\top}(t)=\boldsymbol{g}(t) \text{ on } \Gamma_A, 
\label{def g3 g4}
\end{equation}
and the functions $(\boldsymbol{g}_3,\boldsymbol{g}_4)$ have the following regularity:
\begin{equation}
(\boldsymbol{g}_3,\boldsymbol{g}_4)\in \xWn{{2,1}}(\xR_{>0};\mathbf{H}_{0,\Gamma_P}(\mathbf {\curl}; \Omega) \times \mathbf{H}(\mathbf {\curl}; \Omega)).
\label{re g3 g4}
\end{equation} 
\begin{thrm}
Suppose that the initial data satisfy:
\begin{eqnarray}
\left\lbrace \begin{array}{lll}
\boldsymbol{J}_{1,0}\in \mathbf{L}^2_{(1)}(\Omega),\quad  \boldsymbol{J}_{2,0} \in \mathbf{L}^2_{(2)}(\Omega),\\
\boldsymbol{E}{_0}\in\mathbf{H}_{0,\Gamma_P}(\curl;\Omega),\ \boldsymbol{B}{_0} \in  \mathbf {H}(\curl;\Omega),\\
\boldsymbol{E}{_0} \times \boldsymbol{n} +  c\, \boldsymbol{B}{_{0\top}} = \boldsymbol{g}(0)\quad on ~ \Gamma_{A},
\end{array} \right.
\label{initial cond nonho}
\end{eqnarray}
and $\boldsymbol{g}$ is of regularity~\eqref{regu g sm}, which gives a meaning to the initial value~$\boldsymbol{g}(0)$.
Then, there exists one, and only one, solution  $\boldsymbol{U}=(\boldsymbol{J}_1,\boldsymbol{J}_2,\boldsymbol{E},\boldsymbol{B})^\top$ to the evolution problem~\eqref{pb-evol-abstrait}, supplemented with the boundary conditions~\eqref{cond}, such that its components have the following regularity:
\begin{eqnarray*}
(\boldsymbol{J}_1, \boldsymbol{J}_2) &\in& \xCone(\xR_{\ge0};\mathbf{L}^2_{(1)}(\Omega) \times \mathbf{L}^2_{(2)}(\Omega)),\\
\boldsymbol{E} &\in & \xCone(\xR_{\ge0}; \mathbf{L}^2(\Omega))\cap \xCzero(\xR_{\ge0}; \mathbf {H}_{0,\Gamma_P}(\curl;\Omega)),\\
\boldsymbol{B} & \in & \xCone(\xR_{\ge0}; \mathbf{L}^2(\Omega))\cap \xCzero(\xR_{\ge0}; \mathbf {H}(\curl;\Omega)).
\end{eqnarray*}
\label{non-homo}
\end{thrm}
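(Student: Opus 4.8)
The plan is to reduce the non-homogeneous problem to the homogeneous one of Theorem~\Rref{bonne posi sm homo} by subtracting a lifting of the boundary data, and then to invoke the classical theory of inhomogeneous Cauchy problems for generators of $\xCzero$-semigroups~\cite{P83,CH98}. First, using assumption~\eqref{regu g sm} and the continuous right inverse $R_A$ of $Z_A$, we would take the lifting $(\boldsymbol{g}_3,\boldsymbol{g}_4):=R_A[\boldsymbol{g}]$, which has the regularity~\eqref{re g3 g4} and satisfies $\boldsymbol{g}_3(t)\times\boldsymbol{n}+c\,\boldsymbol{g}_{4\top}(t)=\boldsymbol{g}(t)$ on $\Gamma_A$ and $\boldsymbol{g}_3(t)\times\boldsymbol{n}=0$ on $\Gamma_P$, for all $t\ge0$. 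Setting $\boldsymbol{G}(t):=(0,0,\boldsymbol{g}_3(t),\boldsymbol{g}_4(t))^\top$, looking for the solution as $\boldsymbol{U}=\boldsymbol{V}+\boldsymbol{G}$, and using that $\xA_2$ acts through the same differential expression as~$\xA$, the evolution problem~\eqref{pb-evol-abstrait} supplemented with~\eqref{cond} becomes equivalent to
\begin{equation*}
\partial_t\boldsymbol{V}(t)+\xA_2\boldsymbol{V}(t)=\boldsymbol{F}(t):=-\partial_t\boldsymbol{G}(t)-\xA\boldsymbol{G}(t)\ \ (t>0),\qquad\boldsymbol{V}(0)=\boldsymbol{U}_0-\boldsymbol{G}(0),
\end{equation*}
in which $\boldsymbol{V}(t)$ is now required to satisfy the \emph{homogeneous} Silver--M\"uller condition, that is, to stay in $D(\xA_2)$. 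The compatibility condition in~\eqref{initial cond nonho} is precisely what guarantees $\boldsymbol{V}(0)\in D(\xA_2)$: its first two lines put the components in the right spaces (note $\boldsymbol{g}_3(0)\in\mathbf{H}_{0,\Gamma_P}(\curl;\Omega)$, $\boldsymbol{g}_4(0)\in\mathbf{H}(\curl;\Omega)$ by~\eqref{re g3 g4}), and its third line together with~\eqref{def g3 g4} at $t=0$ yields $(\boldsymbol{E}_0-\boldsymbol{g}_3(0))\times\boldsymbol{n}+c\,(\boldsymbol{B}_0-\boldsymbol{g}_4(0))_\top=0$ on $\Gamma_A$.

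Next I would check the regularity of the source $\boldsymbol{F}$ — this is the only step where the full strength of~\eqref{regu g sm} is needed. From $(\boldsymbol{g}_3,\boldsymbol{g}_4)\in\xWn{2,1}(\xR_{>0};\mathbf{H}_{0,\Gamma_P}(\curl;\Omega)\times\mathbf{H}(\curl;\Omega))$ one has $\partial_t\boldsymbol{G}\in\xWn{1,1}(\xR_{>0};\mathbf{H}(\curl;\Omega)^2)\hookrightarrow\xWn{1,1}(\xR_{>0};\mathbf{X})$; and since the linear map $(\boldsymbol{w}_3,\boldsymbol{w}_4)\mapsto\xA(0,0,\boldsymbol{w}_3,\boldsymbol{w}_4)^\top=(-\varepsilon_0\omega_{p1}^2\boldsymbol{w}_3,\,-\varepsilon_0\omega_{p2}^2\boldsymbol{w}_3,\,-c^2\curl\boldsymbol{w}_4,\,\curl\boldsymbol{w}_3)^\top$ is bounded from $\mathbf{H}_{0,\Gamma_P}(\curl;\Omega)\times\mathbf{H}(\curl;\Omega)$ into~$\mathbf{X}$ — the only non-obvious point being $\omega_{ps}^2\boldsymbol{w}_3\in\mathbf{L}^2_{(s)}(\Omega)$, which is Lemma~\Rref{lem-weight-basic}(ii) — one gets $\xA\boldsymbol{G}\in\xWn{2,1}(\xR_{>0};\mathbf{X})$, hence $\boldsymbol{F}\in\xWn{1,1}(\xR_{>0};\mathbf{X})\subset\mathrm{L}^1(\xR_{>0};\mathbf{X})$. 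Since neither $\boldsymbol{G}(t)$ nor $\boldsymbol{F}(t)$ lies in $D(\xA_2)$ in general (their boundary traces are those of $\boldsymbol{g}$ and $\partial_t\boldsymbol{g}$), one must use the ``$\xWn{1,1}$-in-time'' version of the regularity theorem rather than the ``$\mathrm{L}^1$-in-$D(\xA_2)$'' one.

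Then, since $-\xA_2$ generates a contraction $\xCzero$-semigroup $(\mathit{T}_2(t))_{t\ge0}$ (Theorem~\Rref{bonne posi sm homo}), $\boldsymbol{V}(0)\in D(\xA_2)$, and $\boldsymbol{F}\in\xWn{1,1}(\xR_{>0};\mathbf{X})$, the inhomogeneous Cauchy problem above has a unique classical solution, given globally by Duhamel's formula $\boldsymbol{V}(t)=\mathit{T}_2(t)\boldsymbol{V}(0)+\int_0^t\mathit{T}_2(t-\sigma)\,\boldsymbol{F}(\sigma)\,\xdif\sigma$, with $\boldsymbol{V}\in\xCone(\xR_{\ge0};\mathbf{X})\cap\xCzero(\xR_{\ge0};D(\xA_2))$~\cite{P83,CH98}. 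Setting $\boldsymbol{U}:=\boldsymbol{V}+\boldsymbol{G}$ and reading off components would then give the announced regularity: the $\boldsymbol{J}_s$-components equal those of $\boldsymbol{V}$, so lie in $\xCone(\xR_{\ge0};\mathbf{L}^2_{(s)}(\Omega))$; and $\boldsymbol{E}=\boldsymbol{V}_3+\boldsymbol{g}_3$, $\boldsymbol{B}=\boldsymbol{V}_4+\boldsymbol{g}_4$ inherit $\xCone(\xR_{\ge0};\mathbf{L}^2(\Omega))\cap\xCzero(\xR_{\ge0};\mathbf{H}(\curl;\Omega))$ — with the $\mathbf{H}_{0,\Gamma_P}$-refinement for $\boldsymbol{E}$ — from both summands, using $\xWn{2,1}\hookrightarrow\xCone$ for $(\boldsymbol{g}_3,\boldsymbol{g}_4)$ and $\boldsymbol{V}(t)\in D(\xA_2)$. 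The boundary conditions~\eqref{cond} follow from $\boldsymbol{V}(t)\in D(\xA_2)$ and~\eqref{def g3 g4}, and $\boldsymbol{U}$ solves~\eqref{pb-evol-abstrait} by construction. For uniqueness, the difference $\boldsymbol{W}$ of two solutions with this regularity would satisfy the homogeneous Silver--M\"uller condition, hence $\boldsymbol{W}(t)\in D(\xA_2)$ for all $t$, solve~\eqref{evol prob sm homo} with $\boldsymbol{W}(0)=0$, and thus vanish by Theorem~\Rref{bonne posi sm homo}.

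The hard part will be the regularity bookkeeping of the second step: one has to make sure the lifting keeps the $\xWn{2,1}$-regularity in time (continuity of $R_A$) and that, after applying $\xA$ and one time derivative, the source still lands in $\xWn{1,1}(\xR_{>0};\mathbf{X})$ — the embedding $\mathbf{L}^2(\Omega)\ni\boldsymbol{w}\mapsto\omega_{ps}^2\boldsymbol{w}\in\mathbf{L}^2_{(s)}(\Omega)$ of Lemma~\Rref{lem-weight-basic} being what makes the plasma-frequency factors harmless. Everything else is a routine transcription of the homogeneous case.
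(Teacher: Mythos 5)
Your proposal is correct and follows essentially the same route as the paper: lifting the boundary data via $R_A$, subtracting $(0,0,\boldsymbol{g}_3,\boldsymbol{g}_4)^\top$ to reduce to an inhomogeneous Cauchy problem for $\xA_2$ with source $\boldsymbol{F}=-\partial_t\boldsymbol{G}-\xA\boldsymbol{G}\in\xWn{1,1}(\xR_{>0};\mathbf{X})$ and initial datum in $D(\xA_2)$, and concluding by the classical strong-solution theorem and uniqueness for the homogeneous problem. Your extra remarks on why the $\xWn{1,1}$-in-time (rather than $\mathrm{L}^1$-in-$D(\xA_2)$) hypothesis is the relevant one, and on the role of Lemma~\Rref{lem-weight-basic} for the $\omega_{ps}^2\boldsymbol{g}_3$ terms, are accurate refinements of what the paper states more tersely.
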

\begin{proof}
Define $(\boldsymbol{g}_3,\boldsymbol{g}_4)$ by~\eqref{def g3 g4}, and introduce the auxiliary unknown $\boldsymbol{U}^\star=(\boldsymbol{J}^\star_1,\boldsymbol{J}^\star_2,\boldsymbol{E}^\star,\boldsymbol{B}^\star)^\top$: 
\begin{equation*}
\boldsymbol{J}^\star_1 = \boldsymbol{J}_1,\quad \boldsymbol{J}^\star_2 = \boldsymbol{J}_2,\quad \boldsymbol{E}^\star = \boldsymbol{E}-\boldsymbol{g}_3,\quad \boldsymbol{B}^\star
= \boldsymbol{B}-\boldsymbol{g}_4.
\end{equation*}
If a solution $(\boldsymbol{J}_1,\boldsymbol{J}_2,\boldsymbol{E},\boldsymbol{B})^\top$ as above exists then, by construction, the field $\boldsymbol{U}^\star(t)$ belongs to~$D(\xA_2)$ for any~$t\geq 0$. 
Moreover it is governed by the evolution equations
\begin{eqnarray}
\partial_{t} \boldsymbol{U}^\star  + \xA\, \boldsymbol{U}^\star &=& \boldsymbol{F},\quad t>0, 
\label{UU*}\\
\boldsymbol{U}^\star(0) &=& \boldsymbol{U}^\star_0,
\label{UU*0}
\end{eqnarray}
with data 
\begin{eqnarray*}
\boldsymbol{F}=\begin{pmatrix}
\varepsilon_0\,\omega_{p1}^2\,\boldsymbol{g}_3
\cr
\varepsilon_0\,\omega_{p2}^2\,\boldsymbol{g}_3
\cr
-\partial_t\boldsymbol{g}_3+ c^2 \curl \boldsymbol{g}_4
\cr
-\partial_t \boldsymbol{g}_4 - \curl \boldsymbol{g}_3
\end{pmatrix},\qquad 
\boldsymbol{U}^\star_0=\begin{pmatrix}  
\boldsymbol{J}_{1,0}
\cr
\boldsymbol{J}_{2,0}
\cr
\boldsymbol{E}_{0}-\boldsymbol{g}_3(0)\cr
\boldsymbol{B}_{0}-\boldsymbol{g}_4(0)
\end{pmatrix}.
\end{eqnarray*}
Thanks to~\eqref{re g3 g4} and Lemma~\Rref{lem-weight-basic}, one sees that $\boldsymbol{F} \in \xWn{{1,1}}(\xR_{>0};\mathbf{X})$; and, obviously, $\boldsymbol{U}^\star_0 \in D(\xA_2)$.
Thus, Problem~\eqref{UU*}--\eqref{UU*0} admits a unique strong solution \cite[Proposition~4.1.6]{CH98}, with regularity $\boldsymbol{U}^\star \in \xCone(\xR_{\ge0}; \mathbf{X})\cap \xCzero(\xR_{\ge0}; D(\xA_2))$, which depends continuously on the data $\boldsymbol{F}$ and~$\boldsymbol{U}^\star_0$. Hence, we conclude the existence of 
$$ \boldsymbol{U}=(\boldsymbol{J}_1,\boldsymbol{J}_2,\boldsymbol{E},\boldsymbol{B})^\top 
= (\boldsymbol{J}^\star_1, \boldsymbol{J}^\star_2, \boldsymbol{E}^\star+\boldsymbol{g}_3, \boldsymbol{B}^\star+\boldsymbol{g}_4)^\top$$ 
solution to~\eqref{pb-evol-abstrait} with boundary condition~\eqref{cond}, and depending continuously on the data $\boldsymbol{g}$ and~$\boldsymbol{U}_0$. To get uniqueness, we notice that the difference of two solutions solves the homogeneous problem~\eqref{evol prob sm homo} with zero initial data, so it vanishes.
\end{proof}

\subsection{On the constraint equations}
Following the usual pattern in electromagnetics, the constraints on the fields: divergence equations \eqref{div E},~\eqref{div B}, magnetic boundary condition~\eqref{lkm}, are preserved by the evolution semigroup, provided the sources $(\rho,\boldsymbol{J}) := \sum_s (\rho_s,\boldsymbol{J}_s)$ satisfy the charge conservation equation.
We omit the proofs, as they are extremely classical~\cite[Remark~5.1.2, Thms 5.2.3 and~5.2.12]{ACL+17}. The details can be found in~\cite[\S4]{LZ+17}.
As a matter of fact, once the existence and uniqueness of the solution to the coupled model is obtained, the electromagnetic variables $(\boldsymbol{E},\boldsymbol{B})$ naturally appear as the solution to the Maxwell equations with data $(\rho,\boldsymbol{J})$.

\begin{thrm}
Assume that 
\begin{equation*}
\divg \boldsymbol{E}_0=\frac{\rho(0)}{\varepsilon_0}, \quad\text{and}\quad \divg\boldsymbol{B}_0=0\quad \text{in } \Omega,\quad
\boldsymbol{B}_0 \cdot \boldsymbol{n}=0 \quad\text{on } \Gamma_P,
\end{equation*}
and that  the charge conservation equation 
\begin{equation*}
\frac{\partial \rho(t)}{\partial t}+ \divg \boldsymbol{J}(t)=0 \quad \text{holds in~$\Omega$ for a.e. }  t> 0.
\end{equation*}
Then, for all $t>0$, the electric field $\boldsymbol{E}$ satisfies \eqref{div E} and the magnetic field $\boldsymbol{B}$ satisfies \eqref{div B} and \eqref{lkm}. 
\label{divergence B}
\end{thrm}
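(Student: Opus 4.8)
The plan is to differentiate, along the evolution \eqref{uu}--\eqref{pp2}, each of the three constrained quantities and to show that its time derivative vanishes, so that it remains equal to its (admissible, by hypothesis) value at $t=0$. A first observation is that the argument will never use the boundary condition on~$\Gamma_A$: only the interior equations \eqref{oo}--\eqref{pp2} and the structural fact that $\boldsymbol{E}(t)$ has vanishing tangential trace on~$\Gamma_P$ will matter. The latter is built into $D(\xA_1)$ and $D(\xA_2)$ and, in the setting of Theorem~\Rref{non-homo}, into the decomposition $\boldsymbol{E}=\boldsymbol{E}^\star+\boldsymbol{g}_3$, both summands of which lie in $\mathbf{H}_{0,\Gamma_P}(\curl;\Omega)$. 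Hence the three cases of Theorems~\Rref{thm well posed cp}, \Rref{bonne posi sm homo} and~\Rref{non-homo} can be treated at once.

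For the divergence equations I would test against a scalar $\varphi\in\mathcal{D}(\Omega)$. Pairing \eqref{pp2} with~$\grad\varphi$ and using $\curl\grad\varphi=0$ gives $\frac{\xdif}{\xdif t}\langle\boldsymbol{B}(t),\grad\varphi\rangle=-\langle\curl\boldsymbol{E}(t),\grad\varphi\rangle=-\langle\boldsymbol{E}(t),\curl\grad\varphi\rangle=0$, so that $\divg\boldsymbol{B}(t)=\divg\boldsymbol{B}_0=0$ in $\mathcal{D}'(\Omega)$ for all~$t$; this is \eqref{div B}, and it shows moreover that $\boldsymbol{B}(t)\in\mathbf{H}(\divg 0;\Omega)$, so that its normal trace on~$\Gamma$ is well defined. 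Likewise, pairing \eqref{oo} with~$\grad\varphi$, using $\divg\curl\boldsymbol{B}=0$ and the charge conservation equation $\partial_t\rho+\divg\boldsymbol{J}=0$, one obtains $\frac{\xdif}{\xdif t}\langle\boldsymbol{E}(t),\grad\varphi\rangle=\frac1{\varepsilon_0}\langle\divg\boldsymbol{J}(t),\varphi\rangle=-\frac1{\varepsilon_0}\frac{\xdif}{\xdif t}\langle\rho(t),\varphi\rangle$; integrating in time and using $\divg\boldsymbol{E}_0=\rho(0)/\varepsilon_0$ gives \eqref{div E} for all~$t$.

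The substantive step is the magnetic boundary condition~\eqref{lkm}. I would first assume $\boldsymbol{U}_0\in D(\xA_1)$ (resp.\ $D(\xA_2)$, resp.\ the regular solution furnished by Theorem~\Rref{non-homo}), so that $\boldsymbol{E}(t)\in\mathbf{H}_{0,\Gamma_P}(\curl;\Omega)$ and $t\mapsto\boldsymbol{B}(t)$ is of class $\xCone$ with values in $\mathbf{L}^2(\Omega)$. Fix $\psi\in\xHone(\Omega)$ vanishing on~$\Gamma_A$; then $\grad\psi\in\mathbf{H}(\curl 0;\Omega)$, and $\grad\psi\times\boldsymbol{n}$ vanishes on~$\Gamma_A$ because the surface gradient of $\psi|_{\Gamma_A}$ is zero. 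Since $\divg\boldsymbol{B}(t)=0$, Green's formula between $\mathbf{H}(\divg;\Omega)$ and~$\xHone(\Omega)$ gives $(\boldsymbol{B}(t)\mid\grad\psi)=\langle\boldsymbol{B}(t)\cdot\boldsymbol{n},\psi\rangle_{\Gamma_P}$, the contribution of $\Gamma_A$ being zero. Differentiating in~$t$, using~\eqref{pp2}, and then applying~\eqref{green rot 0 Gamma p} with $\boldsymbol{v}=\grad\psi$ and $\boldsymbol{w}=\boldsymbol{E}(t)$,
\begin{equation*}
\frac{\xdif}{\xdif t}\,(\boldsymbol{B}(t)\mid\grad\psi)=-(\curl\boldsymbol{E}(t)\mid\grad\psi)=-\,{}_{\gamma_A}\langle\grad\psi\times\boldsymbol{n},\boldsymbol{E}(t)_\top\rangle_{\pi^0_A}=0 ,
\end{equation*}
whence $\langle\boldsymbol{B}(t)\cdot\boldsymbol{n},\psi\rangle_{\Gamma_P}=\langle\boldsymbol{B}_0\cdot\boldsymbol{n},\psi\rangle_{\Gamma_P}=0$ for every such~$\psi$; since the traces on~$\Gamma_P$ of these $\psi$ exhaust the relevant trace space, $\boldsymbol{B}(t)\cdot\boldsymbol{n}=0$ on~$\Gamma_P$, which is~\eqref{lkm}. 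For a general $\boldsymbol{U}_0\in\mathbf{X}$ satisfying the hypotheses, I would approximate it, inside the affine constraint set, by data in~$D(\xA)$ and pass to the limit, as in \cite[Thms~5.2.3 and~5.2.12]{ACL+17} or \cite[\S4]{LZ+17}.

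The main obstacle is precisely this last step: one has to keep careful track of which of the non-standard trace spaces of~\S\Rref{sec-prelim} each term lives in (in particular, that the $\Gamma_A$-tangential traces $\grad\psi\times\boldsymbol{n}\in\mathbf{TT}(\Gamma_A)$ and $\boldsymbol{E}(t)_\top\in\widetilde{\mathbf{TC}}(\Gamma_A)$ pair by duality, and that the former vanishes), and the density argument for rough data requires producing an approximating sequence that stays inside the (non-closed) constraint set. Both points are routine in the electromagnetics literature, which is why only a sketch is warranted.
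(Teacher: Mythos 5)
Your proof is correct and follows exactly the classical route that the paper itself invokes: the paper omits the proof of Theorem~\Rref{divergence B}, referring to \cite[Remark~5.1.2, Thms~5.2.3 and~5.2.12]{ACL+17} and \cite[\S4]{LZ+17}, where the constraints are propagated precisely as you do --- by differentiating them along the flow, testing against gradients of scalar test functions (compactly supported for the divergence equations, vanishing on $\Gamma_A$ for the normal trace on $\Gamma_P$), and invoking the Green formulas of \S\Rref{sec-prelim} together with a density argument for rough data. Your handling of the trace pairings on $\Gamma_A$ and the remark that the Silver--M\"uller condition itself plays no role are consistent with that standard treatment.
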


\begin{rmrk} 
For $\ell=1,\ 2$, we define $\mathbf{X}_{\ell} := \mathbf{L}^2_{(1)}(\Omega) \times \mathbf{L}^2_{(2)}(\Omega) \times \mathbf{L}^2(\Omega) \times \mathbf{H}_{0,\Gamma_P}(\divg 0;\Omega)$ where the case $\ell=1$ corresponds to $\Gamma_A=\varnothing $, \ie, $\Gamma_P=\Gamma$ and $\ell=2$ is for the case $\Gamma_A\neq\varnothing$. Then, $\xim(\xA_\ell) \subset \mathbf{X}_{\ell}$ and, according to Theorem~\Rref{divergence B}, we conclude that the space $\mathbf{X}_{\ell}$ is stable by the semigroup $\mathit{T}_{\ell}$ generated by the operator~$-\xA_{\ell}$, \ie,
\begin{itemize}
\item For all $\boldsymbol{U}_0\in D(\xA_{1})\cap \mathbf{X}_{1}$, there exists a unique $\boldsymbol{U}\in \xCone(\xR_{\ge0}; \mathbf{X}_1)\cap \xCzero(\xR_{\ge0};D(\xA_{1})\cap \mathbf{X}_1)$ solution to the system of equations  \eqref{uu}--\eqref{init cond 0} and \eqref{div B} with boundary conditions \eqref{lk} and~\eqref{lkm}.
\item For all $\boldsymbol{U}_0\in D(\xA_{2})\cap \mathbf{X}_{2}$, there exists a unique $\boldsymbol{U}\in \xCone(\xR_{\ge0}; \mathbf{X}_2)\cap \xCzero(\xR_{\ge0};D(\xA_{2})\cap \mathbf{X}_2)$ solution to the system of equations  \eqref{uu}--\eqref{init cond 0} and \eqref{div B} with boundary conditions \eqref{lk}--\eqref{lkm}.
\end{itemize}
Also, for all $\ell$, if we take $\boldsymbol{U}_0\in\mathbf{X}_{\ell}$, the two problems above have a weak solution $\boldsymbol{U}\in \xCzero(\xR_{\ge0}; \mathbf{X}_{\ell})$.
\label{stable space}
\end{rmrk}

\section{Some results of functional analysis}
\label{sec-prelim-2}
\subsection{The geometry}
As said in~\S\Rref{sec-model}, the domain~$\Omega$ can be topologically non-trivial, and the boundary~$\Gamma$ can be connected, or not; see Figure~\ref{fig1} for an example. We now introduce some notations associated with this geometry; we use the notations from~\cite{ABD98,ACL+17,FG97}.

\medbreak

We denote by $\Gamma_k$, $0\leq k \leq K$ the connected  components of $\Gamma$, $\Gamma_0$ being the boundary of the unbounded component of $\xR^3\setminus  \overline{\Omega}$. When the boundary is connected, $\Gamma_0=\Gamma$. Let us introduce a subspace of $\xHone(\Omega)$:
\begin{equation*}
\xHone_{\partial\Omega}(\Omega):=\lbrace q\in\xHone(\Omega)\,:\, q_{|\Gamma_0}=0,\, q_{|\Gamma_k}=C_k,\, 1\leq k \leq K \rbrace.
\end{equation*}
Above, $C_k$ means a constant, and for $\ell\neq k$, $C_{\ell}$ and $C_k$ may be different. This space can be endowed with the norm $\| \cdot \|_{\xHone_{\partial\Omega}(\Omega)}=\| \grad \cdot \|$ (see \cite[Proposition~2.1.66]{ACL+17}). 

\medbreak

We assume that there exist $J$ connected open surfaces $\Sigma_j$,  $j=1,\ \ldots,\ J$, called ``cuts'', contained in $\Omega$, such that:
\begin{enumerate}[label=\roman*)]
\item each surface $\Sigma_j$ is a smooth, orientable two-dimensional manifold;
\item the boundary of $\Sigma_j$ is contained in $\partial \Omega$; 
\item the intersection $ \overline{\Sigma_j}\cap\overline{\Sigma_i}$ is empty for $i\neq  j$;
\item (if $\Gamma_A\ne\emptyset$:) $\Gamma_A \setminus \partial\Sigma$, where $\Sigma=\bigcup_{j=1}^{J}\Sigma_j$,  has a finite number of connected components, denoted $\Gamma_{A,i}$, $i=1,\ \ldots,\ N$, whose closures are compact Lipschitz submanifolds of~$\Gamma$;
\item the open set $\dot{\Omega}:=\Omega\setminus \Sigma$ is pseudo-Lipschitz \cite[Definition~3.1]{ABD98} and topologically trivial (\ie, any vector field with vanishing curl is the gradient of a scalar field  on~$\dot{\Omega}$).
\end{enumerate}
If $\Omega$ is topologically trivial, $J=0$ and $\dot{\Omega}=\Omega$.
The extension operator from $\mathbf{L}^2(\dot{\Omega})$ to $\mathbf{L}^2(\Omega)$ is denoted $\widetilde{\cdot}$, whereas  $[\cdot]_{\Sigma_j}$ denotes the jump across the surface $\Sigma_j$, $j=1,\ \ldots,\ J$. Being orientable, each cut is assumed to have a ``plus'' and a ``minus'' side, so $[w]_{\Sigma_j} = w_{|_{\Sigma_j^+}} - w_{|_{\Sigma_j^-}}$. 
For all $j$, we denote  $\langle \cdot,\cdot \rangle_{\Sigma_j}$  the duality pairing between $\xHn{\frac1{2}}(\Sigma_j)$ and its dual $\xHn{{-\frac1{2}}}(\Sigma_j)$.  

\medbreak 

\begin{figure}
\centering \includegraphics[height=5.0cm]{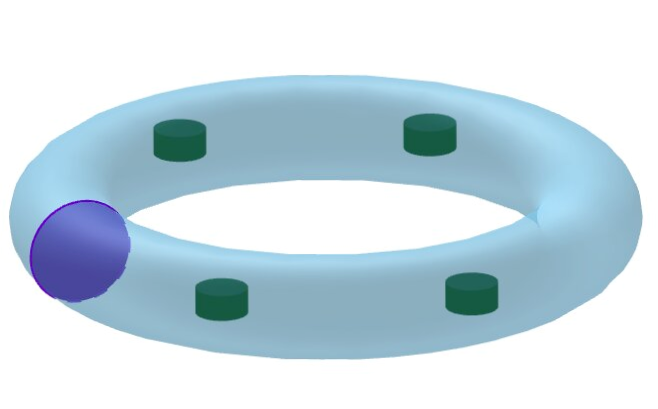}
\caption{Exemple of non-topologically trivial domain with a non-connected boundary. The domain $\Omega$ is made of the interior of the torus minus the green cylinders. The boundary $\partial\Omega$ is the union of the boundary of the torus and the cylinders which are all disjoint. The purple surface is a cut $\Sigma_1$, and the set $\dot\Omega = \Omega \setminus \Sigma_1$ is simply connected.} 
\label{fig1}
\end{figure}

\subsection{Hodge decompositions and topology-related spaces}
\label{sec-hodge}
Let $\mathbb{B}:\Omega\longrightarrow \mathcal{M}_{3}(\xC)$  be a matrix-valued function. We make  the following assumption:
\begin{eqnarray}
\exists\,\eta,\,\zeta>0, \quad\eta(\boldsymbol{v}^{*}\boldsymbol{v})\geq\|  \boldsymbol{v}^{*}\mathbb{B}(\boldsymbol{x})\boldsymbol{v}\|\geq \zeta(\boldsymbol{v}^{*}\boldsymbol{v}),\quad \forall\boldsymbol{v}\in\mathbb{C}^{3},\quad \forall\boldsymbol{x} \in\Omega.
\label{minor}
\end{eqnarray}
Define the Hilbert space 
\begin{eqnarray*}
\mathbf{H}(\divg\mathbb{B};\Omega) := \lbrace \boldsymbol{v}\in\mathbf{L}^2(\Omega)\,:\, \divg\mathbb{B}\boldsymbol{v}\in \xLtwo(\Omega)  \rbrace.
\end{eqnarray*}
This space is equipped with the canonical norm $\boldsymbol{v}\mapsto(\|\boldsymbol{v}\|^2+\|\divg\mathbb{B}\boldsymbol{v}\|^2)^{\frac1{2}}$. The subspace $\mathbf{H}(\divg\mathbb{B}0;\Omega):=\lbrace \boldsymbol{v}\in\mathbf{L}^2(\Omega)\,:\, \divg\mathbb{B}\boldsymbol{v} = 0 \rbrace$ is obviously a closed subspace of both $\mathbf{H}(\divg\mathbb{B};\Omega)$ and~$\mathbf{L}^2(\Omega)$.
If $\boldsymbol{v}\in\mathbf{H}(\divg\mathbb{B};\Omega)$, the normal trace of $\boldsymbol{v}$ is a well-defined element of $\xHn{\frac1{2}}(\Gamma)$ and the integration by parts formula holds
\begin{equation}
\forall\boldsymbol{v}\in\mathbf{H}(\divg\mathbb{B};\Omega),\,\forall q\in\xHone(\Omega),\quad
(\mathbb{B}\boldsymbol{v}\mid\grad q)+(\divg\mathbb{B}\boldsymbol{v}\mid q)=\langle \mathbb{B}\boldsymbol{v}\cdot \boldsymbol{n} , q \rangle_{\xHn{\frac1{2}}(\Gamma)}.
\label{green div grad}
\end{equation}
If $q\in\xHone_0(\Omega)$, the above formula can be extended to $\boldsymbol{v}\in \mathbf{L}^2(\Omega)$. Then, $\divg\mathbb{B}\boldsymbol{v}\in\xHn{{-1}}(\Omega)$ and one gets 
\begin{equation}
(\mathbb{B}\boldsymbol{v}\mid\grad q)+\langle \divg\mathbb{B}\boldsymbol{v}, q\rangle_{\xHone_0(\Omega)}=0.
\label{green div grad h 1 0}
\end{equation}

\medbreak

In this paragraph, we introduce some other spaces and notations associated to a matrix 
$\mathbb{B}$ satisfying~\eqref{minor}, and we prove some useful results. We start with a result on elliptic problems, whose proof is straightforward and left to the reader.
\begin{lmm}
For any $f\in\mathrm{H^{-1}}(\Omega)$, the elliptic problem:  
\begin{equation}
\left\lbrace \begin{array}{lll}
\text{Find } q\in\xHone_0(\Omega) \text{ such that  } \\
-\Delta_{\mathbb{B}}q: = -\divg(\mathbb{B}\grad q)=f
\end{array} \right.
\label{r'esol}
\end{equation}
admits a unique solution.  Furthermore, there exists a constant $\,C>0$  such that
\begin{eqnarray}
\| q \|_{\xHone(\Omega)}\leq C\, \|f\|_{\xHn{{-1}}(\Omega)}.
\label{'enig1}
\end{eqnarray}
\label{lem1}
\end{lmm}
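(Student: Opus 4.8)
The plan is to prove Lemma~\Rref{lem1} by the standard variational (Lax--Milgram) argument. First I would write the weak formulation of~\eqref{r'esol}: find $q\in\xHone_0(\Omega)$ such that
\[
B(q,p) := (\mathbb{B}\grad q \mid \grad p) = \langle f, p\rangle_{\xHone_0(\Omega)}, \quad \forall p\in\xHone_0(\Omega).
\]
Here I use the duality pairing between $\xHn{-1}(\Omega)$ and $\xHone_0(\Omega)$ on the right-hand side, and I must check that this weak formulation is genuinely equivalent to~\eqref{r'esol}: a solution of the weak problem satisfies $-\divg(\mathbb{B}\grad q)=f$ in the sense of distributions (test against $\boldsymbol{\mathcal{D}}(\Omega)$), hence $\divg(\mathbb{B}\grad q)\in\xHn{-1}(\Omega)$ and the PDE holds in $\xHn{-1}(\Omega)$; conversely any classical/strong solution satisfies the weak form after integration by parts via~\eqref{green div grad h 1 0} (applied with $\boldsymbol{v}=\grad q$).

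Second, I would verify the hypotheses of Lax--Milgram on the Hilbert space $\xHone_0(\Omega)$ equipped with the norm $\|\grad\cdot\|$ (equivalent to the full $\xHone$-norm by the Poincaré inequality, since $\Omega$ is bounded). Continuity of $B$ follows from the upper bound in~\eqref{minor}: $|B(q,p)| \le \eta\,\|\grad q\|\,\|\grad p\|$ (the bound~\eqref{minor} gives $\|\mathbb{B}(\boldsymbol{x})\|_{\mathcal{M}}\le\eta$ after a short argument, e.g.\ polarisation, or one simply reads off $\III\mathbb{B}(\boldsymbol{x})\III_{\mathcal M}$ from the stated two-sided estimate). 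Coercivity follows from the lower bound: $\Re B(q,q) = \Re\int_\Omega \grad q^{*}\,\mathbb{B}(\boldsymbol{x})\,\grad q \,\xdif\Omega \ge \zeta\,\|\grad q\|^2$. Anti-linearity and continuity of the right-hand side functional $p\mapsto\langle f,p\rangle$ are immediate from $f\in\xHn{-1}(\Omega)$, with operator norm $\|f\|_{\xHn{-1}(\Omega)}$. Lax--Milgram then yields a unique $q\in\xHone_0(\Omega)$, and testing with $p=q$ gives $\zeta\|\grad q\|^2 \le \|f\|_{\xHn{-1}(\Omega)}\|q\|_{\xHone(\Omega)}$, whence~\eqref{'enig1} with $C$ depending on $\zeta$ and the Poincaré constant of $\Omega$.

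There is no serious obstacle here; the only point that requires a word of care is the precise meaning one attaches to~\eqref{minor} when extracting continuity of $B$, since~\eqref{minor} is phrased as a bound on $\|\boldsymbol{v}^{*}\mathbb{B}(\boldsymbol{x})\boldsymbol{v}\|$ rather than directly on the operator norm of $\mathbb{B}(\boldsymbol{x})$ or on the real part of the associated quadratic form; reconciling these (e.g.\ observing that the quadratic form $\boldsymbol{v}\mapsto\boldsymbol{v}^{*}\mathbb{B}(\boldsymbol{x})\boldsymbol{v}$ has modulus between $\zeta|\boldsymbol{v}|^2$ and $\eta|\boldsymbol{v}|^2$, which controls both $\III\mathbb{B}(\boldsymbol{x})\III_{\mathcal M}$ and the coercivity constant of its Hermitian part) is the one line that is worth spelling out, and is presumably why the authors call the proof ``straightforward'' and leave it to the reader.
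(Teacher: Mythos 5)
Your overall strategy --- weak formulation plus Lax--Milgram on $\xHone_0(\Omega)$ equipped with the norm $\|\grad\cdot\|$ --- is certainly what the authors intend (they declare the proof straightforward and give none), and the continuity estimate, the equivalence with the distributional formulation via~\eqref{green div grad h 1 0}, and the derivation of~\eqref{'enig1} by testing with $p=q$ are all fine. The one step that does not work as you wrote it is coercivity. Hypothesis~\eqref{minor} bounds the \emph{modulus} of the pointwise quadratic form, $|\boldsymbol{v}^{*}\mathbb{B}(\boldsymbol{x})\boldsymbol{v}|\geq\zeta\,|\boldsymbol{v}|^{2}$, and this does \emph{not} yield $\Re B(q,q)\geq\zeta\,\|\grad q\|^{2}$: for $\mathbb{B}=\ii\,\mathbb{I}$ the modulus bound holds with $\zeta=1$ while the Hermitian part vanishes identically, so your parenthetical claim that the modulus bound ``controls the coercivity constant of the Hermitian part'' is false. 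Nor can you retreat to the weaker coercivity $|B(q,q)|\geq\zeta\,\|\grad q\|^{2}$ (which would still suffice for Lax--Milgram): a pointwise lower bound on the modulus of a complex integrand does not survive integration, because the phases may cancel. Taking $\mathbb{B}=\ii\,\mathbb{I}$ on one half of $\Omega$ and $-\ii\,\mathbb{I}$ on the other, \eqref{minor}~holds with $\eta=\zeta=1$ yet $B(q,q)=0$ for suitable $q\neq0$; this is exactly the sign-changing-coefficient situation, in which well-posedness of~\eqref{r'esol} can genuinely fail. So the lemma is not a consequence of~\eqref{minor} as literally stated, and the coercivity step needs a different justification.

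The gap is inherited from the paper rather than introduced by you (the proof of Theorem~\Rref{compact} makes the same silent jump from~\eqref{minor} to $\left|\left(\mathbb{B}\grad q\mid\grad q\right)\right|\geq\zeta\,\|\grad q\|^{2}$), and it is harmless for the intended application: the results of this subsection are only ever invoked for $\mathbb{B}=\mathbb{B}_{\alpha}$, and Lemma~\Rref{lem minor B i alpha} supplies precisely the missing ingredient, namely the pointwise lower bound on the \emph{real part}, $\Re[\boldsymbol{v}^{*}\mathbb{B}_{\alpha}(\boldsymbol{x})\boldsymbol{v}]\geq\zeta_{\alpha}\,|\boldsymbol{v}|^{2}$. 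That inequality integrates without cancellation to $\Re B(q,q)\geq\zeta_{\alpha}\,\|\grad q\|^{2}$, and your argument then goes through verbatim. The honest fix is therefore either to strengthen~\eqref{minor} to a bound on the real part of the quadratic form, or to state explicitly that at the coercivity step you are using~\eqref{minor B i alpha} rather than~\eqref{minor}.
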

\begin{lmm}
For any $\boldsymbol{v}\in\mathbf{L}^2(\Omega)$, there exists a unique pair $(q,\boldsymbol{v}^T)\in\mathrm{H^1_0}(\Omega)\times \mathbf{L}^2(\Omega)$ satisfying the conditions
\begin{equation}
\boldsymbol{v}=\grad q+\boldsymbol{v}^T,\quad \divg(\mathbb{B}\boldsymbol{v}^T)=0.
\label{decomp}
\end{equation}
Moreover, there exists $\,C>0$ independent constant of $\boldsymbol{v}$ such that
\begin{equation}
\| \grad q\|\leq C \,\| \boldsymbol{v} \|,\quad \| \boldsymbol{v}^T \|\leq C\, \| \boldsymbol{v} \|.
\label{in'ega}
\end{equation}
\label{lem0}
\end{lmm}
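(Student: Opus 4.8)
The plan is to obtain $q$ as the solution of the elliptic problem of Lemma~\Rref{lem1} with a right-hand side read off from $\boldsymbol{v}$, and then to define $\boldsymbol{v}^T:=\boldsymbol{v}-\grad q$ by subtraction; in other words, \eqref{decomp} is nothing but the $\mathbb{B}$-weighted Helmholtz decomposition of $\mathbf{L}^2(\Omega)$ into gradients of $\xHone_0(\Omega)$-functions and $\divg\mathbb{B}$-free fields, and the claim amounts to saying that the associated projection onto $\mathbf{H}(\divg\mathbb{B}0;\Omega)$ is well-defined and bounded.

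First I would note that, since $\boldsymbol{v}\in\mathbf{L}^2(\Omega)$, the formula $\langle f,\varphi\rangle_{\xHone_0(\Omega)}:=(\mathbb{B}\boldsymbol{v}\mid\grad\varphi)$ defines, through the extended Green formula~\eqref{green div grad h 1 0}, an element $f=-\divg(\mathbb{B}\boldsymbol{v})\in\xHn{-1}(\Omega)$; by the upper bound in~\eqref{minor} one has $|\langle f,\varphi\rangle|\le\eta\,\|\boldsymbol{v}\|\,\|\grad\varphi\|$, hence $\|f\|_{\xHn{-1}(\Omega)}\le C\,\|\boldsymbol{v}\|$ with $C$ independent of $\boldsymbol{v}$. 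Lemma~\Rref{lem1} then provides a unique $q\in\xHone_0(\Omega)$ with $-\divg(\mathbb{B}\grad q)=f$, together with the bound~\eqref{'enig1}, so that $\|\grad q\|\le\|q\|_{\xHone(\Omega)}\le C\,\|f\|_{\xHn{-1}(\Omega)}\le C\,\|\boldsymbol{v}\|$, which is the first inequality in~\eqref{in'ega}.

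Next I would set $\boldsymbol{v}^T:=\boldsymbol{v}-\grad q\in\mathbf{L}^2(\Omega)$ and verify the constraint. Applying~\eqref{green div grad h 1 0} to $\grad q$ one gets $(\mathbb{B}\grad q\mid\grad\varphi)=-\langle\divg(\mathbb{B}\grad q),\varphi\rangle=\langle f,\varphi\rangle$ for all $\varphi\in\xHone_0(\Omega)$; combined with the definition of $f$ this yields $(\mathbb{B}\boldsymbol{v}^T\mid\grad\varphi)=(\mathbb{B}\boldsymbol{v}\mid\grad\varphi)-(\mathbb{B}\grad q\mid\grad\varphi)=\langle f,\varphi\rangle-\langle f,\varphi\rangle=0$, which is exactly $\divg(\mathbb{B}\boldsymbol{v}^T)=0$ in $\xHn{-1}(\Omega)$, i.e.\ $\boldsymbol{v}^T\in\mathbf{H}(\divg\mathbb{B}0;\Omega)$. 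The triangle inequality gives $\|\boldsymbol{v}^T\|\le\|\boldsymbol{v}\|+\|\grad q\|\le C\,\|\boldsymbol{v}\|$, the second estimate of~\eqref{in'ega}, so $(q,\boldsymbol{v}^T)$ satisfies~\eqref{decomp} with the required bounds.

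Finally, for uniqueness, if $(q_1,\boldsymbol{v}^T_1)$ and $(q_2,\boldsymbol{v}^T_2)$ both satisfy~\eqref{decomp}, subtracting gives $\grad(q_1-q_2)=\boldsymbol{v}^T_2-\boldsymbol{v}^T_1$ with $\divg\big(\mathbb{B}\grad(q_1-q_2)\big)=\divg\big(\mathbb{B}(\boldsymbol{v}^T_2-\boldsymbol{v}^T_1)\big)=0$, so $q_1-q_2\in\xHone_0(\Omega)$ solves~\eqref{r'esol} with $f=0$ and hence vanishes by the uniqueness in Lemma~\Rref{lem1}, whence also $\boldsymbol{v}^T_1=\boldsymbol{v}^T_2$. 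I do not expect a genuine obstacle here; the only point requiring care is the functional-analytic bookkeeping, namely that $\divg(\mathbb{B}\boldsymbol{v})$ and $\divg(\mathbb{B}\boldsymbol{v}^T)$ must be read as elements of $\xHn{-1}(\Omega)$ (the fields being merely in $\mathbf{L}^2(\Omega)$) via~\eqref{green div grad h 1 0}, and that the resulting datum $f$ is controlled in $\xHn{-1}(\Omega)$ by $\|\boldsymbol{v}\|$ with a constant independent of $\boldsymbol{v}$, so that Lemma~\Rref{lem1} delivers estimates uniform in~$\boldsymbol{v}$.
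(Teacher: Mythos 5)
Your proof is correct and follows essentially the same route as the paper: define $f=-\divg(\mathbb{B}\boldsymbol{v})\in\xHn{-1}(\Omega)$, solve the elliptic problem of Lemma~\Rref{lem1}, and set $\boldsymbol{v}^T=\boldsymbol{v}-\grad q$. You are in fact slightly more explicit than the paper on the uniqueness argument, which the paper leaves implicit in the uniqueness statement of Lemma~\Rref{lem1}.
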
 
\begin{proof}
Let  $\boldsymbol{v}\in\mathbf{L}^2(\Omega)$. As $\mathbb{B}$ is bounded  and the application $\divg$ is continuous from $ \mathbf{L}^2(\Omega)$ to~$\xHn{{-1}}(\Omega)$, we have $\divg \mathbb{B}\boldsymbol{v} \in \xHn{{-1}}(\Omega)$ and $\|\divg \mathbb{B}\boldsymbol{v} \|_{\xHn{{-1}}(\Omega)} \leq C_1\, \| \boldsymbol{v} \|$. According to Lemma~\Rref{lem1}, there exists one, and only one, $q\in\mathrm{H^1_0}(\Omega)$ that solves the problem~\eqref{r'esol} with data $f = -\divg\mathbb{B}\boldsymbol{v}$  and satisfies
$ \| \grad q\|\leq C\, \| \divg\mathbb{B}\boldsymbol{v} \|_{\xHn{{-1}}(\Omega)}$. 
Finally, let $\boldsymbol{v}^T=\boldsymbol{v}-\grad q$. By construction, we have $\boldsymbol{v}^T\in\mathbf{L}^2(\Omega)$ and $\divg\mathbb{B}\boldsymbol{v}^T=0$. The estimates of~\eqref{in'ega} are also established.
\end{proof}

\noindent We now characterize the following space:
\begin{eqnarray*}
\mathbf{Z}_{N}(\Omega; \mathbb{B}):=\mathbf{H}_0(\curl0;\Omega)\cap\mathbf{H}(\divg\mathbb{B}0;\Omega).
\end{eqnarray*}

\begin{prpstn}
The dimension of the vector space $\mathbf{Z}_{N}(\Omega; \mathbb{B})$ is equal to~$K$, the number of connected components of the boundary, minus one. Furthermore, a basis of $\mathbf{Z}_{N}(\Omega; \mathbb{B})$ is the set of the functions $(\grad{{q}_k})_{1\leq k \leq K}$, where each $q_k$ is the unique solution in $\xHone(\Omega)$ to the problem
\begin{eqnarray}
\left\lbrace \begin{array}{lll}
\Delta_{\mathbb{B}} q_k = \divg\mathbb{B}\grad q_k = 0, \quad\text{in } \Omega,\\[3pt]
{q_k}_{|\Gamma_0} = 0  \quad\text{and}\quad {q_k}_{|\Gamma_i}
=cst_{i},\quad 1\leq i\leq K,\\[3pt]
\langle \mathbb{B}\grad q_k\cdot \boldsymbol{n} , 1 \rangle_{\xHn{\frac1{2}}(\Gamma_0)}
= -1 ~\text{ and }~ \langle \mathbb{B}\grad q_k\cdot \boldsymbol{n} , 1 \rangle_{\xHn{\frac1{2}}({\Gamma_i})}
= \delta_{ki}, ~1\leq i \leq K.
\end{array} \right.
\label{div B grad}
\end{eqnarray} 
\label{dim ZNB}
\end{prpstn}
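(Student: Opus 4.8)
The plan is to prove the two claims — the dimension count and the explicit basis — together, by first establishing that the fields $\grad q_k$ defined by~\eqref{div B grad} belong to $\mathbf{Z}_N(\Omega;\mathbb{B})$, then that they are linearly independent, and finally that they span the whole space. For the first point, I would begin by showing that Problem~\eqref{div B grad} is well-posed. It is a variational problem in the space $\xHone_{\partial\Omega}(\Omega)$ introduced above: testing $\divg\mathbb{B}\grad q_k = 0$ against functions in $\xHone_{\partial\Omega}(\Omega)$ and using the Green formula~\eqref{green div grad}, the boundary integrals collapse to a finite linear combination of the quantities $\langle \mathbb{B}\grad q_k\cdot\boldsymbol{n}, 1\rangle_{\xHn{1/2}(\Gamma_i)}$, since each test function is constant on each $\Gamma_i$ and vanishes on $\Gamma_0$. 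The bilinear form $(p,q)\mapsto(\mathbb{B}\grad p\mid\grad q)$ is coercive on $\xHone_{\partial\Omega}(\Omega)$ by the lower bound in~\eqref{minor} together with the Poincaré inequality hidden in the norm $\|\grad\cdot\|$ (valid because elements vanish on $\Gamma_0$), so Lax--Milgram gives a unique $q_k$ for each prescribed data $(\delta_{ki})_i$; the normalization on $\Gamma_0$ is automatic because $\sum_{i=0}^{K}\langle\mathbb{B}\grad q_k\cdot\boldsymbol{n},1\rangle_{\xHn{1/2}(\Gamma_i)} = (\divg\mathbb{B}\grad q_k\mid 1) = 0$ by~\eqref{green div grad} with $q\equiv 1$. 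By construction $\grad q_k$ is curl-free, lies in $\mathbf{H}_0(\curl 0;\Omega)$ (gradients of $\xHone$ functions have vanishing tangential trace), and satisfies $\divg\mathbb{B}\grad q_k = 0$, hence $\grad q_k\in\mathbf{Z}_N(\Omega;\mathbb{B})$.

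Linear independence is immediate from the boundary flux conditions: if $\sum_k \alpha_k\grad q_k = 0$ then applying $\langle\mathbb{B}(\cdot)\cdot\boldsymbol{n},1\rangle_{\xHn{1/2}(\Gamma_i)}$ and using the last line of~\eqref{div B grad} forces $\alpha_i = 0$ for each $i$. So $\dim\mathbf{Z}_N(\Omega;\mathbb{B})\geq K$.

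For the reverse inclusion — this is the main obstacle — I would take an arbitrary $\boldsymbol{v}\in\mathbf{Z}_N(\Omega;\mathbb{B})$ and show it is a combination of the $\grad q_k$. Since $\curl\boldsymbol{v} = 0$ in $\Omega$ and $\boldsymbol{v}\times\boldsymbol{n} = 0$ on $\Gamma$, and since $\dot\Omega = \Omega\setminus\Sigma$ is topologically trivial, the restriction $\boldsymbol{v}|_{\dot\Omega}$ is a gradient: $\boldsymbol{v} = \grad\varphi$ in $\dot\Omega$ for some $\varphi\in\xHone(\dot\Omega)$ (using point~(v) of the geometric assumptions). The vanishing tangential trace on $\Gamma$ forces $\varphi$ to be locally constant on $\Gamma\cap\partial\dot\Omega$, i.e.\ constant on each $\Gamma_i$; moreover $\boldsymbol{v}\in\mathbf{L}^2(\Omega)$ (no jump across the cuts) forces $[\varphi]_{\Sigma_j}$ to be constant on each $\Sigma_j$ — but then $[\grad\varphi]_{\Sigma_j} = \grad[\varphi]_{\Sigma_j}$ must vanish tangentially along $\Sigma_j$, and a careful argument (as in the topology-handling of~\cite{ABD98,ACL+17}, exploiting that $\partial\Sigma_j\subset\Gamma$ where $\varphi$ is constant on each component) shows these jump constants are in fact zero; hence $\varphi$ extends to a genuine element of $\xHone(\Omega)$ with $\varphi|_{\Gamma_i} = c_i$. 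Normalizing $c_0 = 0$ by subtracting a constant, set $\alpha_i := \langle\mathbb{B}\grad\varphi\cdot\boldsymbol{n},1\rangle_{\xHn{1/2}(\Gamma_i)}$ for $1\leq i\leq K$; then $\boldsymbol{w} := \boldsymbol{v} - \sum_k\alpha_k\grad q_k$ again lies in $\mathbf{Z}_N(\Omega;\mathbb{B})$ and now has \emph{zero} $\mathbb{B}$-flux through every $\Gamma_i$, $0\leq i\leq K$, while being a gradient $\grad\psi$ with $\psi\in\xHone_{\partial\Omega}(\Omega)$ and $\divg\mathbb{B}\grad\psi = 0$. Testing against $\psi$ itself in the variational formulation, the boundary term vanishes (all fluxes are zero), leaving $(\mathbb{B}\grad\psi\mid\grad\psi) = 0$, whence $\grad\psi = 0$ by the coercivity~\eqref{minor}. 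Therefore $\boldsymbol{v} = \sum_k\alpha_k\grad q_k$, proving $\dim\mathbf{Z}_N(\Omega;\mathbb{B}) = K$ and that $(\grad q_k)_{1\leq k\leq K}$ is a basis.

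The delicate point, as indicated, is the passage from "$\boldsymbol{v}$ is a gradient on the cut domain $\dot\Omega$" to "$\boldsymbol{v}$ is a gradient on all of $\Omega$": a priori $\boldsymbol{v}\in\mathbf{H}_0(\curl 0;\Omega)$ need not be a global gradient when $\Omega$ is multiply connected, so one must genuinely use that the jumps $[\varphi]_{\Sigma_j}$ vanish — which is where the boundary condition $\boldsymbol{v}\times\boldsymbol{n} = 0$ on all of $\Gamma$ (as opposed to only part of it) and the fact that each cut $\Sigma_j$ abuts $\Gamma$ enter decisively. This is precisely the reason the dimension is $K$ (boundary components minus one) rather than $K + J$ (which would include the cut-related contribution) as would occur for $\mathbf{H}(\curl 0;\Omega)\cap\mathbf{H}(\divg\mathbb{B}0;\Omega)$ without the homogeneous tangential trace.
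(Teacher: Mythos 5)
Your argument is correct and is essentially the proof the paper delegates to~\cite[Proposition~3.18]{ABD98}: well-posedness of~\eqref{div B grad} by Lax--Milgram in $\xHone_{\partial\Omega}(\Omega)$, linear independence read off the flux conditions, and spanning via the fact that a curl-free field with vanishing tangential trace on \emph{all} of $\Gamma$ is a global gradient, constant on each $\Gamma_i$ (obtained through the cut domain and the vanishing of the jump constants, which does require the connectivity of $\Gamma_i\setminus\partial\Sigma$ guaranteed by the connectedness of~$\dot\Omega$). Two cosmetic points only: the parenthetical ``gradients of $\xHone$ functions have vanishing tangential trace'' should read ``gradients of $\xHone$ functions that are constant on each connected component of~$\Gamma$'', and the coercivity and the final step $\left(\mathbb{B}\grad\psi\mid\grad\psi\right)=0\Rightarrow\grad\psi=0$ really rest on a real-part lower bound $\Re\bigl(\boldsymbol{v}^{*}\mathbb{B}\boldsymbol{v}\bigr)\ge\zeta\,\boldsymbol{v}^{*}\boldsymbol{v}$ (as supplied by Lemma~\ref{lem minor B i alpha} for the matrices actually used) rather than on the modulus bound of~\eqref{minor} alone.
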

\begin{proof}
Entirely similar to~\cite[Proposition~3.18]{ABD98}, using the integration-by-parts formulas  \eqref{green div grad h 1 0} and~\eqref{green div grad}, and the well-posedness of elliptic problems involving the operator~$\Delta_{\mathbb{B}}$, as in Lemma~\Rref{lem1}.
\end{proof}
\begin{rmrk}
All norms being equivalent on the finite-dimensional space~$\mathbf{Z}_{N}(\Omega;\mathbb{B})$, we may use any norm to measure elements of this space; for example 
$$\boldsymbol{v}\mapsto \| \boldsymbol{v} \|, \quad\text{or}\quad 
\boldsymbol{v}\mapsto |(\langle \mathbb{B}\boldsymbol{v}\cdot\boldsymbol{n}, 1 \rangle_{\xHn{\frac1{2}}({\Gamma_{k}})})_{1\leq 1 \leq K}|_p,$$
with $1 \leq p \leq \infty$. Also, we easily check that $\mathbf{Z}_N(\Omega; \mathbb{B})=\grad [{Q}_N(\Omega;\mathbb{B})]$, where 
$${Q}_N(\Omega;\mathbb{B}):=\lbrace q\in \xHone_{\partial\Omega}(\Omega)~:~ \divg\mathbb{B}\grad q=0 \text{ in } \Omega\rbrace, $$
so, $\boldsymbol{v}=\grad q \mapsto |(q_{|\Gamma_k})_{1 \leq k \leq K}|_p$, with $1 \leq p \leq \infty$, is another norm. 
Hereafter, we denote $|\cdot|_{\mathbf{Z}_N(\Omega;\mathbb{B})}$ the chosen norm. 
\end{rmrk}

\subsection{A compactness result}
We introduce the function space  
\begin{equation*}
\mathbf {X}_{N,\Gamma}(\Omega; \mathbb{B}):= \{ \boldsymbol{w} \in \mathbf{H}(\curl;\Omega): \divg\mathbb{B}\boldsymbol{w}\in \xLtwo(\Omega) \text{ and } \boldsymbol{w}\times\boldsymbol{n}_{|_{\Gamma}} \in \mathbf{L}^2(\Gamma)\} \,;
\end{equation*}
obviously, $\boldsymbol{w}\times\boldsymbol{n}_{|_{\Gamma}}$ can be replaced with $\boldsymbol{w}_{\top|_{\Gamma}}$ in the above definition. It is endowed with its canonical norm
\begin{equation}
 \| \boldsymbol{w} \|_{\mathbf {X}_{N,\Gamma}(\Omega; \mathbb{B})}^2=\| \boldsymbol{w} \|^2+ \| \curl \boldsymbol{w} \|^2+\| \divg\mathbb{B}\boldsymbol{w}\|^2+\| \boldsymbol{w}_\top \|_{\mathbf{L}^2(\Gamma)}^2.
\label{natural norm XNA}
\end{equation}
Below, we derive some useful properties of the space $\mathbf {X}_{N,\Gamma}(\Omega; \mathbb{B})$: it is compactly embedded into~$\mathbf{L}^2(\Omega)$, which yields an a inequality in $\mathbf{L}^2$~norm for elements of $\mathbf {X}_{N,\Gamma}(\Omega; \mathbb{B})$, and finally a new norm equivalent to~\eqref{natural norm XNA}. These results parallel and generalize those of~\cite[Thm~3.22]{H+14} (for the boundary condition $\boldsymbol{w}\times\boldsymbol{n}=0$ and $\Gamma$~connected) and~\cite[Thm~8.1.3]{ACL+17} (where $\mathbb{B}$ is assumed to be real and symmetric, and $\boldsymbol{w}\times\boldsymbol{n}=0$ on~$\Gamma_P$), both of which grounded in the pioneering work~\cite[Theorem~2.2]{Webe80}.
However, we choose to present the proof, as the two simultaneous negative features 
(non-Hermitianness of~$\mathbb{B}$ and non-connectedness of~$\Gamma$) call for a careful demonstration. 

\subsubsection{Compact embedding of $\mathbf {X}_{N,\Gamma}(\Omega; \mathbb{B})$ into $\mathbf{L}^2(\Omega)$}
We denote
\begin{equation*}
\mathbf{Z}_N(\Omega):=\mathbf{H}_0(\curl0;\Omega)\cap\mathbf{H}(\divg0;\Omega) =\mathbf{Z}_N(\Omega;\mathbb{I}). 
\end{equation*}
As the identity matrix $\mathbb{I}$ obviously satisfies the condition~\eqref{minor}, $\mathbf{Z}_N(\Omega)$ is of dimension $K$ and a basis is given by~\eqref{div B grad}.
%
Next, we introduce the (closed) subspace of $\mathbf{H}(\divg 0;\Omega)$:
\begin{equation*}
\mathbf{H}^{\Gamma}(\divg 0;\Omega):=\lbrace \boldsymbol{v}\in \mathbf{H}(\divg 0;\Omega): \langle \boldsymbol{v} \cdot \boldsymbol{n}, 1 \rangle_{\xHn{\frac1{2}}({\Gamma_{k}})}=0,~ 1\leq k \leq K\rbrace.
\end{equation*}
As an immediate consequence of Proposition~\Rref{dim ZNB}, we have\dots
\begin{prpstn}
Let  $\Omega$ a domain. The following orthogonal decomposition  of  the space $\mathbf{H}(\divg0;\Omega)$ holds:
\begin{equation*}
\mathbf{H}(\divg0;\Omega)=\mathbf{Z}_N(\Omega)\stackrel{\perp}{\oplus}\mathbf{H}^{\Gamma}(\divg 0;\Omega).
\end{equation*}
\label{decomp div0}
\end{prpstn}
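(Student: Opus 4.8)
The plan is to show that $\mathbf{Z}_N(\Omega)$ is exactly the orthogonal complement of $\mathbf{H}^{\Gamma}(\divg 0;\Omega)$ inside $\mathbf{H}(\divg 0;\Omega)$, equipped with the $\mathbf{L}^2(\Omega)$ inner product (which coincides with the canonical norm on $\mathbf{H}(\divg 0;\Omega)$, since the divergence vanishes). Since $\mathbf{H}^{\Gamma}(\divg 0;\Omega)$ is by definition a closed subspace of the Hilbert space $\mathbf{H}(\divg 0;\Omega)$, it suffices to prove the two inclusions $\mathbf{Z}_N(\Omega)\subset \bigl(\mathbf{H}^{\Gamma}(\divg 0;\Omega)\bigr)^{\perp}$ and $\bigl(\mathbf{H}^{\Gamma}(\divg 0;\Omega)\bigr)^{\perp}\subset\mathbf{Z}_N(\Omega)$; the orthogonal decomposition then follows from the projection theorem.

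For the first inclusion, take $\boldsymbol{z}\in\mathbf{Z}_N(\Omega)=\mathbf{Z}_N(\Omega;\mathbb{I})$. By Proposition~\Rref{dim ZNB} applied with $\mathbb{B}=\mathbb{I}$, we may write $\boldsymbol{z}=\grad q$ with $q\in\xHone_{\partial\Omega}(\Omega)$ and $\divg\grad q=0$ in $\Omega$. Now let $\boldsymbol{v}\in\mathbf{H}^{\Gamma}(\divg 0;\Omega)$. Using the integration-by-parts formula~\eqref{green div grad} with $\mathbb{B}=\mathbb{I}$, and the fact that $q$ is constant on each $\Gamma_k$ (equal to $0$ on $\Gamma_0$),
\begin{equation*}
(\boldsymbol{v}\mid\boldsymbol{z})=(\boldsymbol{v}\mid\grad q)=-(\divg\boldsymbol{v}\mid q)+\sum_{k=0}^{K}\langle \boldsymbol{v}\cdot\boldsymbol{n},q\rangle_{\xHn{\frac1{2}}(\Gamma_k)}=\sum_{k=1}^{K}q_{|\Gamma_k}\,\langle \boldsymbol{v}\cdot\boldsymbol{n},1\rangle_{\xHn{\frac1{2}}(\Gamma_k)}=0,
\end{equation*}
since $\divg\boldsymbol{v}=0$ and $\langle\boldsymbol{v}\cdot\boldsymbol{n},1\rangle_{\xHn{\frac1{2}}(\Gamma_k)}=0$ for $1\leq k\leq K$. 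Hence $\boldsymbol{z}\perp\mathbf{H}^{\Gamma}(\divg 0;\Omega)$.

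For the reverse inclusion, let $\boldsymbol{w}\in\mathbf{H}(\divg 0;\Omega)$ be orthogonal to $\mathbf{H}^{\Gamma}(\divg 0;\Omega)$. Since $\mathbf{Z}_N(\Omega)$ has dimension $K$ (by Proposition~\Rref{dim ZNB} with $\mathbb{B}=\mathbb{I}$) and is, by the first inclusion, in direct sum with $\mathbf{H}^{\Gamma}(\divg 0;\Omega)$, the quickest route is a dimension-count argument: the linear map $\Phi:\mathbf{H}(\divg 0;\Omega)\to\xR^{K}$, $\boldsymbol{v}\mapsto(\langle\boldsymbol{v}\cdot\boldsymbol{n},1\rangle_{\xHn{\frac1{2}}(\Gamma_k)})_{1\leq k\leq K}$, has kernel $\mathbf{H}^{\Gamma}(\divg 0;\Omega)$ and is surjective (the basis fields $\grad q_k$ from~\eqref{div B grad} with $\mathbb{B}=\mathbb{I}$ map onto a basis of $\xR^K$, as their images under $\Phi$ are the vectors $\delta_{ki}$). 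Therefore $\mathbf{H}(\divg 0;\Omega)/\mathbf{H}^{\Gamma}(\divg 0;\Omega)$ has dimension $K$, so $\bigl(\mathbf{H}^{\Gamma}(\divg 0;\Omega)\bigr)^{\perp}$ has dimension $K$; combined with $\mathbf{Z}_N(\Omega)\subset\bigl(\mathbf{H}^{\Gamma}(\divg 0;\Omega)\bigr)^{\perp}$ and $\dim\mathbf{Z}_N(\Omega)=K$, equality follows. The main (and only mildly delicate) point is to make sure the pairings $\langle\boldsymbol{v}\cdot\boldsymbol{n},1\rangle_{\xHn{\frac1{2}}(\Gamma_k)}$ are well defined for $\boldsymbol{v}\in\mathbf{H}(\divg 0;\Omega)$ and that the surjectivity of $\Phi$ is correctly read off from~\eqref{div B grad}; both are immediate consequences of the normal-trace theory recalled in~\S\Rref{sec-hodge} and of Proposition~\Rref{dim ZNB}. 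Alternatively, one can avoid the dimension count by directly showing $\boldsymbol{w}\in\mathbf{H}_0(\curl 0;\Omega)$: decompose $\boldsymbol{w}$ against the basis $(\grad q_k)$ to kill its components on $\Gamma_k$, check the remainder lies in $\mathbf{H}^{\Gamma}(\divg 0;\Omega)$ and is self-orthogonal, hence zero — but the dimension argument is cleaner.
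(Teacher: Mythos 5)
Your proof is correct and follows exactly the route the paper intends: the authors state the decomposition as "an immediate consequence of Proposition~\ref{dim ZNB}" without writing out details, and your argument (orthogonality of $\mathbf{Z}_N(\Omega)$ to $\mathbf{H}^{\Gamma}(\divg 0;\Omega)$ via \eqref{green div grad}, then a dimension count using the basis $(\grad q_k)$ and the fluxes $\delta_{ki}$ from \eqref{div B grad}) is precisely the standard way to make that deduction. No gaps.
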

Now, we can prove the following compactness result.
\begin{thrm}
Let $\Omega$ be a domain. The embedding of the space $\mathbf {X}_{N,\Gamma}(\Omega; \mathbb{B})$ into $\mathbf{L}^2(\Omega)$ is compact.
\label{compact}
\end{thrm}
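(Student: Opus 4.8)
The plan is to reduce the compact embedding of $\mathbf{X}_{N,\Gamma}(\Omega;\mathbb{B})$ into $\mathbf{L}^2(\Omega)$ to the classical compactness results for Maxwell-type spaces by means of the Hodge-type decomposition from Lemma~\Rref{lem0} and Proposition~\Rref{decomp div0}. Concretely, let $(\boldsymbol{w}_n)$ be a bounded sequence in $\mathbf{X}_{N,\Gamma}(\Omega;\mathbb{B})$; we must extract an $\mathbf{L}^2(\Omega)$-convergent subsequence. First I would apply Lemma~\Rref{lem0} to each $\boldsymbol{w}_n$, writing $\boldsymbol{w}_n = \grad q_n + \boldsymbol{w}_n^T$ with $q_n\in\xHone_0(\Omega)$, $\divg(\mathbb{B}\boldsymbol{w}_n^T)=0$, and $\|\grad q_n\| + \|\boldsymbol{w}_n^T\| \lesssim \|\boldsymbol{w}_n\|$. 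Since $\curl\grad q_n = 0$, we have $\curl\boldsymbol{w}_n^T = \curl\boldsymbol{w}_n$, and since $\divg\mathbb{B}\boldsymbol{w}_n = \divg(\mathbb{B}\grad q_n) = \Delta_{\mathbb{B}}q_n$ is bounded in $\xLtwo(\Omega)$, elliptic regularity for the operator $\Delta_{\mathbb{B}}$ on a Lipschitz domain (or rather the compact embedding $\xHone_0(\Omega)\hookrightarrow\xLtwo(\Omega)$ together with a bound on $q_n$ — see below) handles the gradient part.

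The heart of the argument is the transversal part $\boldsymbol{w}_n^T$. Here I would argue that $\boldsymbol{w}_n^T \in \mathbf{H}(\curl;\Omega)\cap\mathbf{H}(\divg\mathbb{B}0;\Omega)$ with bounded norms, and moreover $\boldsymbol{w}_n^T\times\boldsymbol{n}|_\Gamma = \boldsymbol{w}_n\times\boldsymbol{n}|_\Gamma - \grad q_n\times\boldsymbol{n}|_\Gamma$; since $q_n\in\xHone_0(\Omega)$ has zero tangential gradient on $\Gamma$ in a suitable weak sense (or one first takes $q_n$ slightly more regular), the tangential trace of $\boldsymbol{w}_n^T$ inherits the $\mathbf{L}^2(\Gamma)$ bound from that of $\boldsymbol{w}_n$. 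Thus $(\boldsymbol{w}_n^T)$ is bounded in the analogue of $\mathbf{X}_{N,\Gamma}$ but now with a \emph{divergence-free} (w.r.t.\ $\mathbb{B}$) constraint. Using assumption~\eqref{minor}, which makes $\divg\mathbb{B}(\cdot)$ comparable to $\divg(\cdot)$ on the relevant subspace, and splitting off the finite-dimensional harmonic part $\mathbf{Z}_N(\Omega;\mathbb{B})$ via Proposition~\Rref{dim ZNB} (finite dimension ensures automatic compactness there), one reduces to the subspace $\mathbf{H}^\Gamma$-type fields. On that subspace I would invoke the Weber-type compactness theorem~\cite[Theorem~2.2]{Webe80} (or its generalisations~\cite[Thm~3.22]{H+14}, \cite[Thm~8.1.3]{ACL+17}), whose hypotheses — $\mathbf{L}^2$ control of the field, its curl, its ($\mathbb{B}$-weighted) divergence, and its tangential trace on $\Gamma$ — are exactly what we have secured. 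The non-Hermitianness of $\mathbb{B}$ only enters through the bilinear (not sesquilinear) Green's formulas~\eqref{green div grad}, \eqref{green div grad h 1 0}, which still furnish the needed a priori estimates, and the non-connectedness of $\Gamma$ is absorbed into the finite-dimensional space $\mathbf{Z}_N(\Omega;\mathbb{B})$ of dimension $K$.

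Assembling the pieces: extract a subsequence along which $q_n\to q$ in $\xLtwo(\Omega)$ (by boundedness of $q_n$ in $\xHone_0(\Omega)$ from~\eqref{'enig1} and Rellich), along which the projection of $\boldsymbol{w}_n^T$ onto $\mathbf{Z}_N(\Omega;\mathbb{B})$ converges (finite dimension), and along which the remaining $\mathbf{H}^\Gamma$-component of $\boldsymbol{w}_n^T$ converges in $\mathbf{L}^2(\Omega)$ (Weber compactness). Then $\boldsymbol{w}_n = \grad q_n + \boldsymbol{w}_n^T$ — wait, $\grad q_n$ itself need not converge in $\mathbf{L}^2$; instead one must be more careful and keep $\boldsymbol{w}_n = \grad q_n + \boldsymbol{w}_n^T$ but observe that it is really $\boldsymbol{w}_n$ as a whole, decomposed differently, to which Weber's theorem applies. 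The cleanest route, which I would actually follow, is to \emph{not} peel off the gradient but to apply the compactness theorem directly to $\boldsymbol{w}_n$ after noting that the scalar potential construction is only needed to verify that the hypotheses of~\cite[Theorem~2.2]{Webe80} are genuinely met in the present generality; the decomposition serves as the technical vehicle inside the proof of that theorem. The main obstacle I anticipate is precisely this bookkeeping around the tangential trace: showing that the $\mathbf{L}^2(\Gamma)$ control of $\boldsymbol{w}\times\boldsymbol{n}$ survives the Hodge splitting and interacts correctly with the non-symmetric weight $\mathbb{B}$, since the natural Green's formula pairs $\mathbb{B}\boldsymbol{v}\cdot\boldsymbol{n}$ rather than $\boldsymbol{v}\cdot\boldsymbol{n}$ with boundary test functions; handling the mismatch between the curl-side trace (unweighted) and the div-side trace (weighted by $\mathbb{B}$) on a non-connected $\Gamma$ is where the argument requires genuine care rather than a quotation of earlier work.
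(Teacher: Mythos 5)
Your skeleton coincides with the paper's: split $\boldsymbol{v}_n=\grad q_n+\boldsymbol{v}_n^T$ by Lemma~\Rref{lem0}, treat the two pieces separately, isolate the finite-dimensional harmonic space, and use a vector potential for the transversal part. But at the two points where one must actually produce strong $\mathbf{L}^2$ convergence, the proposal either leaves a gap or defers to a citation that does not apply. For the gradient part you notice yourself that $\grad q_n$ need not converge: Rellich only gives $q_n\to q$ in $\xLtwo(\Omega)$, and elliptic regularity is unavailable for a merely bounded, non-Hermitian $\mathbb{B}$ on a Lipschitz domain. The missing step is the coercivity--duality estimate
\begin{equation*}
\zeta\,\|\grad q_{nm}\|^2 \leq \bigl| \bigl(\mathbb{B}\grad q_{nm}\mid\grad q_{nm}\bigr) \bigr| = \bigl| \bigl(\divg(\mathbb{B}\boldsymbol{v}_{nm})\mid q_{nm}\bigr) \bigr| \leq C\,\|q_{nm}\|, \qquad q_{nm}:=q_n-q_m,
\end{equation*}
obtained from~\eqref{minor} and the Green formula~\eqref{green div grad}, which converts the $\xLtwo$ convergence of the potentials into a Cauchy property for $(\grad q_n)$ in $\mathbf{L}^2(\Omega)$. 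Without this the gradient part is simply not handled.

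For the transversal part, your fallback is to invoke Weber-type compactness ``directly'' on $\boldsymbol{v}_n^T$ (or on $\boldsymbol{v}_n$ itself), but the results you cite assume $\mathbb{B}$ Hermitian (or $\mathbb{B}=\mathbb{I}$), and either a connected boundary or a vanishing tangential trace --- precisely the hypotheses this theorem is written to dispense with, as the paper itself points out; quoting them here is circular. The paper's actual argument decomposes $\mathbb{B}\boldsymbol{v}_n^T$ (note: the image under $\mathbb{B}$, not $\boldsymbol{v}_n^T$) in $\mathbf{H}(\divg 0;\Omega)=\mathbf{Z}_N(\Omega)\oplus\mathbf{H}^{\Gamma}(\divg 0;\Omega)$ via Proposition~\Rref{decomp div0}, writes the flux-free component as $\curl\boldsymbol{w}_n$ with $\boldsymbol{w}_n$ bounded in $\mathbf{H}^1(\Omega)$, extracts subsequences with $\boldsymbol{w}_n$ convergent in $\mathbf{L}^2(\Omega)$ and ${\boldsymbol{w}_n}_{|\Gamma}$ convergent in $\mathbf{L}^2(\Gamma)$, and then repeats the duality estimate $\zeta\|\boldsymbol{v}_{nm}^T\|^2\le|(\boldsymbol{v}_{nm}^T\mid\mathbb{B}\boldsymbol{v}_{nm}^T)|$ followed by the integration by parts~\eqref{green rot rot}: the boundary term pairs the merely bounded $\boldsymbol{v}_{nm}\times\boldsymbol{n}$ in $\mathbf{L}^2(\Gamma)$ against the strongly convergent $(\boldsymbol{w}_{nm})_\top$, which is exactly how the $\mathbf{L}^2(\Gamma)$ hypothesis on the tangential trace is consumed. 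You correctly identified this boundary bookkeeping as the crux of the matter, but the proposal stops short of carrying it out, so the proof is incomplete precisely where the theorem's novelty lies.
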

\begin{proof}
Let $(\boldsymbol{v}_n)_{n\in\mathbb{N}}$ be a bounded sequence of $\mathbf {X}_{N,\Gamma}(\Omega; \mathbb{B})$. According to Lemma~\Rref{lem0}, there  exists two sequences  $(q_n)_{n\in\mathbb{N}}$ and $(\boldsymbol{v}^T_n)_{n\in\mathbb{N}}$ of elements, respectively, of
 $\mathrm{H^1_0}(\Omega)$ and  $\mathbf{L}^2(\Omega)$, such that   $\boldsymbol{v}_n=\grad q_n+\boldsymbol{v}^T_n$ for all $n$. Our aim, using this decomposition, is to prove that a subsequence of $ (\boldsymbol{v}_n)_n$ converges strongly in $\mathbf{L}^2(\Omega)$. This is done in two steps.
 
\medbreak 
 
\noindent \textsc{Step 1.}\quad 
According to~\eqref{in'ega}, the sequence $(q_n)_n$ satisfies, for all~$n$: $\| \grad q_n\|\leq C \| \boldsymbol{v}_n \|$, with $C$  independent of $ \boldsymbol{v}_n$. So, $ (q_n)_n$ is bounded in  $\mathrm{H^1_0}(\Omega)$, and 
since $\mathrm{H^1_0}(\Omega)$ is compactly embedded into~$\xLtwo(\Omega)$, there exists a subsequence, still denoted by $(q_n)_n$, that converges strongly in $\xLtwo(\Omega)$. Now, let us show that the subsequence $(\grad q_n)_n$ converges in~$\mathbf{L}^2(\Omega)$. Denote $ \boldsymbol{v}_{nm}:= \boldsymbol{v}_n-\boldsymbol{v}_m$ and $ q_{nm} := q_n-q_m$. By construction, the sequence $ (q_n)_n$  verifies  $ \divg\mathbb{B}\boldsymbol{v}_n=\divg\mathbb{B}\grad q_n,$ for all $n\in\mathbb{N}$. This leads to the inequality 
\begin{eqnarray*}
\left| \left( \divg(\mathbb{B}\grad q_{nm})\mid q_{nm}\right) \right| 
&=& \left| \left( \divg(\mathbb{B}\boldsymbol{v}_{nm})\mid q_{nm}\right) \right| \\
& \leq & \| \divg(\mathbb{B}\boldsymbol{v}_{nm})\| \cdot \| q_{nm} \| \\
&\leq &2\sup_n\| \boldsymbol{v}_n\|_{ \mathbf {X}_{N,\Gamma}(\Omega; \mathbb{B})}\,\| q_{nm} \|
\leq  C'\,\| q_{nm} \|.
\end{eqnarray*}
On the other hand, from~\eqref{minor} and the integration-by-parts formula~\eqref{green div grad}, we deduce:
\begin{equation*}
\left| \left( \divg(\mathbb{B}\grad q_{nm})\mid q_{nm}\right) \right| 
= \left| \left( \mathbb{B}\grad q_{nm}\mid \grad q_{nm}\right) \right| 
\geq \zeta\,\|\grad q_{nm}\|.
\end{equation*}
Combining the above, we conclude:
\begin{equation*}
\| \grad q_n-\grad q_m\|
\leq \frac{C'}{\zeta}\,\| q_{nm} \|.
\end{equation*}
So, $(\grad q_n)_n$ is a Cauchy sequence in~$\mathbf{L}^2(\Omega)$, and therefore it converges in this space.

\medbreak

\noindent \textsc{Step 2.}\quad 
Recall that the sequence $(\boldsymbol{v}^T_n)_n$  verifies $ \divg\mathbb{B}\boldsymbol{v}^T_n=0$, $ \curl\boldsymbol{v}^T_n=\curl\boldsymbol{v}_n$ and $ \boldsymbol{v}^T_n\times\boldsymbol{n}_{|\Gamma}=\boldsymbol{v}_n\times\boldsymbol{n}_{|\Gamma}$. By  Proposition~\Rref{decomp div0}, there exists a sequence $(\boldsymbol{y}_n)_n$ of elements of  $\mathbf{H}^{\Gamma}(\divg0;\Omega)$ and a sequence $(\boldsymbol{z}_n)_n$ on~$\mathbf{Z}_N(\Omega)$ such that  $\mathbb{B}\boldsymbol{v}^T_n=\boldsymbol{z}_n+\boldsymbol{y}_n$ for all~$n$. 
The sequence $(\boldsymbol{z}_n)_n$ is bounded in the finite-dimensional space $\mathbf{Z}_N(\Omega)$, so there exists a subsequence, still denoted by $(\boldsymbol{z}_n)_n$, which converges in any norm, \vg, that of~$\mathbf{L}^2(\Omega)$. Then, according to \cite[Theorem~3.4.1]{ACL+17}, there exists a sequence $ (\boldsymbol{w}_n)_n$ of elements of  $\mathbf{H}^1(\Omega)$ such that  $\boldsymbol{y}_n=\curl\boldsymbol{w}_n$ for all~$n$, and it satisfies:   
\begin{equation*}
\| \boldsymbol{w}_n \|_{\mathbf{H^1}(\Omega)}\leq \xi\, \| \boldsymbol{y}_n \|
\end{equation*}
for some $\xi>0$. As $(\boldsymbol{y}_n)_n$ is bounded in~$\mathbf{L}^2(\Omega)$, it follows that $(\boldsymbol{w}_n)_n$ is bounded in~$\mathbf{H}^1(\Omega)$. As the trace mapping is continuous from $\mathbf{H}^1(\Omega)$ to~$\mathbf{H}^{\frac{1}{2}}(\Gamma)$, it follows that $({\boldsymbol{w}_n}_{|\Gamma})_n$ is bounded in  $\mathbf{H}^{\frac{1}{2}}(\Gamma)$. Therefore, by Sobolev's compact embedding theorem, we can extract a subsequence, still denoted by $(\boldsymbol{w}_n)_n$, that converges in $\mathbf{L}^2(\Omega)$ and such that $({\boldsymbol{w}_n}_{|\Gamma})_n$  converges in $\mathbf{L}^2(\Gamma)$. Denote $\boldsymbol{v}^T_{nm}:=\boldsymbol{v}^T_n-\boldsymbol{v}^T_m$, $ \boldsymbol{w}_{nm}:=\boldsymbol{w}_n-\boldsymbol{w}_m$ and $\boldsymbol{z}_{nm}:=\boldsymbol{z}_n-\boldsymbol{z}_m$. According to the condition \eqref{minor}, $\mathbb{B}$ is invertible, and we find
\begin{eqnarray*}
\left| \left(\mathbb{B}^{-1}(\boldsymbol{z}_{nm}+\curl\boldsymbol{w}_{nm})\mid \boldsymbol{z}_{nm}+\curl\boldsymbol{w}_{nm} \right) \right|
&=&\left| \left( \boldsymbol{v}^T_{nm}\mid\mathbb{B}\boldsymbol{v}^T_{nm}\right) \right| \\
&\geq & \zeta\, \|\boldsymbol{v}^T_{nm}\|.
\end{eqnarray*}
Next, by integration by parts~\eqref{green rot rot}, we obtain
\begin{eqnarray*}
&&\left| \left( \mathbb{B}^{-1}(\boldsymbol{z}_{nm}+\curl\boldsymbol{w}_{nm}) \mid \boldsymbol{z}_{nm}+\curl\boldsymbol{w}_{nm} \right) \right|
\\
&=& \left| \left( \boldsymbol{v}^T_{nm} \mid \boldsymbol{z}_{nm}+\curl\boldsymbol{w}_{nm} \right) \right| \\
&=& \left| \left(\boldsymbol{v}^T_{nm}\mid\boldsymbol{z}_{nm}\right) + \left( \curl\boldsymbol{v}^T_{nm}\mid\boldsymbol{w}_{nm}\right)+ \left(\boldsymbol{v}^T_{nm}\times\boldsymbol{n}\mid (\boldsymbol{w}_{nm})_\top\right)_{\mathbf{L}^2(\Gamma)} \right| \\
&=& \left| \left(\boldsymbol{v}^T_{nm}\mid\boldsymbol{z}_{nm}\right) + \left( \curl\boldsymbol{v}_{nm}\mid\boldsymbol{w}_{nm}\right)+ \left(\boldsymbol{v}_{nm}\times\boldsymbol{n}\mid (\boldsymbol{w}_{nm})_\top\right)_{\mathbf{L}^2(\Gamma)} \right| \\
&\leq& 2\sup_n\| \boldsymbol{v}_{nm}\|_{ \mathbf {X}_{N,\Gamma}(\Omega; \mathbb{B})} (\|\boldsymbol{z}_{nm}\|+\|\boldsymbol{w}_{nm}\|+\|\boldsymbol{w}_{nm}\|_{\mathbf{L}^2(\Gamma)}).\\
& \leq & C'\, (\|\boldsymbol{z}_{nm}\|+\|\boldsymbol{w}_{nm}\|+\|\boldsymbol{w}_{nm}\|_{\mathbf{L}^2(\Gamma)}).
\end{eqnarray*}
Combining the above, we find 
\begin{eqnarray*}
\|\boldsymbol{v}^T_n-\boldsymbol{v}^T_m\|_{\mathbf{L}^2(\Omega)}\leq \frac{C'}{\zeta}\,(\|\boldsymbol{z}_{nm}\|+\|\boldsymbol{w}_{nm}\|+\|\boldsymbol{w}_{nm}\|_{\mathbf{L}^2(\Gamma)}).
\end{eqnarray*}
So, $(\boldsymbol{v}^T_n)_n$ is a Cauchy sequence in $\mathbf{L}^2(\Omega)$, and it converges in this space. Finally, the subsequence $(\boldsymbol{v}_n)_n$,  defined by $\boldsymbol{v}_n:=\grad q_n+\boldsymbol{v}^T_n$,  converges in $\mathbf{L}^2(\Omega)$.
\end{proof}

\subsubsection{Equivalent norms on $\mathbf{X}_{N,\Gamma}(\Omega; \mathbb{B})$}
As a consequence of Theorem \Rref{compact}, there holds a basic inequality. The proof follows the lines of \cite[Thm~3.4.3]{ACL+17} and~\cite[Proposition~7.4]{FG97}.
\begin{prpstn}
There exists a constant $C>0$ such that 
\begin{equation}
\forall \boldsymbol{v}\in \mathbf{X}_{N,\Gamma}(\Omega; \mathbb{B}),\quad
\| \boldsymbol{v} \|\leq C\,\left\lbrace \| \curl \boldsymbol{v} \|+\| \divg\mathbb{B}\boldsymbol{v} \|+\| \boldsymbol{v}_\top \|_{\mathbf{L}^2(\Gamma)}+
 | P_{\mathbf{Z}_{N}(\Omega;\mathbb{B})}\boldsymbol{v} |_{\mathbf{Z}_N(\Omega;\mathbb{B})} \right\rbrace.
\label{seminorme}
\end{equation}
\end{prpstn}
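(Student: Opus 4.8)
The plan is a compactness--uniqueness (Peetre--Tartar) argument built on the compact embedding $\mathbf{X}_{N,\Gamma}(\Omega;\mathbb{B})\hookrightarrow\mathbf{L}^2(\Omega)$ just established in Theorem~\Rref{compact}. Arguing by contradiction, I would assume the inequality fails and produce a sequence $(\boldsymbol{v}_n)_{n\in\mathbb{N}}$ in $\mathbf{X}_{N,\Gamma}(\Omega;\mathbb{B})$ with $\|\boldsymbol{v}_n\|=1$ for all~$n$ and
\[
\|\curl\boldsymbol{v}_n\|+\|\divg\mathbb{B}\boldsymbol{v}_n\|+\|(\boldsymbol{v}_n)_\top\|_{\mathbf{L}^2(\Gamma)}+|P_{\mathbf{Z}_N(\Omega;\mathbb{B})}\boldsymbol{v}_n|_{\mathbf{Z}_N(\Omega;\mathbb{B})}\longrightarrow 0.
\]
In particular every term on the left is bounded, hence $(\boldsymbol{v}_n)_n$ is bounded in $\mathbf{X}_{N,\Gamma}(\Omega;\mathbb{B})$; by Theorem~\Rref{compact} a subsequence, still written $(\boldsymbol{v}_n)_n$, converges strongly in $\mathbf{L}^2(\Omega)$ to some $\boldsymbol{v}$ with $\|\boldsymbol{v}\|=1$.

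The next step is to pass to the limit in the three differential/trace constraints. Since $\curl\boldsymbol{v}_n\to0$ and $\divg\mathbb{B}\boldsymbol{v}_n\to0$ in $\mathbf{L}^2(\Omega)$ while $\boldsymbol{v}_n\to\boldsymbol{v}$ in $\mathbf{L}^2(\Omega)$, the distributional identities $\curl\boldsymbol{v}=0$ and $\divg\mathbb{B}\boldsymbol{v}=0$ follow at once; in particular $\boldsymbol{v}_n\to\boldsymbol{v}$ in $\mathbf{H}(\curl;\Omega)$, so the tangential trace is continuous along the sequence into $\mathbf{TT}(\Gamma)$, and together with $(\boldsymbol{v}_n)_\top\to0$ in $\mathbf{L}^2(\Gamma)$ this yields $\boldsymbol{v}\times\boldsymbol{n}_{|\Gamma}=0$. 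Hence $\boldsymbol{v}\in\mathbf{H}_0(\curl0;\Omega)\cap\mathbf{H}(\divg\mathbb{B}0;\Omega)=\mathbf{Z}_N(\Omega;\mathbb{B})$.

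Finally, $\mathbf{Z}_N(\Omega;\mathbb{B})$ is finite-dimensional (Proposition~\Rref{dim ZNB}), so the projection $P_{\mathbf{Z}_N(\Omega;\mathbb{B})}$ is continuous on $\mathbf{L}^2(\Omega)$ and all norms on $\mathbf{Z}_N(\Omega;\mathbb{B})$ are equivalent; passing to the limit in $|P_{\mathbf{Z}_N(\Omega;\mathbb{B})}\boldsymbol{v}_n|_{\mathbf{Z}_N(\Omega;\mathbb{B})}\to0$ gives $P_{\mathbf{Z}_N(\Omega;\mathbb{B})}\boldsymbol{v}=0$, while $\boldsymbol{v}\in\mathbf{Z}_N(\Omega;\mathbb{B})$ forces $P_{\mathbf{Z}_N(\Omega;\mathbb{B})}\boldsymbol{v}=\boldsymbol{v}$; thus $\boldsymbol{v}=0$, contradicting $\|\boldsymbol{v}\|=1$. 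The substance of the argument is carried entirely by Theorem~\Rref{compact}; the only delicate points, and the closest thing to an obstacle, are checking that the normalised sequence genuinely has bounded $\mathbf{X}_{N,\Gamma}(\Omega;\mathbb{B})$-norm and that the curl, the weighted divergence, the tangential trace, and the finite-rank projection all commute with the $\mathbf{L}^2$-limit. Equivalently, the whole statement can be read off from the abstract Peetre--Tartar lemma applied to the compact embedding together with the bounded operator $\boldsymbol{v}\mapsto(\curl\boldsymbol{v},\,\divg\mathbb{B}\boldsymbol{v},\,\boldsymbol{v}\times\boldsymbol{n}_{|\Gamma},\,P_{\mathbf{Z}_N(\Omega;\mathbb{B})}\boldsymbol{v})$, whose kernel is $\{0\}$ by exactly the limiting computation above.
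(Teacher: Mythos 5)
Your proof is correct and follows essentially the same route as the paper, which only sketches the argument by pointing to \cite[Thm~3.4.3]{ACL+17} and \cite[Proposition~7.4]{FG97}: both are exactly this compactness--contradiction (Peetre--Tartar) scheme built on the compact embedding of Theorem~\ref{compact}, with the limit field landing in $\mathbf{Z}_N(\Omega;\mathbb{B})$ and being killed by the vanishing projection. The only point to phrase carefully is that the flux functionals $\langle\mathbb{B}\boldsymbol{v}\cdot\boldsymbol{n},1\rangle_{\xHn{\frac1{2}}(\Gamma_k)}$ underlying $P_{\mathbf{Z}_N(\Omega;\mathbb{B})}$ are continuous on $\mathbf{H}(\divg\mathbb{B};\Omega)$ rather than on $\mathbf{L}^2(\Omega)$ alone, but your sequence converges in that space as well, so the limiting step is sound.
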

\begin{crllr}
The semi-norm
\begin{eqnarray*}
\quad|v|_{\mathbf{X}_{N,\Gamma}(\Omega; \mathbb{B})}=
\left(\| \curl \boldsymbol{v} \|^2+\| \divg\mathbb{B}\boldsymbol{v} \|^2+\| \boldsymbol{v}_\top \|^2_{\mathbf{L}^2(\Gamma)}%
+ | P_{\mathbf{Z}_{N}(\Omega;\mathbb{B})}\boldsymbol{v} |_{\mathbf{Z}_N(\Omega;\mathbb{B})}\right)^{\frac{1}{2}}.
\end{eqnarray*} 
is a norm in $\mathbf{X}_{N,\Gamma}(\Omega; \mathbb{B})$, equivalent to the natural norm.
\label{coro semi norm}
\end{crllr}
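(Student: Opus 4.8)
The plan is to read the norm equivalence directly off the inequality~\eqref{seminorme}, and then to obtain the positive-definiteness of $|\cdot|_{\mathbf{X}_{N,\Gamma}(\Omega;\mathbb{B})}$ from that equivalence together with the fact that the natural norm~\eqref{natural norm XNA} is a genuine norm on $\mathbf{X}_{N,\Gamma}(\Omega;\mathbb{B})$.

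First I would note that $|\cdot|_{\mathbf{X}_{N,\Gamma}(\Omega;\mathbb{B})}$ is at least a semi-norm: each of the quantities $\|\curl\boldsymbol{v}\|$, $\|\divg\mathbb{B}\boldsymbol{v}\|$, $\|\boldsymbol{v}_\top\|_{\mathbf{L}^2(\Gamma)}$ and $|P_{\mathbf{Z}_N(\Omega;\mathbb{B})}\boldsymbol{v}|_{\mathbf{Z}_N(\Omega;\mathbb{B})}$ is a semi-norm on $\mathbf{X}_{N,\Gamma}(\Omega;\mathbb{B})$ — the last one because $P_{\mathbf{Z}_N(\Omega;\mathbb{B})}$ is linear and $|\cdot|_{\mathbf{Z}_N(\Omega;\mathbb{B})}$ is a norm — and a finite Euclidean combination of semi-norms is again a semi-norm. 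The bound $|\boldsymbol{v}|_{\mathbf{X}_{N,\Gamma}(\Omega;\mathbb{B})}\le C\,\|\boldsymbol{v}\|_{\mathbf{X}_{N,\Gamma}(\Omega;\mathbb{B})}$ is then immediate: three of the four terms of $|\boldsymbol{v}|_{\mathbf{X}_{N,\Gamma}(\Omega;\mathbb{B})}$ occur verbatim in the natural norm, while $|P_{\mathbf{Z}_N(\Omega;\mathbb{B})}\boldsymbol{v}|_{\mathbf{Z}_N(\Omega;\mathbb{B})}\le C\,\|P_{\mathbf{Z}_N(\Omega;\mathbb{B})}\boldsymbol{v}\|\le C\,\|\boldsymbol{v}\|$, since $\mathbf{Z}_N(\Omega;\mathbb{B})$ is finite-dimensional — so $|\cdot|_{\mathbf{Z}_N(\Omega;\mathbb{B})}$ is equivalent on it to the $\mathbf{L}^2(\Omega)$-norm — and $P_{\mathbf{Z}_N(\Omega;\mathbb{B})}$ is bounded on $\mathbf{L}^2(\Omega)$.

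For the reverse inequality I would use~\eqref{seminorme}: its right-hand side is an $\ell^1$-type combination of the four quantities entering $|\boldsymbol{v}|_{\mathbf{X}_{N,\Gamma}(\Omega;\mathbb{B})}$, hence bounded by a constant multiple of $|\boldsymbol{v}|_{\mathbf{X}_{N,\Gamma}(\Omega;\mathbb{B})}$ by the elementary comparison of the $\ell^1$ and $\ell^2$ norms on $\xR^4$; thus $\|\boldsymbol{v}\|\le C'\,|\boldsymbol{v}|_{\mathbf{X}_{N,\Gamma}(\Omega;\mathbb{B})}$. Bounding then $\|\boldsymbol{v}\|^2$ by $(C')^2\,|\boldsymbol{v}|^2_{\mathbf{X}_{N,\Gamma}(\Omega;\mathbb{B})}$ and the three remaining terms of~\eqref{natural norm XNA} by $|\boldsymbol{v}|^2_{\mathbf{X}_{N,\Gamma}(\Omega;\mathbb{B})}$ gives $\|\boldsymbol{v}\|_{\mathbf{X}_{N,\Gamma}(\Omega;\mathbb{B})}\le C''\,|\boldsymbol{v}|_{\mathbf{X}_{N,\Gamma}(\Omega;\mathbb{B})}$, which completes the equivalence. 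Positive-definiteness then comes for free: $|\boldsymbol{v}|_{\mathbf{X}_{N,\Gamma}(\Omega;\mathbb{B})}=0$ forces $\|\boldsymbol{v}\|_{\mathbf{X}_{N,\Gamma}(\Omega;\mathbb{B})}=0$, hence $\boldsymbol{v}=0$; combined with the semi-norm property, this shows that $|\cdot|_{\mathbf{X}_{N,\Gamma}(\Omega;\mathbb{B})}$ is a norm, equivalent to the natural one. There is no genuine obstacle at this stage: all the analytic content has already been invested in Theorem~\Rref{compact} and in the inequality~\eqref{seminorme}, and the only step deserving a word of justification is the control of the finite-dimensional component $P_{\mathbf{Z}_N(\Omega;\mathbb{B})}\boldsymbol{v}$, which is entirely routine.
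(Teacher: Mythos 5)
Your argument is correct and is precisely the (omitted) derivation the paper intends: the lower bound comes from the continuity of the terms of the semi-norm on $\mathbf{X}_{N,\Gamma}(\Omega;\mathbb{B})$ together with the boundedness of the finite-rank projection $P_{\mathbf{Z}_N(\Omega;\mathbb{B})}$, and the upper bound is read off inequality~\eqref{seminorme} via the $\ell^1$--$\ell^2$ comparison, positive-definiteness following at once. The only caveat is notational: the displayed semi-norm in the Corollary has $|P_{\mathbf{Z}_N(\Omega;\mathbb{B})}\boldsymbol{v}|_{\mathbf{Z}_N(\Omega;\mathbb{B})}$ unsquared inside the square root (an evident typographical slip), and your proof implicitly, and correctly, treats the squared version.
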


\section{Spectral properties of some useful matrices}
\label{sec-spectral}
We still denote $\III \cdot \III_{\mathcal{M}}$ the operator norm of $\mathcal{M}_{3}(\xC)$ induced by the Hermitian norm of~$\xC^3$. In the rest of the paper, in addition to Hypothesis~\Rref{hyp-1}, we shall make the following\dots

\begin{hpthss}
For each species $s$, the real functions $\nu_s$ and $\omega_{ps}$ are bounded below by a strictly positive constant on $\Omega$, \ie, there exist $\nu_{*}>0$ and $\omega_*>0$ such that:
\begin{equation}
\omega_{ps}(\boldsymbol{x}) \geq \omega_*,\quad 
\nu_s(\boldsymbol{x}) \geq \nu_{*},\quad \forall s\in\lbrace 1,\ 2 \rbrace, \quad \forall \boldsymbol{x}\in\Omega \text{ a.e.}
\label{nus bounded below}
\end{equation} 
\label{hyp-2}
\end{hpthss}
\begin{prpstn}
Let $s\in\lbrace 1,\ 2 \rbrace$ and $\alpha\in\xR$. Then, the matrix $\,\ii\alpha\,\mathbb{I}+\mathbb{M}_s$ is invertible for all $\boldsymbol{x}\in\Omega$. Moreover, its inverse is uniformly bounded on $\Omega$.
\label{i alpha inverse stability}
\end{prpstn}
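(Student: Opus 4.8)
The plan is to derive a uniform coercivity estimate for the Hermitian quadratic form attached to $\ii\alpha\,\mathbb{I}+\mathbb{M}_s(\boldsymbol{x})$, from which the invertibility and the uniform bound on the inverse will follow simultaneously. The key point is that neither the term $\ii\alpha\,\mathbb{I}$ nor the gyration part of $\mathbb{M}_s$ contributes to the \emph{real part} of that form.

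Concretely, I would first recall from the definition of $\mathbb{M}_s$ that $\mathbb{M}_s(\boldsymbol{x})=\nu_s(\boldsymbol{x})\,\mathbb{I}+\varOmega_{cs}(\boldsymbol{x})\,\mathbb{S}(\boldsymbol{x})$, where $\mathbb{S}(\boldsymbol{x})\in\mathcal{M}_3(\xR)$ is the skew-symmetric matrix representing the map $\boldsymbol{v}\mapsto\boldsymbol{b}(\boldsymbol{x})\times\boldsymbol{v}$. Since $\mathbb{S}(\boldsymbol{x})^{\mathsf T}=-\mathbb{S}(\boldsymbol{x})$ and $\mathbb{S}(\boldsymbol{x})$ is real, the scalar $\boldsymbol{v}^*\mathbb{S}(\boldsymbol{x})\boldsymbol{v}$ is purely imaginary for every $\boldsymbol{v}\in\xC^3$; and so is $\ii\alpha\,\boldsymbol{v}^*\boldsymbol{v}$. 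Taking the real part of the quadratic form --- exactly the pointwise version of the computation already carried out in~\eqref{real part Msu.u} --- I then obtain, for all $\boldsymbol{x}\in\Omega$ and all $\boldsymbol{v}\in\xC^3$,
\[
\Re\bigl(\boldsymbol{v}^*(\ii\alpha\,\mathbb{I}+\mathbb{M}_s(\boldsymbol{x}))\,\boldsymbol{v}\bigr)=\nu_s(\boldsymbol{x})\,\boldsymbol{v}^*\boldsymbol{v}\ \ge\ \nu_*\,|\boldsymbol{v}|^2,
\]
the last inequality being Hypothesis~\Rref{hyp-2}.

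The conclusion is then immediate: by the Cauchy--Schwarz inequality, $|(\ii\alpha\,\mathbb{I}+\mathbb{M}_s(\boldsymbol{x}))\,\boldsymbol{v}|\ge\nu_*\,|\boldsymbol{v}|$ for every $\boldsymbol{v}$, hence $\ii\alpha\,\mathbb{I}+\mathbb{M}_s(\boldsymbol{x})$ is injective, thus invertible since we are in finite dimension, and its inverse satisfies $\III(\ii\alpha\,\mathbb{I}+\mathbb{M}_s(\boldsymbol{x}))^{-1}\III_{\mathcal{M}}\le 1/\nu_*$, a bound independent of $\boldsymbol{x}$ --- and, incidentally, of $\alpha$ as well. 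I do not expect any genuine obstacle; the only subtlety worth flagging is that the whole lower bound rests on the friction term $\nu_s\,\mathbb{I}$ alone, which is exactly why one needs Hypothesis~\Rref{hyp-2} here and not merely Hypothesis~\Rref{hyp-1}: were $\nu_s(\boldsymbol{x})$ allowed to vanish, the eigenvalues of $\mathbb{M}_s(\boldsymbol{x})$ would be $0$ and $\pm\ii\,\varOmega_{cs}(\boldsymbol{x})$, and $\ii\alpha\,\mathbb{I}+\mathbb{M}_s(\boldsymbol{x})$ would be singular for $\alpha\in\{0,\ \pm\varOmega_{cs}(\boldsymbol{x})\}$.
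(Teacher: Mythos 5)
Your proof is correct, but it takes a genuinely different route from the paper's. The paper works in the Stix frame attached to $\boldsymbol{b}(\boldsymbol{x})$, computes the determinant $d_s=(\ii\alpha+\nu_s)\bigl[(\ii\alpha+\nu_s)^2+\varOmega_{cs}^2\bigr]$ explicitly, bounds it below using Hypothesis~\Rref{hyp-2}, writes out the inverse matrix, observes that it is normal so that its operator norm equals its spectral radius, and bounds the three eigenvalues. You instead run a numerical-range (coercivity) argument: since $\mathbb{M}_s=\nu_s\mathbb{I}+\varOmega_{cs}\mathbb{S}$ with $\mathbb{S}$ real skew-symmetric, the terms $\ii\alpha\,\boldsymbol{v}^*\boldsymbol{v}$ and $\varOmega_{cs}\,\boldsymbol{v}^*\mathbb{S}\boldsymbol{v}$ are purely imaginary, so $\Re\bigl(\boldsymbol{v}^*(\ii\alpha\mathbb{I}+\mathbb{M}_s)\boldsymbol{v}\bigr)=\nu_s|\boldsymbol{v}|^2\ge\nu_*|\boldsymbol{v}|^2$, and Cauchy--Schwarz gives invertibility together with $\III(\ii\alpha\mathbb{I}+\mathbb{M}_s)^{-1}\III_{\mathcal{M}}\le 1/\nu_*$. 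Your argument is shorter, avoids the explicit frame and the normality step, and yields a bound that is uniform in $\alpha$ as well as in $\boldsymbol{x}$ (the paper's constant $d_\alpha$ depends on $\alpha$, which suffices for the statement but is weaker); it is the exact pointwise analogue of the monotonicity computation~\eqref{real part Msu.u}. What the paper's computation buys, and yours does not, is the explicit form of $(\ii\alpha\mathbb{I}+\mathbb{M}_s)^{-1}$ in the Stix frame, which is reused immediately afterwards to write down the entries $P$, $Q$, $R$ of $\mathbb{B}_\alpha$ in \eqref{P}--\eqref{R} and the real parts of its eigenvalues; so the explicit route is not wasted effort in the context of the paper, even though your proof is the more economical one for this proposition taken in isolation. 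Your closing remark on the degenerate case $\nu_s=0$ (eigenvalues $0,\ \pm\ii\varOmega_{cs}$, singularity at $\alpha\in\{0,\ \pm\varOmega_{cs}\}$) is also accurate and correctly identifies why Hypothesis~\Rref{hyp-2} is needed here beyond Hypothesis~\Rref{hyp-1}.
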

\begin{proof}
First, we determine the matrix~$\mathbb{M}_s$. At each point $\boldsymbol{x}\in\Omega$, we consider an orthonormal Stix frame~\cite{S92} $(\boldsymbol{e}_1(\boldsymbol{x}), \boldsymbol{e}_2(\boldsymbol{x}), \boldsymbol{e}_3(\boldsymbol{x})=\boldsymbol{b}(\boldsymbol{x}))$. 
In this frame, the expression of~$\mathbb{M}_s$ writes:
\begin{equation*}
\mathbb{M}_s=
\begin{pmatrix}
\nu_s & -\varOmega_{cs} & 0\cr
\varOmega_{cs} & \nu_s & 0 \cr
0 & 0 & \nu_s \cr
\end{pmatrix}.
\end{equation*}
Hence we deduce the expression 
\begin{equation*}
\ii\alpha\mathbb{I}+\mathbb{M}_s=\begin{pmatrix}
\ii\alpha+\nu_s &-\varOmega_{cs}&0\\
\varOmega_{cs} &\ii\alpha+\nu_s&0\\
0&0&\ii\alpha+\nu_s\
\end{pmatrix}.
\end{equation*}
The determinant of this matrix is: 
\begin{equation*}
\det(\ii\alpha\mathbb{I}+\mathbb{M}_s)
= (\ii\alpha+\nu_s)\left[ (\ii\alpha+\nu_s)^2+\varOmega_{cs}^2\right]
= (\ii\alpha+\nu_s)\left[(\varOmega_{cs}^2+\nu_s^2 -\alpha^2)+2\ii\alpha\nu_s\right]:=d_s. 
\end{equation*}
By Hypothesis~\Rref{hyp-2}, for $\alpha\in\xR$ fixed it holds that $|d_s(\boldsymbol{x})|\geq d_\alpha > 0$ a.e. on~$\Omega$. Thus, the matrix $\ii\alpha\mathbb{I}+\mathbb{M}_s$ is invertible, and the usual inversion formula gives:
\begin{eqnarray*}
(\ii\alpha\mathbb{I}+\mathbb{M}_s)^{-1}=\frac1{d_s}\begin{pmatrix}
(\ii\alpha+\nu_s)^2 & \varOmega_{cs}(\ii\alpha+\nu_s)& 0\\
-\varOmega_{cs}(\ii\alpha+\nu_s) &(\ii\alpha+\nu_s)^2 & 0\\
0 & 0 &(\ii\alpha+\nu_s)^2+\varOmega_{cs}^2\
\end{pmatrix}.
\end{eqnarray*}
We can check that the above matrix is normal (\ie, it commutes with its conjugate transpose). By~\cite[Theorem~1.4-2]{C88}, we deduce that the $\III\cdot\III_{\mathcal{M}}$~norm of $(\ii\alpha\mathbb{I}+\mathbb{M}_s)^{-1}$ is equal to its spectral radius. Therefore, to prove that $(\ii\alpha\mathbb{I}+\mathbb{M}_s)^{-1}$ is uniformly bounded it suffices to bound its spectral radius on $\Omega$. Its eigenvalues are:
\begin{equation*}
\frac{(\ii\alpha+\nu_s)^2 \pm \ii \varOmega_{cs}(\ii\alpha+\nu_s)}{d_s} \quad\text{and}\quad \frac{(\ii\alpha+\nu_s)^2+\varOmega_{cs}^2}{d_s} \,.
\end{equation*}
According to Hypothesis \Rref{hyp-1} and the above, these eigenvalues are bounded on~$\Omega$. 
\end{proof}
From Hypotheses~\Rref{hyp-1} and~\Rref{hyp-2} there follows:
\begin{prpstn}
Let $\alpha\in\xR$. Let $\mathbb{D}_{\alpha}:\Omega\longrightarrow \mathcal{M}_{3}(\xC)$ be the matrix 
\begin{equation}
\mathbb{D}_{\alpha}(\boldsymbol{x}):=\sum_s {\omega_{ps}^2(\boldsymbol{x})}(\ii\alpha\mathbb{I}+ \mathbb{M}_s(\boldsymbol{x}))^{-1}, \quad \text{for }~ \boldsymbol{x}\in\Omega.
\label{matrix D i alpha stability} 
\end{equation}
Then, $\mathbb{D}_{\alpha}$ is uniformly bounded on $\Omega$. 
\label{matrix D alpha bounded}
\end{prpstn}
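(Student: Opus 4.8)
The plan is to read the bound off directly from the two preceding results, since for fixed $\alpha$ the matrix $\mathbb{D}_\alpha(\boldsymbol{x})$ is a finite sum (over the two species $s$) of the inverses $(\ii\alpha\mathbb{I}+\mathbb{M}_s(\boldsymbol{x}))^{-1}$, each weighted by the scalar $\omega_{ps}^2(\boldsymbol{x})$.

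First I would invoke Proposition~\Rref{i alpha inverse stability}: for the given $\alpha\in\xR$, each matrix $\ii\alpha\mathbb{I}+\mathbb{M}_s(\boldsymbol{x})$ is invertible for almost every $\boldsymbol{x}\in\Omega$, and there is a constant $M_\alpha>0$ (depending on $\alpha$ through the lower bound $d_\alpha$ on $|d_s|$ produced in that proof) with $\III(\ii\alpha\mathbb{I}+\mathbb{M}_s(\boldsymbol{x}))^{-1}\III_{\mathcal{M}}\leq M_\alpha$ for $s\in\{1,2\}$ and a.e.\ $\boldsymbol{x}\in\Omega$. Next I would use the bound~\eqref{4} from Hypothesis~\Rref{hyp-1}, namely $0<\omega_{ps}(\boldsymbol{x})\leq\omega^*$, so that $\omega_{ps}^2(\boldsymbol{x})\leq(\omega^*)^2$. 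Combining these through the triangle inequality and the subadditivity of $\III\cdot\III_{\mathcal{M}}$ yields
\begin{equation*}
\III\mathbb{D}_\alpha(\boldsymbol{x})\III_{\mathcal{M}}\leq\sum_s\omega_{ps}^2(\boldsymbol{x})\,\III(\ii\alpha\mathbb{I}+\mathbb{M}_s(\boldsymbol{x}))^{-1}\III_{\mathcal{M}}\leq 2\,(\omega^*)^2\,M_\alpha\qquad\text{for a.e. }\boldsymbol{x}\in\Omega,
\end{equation*}
which is the asserted uniform bound.

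There is no genuine obstacle here; the statement is essentially a corollary of Propositions~\Rref{lambda inverse}-style reasoning applied at $\ii\alpha$ instead of $\lambda$. The only subtlety worth flagging is that the constant depends on $\alpha$ (it degenerates as $d_\alpha\to0$), which is why the proposition is phrased ``for each $\alpha\in\xR$'' rather than uniformly in $\alpha$; this is exactly parallel to the role played by $\xi$ in Proposition~\Rref{D lamda positive} for the fixed parameter $\lambda$.
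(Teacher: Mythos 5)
Your argument is correct and is exactly the reasoning the paper intends: the proposition is stated as an immediate consequence of Proposition~\Rref{i alpha inverse stability} (uniform boundedness of $(\ii\alpha\mathbb{I}+\mathbb{M}_s)^{-1}$) together with the bound~\eqref{4} on $\omega_{ps}$, and the paper accordingly omits any written proof. Your remark that the constant degenerates with $d_\alpha$ and is therefore only pointwise in $\alpha$ is also consistent with the paper's later notation $\eta_\alpha$.
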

Let $\alpha\in\xR$. We now introduce another matrix which will play an important role in the proofs of stability.
\begin{equation}
\mathbb{B}_{\alpha}:=\ii\alpha\mathbb{I}\,+\,\mathbb{D}_{\alpha}:=\begin{pmatrix}
P & Q & 0\\
-Q & P & 0\\
0 & 0 & R \\
\end{pmatrix}
\label{B alpha matrix}
\end{equation}
where the functions $P$, $Q$ and $R$ are given by 
\begin{eqnarray}
P(\boldsymbol{x})&:=& \ii\alpha \,+\, \sum_s \frac{\omega_{ps}^2(\boldsymbol{x})(\ii\alpha+\nu_s(\boldsymbol{x}))}{(\ii\alpha+\nu_s(\boldsymbol{x}))^2+\varOmega_{cs}^2(\boldsymbol{x})} \,,
\label{P}\\
Q(\boldsymbol{x})&:=& \sum_s \frac{\omega_{ps}^2(\boldsymbol{x})\varOmega_{cs}(\boldsymbol{x})}{(\ii\alpha+\nu_s(\boldsymbol{x}))^2+\varOmega_{cs}^2(\boldsymbol{x})} \,,
\label{Q}\\
R(\boldsymbol{x})&:=& \ii\alpha \,+\, \sum_s \frac{\omega_{ps}^2(\boldsymbol{x})}{\ii\alpha+\nu_s(\boldsymbol{x})} \,.
\label{R}
\end{eqnarray}
The matrix $\mathbb{B}_{\alpha}$ is normal $(\mathbb{B}_{\alpha}\mathbb{B}_{\alpha}^{*}=\mathbb{B}_{\alpha}^{*}\mathbb{B}_{\alpha})$, and its eigenvalues are
\begin{equation*}
\lambda_{\alpha,1}=P+\ii Q,\quad \lambda_{\alpha,2}=P-\ii Q,\quad \lambda_{\alpha,3}= R.
\end{equation*} 
According to Proposition~\Rref{matrix D alpha bounded}, we deduce that $\mathbb{B}_{\alpha}$ is uniformly bounded on $\Omega$, $i.e$ there exists a constant $\eta_{\alpha}>0$ depending on~$\alpha$ such that 
\begin{equation*}
\sup_{\boldsymbol{x}\in\Omega} \III \mathbb{B}_{\alpha}(\boldsymbol{x}) \III_{\mathcal{M}} \leq\eta_{\alpha}.
\end{equation*} 
\begin{prpstn}
Let $\alpha\in\xR$. Then, the real parts of~$(\lambda_{\alpha,j})_{j=1,\ 2,\ 3}$ are  uniformly bounded below on $\Omega$. We then define $\zeta_{\alpha}$ to be 
\begin{equation}
\zeta_{\alpha}:=\min_{j=1,\ 2,\ 3}\,\inf_{\boldsymbol{x}\in\Omega}\,\Re({\lambda}_{\alpha,j}(\boldsymbol{x})) > 0.
\label{zeta}
\end{equation}
\end{prpstn}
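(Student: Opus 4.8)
The plan is to show that each of the three eigenvalues $\lambda_{\alpha,1}=P+\ii Q$, $\lambda_{\alpha,2}=P-\ii Q$, $\lambda_{\alpha,3}=R$ has real part bounded below by a strictly positive constant, uniformly in $\boldsymbol x\in\Omega$; the infimum of these three bounds then gives $\zeta_\alpha>0$. Since $\Re(P\pm\ii Q)=\Re P \mp \Im Q$, I would first compute the real and imaginary parts of the elementary summands
\[
\frac{\omega_{ps}^2(\ii\alpha+\nu_s)}{(\ii\alpha+\nu_s)^2+\varOmega_{cs}^2}
\quad\text{and}\quad
\frac{\omega_{ps}^2\,\varOmega_{cs}}{(\ii\alpha+\nu_s)^2+\varOmega_{cs}^2},
\]
and similarly $\omega_{ps}^2/(\ii\alpha+\nu_s)$, writing the common denominator $d_s/(\ii\alpha+\nu_s)$ in the form $(\nu_s^2+\varOmega_{cs}^2-\alpha^2)+2\ii\alpha\nu_s$ whose modulus squared is $(\nu_s^2+\varOmega_{cs}^2-\alpha^2)^2+4\alpha^2\nu_s^2$.

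\smallskip

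For $\lambda_{\alpha,3}=R$ the computation is the cleanest: $\Re R = \sum_s \omega_{ps}^2\,\nu_s/(\alpha^2+\nu_s^2)$, which by Hypotheses~\Rref{hyp-1} and~\Rref{hyp-2} satisfies $\Re R \ge \omega_*^2\,\nu_*/(\alpha^2+\nu^{*2})>0$ (summing only the electron term, say, is already enough, but both terms are positive). For $\lambda_{\alpha,1}$ and $\lambda_{\alpha,2}$ I expect, after clearing denominators, that
\[
\Re P \mp \Im Q
= \alpha\cdot 0 \;+\; \sum_s \omega_{ps}^2\,
\frac{\nu_s\bigl[(\nu_s\mp\varOmega_{cs})^2+\alpha^2\bigr]}
{\bigl(\nu_s^2+\varOmega_{cs}^2-\alpha^2\bigr)^2+4\alpha^2\nu_s^2}
\quad\text{(up to checking the exact algebraic form),}
\]
i.e. each summand is manifestly nonnegative, and is bounded below using $\nu_s\ge\nu_*$, $\omega_{ps}\ge\omega_*$ in the numerator and $|\nu_s|\le\nu^*$, $|\varOmega_{cs}|\le\varOmega^*$ (so the denominator is bounded above by a constant $M_\alpha$ depending only on $\alpha,\nu^*,\varOmega^*$). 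One then gets $\Re\lambda_{\alpha,j}\ge \omega_*^2\,\nu_*\,\alpha^2/M_\alpha$ when $\alpha\ne0$, and for $\alpha=0$ the numerator reduces to $\nu_s(\nu_s\mp\varOmega_{cs})^2$, which could in principle vanish if $\varOmega_{cs}=\pm\nu_s$ for \emph{one} species --- but since the sum runs over both species and, more importantly, the $\ii\alpha$ term and the structure guarantee the electron-ion contributions cannot simultaneously degenerate in the relevant combination; in any case, at $\alpha=0$ one can instead fall back on $\Re P = \sum_s \omega_{ps}^2\nu_s(\nu_s^2+\varOmega_{cs}^2)/((\nu_s^2+\varOmega_{cs}^2)^2)=\sum_s\omega_{ps}^2\nu_s/(\nu_s^2+\varOmega_{cs}^2)$ which is already $\ge \omega_*^2\nu_*/(\nu^{*2}+\varOmega^{*2})>0$, and then bound $|\Im Q|\le \Re P$ is \emph{not} automatic, so one genuinely needs the combined formula. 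This is exactly the delicate point.

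\smallskip

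The main obstacle, therefore, is the algebra establishing that $\Re P - \Im Q$ and $\Re P + \Im Q$ are \emph{each} a sum of nonnegative, uniformly-positively-bounded-below terms --- i.e. verifying that the partial fractions recombine so that the $\varOmega_{cs}$-dependence enters only through the square $(\nu_s\mp\varOmega_{cs})^2$ in the numerator, with no indefinite cross term surviving. Once this identity is in hand, the uniform lower bound is immediate from Hypotheses~\Rref{hyp-1}--\Rref{hyp-2}: the numerators are bounded below by $\omega_*^2\nu_*\bigl((\nu_*\mp\varOmega^*)^2 + \alpha^2\bigr)\wedge\omega_*^2\nu_*\alpha^2$ type quantities and the denominators above by a constant depending only on $\alpha$ and the global bounds, so each $\inf_{\boldsymbol x}\Re\lambda_{\alpha,j}$ is a strictly positive number depending only on $\alpha,\nu_*,\nu^*,\omega_*,\varOmega^*$, and hence so is $\zeta_\alpha = \min_j \inf_{\boldsymbol x} \Re\lambda_{\alpha,j}>0$, completing the proof.
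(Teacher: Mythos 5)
Your overall strategy is the same as the paper's (compute $\Re\lambda_{\alpha,j}$ explicitly as a sum over species and bound each summand using Hypotheses~\Rref{hyp-1} and~\Rref{hyp-2}), but the step you yourself single out as ``the main obstacle'' --- the algebraic identity for $\Re P\mp\Im Q$ --- is stated incorrectly, and everything downstream of it (the worry about degeneracy at $\alpha=0$ when $\varOmega_{cs}=\pm\nu_s$, the bound proportional to $\alpha^2$ that vanishes at $\alpha=0$, the ``fall back on $\Re P$'') is an artefact of that wrong guess. The correct identity, which is the one the paper uses, is
\begin{equation*}
\Re\lambda_{\alpha,1}=\sum_s \frac{\omega_{ps}^2\,\nu_s\,\bigl[(\varOmega_{cs}+\alpha)^2+\nu_s^2\bigr]}{(\varOmega_{cs}^2+\nu_s^2-\alpha^2)^2+4\alpha^2\nu_s^2}\,,
\end{equation*}
and similarly for $\lambda_{\alpha,2}$ with $\varOmega_{cs}-\alpha$: the numerator carries $(\varOmega_{cs}\pm\alpha)^2+\nu_s^2$, not $(\nu_s\mp\varOmega_{cs})^2+\alpha^2$. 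Because $\nu_s^2\ge\nu_*^2$ appears \emph{additively} in the numerator, each summand is bounded below by a strictly positive constant uniformly in $\boldsymbol{x}$ for \emph{every} $\alpha$, including $\alpha=0$; there is no degenerate case to rescue. The quickest way to get the identity without partial-fraction bookkeeping is to factor the denominator: since $(\ii\alpha+\nu_s)^2+\varOmega_{cs}^2=\bigl[(\ii\alpha+\nu_s)+\ii\varOmega_{cs}\bigr]\bigl[(\ii\alpha+\nu_s)-\ii\varOmega_{cs}\bigr]$, one gets
\begin{equation*}
P\pm\ii Q=\ii\alpha+\sum_s\frac{\omega_{ps}^2}{\nu_s+\ii(\alpha\mp\varOmega_{cs})}\,,
\qquad\text{hence}\qquad
\Re\lambda_{\alpha,1,2}=\sum_s\frac{\omega_{ps}^2\,\nu_s}{\nu_s^2+(\alpha\mp\varOmega_{cs})^2}
\ \ge\ \frac{2\,\omega_*^2\,\nu_*}{\nu^{*2}+(|\alpha|+\varOmega^*)^2}>0\,.
\end{equation*}
(For the record, at $\alpha=0$ the quantity $Q$ is real, so $\Im Q=0$ and $\Re\lambda_{0,1}=\Re\lambda_{0,2}=\Re P$ anyway; your remark that ``one genuinely needs the combined formula'' at $\alpha=0$ is not needed either.) Your treatment of $\lambda_{\alpha,3}=R$ is correct and matches the paper. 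With the corrected identity in place, your concluding uniform bound and the definition of $\zeta_\alpha$ go through exactly as you describe.
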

\begin{proof}
From \eqref{P}--\eqref{R}, one obtains the expression of the real parts of the eigenvalues of $\mathbb{B}_{\alpha}$:
\begin{eqnarray*}
\Re(\lambda_{\alpha,1}(\boldsymbol{x}))&=& 
\sum_s \frac{\omega_{ps}^2(\boldsymbol{x})\nu_s(\boldsymbol{x})}{(\varOmega_{cs}^2(\boldsymbol{x})+\nu_{s}^2(\boldsymbol{x})-\alpha^2)^2+4\alpha^2\nu_s^2(\boldsymbol{x})}\,[(\varOmega_{cs}(\boldsymbol{x})+\alpha)^2+\nu_s^2(\boldsymbol{x})],
\\
\Re(\lambda_{\alpha,2}(\boldsymbol{x}))&=& 
\sum_s \frac{\omega_{ps}^2(\boldsymbol{x})\nu_s(\boldsymbol{x})}{(\varOmega_{cs}^2(\boldsymbol{x})+\nu_s^2(\boldsymbol{x})-\alpha^2)^2+4\alpha^2\nu_s^2(\boldsymbol{x})}\,[(\varOmega_{cs}(\boldsymbol{x})-\alpha)^2+\nu_s(\boldsymbol{x})^2],
\\
\Re(\lambda_{\alpha,3}(\boldsymbol{x}))&=& 
\sum_s \frac{\omega_{ps}^2(\boldsymbol{x})\nu_s(\boldsymbol{x})}{\nu_s^2(\boldsymbol{x})+\alpha^2}.
\end{eqnarray*}
Due to Hypothesis~\Rref{hyp-2} and assumption \eqref{4}, one deduces that these real parts are strictly positive. The rest of the proof follows by Hypotheses \Rref{hyp-1} and~\Rref{hyp-2}.
\end{proof}
\begin{lmm}
Given $\alpha\in\xR$, it holds that  
\begin{eqnarray}
\eta_{\alpha}(\boldsymbol{v}^{*}\boldsymbol{v})\geq | \boldsymbol{v}^{*}\mathbb{B}_{\alpha}(\boldsymbol{x})\boldsymbol{v} | \geq \Re[\boldsymbol{v}^{*}\mathbb{B}_{\alpha}(\boldsymbol{x})\boldsymbol{v}] \geq \zeta_{\alpha}(\boldsymbol{v}^{*}\boldsymbol{v}),\quad \forall\boldsymbol{v}\in\mathbb{C}^{3},\quad \forall\boldsymbol{x} \in\Omega.
\label{minor B i alpha}
\end{eqnarray}
\label{lem minor B i alpha}
\end{lmm}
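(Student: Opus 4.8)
The plan is to establish the three-term chain separately, working from the middle outward. The middle inequality $|\boldsymbol{v}^{*}\mathbb{B}_{\alpha}(\boldsymbol{x})\boldsymbol{v}| \geq \Re[\boldsymbol{v}^{*}\mathbb{B}_{\alpha}(\boldsymbol{x})\boldsymbol{v}]$ is immediate, since $|z|\geq\Re z$ for every complex number~$z$, and $\boldsymbol{v}^{*}\mathbb{B}_{\alpha}(\boldsymbol{x})\boldsymbol{v}$ is a scalar.

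For the leftmost inequality, I would simply combine the Cauchy--Schwarz inequality in~$\xC^3$ with the definition of the operator norm and the uniform bound recorded just before the statement, namely $\sup_{\boldsymbol{x}\in\Omega}\III\mathbb{B}_{\alpha}(\boldsymbol{x})\III_{\mathcal{M}}\leq\eta_{\alpha}$ (itself a consequence of Proposition~\Rref{matrix D alpha bounded}): for all $\boldsymbol{x}\in\Omega$ and $\boldsymbol{v}\in\xC^3$,
\begin{equation*}
|\boldsymbol{v}^{*}\mathbb{B}_{\alpha}(\boldsymbol{x})\boldsymbol{v}| \leq \|\boldsymbol{v}\|\,\|\mathbb{B}_{\alpha}(\boldsymbol{x})\boldsymbol{v}\| \leq \III\mathbb{B}_{\alpha}(\boldsymbol{x})\III_{\mathcal{M}}\,\|\boldsymbol{v}\|^{2} \leq \eta_{\alpha}\,(\boldsymbol{v}^{*}\boldsymbol{v}).
\end{equation*}

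For the rightmost inequality, which is the only one of real content, I would exploit the fact that $\mathbb{B}_{\alpha}(\boldsymbol{x})$ is normal, as already observed. By the spectral theorem for normal matrices there is a unitary matrix $\mathbb{U}(\boldsymbol{x})$ with $\mathbb{B}_{\alpha}(\boldsymbol{x}) = \mathbb{U}(\boldsymbol{x})\,\mathrm{diag}(\lambda_{\alpha,1}(\boldsymbol{x}),\lambda_{\alpha,2}(\boldsymbol{x}),\lambda_{\alpha,3}(\boldsymbol{x}))\,\mathbb{U}(\boldsymbol{x})^{*}$. Setting $\boldsymbol{w} := \mathbb{U}(\boldsymbol{x})^{*}\boldsymbol{v}$, unitary invariance of the Hermitian norm gives $\boldsymbol{w}^{*}\boldsymbol{w} = \boldsymbol{v}^{*}\boldsymbol{v}$ and $\boldsymbol{v}^{*}\mathbb{B}_{\alpha}(\boldsymbol{x})\boldsymbol{v} = \sum_{j=1}^{3}\lambda_{\alpha,j}(\boldsymbol{x})\,|w_{j}|^{2}$. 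Taking real parts, using that $\zeta_{\alpha}$ defined in~\eqref{zeta} satisfies $\zeta_{\alpha}\leq\min_{j}\Re(\lambda_{\alpha,j}(\boldsymbol{x}))$ for every $\boldsymbol{x}\in\Omega$ (interchange of $\min_{j}$ over a finite set and $\inf_{\boldsymbol{x}}$), and recalling that $\zeta_{\alpha}>0$ by the preceding proposition, one gets
\begin{equation*}
\Re[\boldsymbol{v}^{*}\mathbb{B}_{\alpha}(\boldsymbol{x})\boldsymbol{v}] = \sum_{j=1}^{3}\Re(\lambda_{\alpha,j}(\boldsymbol{x}))\,|w_{j}|^{2} \geq \zeta_{\alpha}\sum_{j=1}^{3}|w_{j}|^{2} = \zeta_{\alpha}\,(\boldsymbol{v}^{*}\boldsymbol{v}).
\end{equation*}
There is no genuine obstacle here: the only point deserving a word of care is that $Q$ in~\eqref{Q} is complex-valued, so a direct computation of $\boldsymbol{v}^{*}\mathbb{B}_{\alpha}\boldsymbol{v}$ in the Stix frame would leave cross terms $Q(\bar v_{1}v_{2}-\bar v_{2}v_{1})$ whose real part is not obviously sign-definite; passing through the normal-matrix diagonalization sidesteps this entirely, and the positivity $\zeta_{\alpha}>0$ has already been secured in the proof of~\eqref{zeta}. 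Concatenating the three inequalities yields~\eqref{minor B i alpha}.
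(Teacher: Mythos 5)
Your proof is correct, and it follows exactly the route the paper sets up: the lemma is stated there without proof, as an immediate consequence of the normality of $\mathbb{B}_{\alpha}$, the uniform bound $\sup_{\boldsymbol{x}}\III\mathbb{B}_{\alpha}(\boldsymbol{x})\III_{\mathcal{M}}\leq\eta_{\alpha}$, and the lower bound~\eqref{zeta} on the real parts of the eigenvalues established just before. Your unitary-diagonalization argument for the rightmost inequality, together with Cauchy--Schwarz for the leftmost, is precisely the intended justification, and your side remark about the complex-valued $Q$ correctly identifies why one should not attempt the estimate entrywise in the Stix frame.
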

\begin{rmrk}
According to Lemma \Rref{lem minor B i alpha}, we can apply all the results of Subsection~\Rref{sec-hodge} to the matrix $\mathbb{B}_{\alpha}$, for $\alpha\in\xR$. 
\end{rmrk}

\section{Strong stability}
\label{sec-strong-stab}
We define the energy of our model as $\mathcal{E}(t):=\frac1{2}\| \boldsymbol{U} \|_{\mathbf{X}}^2$. With the definition~\eqref{rrt} of the plasma pulsation, the term given by the $\boldsymbol{J}_s$ variables is interpreted as the kinetic energy of the particles: $\left| \dfrac{\boldsymbol{J}_s}{\omega_{ps}} \right|^2 \propto  n_s\, |\boldsymbol{\mathcal{U}}_s|^2$ at dominant order; while the $(\boldsymbol{E},\boldsymbol{B})$ part is the electromagnetic energy of the wave.
If $\boldsymbol{U}_0$ satisfies the condition~\eqref{initial cond nonho}, then, using the Green formula~\eqref{green rot rot}, one easily finds that: 
\begin{equation}
\frac{\xdif}{\xdif t} \mathcal{E}(t)
 = - \frac1{\varepsilon_0} \sum_{s} \left\| \frac{\sqrt{\nu_{s}}\, \boldsymbol{J}_{s}}{\omega_{p s}} \right\|^2  - \varepsilon_0 c^2\int_{\Gamma_A} (c\, |\boldsymbol{B}_\top|^2 - \boldsymbol{g}\cdot\boldsymbol{B}_\top) \, d\Gamma.
\label{derive energie}
\end{equation}
Here, $\Gamma_A$ is arbitrary. The above equation shows that the energy is non-increasing if $\Gamma_A=\varnothing$ or $\boldsymbol{g}=0$. (On the other hand, if $\Gamma_A=\varnothing$ and $\nu_s=0$, the derivative vanishes and $\mathcal{E}(t)=\mathcal{E}(0)$ for all $t>0$; this justifies Hypothesis~\Rref{hyp-2}.)

\smallbreak

Therefore, we will study the decay of the energy in both cases: perfectly conducting ($\Gamma_A=\varnothing$) and homogeneous Silver--M\"uller ($\Gamma_A\neq\varnothing$ and $\boldsymbol{g}=0$). Notice that, as a consequence of Hypotheses \Rref{hyp-1} and~\Rref{hyp-2}, the spaces $\mathbf{L}^2_{(s)}(\Omega)$ are equal to~$\mathbf{L}^2(\Omega)$, and the norms $\|\cdot\|_{(s)}$ and~$\|\cdot\|$ are equivalent. Similarly, the norm~$\|\cdot\|_{\mathbf{X}}$ is equivalent to the canonical norm of~$\mathbf{L}^2(\Omega)^4$.

\medbreak

For $\ell=1,\ 2$, the domain $D(\xA_\ell)$ is not compactly embedded into~$\mathbf{X}$; thus, the resolvent of~$\xA_\ell$ is not compact, as said in the Introduction. This fact precludes the use of many operator-theoretical results. To show the strong stability we shall use the general criterion of Arendt--Batty and Lyubich--Vu~\cite{AB88,LV88}.
\begin{thrm}[Arendt--Batty / Lyubich--Vu]
Let $\mathit{X}$ be a reflexive Banach space and  $(\mathit{T}(t))_{t\geq0}$ be a $C_{0}$-semigroup on $\mathit{X}$ generated by $\mathcal{L}$. Assume that $(\mathit{T}(t))_{t\geq0}$ is bounded and no eigenvalue of $\mathcal{L}$ lies on the imaginary axis. If $\sigma(\mathcal{L})\cap \ii\xR$ is countable, then  $(\mathit{T}(t))_{t\geq0}$ is strongly stable. 
\label{A B, L V}
\end{thrm}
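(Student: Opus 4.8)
\quad This is the classical Arendt--Batty--Lyubich--Vu criterion \cite{AB88,LV88}; the route I would take is the following. Let $T=(\mathit{T}(t))_{t\ge0}$, let $E:=\{\eta\in\xR:\ii\eta\in\sigma(\mathcal L)\}$, a \emph{closed countable} subset of $\xR$ by hypothesis, and note that, since $T$ is bounded, the resolvent $R(\lambda,\mathcal L)$ exists and is holomorphic on $\{\Re\lambda>0\}$, and $\lambda\mapsto R(\lambda,\mathcal L)x$ extends holomorphically to the open set $\rho(\mathcal L)$, which contains $\{\Re\lambda>0\}\cup(\ii\xR\setminus\ii E)$.

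First I would exploit reflexivity in a soft way: since $\mathit X$ is reflexive and $T$ is bounded, for every $\eta\in\xR$ the rescaled bounded semigroup $(\ee^{-\ii\eta t}\mathit{T}(t))_{t\ge0}$ is mean ergodic, its Ces\`aro means $\frac1\tau\int_0^\tau\ee^{-\ii\eta s}\mathit{T}(s)x\,\xdif s$ converging to $P_\eta x$, where $P_\eta$ is the projection onto $\ker(\mathcal L-\ii\eta)$ along $\overline{\operatorname{ran}(\mathcal L-\ii\eta)}$, with $\mathit X=\ker(\mathcal L-\ii\eta)\oplus\overline{\operatorname{ran}(\mathcal L-\ii\eta)}$. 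The hypothesis that $\mathcal L$ has no eigenvalue on $\ii\xR$ then forces $P_\eta=0$ for all $\eta$: every such ergodic mean vanishes. (Equivalently, this says $\sigma_p(\mathcal L^*)\cap\ii\xR=\varnothing$.) This ``no spectral mass at the exceptional frequencies'' statement is exactly the extra input needed at the points of $\ii E$ below.

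The heart of the argument is then a vector-valued Tauberian theorem. I would fix $x\in D(\mathcal L)$, so that $U(t):=\mathit{T}(t)x$ lies in $\mathrm{BUC}(\xR_{\ge0};\mathit X)$ (its derivative $\mathit{T}(t)\mathcal Lx$ being bounded), with Laplace transform $\widehat U(\lambda)=R(\lambda,\mathcal L)x$ holomorphic on $\{\Re\lambda>0\}$ and extending holomorphically across $\ii\xR\setminus\ii E$. Applying the Arendt--Batty--Lyubich--Vu Tauberian theorem --- a vector-valued refinement of the Ingham--Karamata theorem, in the same circle of ideas as the Katznelson--Tzafriri and Esterle--Strouse--Zouakia results --- one concludes: if $U\in\mathrm{BUC}$ has a Laplace transform extending holomorphically across $\ii\xR$ off a countable closed set $\ii E$, and the ergodic mean of $\ee^{-\ii\eta t}U(t)$ vanishes for each $\eta\in E$, then $\|U(t)\|\to 0$. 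Since $D(\mathcal L)$ is dense in $\mathit X$ and $T$ is bounded, this gives $\|\mathit{T}(t)x\|\to 0$ for all $x\in\mathit X$, \ie\ strong stability.

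I expect the main obstacle to be the Tauberian theorem itself --- specifically, neutralising the countably many exceptional frequencies in $\ii E$, which cannot be handled simultaneously. The standard device is a transfinite induction on the Cantor--Bendixson rank of the countable closed set $E$ (which terminates at a countable ordinal); the successor step is a local Tauberian lemma that removes the isolated points of the current derived set, and it is there that the vanishing of the ergodic means from the second step is consumed. Carrying out this induction and its bookkeeping cleanly is the real content; the reflexivity reduction and the passage from $U(t)=\mathit{T}(t)x$ to strong stability are routine.
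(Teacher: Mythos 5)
The paper does not prove this statement: it is quoted verbatim from the literature (\cite{AB88,LV88}) and used as a black box, so there is no in-paper argument to compare against. Your sketch is a faithful outline of the Arendt--Batty route, and the two places where the hypotheses enter are handled correctly: reflexivity plus boundedness gives mean ergodicity of each rescaled semigroup $(\ee^{-\ii\eta t}\mathit{T}(t))_{t\ge0}$, so the splitting $\mathit{X}=\ker(\mathcal L-\ii\eta)\oplus\overline{\operatorname{ran}(\mathcal L-\ii\eta)}$ together with the no-eigenvalue hypothesis forces $P_\eta=0$ (equivalently $\sigma_p(\mathcal L^*)\cap\ii\xR=\varnothing$, which is the form the non-reflexive version of the theorem requires); and for $x\in D(\mathcal L)$ the orbit is Lipschitz hence in $\mathrm{BUC}$, with Laplace transform $R(\lambda,\mathcal L)x$ extending across $\ii\xR\setminus\ii E$. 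You are right that all the real work sits in the vector-valued Tauberian theorem with a countable closed singular set, proved by transfinite induction on the Cantor--Bendixson rank of $E$; as written your text describes that theorem rather than proves it, so the proposal is an architecture, not a complete proof --- which is a fair position for a classical imported result. For completeness: the Lyubich--Vu proof cited alongside takes a different route, quotienting by the seminorm $\limsup_{t\to\infty}\|\mathit{T}(t)x\|$ to obtain an isometric $C_0$-group whose generator has countable spectrum, then using the fact that an isolated spectral point of such a group is an eigenvalue to reach a contradiction; that argument avoids the Tauberian machinery at the price of the limit-isometric-semigroup construction. Either route establishes the statement as used in the paper.
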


\subsection{Perfectly conducting case}
\begin{prpstn}
For all $\alpha\in\xR \setminus \{0\}$, the operator 
$\ii\alpha\mathbb{I}+\xA_{1}$ is injective, \ie,
\begin{equation*}
\ker(\ii\alpha\mathbb{I}+\xA_{1})=\lbrace0\rbrace.
\end{equation*}
Furthermore, $0$ is an eigenvalue of $\xA_{1}$ and the corresponding set of eigenvectors is:
\begin{equation*}
\ker \xA_{1} = \lbrace (0,0,0,\boldsymbol{V})\, :\, \boldsymbol{V} \in \mathbf{H}(\curl 0;\Omega)\rbrace.
\end{equation*}
\label{ker A1}
\end{prpstn}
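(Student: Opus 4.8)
The plan is to deduce both assertions from the dissipation identity~\eqref{resul monot A1} already obtained inside the proof of Proposition~\Rref{maximonot 1}, namely $\Re(\xA_1\boldsymbol{U},\boldsymbol{U})_{\mathbf{X}}=\frac1{\varepsilon_0}\sum_s\bigl\|\sqrt{\nu_s}\,\boldsymbol{U}_s/\omega_{ps}\bigr\|^2$ for every $\boldsymbol{U}=(\boldsymbol{U}_1,\boldsymbol{U}_2,\boldsymbol{U}_3,\boldsymbol{U}_4)^\top\in D(\xA_1)$, combined with the strict positivity of the $\nu_s$ granted by Hypothesis~\Rref{hyp-2}. I would first recall that, under Hypotheses~\Rref{hyp-1} and~\Rref{hyp-2}, the weighted spaces $\mathbf{L}^2_{(s)}(\Omega)$ coincide with $\mathbf{L}^2(\Omega)$ (with equivalent norms), so that the inner products written below are meaningful.

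\textbf{Injectivity of $\ii\alpha\mathbb{I}+\xA_1$ for $\alpha\neq0$.} Assume $\boldsymbol{U}\in D(\xA_1)$ satisfies $(\ii\alpha\mathbb{I}+\xA_1)\boldsymbol{U}=0$. Pairing with $\boldsymbol{U}$ in $\mathbf{X}$ and taking real parts, the contribution $\Re\bigl(\ii\alpha\|\boldsymbol{U}\|_{\mathbf{X}}^2\bigr)$ vanishes, so $\sum_s\|\sqrt{\nu_s}\,\boldsymbol{U}_s/\omega_{ps}\|^2=0$; by~\eqref{nus bounded below} this gives $\boldsymbol{U}_1=\boldsymbol{U}_2=0$. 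The first two rows of the system now read $(\ii\alpha\mathbb{I}+\mathbb{M}_s)\boldsymbol{U}_s=\varepsilon_0\,\omega_{ps}^2\,\boldsymbol{U}_3$ with zero left-hand side, whence $\boldsymbol{U}_3=0$ (directly, since $\omega_{ps}>0$, or via Proposition~\Rref{i alpha inverse stability}). The third row then forces $c^2\curl\boldsymbol{U}_4=0$, and the fourth row gives $\ii\alpha\boldsymbol{U}_4=-\curl\boldsymbol{U}_3=0$, so $\boldsymbol{U}_4=0$ because $\alpha\neq0$. Hence $\boldsymbol{U}=0$.

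\textbf{Determination of $\ker\xA_1$.} Repeating the argument with $\alpha=0$ yields successively $\boldsymbol{U}_1=\boldsymbol{U}_2=0$, then $\boldsymbol{U}_3=0$, then $\curl\boldsymbol{U}_4=0$; the fourth row $\curl\boldsymbol{U}_3=0$ is then automatic. Conversely, for any $\boldsymbol{V}\in\mathbf{H}(\curl 0;\Omega)$ the vector $(0,0,0,\boldsymbol{V})$ belongs to $D(\xA_1)$ --- no boundary condition is imposed on its fourth component in~\eqref{A1 definition} --- and $\xA_1(0,0,0,\boldsymbol{V})=0$. This gives the stated eigenspace, which is nonzero (for instance, it contains $\grad\xHone(\Omega)$), so $0$ is an eigenvalue of $\xA_1$.

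I do not expect a real obstacle here: once the dissipation identity is available, the argument is purely algebraic. The only points worth a careful word are that~\eqref{resul monot A1} already incorporates Green's formula~\eqref{green rot rot} and the perfectly conducting condition $\boldsymbol{U}_3\times\boldsymbol{n}=0$ on~$\Gamma$, and that it is Hypothesis~\Rref{hyp-2} (not merely~\eqref{2}) which allows one to pass from the vanishing of the dissipation to $\boldsymbol{U}_1=\boldsymbol{U}_2=0$.
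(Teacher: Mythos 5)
Your proof is correct and follows essentially the same route as the paper: take the real part of the $\mathbf{X}$-inner product with $\boldsymbol{U}$ to kill the dissipation terms, use Hypothesis~\Rref{hyp-2} to get $\boldsymbol{U}_1=\boldsymbol{U}_2=0$, then propagate through the rows of the system to get $\boldsymbol{U}_3=0$ and finally $\boldsymbol{U}_4=0$ (for $\alpha\neq0$) or $\curl\boldsymbol{U}_4=0$ (for $\alpha=0$). Your explicit verification of the converse inclusion for $\ker\xA_1$ is a small welcome addition that the paper leaves implicit.
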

\begin{proof}
Let  $\alpha\in\xR$ and 
$\boldsymbol{U}=(\boldsymbol{U}_1,\boldsymbol{U}_2,\boldsymbol{U}_3,\boldsymbol{U}_4)^{\top}\in D(\xA_1)$ such that  
\begin{equation}
(\ii\alpha\,\mathbb{I}+\xA_1)\, \boldsymbol{U}=0 .
\label{00injec}
\end{equation}
This is equivalent to the system
\begin{eqnarray}
\ii\alpha\,\boldsymbol{U}_1+\mathbb{M}_1\boldsymbol{U}_1 - \varepsilon_0\omega_{p1}^2\, \boldsymbol{U}_3 &=&0,
\label{1injec}\\
\ii\alpha\,\boldsymbol{U}_2+\mathbb{M}_2\boldsymbol{U}_2 - \varepsilon_0\omega_{p2}^2\, \boldsymbol{U}_3 &=&0,
\label{2injec}\\
\ii\alpha\,\boldsymbol{U}_3 + \frac{1}{\varepsilon_0}\,\boldsymbol{U}_1 + \frac{1}{\varepsilon_0}\, \boldsymbol{U}_2- c^2 \curl\boldsymbol{U}_4 &=&0,
\label{3injec}\\
\ii\alpha\,\boldsymbol{U}_4 + \curl\boldsymbol{U}_3 &=&0.
\label{4injec}
\end{eqnarray}
Taking the real part of the inner product of~\eqref{00injec} with $\boldsymbol{U}$ in $\mathbf{X}$, one gets:
\begin{equation*}
\Re(\ii\alpha\,\|\boldsymbol{U}\|_{\mathbf{X}}^2)=\Re\left(\xA_1\boldsymbol{U},\boldsymbol{U}\right)_{\mathbf{X}}=0.
\end{equation*}
By the monotonicity of $\xA_1$, see Equation~\eqref{resul monot A1}, one obtains:
\begin{equation}
\left( \frac{\nu_s\,\boldsymbol{U}_s }{\varepsilon_0\omega_{ps}} \Biggm| \frac{\boldsymbol{U}_s}{\omega_{ps}} \right)=0, \quad  s=1,\ 2.
\label{5injec}
\end{equation}
By Hypothesis~\Rref{hyp-2}:
\begin{equation}
\left( \frac{\nu_s\,\boldsymbol{U}_s }{\varepsilon_0\omega_{ps}} \Biggm| \frac{\boldsymbol{U}_s}{\omega_{ps}} \right) \geq \frac{\nu_{*}}{\varepsilon_0}\, \|\boldsymbol{U}_s\|_{(s)}^2,\quad s=1,\ 2.
\label{6injec}
\end{equation}
From Equations \eqref{5injec} and~\eqref{6injec}, we deduce that 
\begin{equation}
\boldsymbol{U}_1=0 \quad\text{and}\quad \boldsymbol{U}_2=0 \quad \text{in } \Omega.
\label{ker u1 u2}
\end{equation}
This, together with Equation~\eqref{1injec}, implies that 
\begin{equation}
\boldsymbol{U}_3=0 \quad ~\, \text{in}~\,\Omega.
\label{ker u3}
\end{equation}
If $\alpha\neq 0$, $\boldsymbol{U}_4=0$ follows from \eqref{ker u3} and~\eqref{4injec}. And if $\alpha=0$, we deduce from~\eqref{3injec}, \eqref{ker u1 u2} and~\eqref{ker u3} that $ \curl\boldsymbol{U}_4=0$.   
The proof of the proposition is complete.
\end{proof}

Therefore, $\ker \xA_{1} $ consists of the space of stationary solutions to Problem~\eqref{uu}--\eqref{init cond 0} with boundary condition~\eqref{lk}, and it is of infinite dimension. From Remark~\Rref{stable space}, we can define the operator ${\xA_1}_{|\mathbf{X}_1}: D(\xA_1)\cap \mathbf{X}_1\rightarrow \mathbf{X}_1$ the restriction of $\xA_1$ on the space~$\mathbf{X}_1$. In this case, the set of stationary solutions of the problem formed by Equations~\eqref{uu}--\eqref{init cond 0} and~\eqref{div B}, with the boundary conditions \eqref{lk} and~\eqref{lkm}, is equal to 
\begin{equation*}
\ker({\xA_{1}}_{|\mathbf{X}_1})=\lbrace 0 \rbrace^3 \times \mathbf{Z}_{T}(\Omega), 
\end{equation*}
where the kernel 
\begin{equation*}
\mathbf{Z}_{T}(\Omega) := \mathbf{H}(\curl 0;\Omega)\cap \mathbf{H}_0(\divg 0; \Omega). 
\end{equation*}
We recall that the space $\mathbf{Z}_{T}(\Omega)$ is of dimension $J$, the number of cuts~\cite{ABD98,ACL+17} 
(if $\Omega$ is topologically trivial then $\mathbf{Z}_{T}(\Omega)=\lbrace 0 \rbrace$). Consider $(\widetilde{\grad \dot{q}_j})_{1\leq j \leq J}$ a basis of $\mathbf{Z}_{T}(\Omega)$ given by~\cite[Proposition~3.14]{ABD98} where $\dot{q}_j\in\xHone(\dot{\Omega})$ is a function such that (among other conditions) $\langle \widetilde{\grad \dot{q}_j} \cdot \boldsymbol{n}, 1 \rangle_{\Sigma_i}=\delta_{ji}$ for $i=1,\ \ldots,\ J$.  
From this basis we deduce the following orthogonal decomposition in $\mathbf{H}_0(\divg 0;\Omega)$:
\begin{equation}
\mathbf{H}_0(\divg 0;\Omega)={\mathbf{Z}_{T}(\Omega)}\stackrel{\perp}{\oplus}{\mathbf{H}_0^{\Sigma}(\divg 0;\Omega)},
\label{orth decom ZT}
\end{equation}
where 
\begin{equation*}
\mathbf{H}_0^{\Sigma}(\divg 0;\Omega):=\lbrace \boldsymbol{v}\in \mathbf{H}_0(\divg 0;\Omega): \langle \boldsymbol{v} \cdot \boldsymbol{n}, 1 \rangle_{\Sigma_j}=0,~ 1\leq j \leq J\rbrace. 
\end{equation*}
Then, according to~\eqref{orth decom ZT} and by \cite[Propositions~3.7.3 and 3.7.4]{ACL+17}, we have the following decomposition 
\begin{equation} 
\mathbf{L}^2(\Omega) = \mathbf{H}(\curl 0;\Omega) \stackrel{\perp}{\oplus} \mathbf{H}_0^{\Sigma}(\divg 0;\Omega).
\label{orth decom Hrot}
\end{equation}
\begin{prpstn}
For all $\alpha\in\xR\setminus\lbrace0\rbrace$, the operator $ \ii\alpha\,\mathbb{I}+\xA_1$ is surjective, \ie,
\begin{equation*}
\xim(\ii\alpha\,\mathbb{I}+\xA_1)=\mathbf{X}.
\end{equation*}
\label{surjj*}
\end{prpstn}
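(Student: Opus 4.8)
The plan is to reduce the resolvent equation $(\ii\alpha\,\mathbb{I}+\xA_1)\boldsymbol{U}=\boldsymbol{F}$, for an arbitrary $\boldsymbol{F}=(\boldsymbol{F}_1,\boldsymbol{F}_2,\boldsymbol{F}_3,\boldsymbol{F}_4)^{\top}\in\mathbf{X}$, to a single second-order equation for the electric component~$\boldsymbol{U}_3$, and then to solve that equation by combining a Hodge decomposition adapted to the matrix~$\mathbb{B}_{\alpha}$ with the Fredholm alternative. First I would write the system in the componentwise form \eqref{1injec}--\eqref{4injec}, but now with right-hand sides $\boldsymbol{F}_1,\dots,\boldsymbol{F}_4$. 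Since $\alpha\neq0$, equation~\eqref{4injec} yields $\boldsymbol{U}_4=(\ii\alpha)^{-1}(\boldsymbol{F}_4-\curl\boldsymbol{U}_3)$; and by Proposition~\Rref{i alpha inverse stability} the matrices $\ii\alpha\,\mathbb{I}+\mathbb{M}_s$ are boundedly invertible, so \eqref{1injec}--\eqref{2injec} give $\boldsymbol{U}_s=(\ii\alpha\,\mathbb{I}+\mathbb{M}_s)^{-1}(\boldsymbol{F}_s+\varepsilon_0\,\omega_{ps}^2\,\boldsymbol{U}_3)$ for $s=1,2$. Inserting these expressions into~\eqref{3injec} and recalling $\mathbb{B}_{\alpha}=\ii\alpha\,\mathbb{I}+\mathbb{D}_{\alpha}$ from~\eqref{B alpha matrix}, one is left with the equation $\mathbb{B}_{\alpha}\boldsymbol{U}_3+\frac{c^2}{\ii\alpha}\,\curl\curl\boldsymbol{U}_3=\boldsymbol{G}$ in~$\Omega$, where $\boldsymbol{G}=\boldsymbol{F}_3+\frac{c^2}{\ii\alpha}\,\curl\boldsymbol{F}_4-\frac1{\varepsilon_0}\sum_s(\ii\alpha\,\mathbb{I}+\mathbb{M}_s)^{-1}\boldsymbol{F}_s$. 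Testing against $\boldsymbol{v}\in\mathbf{H}_0(\curl;\Omega)$ and using~\eqref{green rot rot} (the boundary term vanishes since $\boldsymbol{v}\times\boldsymbol{n}=0$, and the $\curl\boldsymbol{F}_4$ term is read as $(\boldsymbol{F}_4\mid\curl\boldsymbol{v})$), this becomes the variational problem of finding $\boldsymbol{U}_3\in\mathbf{H}_0(\curl;\Omega)$ with $\widehat{a}(\boldsymbol{U}_3,\boldsymbol{v})=L(\boldsymbol{v})$ for all $\boldsymbol{v}\in\mathbf{H}_0(\curl;\Omega)$, where $L$ is continuous and antilinear and
\[
\widehat{a}(\boldsymbol{w},\boldsymbol{v}):=(\mathbb{B}_{\alpha}\boldsymbol{w}\mid\boldsymbol{v})+\frac{c^2}{\ii\alpha}\,(\curl\boldsymbol{w}\mid\curl\boldsymbol{v}).
\]

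The difficulty — contrary to Proposition~\Rref{maximonot 1} — is that $\widehat{a}$ is \emph{not} coercive on $\mathbf{H}_0(\curl;\Omega)$: the curl term carries the purely imaginary coefficient $c^2/\ii\alpha$, so $\Re\widehat{a}(\boldsymbol{w},\boldsymbol{w})=\Re(\mathbb{B}_{\alpha}\boldsymbol{w}\mid\boldsymbol{w})$, which by Lemma~\Rref{lem minor B i alpha} controls $\|\boldsymbol{w}\|$ but not $\|\curl\boldsymbol{w}\|$; moreover $\mathbf{H}_0(\curl;\Omega)$ is not compactly embedded in~$\mathbf{L}^2(\Omega)$, so the Fredholm alternative cannot be used at once. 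The remedy I would use is to split off the irrotational part. As $\mathbb{B}_{\alpha}$ satisfies~\eqref{minor} (Lemma~\Rref{lem minor B i alpha}), Lemma~\Rref{lem0} decomposes every field of $\mathbf{H}_0(\curl;\Omega)$ uniquely as $\grad p+\boldsymbol{w}^{T}$ with $p\in\xHone_0(\Omega)$, $\divg(\mathbb{B}_{\alpha}\boldsymbol{w}^{T})=0$, and both summands in $\mathbf{H}_0(\curl;\Omega)$ (since $\grad p\in\mathbf{H}_0(\curl0;\Omega)$); in particular $\mathbf{H}_0(\curl;\Omega)=\grad\xHone_0(\Omega)\oplus\mathbf{W}_{\alpha}$, where $\mathbf{W}_{\alpha}:=\mathbf{H}_0(\curl;\Omega)\cap\mathbf{H}(\divg\mathbb{B}_{\alpha}0;\Omega)$. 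Writing $\boldsymbol{U}_3=\grad p+\boldsymbol{w}^{T}$ and testing $\widehat{a}(\boldsymbol{U}_3,\cdot)=L$ against gradients $\grad q$, $q\in\xHone_0(\Omega)$, the curl term drops out and $(\mathbb{B}_{\alpha}\boldsymbol{w}^{T}\mid\grad q)=0$ by~\eqref{green div grad h 1 0}; hence $p$ is determined by the coercive problem $(\mathbb{B}_{\alpha}\grad p\mid\grad q)=L(\grad q)$, uniquely solvable via Lax--Milgram thanks to Lemma~\Rref{lem minor B i alpha} and Poincar\'e's inequality.

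It then remains to find $\boldsymbol{w}^{T}\in\mathbf{W}_{\alpha}$ with $\widehat{a}(\boldsymbol{w}^{T},\boldsymbol{v})=L(\boldsymbol{v})-(\mathbb{B}_{\alpha}\grad p\mid\boldsymbol{v})$ for all $\boldsymbol{v}\in\mathbf{W}_{\alpha}$; by the direct-sum decomposition, this together with the previous step solves the full variational problem. Here Theorem~\Rref{compact} enters: every element of $\mathbf{W}_{\alpha}$ has vanishing tangential trace and $\divg\mathbb{B}_{\alpha}(\cdot)=0$, so $\mathbf{W}_{\alpha}\subset\mathbf{X}_{N,\Gamma}(\Omega;\mathbb{B}_{\alpha})$ and the embedding $\mathbf{W}_{\alpha}\hookrightarrow\mathbf{L}^2(\Omega)$ is compact, while by~\eqref{seminorme} the norm of $\mathbf{H}(\curl;\Omega)$ is bounded on $\mathbf{W}_{\alpha}$ by $\|\curl\cdot\|$ plus the finite-dimensional quantity $|P_{\mathbf{Z}_N(\Omega;\mathbb{B}_{\alpha})}\cdot|_{\mathbf{Z}_N(\Omega;\mathbb{B}_{\alpha})}$. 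Multiplying $\widehat{a}$ by the nonzero constant $\ii\alpha$, one finds on $\mathbf{W}_{\alpha}$ that $\Re[\ii\alpha\,\widehat{a}(\boldsymbol{w},\boldsymbol{w})]=c^2\|\curl\boldsymbol{w}\|^2-\alpha\,\Im(\mathbb{B}_{\alpha}\boldsymbol{w}\mid\boldsymbol{w})\geq c'\|\boldsymbol{w}\|_{\mathbf{H}(\curl;\Omega)}^2-\mathcal{K}(\boldsymbol{w})$, where $\mathcal{K}$ is a nonnegative bounded quadratic form built from $\|\boldsymbol{w}\|^2$ and $|P_{\mathbf{Z}_N(\Omega;\mathbb{B}_{\alpha})}\boldsymbol{w}|^2$, hence associated with a compact operator on~$\mathbf{W}_{\alpha}$. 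Thus $\ii\alpha\,\widehat{a}$ is coercive-minus-compact on $\mathbf{W}_{\alpha}$, so the operator attached to $\widehat{a}$ on $\mathbf{W}_{\alpha}$ is Fredholm of index~$0$ and surjectivity is equivalent to injectivity. If $\boldsymbol{w}^{T}$ solves the homogeneous problem on $\mathbf{W}_{\alpha}$, then, since also $(\mathbb{B}_{\alpha}\boldsymbol{w}^{T}\mid\grad q)=0$, it solves the fully homogeneous reduced problem on $\mathbf{H}_0(\curl;\Omega)$; unwinding the elimination above — with $\curl\boldsymbol{U}_4\in\mathbf{L}^2(\Omega)$ read off from~\eqref{3injec} — produces $\boldsymbol{U}=(\boldsymbol{U}_1,\boldsymbol{U}_2,\boldsymbol{w}^{T},\boldsymbol{U}_4)\in D(\xA_1)$ with $(\ii\alpha\,\mathbb{I}+\xA_1)\boldsymbol{U}=0$, whence $\boldsymbol{U}=0$ by Proposition~\Rref{ker A1} (as $\alpha\neq0$), \ie\ $\boldsymbol{w}^{T}=0$. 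Therefore the inhomogeneous problem is solvable, and assembling $\boldsymbol{U}_3=\grad p+\boldsymbol{w}^{T}$ with the recovered $\boldsymbol{U}_1,\boldsymbol{U}_2,\boldsymbol{U}_4$ yields $\boldsymbol{U}\in D(\xA_1)$ with $(\ii\alpha\,\mathbb{I}+\xA_1)\boldsymbol{U}=\boldsymbol{F}$.

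I expect the main obstacle to be precisely this loss of coercivity at~$\ii\alpha$, the time-harmonic Maxwell phenomenon: it forces the detour through the $\mathbb{B}_{\alpha}$-adapted Hodge splitting — coercive on the irrotational part — and through the compact embedding of Theorem~\Rref{compact} on the transverse part; a secondary technical nuisance is the bookkeeping with the finite-dimensional space $\mathbf{Z}_N(\Omega;\mathbb{B}_{\alpha})$ caused by the possibly disconnected boundary~$\Gamma$.
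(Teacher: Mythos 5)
Your proposal is correct, and it reaches the result by a genuinely different implementation of the same underlying strategy. Like the paper, you eliminate $\boldsymbol{U}_1,\boldsymbol{U}_2,\boldsymbol{U}_4$ via Proposition~\Rref{i alpha inverse stability}, reduce to a second-order problem for $\boldsymbol{U}_3$ governed by $\mathbb{B}_{\alpha}$, exploit Lemma~\Rref{lem minor B i alpha} and the compact embedding of Theorem~\Rref{compact}, and close the Fredholm alternative with the injectivity of Proposition~\Rref{ker A1}. Where you diverge is in how the Fredholm framework is set up: the paper uses the constrained mixed formulation \eqref{555tild}--\eqref{666tild} with a Lagrange multiplier $p\in\xHone_{\partial\Omega}(\Omega)$, verifies coercivity of $a_\alpha$ on the constraint kernel $\mathrm{K}$ (where the flux conditions $\langle\mathbb{B}_\alpha\boldsymbol{v}\cdot\boldsymbol{n},1\rangle_{\Gamma_k}=0$ kill the space $\mathbf{Z}_N(\Omega;\mathbb{B}_\alpha)$) together with an inf-sup condition, and must then prove a posteriori that $p=0$ by the particular choice~\eqref{choix de g} of $\boldsymbol{G}$; you instead split $\mathbf{H}_0(\curl;\Omega)=\grad\xHone_0(\Omega)\oplus\mathbf{W}_\alpha$ directly via Lemma~\Rref{lem0} applied to $\mathbb{B}_\alpha$, solve the gradient part by Lax--Milgram, and treat the transverse part by a G\aa rding inequality on $\mathbf{W}_\alpha$, absorbing both the compactly embedded $\|\boldsymbol{w}\|^2$ and the finite-rank term $|P_{\mathbf{Z}_N(\Omega;\mathbb{B}_\alpha)}\boldsymbol{w}|^2$ into the compact perturbation. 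Your route avoids the multiplier and the inf-sup verification altogether (and the slightly awkward step of showing the multiplier vanishes), at the cost of carrying $\mathbf{Z}_N(\Omega;\mathbb{B}_\alpha)$ inside the index-zero argument rather than excluding it by constraints; the paper's route delegates more to the ready-made theorem on constrained Helmholtz problems from the reference \cite[Thm~4.5.9]{ACL+17}. Your injectivity step is sound: a homogeneous solution in $\mathbf{W}_\alpha$ is automatically orthogonal (in the $\widehat{a}$ sense) to gradients because $\divg\mathbb{B}_\alpha\boldsymbol{w}^T=0$, so it solves the full homogeneous problem and unwinds to an element of $\ker(\ii\alpha\mathbb{I}+\xA_1)$.
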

\begin{proof}
We take any  $\alpha\in\xR\setminus\lbrace0\rbrace$ and any $\boldsymbol{F}= ( \boldsymbol{F}_1, \boldsymbol{F}_2, \boldsymbol{F}_3, \boldsymbol{F}_4
)^\top \in \mathbf{X}$, and we look for $\boldsymbol{U}= ( \boldsymbol{U}_1, \boldsymbol{U}_2, \boldsymbol{U}_3, \boldsymbol{U}_4, )^\top \in D(\xA_1)$, which solves
\begin{equation}
(\ii\alpha\,\mathbb{I}+\xA_1)\,\boldsymbol{U}=\boldsymbol{F}.
\label{000tild}
\end{equation}
Equivalently, according to~\eqref{matrix A}, we consider the following system 
\begin{eqnarray}
\ii\alpha\,\boldsymbol{U}_1 + \mathbb{M}_1\boldsymbol{U}_1 - \varepsilon_0\omega_{p1}^2\, \boldsymbol{U}_3 &=&\boldsymbol{F}_1,
\label{11tild}\\
\ii\alpha\,\boldsymbol{U}_2 + \mathbb{M}_2\boldsymbol{U}_2 - \varepsilon_0\omega_{p2}^2\, \boldsymbol{U}_3 &=&\boldsymbol{F}_2,
\label{22tild}\\
\ii\alpha\,\boldsymbol{U}_3 + \frac{1}{\varepsilon_0}\, \boldsymbol{U}_1 + \frac{1}{\varepsilon_0}\,\boldsymbol{U}_2 - c^2\curl\boldsymbol{U}_4 &=&\boldsymbol{F}_3,
\label{33tild}\\
\ii\alpha\,\boldsymbol{U}_4+\curl \boldsymbol{U}_3 &=&\boldsymbol{F}_4.
\label{44tild}
\end{eqnarray}
Using \eqref{11tild}, \eqref{22tild} and~\eqref{44tild}, we keep $\boldsymbol{U}_3$ as the main unknown and eliminate the others:
\begin{eqnarray}
\boldsymbol{U}_1 &=& (\ii\alpha\mathbb{I}+\mathbb M_{1})^{-1}(\boldsymbol{F}_1 + \varepsilon_0\,\omega_{p1}^2\,\boldsymbol{U}_3),\label{441tild}\\ 
\boldsymbol{U}_2 &=& (\ii\alpha\mathbb I + \mathbb M_{2})^{-1}(\boldsymbol{F}_2 +\varepsilon_0\,\omega_{p2}^2\,\boldsymbol{U}_3),\label{442tild}\\  
\boldsymbol{U}_4 &=& (\ii\alpha)^{-1}(\boldsymbol{F}_4 -{\curl\boldsymbol{U}_3})
\label{443tild}.
\end{eqnarray} 
Inserting these expressions into~\eqref{33tild}, we obtain an equation in~$\boldsymbol{U}_3$: 
\begin{equation}
\ii\alpha\, \boldsymbol{U}_3 + \frac{c^2}{\ii\alpha}\, \curl\curl\boldsymbol{U}_3 + \mathbb{D}_{\alpha}\boldsymbol{U}_3
= \boldsymbol{F}_3 + \frac{c^2}{\ii\alpha}\, \curl \boldsymbol{F}_4 -\frac{1}{\varepsilon_0}\sum_s (\ii\alpha\mathbb{I}+\mathbb M_{s})^{-1}{\boldsymbol{F}_s}.
\label{55tild}
\end{equation}
Here, we cannot apply the Lax--Milgram theorem as in Proposition~\Rref{maximonot 1}: the operator on the left-hand side (even suitably rescaled) is not positive. So, we shall solve this problem with a suitable version of the Fredholm alternative for constrained problems, as in~\cite[\S4.5.1]{ACL+17}. Taking account of the constraints is necessary in order to give some compactness properties (by Theorem~\Rref{compact}), which are not furnished by the definition of our evolution operator.

\medbreak

Thus, we introduce the following mixed formulation for~\eqref{55tild}:\\
\emph{Find $(\boldsymbol{U}_3,p)\in\mathbf{H}_0(\curl;\Omega) \times \xHone_{\partial\Omega}(\Omega)$ such that}
\begin{eqnarray}
a_{\alpha}(\boldsymbol{U}_3,\boldsymbol{v})+c_\alpha(\boldsymbol{U}_3,\boldsymbol{v})+\overline{b_\alpha(\boldsymbol{v},p)}
 &=& L_\alpha(\boldsymbol{v}),\quad\forall\boldsymbol{v}\in\mathbf{H}_0(\curl; \Omega),\label{555tild}\\
b_\alpha(\boldsymbol{U}_3,q)
 &=& \left(\boldsymbol{G}  \mid \grad q\right),\label{666tild}
\end{eqnarray}
where the sesquilinear forms $a_{\alpha}$, $c_\alpha$ and $b_\alpha$ are respectively defined on $\mathbf{H}_0(\curl;\Omega) \times \mathbf{H}_0(\curl;\Omega)$, $\mathbf{L}^2(\Omega) \times \mathbf{H}_0(\curl;\Omega)$ and $\mathbf{H}_0(\curl;\Omega) \times \xHone_{\partial\Omega}(\Omega)$ as:
\begin{eqnarray}
a_{\alpha}(\boldsymbol{w},\boldsymbol{v}) &:=&( \ii\alpha)^{-1} c^2 ( \curl\boldsymbol{w}\mid\curl\boldsymbol{v} ),
\label{a cond}
\\
c_\alpha(\boldsymbol{w},\boldsymbol{v}) &:=& (\mathbb{B}_{\alpha}\boldsymbol{w}\mid \boldsymbol{v} ), \quad\text{ where }\quad \mathbb{B}_{\alpha} = \ii\alpha\mathbb{I}+\mathbb{D}_{\alpha},
\label{c cond}
\\
b_\alpha(\boldsymbol{v},q) &:=& (\mathbb{B}_{\alpha}\boldsymbol{v}  \mid \grad q ).
\label{b cond}
\end{eqnarray}
The anti-linear form $L_\alpha$ on~$\mathbf{H}_0(\curl;\Omega)$ is given by:
\begin{equation}
L_\alpha(\boldsymbol{v}):=\left(\boldsymbol{F}_3\mid\boldsymbol{v}\right)+ \frac{c^2}{\ii\alpha}\, (\boldsymbol{F}_4\mid\curl\boldsymbol{v}) - \frac{1}{\varepsilon_0}\sum_s \left( (\ii\alpha\mathbb{I}+\mathbb M_{s})^{-1}{\boldsymbol{F}_s}\mid\boldsymbol{v}\right),
\label{l cond}  
\end{equation}
and $\boldsymbol{G}$ is an element of $\mathbf{L}^2(\Omega)$ which will be chosen later.

\medbreak

To show the well-posedness of the variational formulation \eqref{555tild}--\eqref{666tild}, we first verify that the assumptions of~\cite[Theorem 4.5.9]{ACL+17} on Helmholtz-like problems with constraints are fulfilled, and we conclude by the Fredholm alternative.
\\[3pt]
\textit{\texttt{i) Continuity:}} it is clear that the sesquilinear forms $a_\alpha$, $b_\alpha$ and $c_\alpha$, and the anti-linear from $L_\alpha$, are continuous on their respective spaces.
\\[3pt]
\textit{\texttt{ii) Coercivity on the kernel:}} the kernel of  $b_\alpha$ is defined by
\begin{equation*}
\mathrm{K}
=\lbrace\boldsymbol{v}\in \mathbf {H}_0(\mathbf  {\curl};\Omega)~\text{ : }~ b_\alpha(\boldsymbol{v},q)=0,\quad  \forall q\in\xHone_{\partial\Omega}(\Omega)\rbrace
\end{equation*}
which, by Green's formulas \eqref{green div grad h 1 0} and~\eqref{green div grad}, can be written as
\begin{eqnarray*}
\mathrm{K}
=\lbrace\boldsymbol{v}\in \mathbf {H}_0(\mathbf {\curl};\Omega)\cap\mathbf{H}(\divg\mathbb{B}_{\alpha}0;\Omega)~\text{ : }~ \langle \mathbb{B}_{\alpha}\boldsymbol{v}\cdot \boldsymbol{n} , 1 \rangle_{\xHn{\frac1{2}}({\Gamma_k})}=0,~ \forall 1 \leq k \leq K\rbrace \\
=\lbrace\boldsymbol{v}\in \mathbf {H}_0(\mathbf {\curl};\Omega)\cap\mathbf{H}(\divg\mathbb{B}_{\alpha}0;\Omega)~\text{ : }~ P_{\mathbf{Z}_{N}(\Omega;\mathbb{B_\alpha})}\boldsymbol{v}=0 \rbrace.\qquad\qquad\qquad\qquad\quad \,
\end{eqnarray*}
According to Corollary~\Rref{coro semi norm} applied to the closed subspace $\mathrm{K}$ of~$\mathbf{X}_{N,\Gamma}(\Omega; \mathbb{B})$, the sesquilinear form  $a_\alpha$ is coercive on $\mathrm{K}\times\mathrm{K}$.
Furthermore, the embedding $\mathrm{K} \subset \mathbf{L}^2(\Omega)$ is compact by Theorem~\ref{compact}. 
\\[3pt]
\textit{\texttt{iii) Inf-sup condition:}} let $q\in\xHone_{\partial\Omega}(\Omega)$ and set $\boldsymbol{v}=\grad q\in\mathbf {H}_0(\curl0;\Omega)$, thus $\boldsymbol{v}$ checks 
$\|\boldsymbol{v}\|_{\mathbf{H}(\curl;\Omega)}
=(\|\boldsymbol{v}\|^2+\|\curl\boldsymbol{v}\|^2)^{\frac{1}{2}}=\|\boldsymbol{v}\|$.
On the other hand, according to Lemma \Rref{lem minor B i alpha}, one has
\begin{eqnarray}
|b_\alpha(\boldsymbol{v},q)|
&=&|\left(\mathbb{B}_{\alpha}\boldsymbol{v}\mid \grad q\right)|=|\left(\mathbb{B}_{\alpha}\boldsymbol{v}\mid \boldsymbol{v}\right)|
\nonumber\\
&\geq &\zeta_{\alpha}\|\boldsymbol{v}\|^2=\zeta_{\alpha}\|\boldsymbol{v}\| \,\|\grad q\|.
\label{b in'eg}
\end{eqnarray} 
Combining the above, it follows that  
\begin{equation}
\frac{|b_\alpha(\boldsymbol{v},q)|}{\|\boldsymbol{v}\|_{\mathbf{H}(\curl;\Omega)}}\geq\zeta_{\alpha}\|\grad q\|=\zeta_{\alpha}\| q\|_{\xHone_{\partial\Omega}(\Omega)}.
\label{bv in'eg}
\end{equation}
Consequently, there exists  $C_b=\zeta_{\alpha}>0$ such that  
\begin{equation*}
\forall q\in\xHone_{\partial\Omega}(\Omega),\quad \sup_{\boldsymbol{v}\in\mathbf{H}_0(\curl;\Omega)}\frac{|b_\alpha(\boldsymbol{v},q)|}{\|\boldsymbol{v}\|_{\mathbf{H}(\curl;\Omega)}}\geq C_b\,\| q\|_{\xHone_{\partial\Omega}(\Omega)}.
\end{equation*}
Hence, the assumptions of  \cite[Theorem~4.5.9]{ACL+17} are satisfied: we can apply the usual Fredholm alternative \cite[Theorem~4.5.7]{ACL+17}. So, we show that the variational formulation \eqref{555tild} is injective on the kernel, \ie, its solution is unique. Let $\boldsymbol{Z}_3$ be a solution to 
\begin{equation}
\forall\boldsymbol{v}\in\mathrm{K},\quad a_\alpha(\boldsymbol{Z}_3,\boldsymbol{v}) + c_\alpha(\boldsymbol{Z}_3,\boldsymbol{v}) = 0.
\label{initialtildd}
\end{equation} 
Since $\boldsymbol{Z}_3$ belongs to  $\mathrm{K}$, one has $\boldsymbol{Z}_3\in\mathbf{H}(\curl;\Omega)$ with 
$\divg\mathbb{B}_{\alpha}\boldsymbol{Z}_3=0$ in $\Omega$ and 
$\boldsymbol{Z}_3\times\boldsymbol{n}_{|\Gamma}=0$. 
Next, consider $\boldsymbol{y}\in\boldsymbol{\mathcal{D}}(\Omega)$. Introduce the scalar field  $\varphi\in\xHone_{\partial\Omega}(\Omega)$ that solves the variational formulation: for all  $\psi\in\xHone_{\partial\Omega}(\Omega)$,  $\left(\mathbb{B}_{\alpha}\grad \varphi\mid \grad \psi\right)=\left(\mathbb{B}_{\alpha}\boldsymbol{y}\mid \grad \psi\right)$. By construction, $\boldsymbol{v}:=\boldsymbol{y}-\grad \varphi$ belongs to $\mathrm{K}$ with $\curl\boldsymbol{v}=\curl\boldsymbol{y}$. Using it as a test function in~\eqref{initialtildd} yields 
\begin{eqnarray*}
\langle(\ii\alpha)^{-1}c^2\curl\curl\boldsymbol{Z}_3+\mathbb{B}_{\alpha}\boldsymbol{Z}_3 ,\boldsymbol{y} \rangle 
&=&(\ii\alpha)^{-1} c^2\left( \curl\boldsymbol{Z}_3\mid\curl\boldsymbol{y}\right)+\left(\mathbb{B}_{\alpha}\boldsymbol{Z}_3\mid \boldsymbol{y}\right)\\
&=&(\ii\alpha)^{-1} c^2\left( \curl\boldsymbol{Z}_3\mid\curl\boldsymbol{v}\right)+\left(\mathbb{B}_{\alpha}\boldsymbol{Z}_3\mid \boldsymbol{v}+\grad \varphi\right)\\
&=&(\ii\alpha)^{-1} c^2\left( \curl\boldsymbol{Z}_3\mid\curl\boldsymbol{v}\right)+\left(\mathbb{B}_{\alpha}\boldsymbol{Z}_3\mid \boldsymbol{v}\right)=0.
\end{eqnarray*}
The last line is obtained by integration by parts, using the facts that $\divg(\mathbb{B}_{\alpha}\boldsymbol{Z}_3)=0$ in $\Omega$ and $\langle \mathbb{B}_{\alpha}\boldsymbol{Z}_3\cdot \boldsymbol{n} , 1 \rangle_{{\mathbf{H}}^{\frac1{2}}({\Gamma_k})}=0$, for all $1 \leq k \leq K$.  Recall that  $ \mathbb{B}_{\alpha}=\ii\alpha\mathbb{I}+\mathbb{D}_{\alpha}$; as $\boldsymbol{y}$ is arbitrary, it follows that:
\begin{equation}
\ii\alpha\boldsymbol{Z}_3+(\ii\alpha)^{-1}{c^2}\curl {\curl\boldsymbol{Z}_3}+\mathbb{D}_{\alpha}\boldsymbol{Z}_3
= 0,\quad \mbox{in}\quad \boldsymbol{\mathcal{D}}'(\Omega).
\label{55tildd}
\end{equation}
Let now $\boldsymbol{Z}_1$, $\boldsymbol{Z}_2$ and $\boldsymbol{Z}_4$ defined as
\begin{equation}
(\ii\alpha\mathbb{I}+\mathbb M_{s})\boldsymbol{Z}_s
= \varepsilon_0\,\omega_{ps}^2\,\boldsymbol{Z}_3,~ s=1,\ 2 ~;\quad\quad\quad \boldsymbol{Z}_4=-(\ii\alpha)^{-1}\curl \boldsymbol{Z}_3.
\label{s4}
\end{equation}
Clearly, these fields belong to~$\mathbf{L}_{(s)}^2(\Omega)=\mathbf{L}^2(\Omega)$.  Replacing the matrix~$\mathbb{D}_{\alpha}$ with its expression in~\eqref{55tildd} and using the definitions above, we get
\begin{equation}
\ii\alpha\boldsymbol{Z}_3+\frac{1}{\varepsilon_0}\boldsymbol{Z}_1+\frac{1}{\varepsilon_0}\boldsymbol{Z}_2- c^2\curl\boldsymbol{Z}_4=0,
\label{s3}
\end{equation}
which implies that $\curl\boldsymbol{Z}_4\in\mathbf{L}^2(\Omega)$. The equations \eqref{s4}--\eqref{s3} are equivalent to $(\ii\alpha\mathbb{I}+\xA_1)\boldsymbol{Z} =0$, with $\boldsymbol{Z} =(\boldsymbol{Z}_1,\boldsymbol{Z}_2,\boldsymbol{Z}_3,\boldsymbol{Z}_4)^\top \in D(\xA_1)$. 
Therefore, according to Proposition~\Rref{ker A1}, one finds $\boldsymbol{Z} =0$, and so $\boldsymbol{Z}_3=0$ in~$\Omega$. Thus, the formulation~\eqref{initialtildd} is injective. 
Finally, according to Theorem~4.5.9 and Proposition~4.5.8 of~\cite{ACL+17}, the problem \eqref{555tild}--\eqref{666tild} has a unique solution  $(\boldsymbol{U}_3,p)\in\mathbf{H}_0(\curl;\Omega) \times\xHone_{\partial\Omega}(\Omega)$.

\medbreak
 
To show the equivalence between~\eqref{555tild} and the strong formulation~\eqref{55tild}, we have to check that the Lagrange multiplier $p$ vanishes. Taking $\boldsymbol{v}=\grad p \in \mathbf {H}_0(\mathbf{\curl};\Omega)$ as a test function in~\eqref{555tild}, we obtain 
\begin{eqnarray}
\left(\mathbb{B}_{\alpha}\boldsymbol{U}_3\mid \grad p \right)+\left(\mathbb{B}_{\alpha}\grad p \mid \grad p \right)=\left(\boldsymbol{F}_3\mid\grad p\right)-\frac{1}{\varepsilon_0}\sum_s \left( (\ii\alpha\mathbb{I}+\mathbb M_{s})^{-1}{\boldsymbol{F}_s}\mid\grad p\right).
\label{inter1tild cond par}
\end{eqnarray}
The first term above is the left-hand side of the constraint equation~\eqref{666tild}. Thus, choosing 
\begin{equation}
\boldsymbol{G}:=\boldsymbol{F}_3-\frac{1}{\varepsilon_0}\sum_s (\ii\alpha\mathbb{I}+\mathbb M_{s})^{-1}\boldsymbol{F}_s
\in \mathbf{L}^2(\Omega),
\label{choix de g}
\end{equation} 
we get, according to  \eqref{inter1tild  cond par} and~\eqref{666tild},
\begin{equation*}
\left(\mathbb{B}_{\alpha}\grad p \mid \grad p \right)=0.
\end{equation*}
Thanks to Lemma \Rref{lem minor B i alpha}, we deduce that $\grad p=0$ in $\Omega$. As $p$ belongs to $\xHone_{\partial\Omega}(\Omega)$,  we find  $p=0$. 

\medbreak

Returning to Problem~\eqref{000tild}, we define $\boldsymbol{U}_1\in \mathbf{L}^2_{(1)}(\Omega)$, $\boldsymbol{U}_2\in \mathbf{L}^2_{(2)}(\Omega)$ respectively by \eqref{441tild} and~\eqref{442tild}. Also, we define $\boldsymbol{U}_4 \in \mathbf{L}^2(\Omega)$ by~\eqref{443tild}. Taking $\boldsymbol{v}\in\boldsymbol{\mathcal{D}}(\Omega)$ as a test function in~\eqref{555tild}, replacing $\mathbb{B}_{\alpha}$ with its expression and using Green's formula~\eqref{green rot rot}, we obtain Equation~\eqref{55tild}, and by the definition~\eqref{443tild} of $\boldsymbol{U}_4$ we find 
\begin{equation}
\ii\alpha \boldsymbol{U}_3 - {c^2}\curl \boldsymbol{U}_4 + \mathbb{D}_{\alpha}\boldsymbol{U}_3
= \boldsymbol{F}_3  -\frac{1}{\varepsilon_0}\sum_s (\ii\alpha\mathbb{I} +\mathbb M_{s})^{-1}\boldsymbol{F}_s \quad \text{in } \boldsymbol{\mathcal{D}}'(\Omega).
\label{inter2tild}
\end{equation}
This implies that $\curl\boldsymbol{U}_4\in\mathbf{L}^2(\Omega)$. To finish the proof, it remains to check that Equation~\eqref{33tild} is satisfied: to this end, it is enough to replace in~\eqref{inter2tild}  the matrix $\mathbb{D}_{\alpha}$ with its definition and to use  \eqref{441tild} and~\eqref{442tild}. 
\end{proof}

\medbreak

Let us introduce a closed subspace of~$\mathbf{X}$:
\begin{equation} 
\mathbf{\widetilde{X}}_1:=\mathbf{L}{^2}(\Omega)\times \mathbf{L}{^2}(\Omega) \times \mathbf{L}{^2}(\Omega) \times\mathbf{H}_0^{\Sigma}(\divg 0;\Omega).
\label{eq-def-Xtild1}
\end{equation}
Of course, $\mathbf{\widetilde{X}}_1$ is a Hilbert space when endowed with the inherited inner product.
\begin{prpstn}
The range $\xim(\xA_{1})$ of 
$\xA_{1}$ is  included in~$\mathbf{\widetilde{X}}_{1}$.
\label{range-Atild1}
\end{prpstn}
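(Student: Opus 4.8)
The plan is to show that for any $\boldsymbol{U}=(\boldsymbol{U}_1,\boldsymbol{U}_2,\boldsymbol{U}_3,\boldsymbol{U}_4)^\top\in D(\xA_1)$, the fourth component of $\xA_1\boldsymbol{U}$, which is $\curl\boldsymbol{U}_3$, lies in $\mathbf{H}_0^{\Sigma}(\divg 0;\Omega)$; the first three components automatically lie in $\mathbf{L}^2(\Omega)$ by Lemma~\Rref{lem-weight-basic} and the definition of $D(\xA_1)$. So everything reduces to a statement about $\boldsymbol{w}:=\curl\boldsymbol{U}_3$ with $\boldsymbol{U}_3\in\mathbf{H}_0(\curl;\Omega)$.

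First I would note that $\divg\boldsymbol{w}=\divg\curl\boldsymbol{U}_3=0$ in the distributional sense, so $\boldsymbol{w}\in\mathbf{H}(\divg 0;\Omega)$. Next, since $\boldsymbol{U}_3\times\boldsymbol{n}=0$ on $\Gamma$, the field $\curl\boldsymbol{U}_3$ has vanishing normal trace: indeed, for any $q\in\xHone(\Omega)$, the Green formula~\eqref{green rot H1} applied to the pair $(\boldsymbol{U}_3,\grad q)$ gives $(\boldsymbol{U}_3\mid\curl\grad q)-(\curl\boldsymbol{U}_3\mid\grad q)=\langle\boldsymbol{U}_3\times\boldsymbol{n},\grad q\rangle_{\mathbf{H}^{1/2}(\Gamma)}=0$, whence $(\curl\boldsymbol{U}_3\mid\grad q)=0$ for all $q\in\xHone(\Omega)$; by~\eqref{green div grad} (with $\mathbb{B}=\mathbb{I}$) this means $\langle\boldsymbol{w}\cdot\boldsymbol{n},q\rangle_{\mathbf{H}^{1/2}(\Gamma)}=0$ for all such $q$, i.e.\ $\boldsymbol{w}\cdot\boldsymbol{n}=0$, so $\boldsymbol{w}\in\mathbf{H}_0(\divg 0;\Omega)$. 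It remains to check the flux conditions through the cuts, namely $\langle\boldsymbol{w}\cdot\boldsymbol{n},1\rangle_{\Sigma_j}=0$ for $1\le j\le J$. For this I would use the basis $(\widetilde{\grad\dot{q}_j})_{1\le j\le J}$ of $\mathbf{Z}_T(\Omega)=\mathbf{H}(\curl 0;\Omega)\cap\mathbf{H}_0(\divg 0;\Omega)$ recalled just above Proposition~\Rref{range-Atild1}: by the orthogonal decomposition~\eqref{orth decom Hrot}, $\boldsymbol{w}\in\mathbf{H}_0^{\Sigma}(\divg 0;\Omega)$ is equivalent to $\boldsymbol{w}\perp\mathbf{Z}_T(\Omega)$ in $\mathbf{L}^2(\Omega)$, so it suffices to show $(\curl\boldsymbol{U}_3\mid\widetilde{\grad\dot{q}_j})=0$ for each $j$. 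Since $\widetilde{\grad\dot{q}_j}\in\mathbf{H}(\curl 0;\Omega)$ and $\boldsymbol{U}_3\in\mathbf{H}_0(\curl;\Omega)$, the Green formula~\eqref{green rot rot} gives $(\curl\boldsymbol{U}_3\mid\widetilde{\grad\dot{q}_j})=(\boldsymbol{U}_3\mid\curl\widetilde{\grad\dot{q}_j})-\langle\boldsymbol{U}_3\times\boldsymbol{n},(\widetilde{\grad\dot{q}_j})_\top\rangle_{\mathbf{TC}(\Gamma)}=0-0=0$, using $\curl\widetilde{\grad\dot{q}_j}=0$ and $\boldsymbol{U}_3\times\boldsymbol{n}=0$ on~$\Gamma$.

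Putting these together, $\boldsymbol{w}=\curl\boldsymbol{U}_3\in\mathbf{H}_0^{\Sigma}(\divg 0;\Omega)$, hence $\xA_1\boldsymbol{U}\in\mathbf{\widetilde{X}}_1$, which is the claim. The only mildly delicate point is the identification $\boldsymbol{w}\in\mathbf{H}_0^{\Sigma}(\divg 0;\Omega)$ via orthogonality to $\mathbf{Z}_T(\Omega)$: one must be careful that the decomposition~\eqref{orth decom Hrot} is exactly the one whose ``$\mathbf{H}_0^{\Sigma}$'' summand is characterised by the vanishing fluxes through the cuts, as recorded in~\eqref{orth decom ZT}; granting that, the argument is a routine sequence of integration-by-parts identities, each justified by a Green formula already available in~\S\Rref{sec-prelim} together with the tangential boundary condition built into $D(\xA_1)$.
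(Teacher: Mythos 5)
Your argument is correct and follows the same reduction as the paper --- everything comes down to showing that $\boldsymbol{U}_3\in\mathbf{H}_0(\curl;\Omega)$ implies $\curl\boldsymbol{U}_3\in\mathbf{H}_0^{\Sigma}(\divg 0;\Omega)$ --- except that the paper simply cites \cite[Remark~3.5.2]{ACL+17} for this implication, whereas you prove it from scratch. One cosmetic fix: for $q\in\xHone(\Omega)$ the field $\grad q$ need not belong to $\mathbf{H}^1(\Omega)$, so in the normal-trace step you should invoke~\eqref{green rot rot} (legitimate since $\grad q\in\mathbf{H}(\curl;\Omega)$ with $\curl\grad q=0$) rather than~\eqref{green rot H1}; the rest, including the identification of $\mathbf{H}_0^{\Sigma}(\divg 0;\Omega)$ as the orthogonal complement of $\mathbf{Z}_T(\Omega)$ inside $\mathbf{H}_0(\divg 0;\Omega)$ via~\eqref{orth decom ZT}, is sound.
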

\begin{proof}
Consider  $\boldsymbol{U}=(\boldsymbol{U}_1,\boldsymbol{U}_2,\boldsymbol{U}_3,\boldsymbol{U}_4)^\top$ an element of $D(\xA_1)$. Then, by the definition~\eqref{A1 definition} of~$\xA_1$,  $\xA_1\boldsymbol{U}$ belongs to $\mathbf{\widetilde{X}}_1$ if, and only if, $ \curl\boldsymbol{U}_3$ belongs to $\mathbf{H}_0^{\Sigma}(\mathbf{\divg}0;\Omega)$. But $ \boldsymbol{U}_3\in\mathbf {H}_0(\mathbf{\curl};\Omega)$, and it is well-known (see, \vg, \cite[Remark~3.5.2]{ACL+17}) that $\boldsymbol{v}\in\mathbf {H}_0(\mathbf{\curl};\Omega)$ implies $\curl\boldsymbol{v} \in \mathbf{H}_0^{\Sigma}(\mathbf{\divg}0;\Omega)$.
\end{proof}

\medbreak

The spectral analysis of the operator $\xA_{1}$ shows that no stabilization can  take place in the whole space~$\mathbf{X}$: an initial data $\boldsymbol{U}_0 \in \ker \xA_1$ generates a constant-in-time solution.
The above results lead us to introduce the unbounded operator 
${\widetilde{\xA}}_{1}: D({\widetilde{\xA}}_{1}) \rightarrow \widetilde{\mathbf{X}}_{1}$  defined by
\begin{equation}
D({\widetilde{\xA}}_{1})=D(\xA_{1})\cap\widetilde{\mathbf{X}}_{1}
\quad\text{and}\quad
{\widetilde{\xA}}_{1}\boldsymbol{U}=\xA_{1}\boldsymbol{U}, ~\forall \boldsymbol{U}\in D({\widetilde{\xA}}_{1}).
\label{eq-def-Atild1}
\end{equation}
The spectral properties of ${\widetilde{\xA}}_{1}$ are easily deduced from Propositions \Rref{ker A1} and~\Rref{surjj*}.
\begin{prpstn}
For all $\alpha\in\xR$, the operator $\ii\alpha\mathbb{I} + {\widetilde{\xA}}_{1}$ is injective. For $\alpha\in\xR \setminus \{0\}$, it is surjective.
\label{spec-Atild1}
\end{prpstn}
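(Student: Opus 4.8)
The plan is to deduce the statement entirely from the spectral information already established for the full operator $\xA_{1}$, namely Propositions~\Rref{ker A1} and~\Rref{surjj*}, together with the orthogonal decomposition~\eqref{orth decom Hrot} and the invariance property of Proposition~\Rref{range-Atild1}; no new analytical ingredient is needed. As a preliminary remark, $\widetilde{\xA}_{1}$ is well defined as an operator into $\widetilde{\mathbf{X}}_{1}$: by Proposition~\Rref{range-Atild1} one has $\xim(\xA_{1}) \subset \widetilde{\mathbf{X}}_{1}$, so for $\boldsymbol{U} \in D(\widetilde{\xA}_{1}) = D(\xA_{1}) \cap \widetilde{\mathbf{X}}_{1}$ the element $(\ii\alpha\mathbb{I} + \widetilde{\xA}_{1})\boldsymbol{U} = \ii\alpha\boldsymbol{U} + \xA_{1}\boldsymbol{U}$ indeed belongs to $\widetilde{\mathbf{X}}_{1}$.

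For the injectivity I would argue as follows. When $\alpha \neq 0$ it is immediate, since $\ker(\ii\alpha\mathbb{I} + \widetilde{\xA}_{1}) \subset \ker(\ii\alpha\mathbb{I} + \xA_{1}) = \{0\}$ by Proposition~\Rref{ker A1}. When $\alpha = 0$, any $\boldsymbol{U} = (\boldsymbol{U}_1,\boldsymbol{U}_2,\boldsymbol{U}_3,\boldsymbol{U}_4)^{\top}$ in $\ker \widetilde{\xA}_{1} = \ker \xA_{1} \cap \widetilde{\mathbf{X}}_{1}$ has, by the description of $\ker \xA_{1}$ in Proposition~\Rref{ker A1}, vanishing first three components and $\boldsymbol{U}_4 \in \mathbf{H}(\curl 0;\Omega)$; membership in $\widetilde{\mathbf{X}}_{1}$ additionally forces $\boldsymbol{U}_4 \in \mathbf{H}_0^{\Sigma}(\divg 0;\Omega)$. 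By the orthogonal decomposition~\eqref{orth decom Hrot}, $\mathbf{H}(\curl 0;\Omega) \cap \mathbf{H}_0^{\Sigma}(\divg 0;\Omega) = \{0\}$, so $\boldsymbol{U} = 0$.

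For the surjectivity with $\alpha \in \xR \setminus \{0\}$, I would start from an arbitrary $\boldsymbol{F} = (\boldsymbol{F}_1,\boldsymbol{F}_2,\boldsymbol{F}_3,\boldsymbol{F}_4)^{\top} \in \widetilde{\mathbf{X}}_{1} \subset \mathbf{X}$ and use Proposition~\Rref{surjj*} to obtain $\boldsymbol{U} = (\boldsymbol{U}_1,\boldsymbol{U}_2,\boldsymbol{U}_3,\boldsymbol{U}_4)^{\top} \in D(\xA_{1})$ with $(\ii\alpha\mathbb{I} + \xA_{1})\boldsymbol{U} = \boldsymbol{F}$. The only thing left to check is that this preimage actually lies in $\widetilde{\mathbf{X}}_{1}$, i.e. $\boldsymbol{U}_4 \in \mathbf{H}_0^{\Sigma}(\divg 0;\Omega)$ (the other components belong to $\mathbf{L}^{2}(\Omega)$ automatically). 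This I would read off the fourth equation of the system, $\ii\alpha\,\boldsymbol{U}_4 + \curl\boldsymbol{U}_3 = \boldsymbol{F}_4$: indeed $\boldsymbol{F}_4 \in \mathbf{H}_0^{\Sigma}(\divg 0;\Omega)$ by assumption, and $\curl\boldsymbol{U}_3 \in \mathbf{H}_0^{\Sigma}(\divg 0;\Omega)$ since $\boldsymbol{U}_3 \in \mathbf{H}_0(\curl;\Omega)$ (the fact used in the proof of Proposition~\Rref{range-Atild1}), whence $\boldsymbol{U}_4 = (\ii\alpha)^{-1}(\boldsymbol{F}_4 - \curl\boldsymbol{U}_3) \in \mathbf{H}_0^{\Sigma}(\divg 0;\Omega)$. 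Thus $\boldsymbol{U} \in D(\widetilde{\xA}_{1})$ and $(\ii\alpha\mathbb{I} + \widetilde{\xA}_{1})\boldsymbol{U} = \boldsymbol{F}$, which proves surjectivity.

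I do not expect a genuine obstacle here: the substance was spent in Propositions~\Rref{ker A1}, \Rref{surjj*} and~\Rref{range-Atild1}, and what remains is bookkeeping --- verifying that restricting to $\widetilde{\mathbf{X}}_{1}$ is compatible with the operator, and that the preimages furnished by the surjectivity on $\mathbf{X}$ stay inside $\widetilde{\mathbf{X}}_{1}$ --- which is exactly what the computation on the fourth component settles.
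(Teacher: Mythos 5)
Your proposal is correct and follows essentially the same route as the paper: injectivity for $\alpha\neq0$ from Proposition~\Rref{ker A1}, injectivity at $\alpha=0$ from the orthogonal decomposition~\eqref{orth decom Hrot}, and surjectivity by showing the preimage furnished by Proposition~\Rref{surjj*} stays in $\widetilde{\mathbf{X}}_{1}$. Your explicit computation on the fourth component merely unpacks what the paper dispatches as an ``obvious consequence'' of Proposition~\Rref{range-Atild1} (namely, $\ii\alpha\boldsymbol{U}=\boldsymbol{F}-\xA_1\boldsymbol{U}\in\widetilde{\mathbf{X}}_{1}$), so the two arguments coincide in substance.
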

\begin{proof}
The injectivity for $\alpha\neq0$ directly follows from Proposition~\Rref{ker A1}. For $\alpha=0$, ${\widetilde{\xA}}_{1} \boldsymbol{U} = 0$ means $\boldsymbol{U} \in \widetilde{\mathbf{X}}_{1}$ and $\xA_{1}\boldsymbol{U}=0$, hence $\boldsymbol{U}_1=\boldsymbol{U}_2=\boldsymbol{U}_3=0$, $\boldsymbol{U}_4 \in \mathbf{H}(\curl 0;\Omega)$ and $\boldsymbol{U}_4 \in \mathbf{H}_0^{\Sigma}(\divg 0;\Omega)$. According to~\eqref{orth decom Hrot}, this implies $\boldsymbol{U}_4=0$. 

\medbreak

Taking account of Proposition~\Rref{surjj*}, the surjectivity property means that, if $\boldsymbol{F} \in \widetilde{\mathbf{X}}_{1}$, the unique solution $\boldsymbol{U}$ to $\ii\alpha\,\boldsymbol{U} + \xA_1\ \boldsymbol{U} = \boldsymbol{F}$ belongs to~$\widetilde{\mathbf{X}}_{1}$. This, in turn, is an obvious consequence of Proposition~\Rref{range-Atild1}.
\end{proof}

We notice that $\widetilde{\mathbf{X}}_{1}$ is an invariant space for the problem~\eqref{evol prob cp}, see Lemma~\Rref{lemma project P1B}. We then define  $\check{\mathit{T}_{1}}:={\mathit{T}_{1}}_{|D(\xA_{1})\cap\widetilde{\mathbf{X}}_{1}}$.
\begin{thrm}
The semigroup of contractions $(\check{\mathit{T}_{1}}(t))_{t\geq0}$ with generator 
$-{\widetilde{\xA}}_{1}$  is strongly stable on the energy space 
${\mathbf{\widetilde{X}}_{1}}$, \ie,
\begin{equation}
\lim\limits_{t \to +\infty}\| \check{\mathit{T}_{1}}(t)\widetilde{\boldsymbol{U}}_0\|_{\widetilde{\mathbf{X}}_1}=0,\quad \forall \widetilde{\boldsymbol{U}}_0 \in \widetilde{\mathbf{X}}_1.
\end{equation}
\label{stabi forte cp X1 til}
\end{thrm}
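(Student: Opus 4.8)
The plan is to apply the Arendt--Batty / Lyubich--Vu criterion, Theorem~\Rref{A B, L V}, to the generator $\mathcal{L}:=-{\widetilde{\xA}}_{1}$ on the space $\widetilde{\mathbf{X}}_{1}$. Four hypotheses have to be verified. First, $\widetilde{\mathbf{X}}_{1}$ is reflexive: this is immediate, since by~\eqref{eq-def-Xtild1} it is a closed subspace of the Hilbert space~$\mathbf{X}$, hence itself a Hilbert space. Second, $(\check{\mathit{T}_{1}}(t))_{t\geq0}$ is bounded: it is even a contraction semigroup, being the restriction of the contraction semigroup $(\mathit{T}_{1}(t))_{t\geq0}$ to the closed invariant subspace $\widetilde{\mathbf{X}}_{1}$ (invariance is Lemma~\Rref{lemma project P1B}), equipped with the inherited inner product. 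Third and fourth, $\mathcal{L}$ should have no eigenvalue on $\ii\xR$ and $\sigma(\mathcal{L})\cap\ii\xR$ should be countable; both will come from Proposition~\Rref{spec-Atild1}.

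For the eigenvalue condition I would simply read off Proposition~\Rref{spec-Atild1}: $\ii\alpha\mathbb{I}+{\widetilde{\xA}}_{1}$ is injective for every $\alpha\in\xR$, so $\mathcal{L}$ has no spectral point which is an eigenvalue on the imaginary axis. For the spectrum itself, I would first record that ${\widetilde{\xA}}_{1}$ is a closed operator on $\widetilde{\mathbf{X}}_{1}$: if $\boldsymbol{U}_n\to\boldsymbol{U}$ and ${\widetilde{\xA}}_{1}\boldsymbol{U}_n\to\boldsymbol{V}$ in $\widetilde{\mathbf{X}}_{1}$, these convergences also hold in $\mathbf{X}$, and since $\xA_{1}$ is closed (it generates a $\xCzero$-semigroup) we get $\boldsymbol{U}\in D(\xA_{1})$, $\xA_{1}\boldsymbol{U}=\boldsymbol{V}$; as $\widetilde{\mathbf{X}}_{1}$ is closed this gives $\boldsymbol{U}\in D({\widetilde{\xA}}_{1})$ and ${\widetilde{\xA}}_{1}\boldsymbol{U}=\boldsymbol{V}$. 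Then, for fixed $\alpha\in\xR\setminus\{0\}$, Proposition~\Rref{spec-Atild1} says $\ii\alpha\mathbb{I}+{\widetilde{\xA}}_{1}$ is a bijection from $D({\widetilde{\xA}}_{1})$ onto $\widetilde{\mathbf{X}}_{1}$; being closed and bijective, it has a bounded inverse by the closed-graph (bounded inverse) theorem, so $\ii\alpha\in\rho(\mathcal{L})$. Consequently $\sigma(\mathcal{L})\cap\ii\xR\subseteq\{0\}$, which is at most a single point, hence countable. Theorem~\Rref{A B, L V} then yields $\|\check{\mathit{T}_{1}}(t)\widetilde{\boldsymbol{U}}_0\|_{\widetilde{\mathbf{X}}_1}\to0$ as $t\to+\infty$ for every $\widetilde{\boldsymbol{U}}_0\in\widetilde{\mathbf{X}}_1$, which is the claim.

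I do not expect a genuine obstacle here: the substantive work has already been done in Propositions~\Rref{ker A1}, \Rref{surjj*} and~\Rref{range-Atild1} (in particular the Fredholm-alternative-with-constraints argument establishing surjectivity of $\ii\alpha\mathbb{I}+\xA_1$), which together give the spectral picture of Proposition~\Rref{spec-Atild1}. The only points that require a moment's care in the write-up are the passage to the restricted operator on $\widetilde{\mathbf{X}}_{1}$ --- namely that $\widetilde{\mathbf{X}}_{1}$ is genuinely invariant, so that $-{\widetilde{\xA}}_{1}$ is a generator there and the restricted semigroup is still a contraction (handled by Lemma~\Rref{lemma project P1B}) --- and the observation that the possible residual point $0$ of the imaginary spectrum is harmless, a singleton being trivially countable. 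Neither of these presents any real difficulty.
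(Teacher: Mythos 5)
Your proposal is correct and follows essentially the same route as the paper: both rest on Proposition~\Rref{spec-Atild1} to conclude that $\sigma(-\widetilde{\xA}_1)\cap\ii\xR$ is contained in $\{0\}$ (hence countable) with no imaginary eigenvalues, and then invoke the Arendt--Batty / Lyubich--Vu criterion. The extra details you supply (closedness of $\widetilde{\xA}_1$, the bounded-inverse theorem, invariance of $\widetilde{\mathbf{X}}_1$ via Lemma~\Rref{lemma project P1B}) are exactly the steps the paper leaves implicit.
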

\begin{proof}
According to Proposition~\Rref{spec-Atild1}, we conclude that 
\begin{equation*}
\sigma(-\widetilde{\xA}_1)\cap \ii\xR=\varnothing \quad \text{or} \quad \lbrace 0 \rbrace, 
\end{equation*}
which is countable in both cases, and that $0$~is not an eigenvalue.
On the other hand, $\widetilde{\xA}_1$~is monotone in~$\widetilde{\mathbf{X}}_1$, so $-\widetilde{\xA}_1$ is dissipative in~$\widetilde{\mathbf{X}}_1$. The rest of the proof  follows from Theorem~\Rref{A B, L V}. 
\end{proof}
\begin{rmrk}
As we shall see in Section~\Rref{sec-expoly-stab}, $0$ actually does not belong to~$\sigma(-\widetilde{\xA}_1)$.
\end{rmrk}

\medbreak

We denote by $P_1$ the orthogonal  projection in $\mathbf{L}^2(\Omega)$ onto $\mathbf{Z}_{T}(\Omega)$. 
\begin{lmm}
Let $\boldsymbol{U}_0\in\mathbf{X}_1$ and $\boldsymbol{U}$ be the solution of problem \eqref{evol prob cp}. It holds  that 
\begin{equation}
P_1(\boldsymbol{B}(t))=P_1(\boldsymbol{B}_0),\quad \forall\, t>0.
\label{projec P1B}
\end{equation}
\label{lemma project P1B}
\end{lmm}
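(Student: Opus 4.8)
The plan is to exploit the magnetic evolution equation~\eqref{pp2}, namely $\partial_t\boldsymbol{B}=-\curl\boldsymbol{E}$, together with the fact that $\curl\boldsymbol{E}$ is $\mathbf{L}^2(\Omega)$-orthogonal to $\mathbf{Z}_T(\Omega)$ as soon as $\boldsymbol{E}$ has vanishing tangential trace. I would first prove~\eqref{projec P1B} for a regular initial datum, and then extend it to all of $\mathbf{X}_1$ by a density argument.

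\emph{Regular data.} Suppose first $\boldsymbol{U}_0\in D(\xA_1)$, so that by Theorem~\Rref{thm well posed cp} the corresponding solution satisfies $\boldsymbol{U}=(\boldsymbol{J}_1,\boldsymbol{J}_2,\boldsymbol{E},\boldsymbol{B})^\top\in\xCone(\xR_{\ge0};\mathbf{X})\cap\xCzero(\xR_{\ge0};D(\xA_1))$; in particular $\boldsymbol{B}\in\xCone(\xR_{\ge0};\mathbf{L}^2(\Omega))$, $\boldsymbol{E}(t)\in\mathbf{H}_0(\curl;\Omega)$ for every $t\ge0$, and the fourth component of $\partial_t\boldsymbol{U}+\xA_1\boldsymbol{U}=0$ reads $\partial_t\boldsymbol{B}(t)=-\curl\boldsymbol{E}(t)$ in $\mathbf{L}^2(\Omega)$. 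Fix $\boldsymbol{\chi}\in\mathbf{Z}_T(\Omega)=\mathbf{H}(\curl0;\Omega)\cap\mathbf{H}_0(\divg0;\Omega)$. Then $t\mapsto(\boldsymbol{B}(t)\mid\boldsymbol{\chi})$ is of class $\xCone$, with
\[
\frac{\xdif}{\xdif t}(\boldsymbol{B}(t)\mid\boldsymbol{\chi})=(\partial_t\boldsymbol{B}(t)\mid\boldsymbol{\chi})=-(\curl\boldsymbol{E}(t)\mid\boldsymbol{\chi}).
\]
Green's formula~\eqref{green rot rot} applied to $\boldsymbol{v}=\boldsymbol{E}(t)$ and $\boldsymbol{w}=\boldsymbol{\chi}$ gives $(\boldsymbol{E}(t)\mid\curl\boldsymbol{\chi})-(\curl\boldsymbol{E}(t)\mid\boldsymbol{\chi})=\langle\boldsymbol{E}(t)\times\boldsymbol{n},\boldsymbol{\chi}_\top\rangle_{\mathbf{TC}(\Gamma)}=0$, since $\boldsymbol{E}(t)\times\boldsymbol{n}=0$ on $\Gamma$; as $\curl\boldsymbol{\chi}=0$, this forces $(\curl\boldsymbol{E}(t)\mid\boldsymbol{\chi})=0$. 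Hence $(\boldsymbol{B}(t)\mid\boldsymbol{\chi})=(\boldsymbol{B}_0\mid\boldsymbol{\chi})$ for all $t>0$ and all $\boldsymbol{\chi}\in\mathbf{Z}_T(\Omega)$, which means precisely $P_1\boldsymbol{B}(t)=P_1\boldsymbol{B}_0$ (taking $\boldsymbol{\chi}=P_1\boldsymbol{B}(t)-P_1\boldsymbol{B}_0\in\mathbf{Z}_T(\Omega)$ shows $\|P_1\boldsymbol{B}(t)-P_1\boldsymbol{B}_0\|=0$).

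\emph{General data.} For arbitrary $\boldsymbol{U}_0\in\mathbf{X}_1\subset\mathbf{X}$, pick a sequence $\boldsymbol{U}_0^n\in D(\xA_1)$ with $\boldsymbol{U}_0^n\to\boldsymbol{U}_0$ in $\mathbf{X}$ (possible since $D(\xA_1)$ is dense in $\mathbf{X}$). Denoting by $\boldsymbol{B}^n(t)$ the magnetic component of $\mathit{T}_1(t)\boldsymbol{U}_0^n$ and by $\boldsymbol{B}(t)$ that of $\mathit{T}_1(t)\boldsymbol{U}_0$, the contraction estimate $\|\mathit{T}_1(t)\boldsymbol{U}_0^n-\mathit{T}_1(t)\boldsymbol{U}_0\|_{\mathbf{X}}\le\|\boldsymbol{U}_0^n-\boldsymbol{U}_0\|_{\mathbf{X}}$ yields $\boldsymbol{B}^n(t)\to\boldsymbol{B}(t)$ in $\mathbf{L}^2(\Omega)$ for every $t\ge0$, and $\boldsymbol{B}_0^n\to\boldsymbol{B}_0$. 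Since $P_1$ is bounded and $P_1\boldsymbol{B}^n(t)=P_1\boldsymbol{B}_0^n$ by the previous step, passing to the limit gives~\eqref{projec P1B}. The bulk of the argument is entirely routine; the only point requiring a little care is this final density step, which is needed because for $\boldsymbol{U}_0$ merely in $\mathbf{X}_1$ the solution is only a mild solution, so the pointwise-in-time identity $\partial_t\boldsymbol{B}=-\curl\boldsymbol{E}$ cannot be invoked directly.
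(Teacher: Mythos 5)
Your proof is correct and follows essentially the same route as the paper: the paper's entire proof is the one-line instruction to multiply Equation~\eqref{pp2} by an element of $\mathbf{Z}_{T}(\Omega)$ and integrate by parts, which is exactly your regular-data step. The density argument you add to cover the case $\boldsymbol{U}_0\in\mathbf{X}_1\setminus D(\xA_1)$, where the solution is only mild, is a detail the paper leaves implicit, and you handle it correctly.
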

\begin{proof}
Just multiply Equation~\eqref{pp2} by a element of~$\mathbf{Z}_{T}(\Omega)$ and integrate by parts on~$\Omega$. 
\end{proof}
Lemma~\Rref{lemma project P1B}  shows that the projection of the solution $\boldsymbol{U}$ onto $\ker({\xA_{1}}_{|\mathbf{X}_1})$ does not depend on the time. Then we conclude\dots  
\begin{crllr}
It holds that 
\begin{equation*}
\lim_{t \to +\infty} \left\| {\mathit{T}_{1}(t)\,\boldsymbol{U}_0}- \sum_{1\leq j \leq J} \xi_j \, (0,0,0,\widetilde{\grad \dot{q}_j})^\top \right\|_{\mathbf{X}_1}=0,\quad\forall \boldsymbol{U}_{0} \in \mathbf{{X}}_1, 
\end{equation*}
where $\xi_j=\langle \boldsymbol{B}_0 \cdot \boldsymbol{n}, 1 \rangle_{\Sigma_j},\quad \text{for } j=1,\ \ldots,\  J$.
\label{strong stab X1}
\end{crllr}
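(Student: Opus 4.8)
The idea is to split $\boldsymbol{U}_0$ into a stationary part living in $\ker({\xA_{1}}_{|\mathbf{X}_1})$ and a part in $\widetilde{\mathbf{X}}_1$ that decays by Theorem~\Rref{stabi forte cp X1 til}. Since $\Gamma_A=\varnothing$ here, the last factor of $\mathbf{X}_1$ is $\mathbf{H}_0(\divg 0;\Omega)$, and the orthogonal decomposition~\eqref{orth decom ZT} lets me write, for $\boldsymbol{U}_0=(\boldsymbol{J}_{1,0},\boldsymbol{J}_{2,0},\boldsymbol{E}_0,\boldsymbol{B}_0)^\top\in\mathbf{X}_1$,
\[
\boldsymbol{U}_0=(0,0,0,P_1\boldsymbol{B}_0)^\top+\widetilde{\boldsymbol{U}}_0,\qquad
\widetilde{\boldsymbol{U}}_0:=(\boldsymbol{J}_{1,0},\boldsymbol{J}_{2,0},\boldsymbol{E}_0,\boldsymbol{B}_0-P_1\boldsymbol{B}_0)^\top.
\]
By Hypotheses~\Rref{hyp-1} and~\Rref{hyp-2} the weighted spaces $\mathbf{L}^2_{(s)}(\Omega)$ coincide with $\mathbf{L}^2(\Omega)$ with equivalent norms, and $\boldsymbol{B}_0-P_1\boldsymbol{B}_0\in\mathbf{H}_0^{\Sigma}(\divg 0;\Omega)$ by~\eqref{orth decom ZT}; hence $\widetilde{\boldsymbol{U}}_0\in\widetilde{\mathbf{X}}_1$, while $(0,0,0,P_1\boldsymbol{B}_0)^\top\in\{0\}^3\times\mathbf{Z}_T(\Omega)=\ker({\xA_{1}}_{|\mathbf{X}_1})$, so it generates the solution $t\mapsto(0,0,0,P_1\boldsymbol{B}_0)^\top$, constant in time.

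Next I would use linearity of the semigroup: $\mathit{T}_1(t)\boldsymbol{U}_0=(0,0,0,P_1\boldsymbol{B}_0)^\top+\mathit{T}_1(t)\widetilde{\boldsymbol{U}}_0$. The subspace $\widetilde{\mathbf{X}}_1$ is invariant under the flow: for a datum in $\widetilde{\mathbf{X}}_1$, the solution stays in $\mathbf{X}_1$ by Remark~\Rref{stable space}, so its $\boldsymbol{B}$-component remains in $\mathbf{H}_0(\divg 0;\Omega)$, and Lemma~\Rref{lemma project P1B} keeps its $\mathbf{Z}_T(\Omega)$-projection equal to $0$, i.e.\ it stays in $\mathbf{H}_0^{\Sigma}(\divg 0;\Omega)$ by~\eqref{orth decom ZT}. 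Thus $\mathit{T}_1(t)\widetilde{\boldsymbol{U}}_0=\check{\mathit{T}_{1}}(t)\widetilde{\boldsymbol{U}}_0$, and Theorem~\Rref{stabi forte cp X1 til} gives $\|\check{\mathit{T}_{1}}(t)\widetilde{\boldsymbol{U}}_0\|_{\widetilde{\mathbf{X}}_1}\to0$, hence the same in the equivalent norm of $\mathbf{X}_1$. Therefore $\mathit{T}_1(t)\boldsymbol{U}_0\to(0,0,0,P_1\boldsymbol{B}_0)^\top$ in $\mathbf{X}_1$.

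Finally I would identify $P_1\boldsymbol{B}_0$. Writing $P_1\boldsymbol{B}_0=\sum_{j=1}^{J}c_j\,\widetilde{\grad\dot q_j}$ in the basis of $\mathbf{Z}_T(\Omega)$ furnished by \cite[Proposition~3.14]{ABD98}, the coefficients are read off from the fluxes across the cuts: for each $k\in\{1,\dots,J\}$, by linearity of the cut-flux pairing on $\mathbf{H}_0(\divg 0;\Omega)$,
\[
\langle\boldsymbol{B}_0\cdot\boldsymbol{n},1\rangle_{\Sigma_k}
=\langle P_1\boldsymbol{B}_0\cdot\boldsymbol{n},1\rangle_{\Sigma_k}
+\langle(\boldsymbol{B}_0-P_1\boldsymbol{B}_0)\cdot\boldsymbol{n},1\rangle_{\Sigma_k}
=\sum_{j=1}^{J}c_j\,\langle\widetilde{\grad\dot q_j}\cdot\boldsymbol{n},1\rangle_{\Sigma_k}=c_k,
\]
since the middle term vanishes because $\boldsymbol{B}_0-P_1\boldsymbol{B}_0\in\mathbf{H}_0^{\Sigma}(\divg 0;\Omega)$, and the basis is normalised by $\langle\widetilde{\grad\dot q_j}\cdot\boldsymbol{n},1\rangle_{\Sigma_k}=\delta_{jk}$. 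Hence $c_k=\langle\boldsymbol{B}_0\cdot\boldsymbol{n},1\rangle_{\Sigma_k}=\xi_k$, which is exactly the claimed formula. The argument is essentially bookkeeping; the only points requiring a little care are the invariance of $\widetilde{\mathbf{X}}_1$ and the meaning of the cut-flux pairings on divergence-free fields, both of which are supplied by results already recalled above.
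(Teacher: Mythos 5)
Your proposal is correct and follows essentially the same route as the paper: decompose $\boldsymbol{U}_0$ via~\eqref{orth decom ZT} into a stationary kernel part $(0,0,0,P_1\boldsymbol{B}_0)^\top$ plus a component in $\widetilde{\mathbf{X}}_1$, invoke Lemma~\Rref{lemma project P1B} and Theorem~\Rref{stabi forte cp X1 til} for the decay, and read off the coefficients $\xi_j$ from the cut fluxes using the normalisation $\langle \widetilde{\grad \dot{q}_j}\cdot\boldsymbol{n},1\rangle_{\Sigma_i}=\delta_{ji}$. The extra care you take with the invariance of $\widetilde{\mathbf{X}}_1$ is a welcome elaboration of a step the paper leaves implicit, but it is not a different argument.
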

\begin{proof}
Let $\boldsymbol{U}_0 \in \mathbf{X}_1$. From the orthogonal decomposition~\eqref{orth decom ZT} and Lemma~\Rref{lemma project P1B}, we deduce that the solution $\boldsymbol{U}$ to the system  \eqref{uu}--\eqref{init cond 0} and~\eqref{div B}, with boundary conditions \eqref{lk} and~\eqref{lkm}, can be written as:
\begin{equation*}
\boldsymbol{U}(t)= \widetilde{\boldsymbol{U}}(t) + (0,0,0,P_1\boldsymbol{B}_0),
\end{equation*}
and  $\widetilde{\boldsymbol{U}}(t)\in\widetilde{\mathbf{X}}_1$ is the solution of problem
\begin{equation*}
\partial_{t}\widetilde{\boldsymbol{U}}(t) + \widetilde{\xA}_1\widetilde{\boldsymbol{U}}(t)=0,\, \text{ for }  t > 0,  \qquad  \widetilde{\boldsymbol{U}}(0)=\widetilde{\boldsymbol{U}}_0,
\end{equation*} 
where the initial condition $\widetilde{\boldsymbol{U}}_0=\boldsymbol{U}_0-(0,0,0,P_1\boldsymbol{B}_0)$  belongs to~$\widetilde{\mathbf{X}}_1$. Next, let $\xi_j$, $j=1,\ \ldots,\ J$ be a constants such that $P_1\boldsymbol{B}_0=\sum_{1\leq j \leq J}\xi_j\,\widetilde{\grad \dot{q}_j}$. Therefore, $\boldsymbol{B}_0-P_1\boldsymbol{B}_0$ belongs to $\mathbf{H}_0^{\Sigma}(\divg 0;\Omega)$, which yields:
\begin{equation*}
\langle \boldsymbol{B}_0 \cdot \boldsymbol{n}, 1 \rangle_{\Sigma_i}
= \sum_{j}\xi_j \langle \widetilde{\grad \dot{q}_j} \cdot \boldsymbol{n}, 1 \rangle_{\Sigma_i}
= \xi_{i}.
\end{equation*}
Finally, from Theorem \Rref{stabi forte cp X1 til}, $\widetilde{\boldsymbol{U}}$ satisfies $\lim\limits_{t \to +\infty}\| \widetilde{\boldsymbol{U}}(t)\|_{\widetilde{\mathbf{X}}_1}=0$, hence the result.
\end{proof}

\subsection{Homogeneous Silver--M\"uller case}
\label{strong-sm}
\begin{prpstn}
For all $\alpha\in\xR \setminus \{0\}$, the operator 
$\ii\alpha\mathbb{I}+\xA_{2}$ is injective, \ie,
\begin{equation*}
\ker(\ii\alpha\mathbb{I}+\xA_{2})=\lbrace0\rbrace.
\end{equation*}
Furthermore, $0$ is an eigenvalue of $\xA_{2}$ and  the set of its eigenvectors is 
\begin{equation*}
\ker(\xA_{2})=\lbrace (0,0,0,\boldsymbol{V})\, :\, \boldsymbol{V} \in \mathbf{H}_{0,\Gamma_A}(\curl 0;\Omega)\rbrace.
\end{equation*}
\label{inject sm stab}
\end{prpstn}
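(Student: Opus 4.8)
The plan is to follow essentially the same two-part argument that proved Proposition~\Rref{ker A1} for $\xA_1$, adapting the boundary-condition-dependent steps to the Silver--M\"uller operator $\xA_2$. For the injectivity statement, take $\alpha\in\xR$ and $\boldsymbol{U}=(\boldsymbol{U}_1,\boldsymbol{U}_2,\boldsymbol{U}_3,\boldsymbol{U}_4)^\top\in D(\xA_2)$ with $(\ii\alpha\,\mathbb{I}+\xA_2)\boldsymbol{U}=0$. Writing this out gives the analogue of the system \eqref{1injec}--\eqref{4injec}. Taking the real part of the inner product of the equation with $\boldsymbol{U}$ in $\mathbf{X}$ and invoking the monotonicity identity~\eqref{monotonie A2 sm homo}, one gets
\begin{equation*}
\frac1{\varepsilon_0}\sum_s\left(\frac{\nu_s\,\boldsymbol{U}_s}{\omega_{ps}}\Biggm|\frac{\boldsymbol{U}_s}{\omega_{ps}}\right) + \varepsilon_0 c^3\,\|\boldsymbol{U}_{4\top}\|^2_{\mathbf{L}^2(\Gamma_A)} = 0,
\end{equation*}
so that both terms vanish. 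Exactly as in the proof of Proposition~\Rref{ker A1}, Hypothesis~\Rref{hyp-2} forces $\boldsymbol{U}_1=\boldsymbol{U}_2=0$, then the first two equations force $\boldsymbol{U}_3=0$, and the third equation then gives $\curl\boldsymbol{U}_4=0$. The genuinely new information is the boundary term: $\boldsymbol{U}_{4\top}=0$ on $\Gamma_A$, i.e.\ $\boldsymbol{U}_4\in\mathbf{H}_{0,\Gamma_A}(\curl;\Omega)$; moreover, since $\boldsymbol{U}_3=0$, the Silver--M\"uller condition $\boldsymbol{U}_3\times\boldsymbol{n}+c\,\boldsymbol{U}_{4\top}=0$ on $\Gamma_A$ is consistent with this and gives nothing more. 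If $\alpha\neq0$, equation \eqref{4injec} (its analogue) gives $\boldsymbol{U}_4=0$, proving injectivity.

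For the case $\alpha=0$, we have shown $\boldsymbol{U}_1=\boldsymbol{U}_2=\boldsymbol{U}_3=0$ and $\boldsymbol{U}_4\in\mathbf{H}(\curl0;\Omega)$. To identify $\ker\xA_2$, observe that the membership $\boldsymbol{U}=(0,0,0,\boldsymbol{U}_4)^\top\in D(\xA_2)$ requires, by definition of $\mathcal H$, that $\boldsymbol{U}_4\in\mathbf{H}(\curl;\Omega)$ and the boundary condition $\boldsymbol{U}_3\times\boldsymbol{n}+c\,\boldsymbol{U}_{4\top}=0$ on $\Gamma_A$; with $\boldsymbol{U}_3=0$ this is $\boldsymbol{U}_{4\top}=0$ on $\Gamma_A$, i.e.\ $\boldsymbol{U}_4\in\mathbf{H}_{0,\Gamma_A}(\curl;\Omega)$. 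Conversely, $\xA_2(0,0,0,\boldsymbol{V})^\top = (0,0,-c^2\curl\boldsymbol{V},0)^\top$ (reading off~\eqref{matrix A}), which is zero precisely when $\boldsymbol{V}\in\mathbf{H}(\curl0;\Omega)$; and such a $(0,0,0,\boldsymbol{V})^\top$ lies in $D(\xA_2)$ exactly when $\boldsymbol{V}\in\mathbf{H}_{0,\Gamma_A}(\curl;\Omega)$ as well. Hence $\ker\xA_2 = \{(0,0,0,\boldsymbol{V}):\boldsymbol{V}\in\mathbf{H}_{0,\Gamma_A}(\curl0;\Omega)\}$, using the notation $\mathbf{H}_{0,\Gamma_A}(\curl0;\Omega)=\mathbf{H}(\curl0;\Omega)\cap\mathbf{H}_{0,\Gamma_A}(\curl;\Omega)$, and $0$ is indeed an eigenvalue (the kernel being nontrivial as soon as $\Omega$ supports a nonzero curl-free field with vanishing tangential trace on $\Gamma_A$, e.g.\ via a gradient of a suitable harmonic potential).

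I do not expect a serious obstacle here: the proof is parallel to that of Proposition~\Rref{ker A1}, the only subtlety being the bookkeeping with the boundary condition on $\Gamma_A$ — making sure that the Silver--M\"uller constraint, once $\boldsymbol{U}_3=0$ is established, collapses exactly to $\boldsymbol{U}_{4\top}=0$ on $\Gamma_A$, and that this is the correct characterization of membership in $D(\xA_2)$ restricted to vectors of the form $(0,0,0,\boldsymbol{V})^\top$. One should also double-check the sign and the coefficient in the monotonicity identity~\eqref{monotonie A2 sm homo} so that the boundary term is manifestly nonnegative, ensuring it really does vanish; but this is already recorded in Proposition~\Rref{maximonot 2}. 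If anything requires care, it is only the density/trace technicalities needed to assert $\boldsymbol{U}_{4\top}\in\mathbf{L}^2(\Gamma_A)$ makes sense, which is guaranteed because $\boldsymbol{U}\in D(\xA_2)$ already places $(\boldsymbol{U}_3,\boldsymbol{U}_4)\in\mathcal H$.
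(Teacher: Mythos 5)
Your proposal is correct and follows essentially the same route as the paper: take the $\mathbf{X}$-inner product with $\boldsymbol{U}$, use the monotonicity identity~\eqref{monotonie A2 sm homo} to kill both the collisional terms and the boundary term $\|\boldsymbol{U}_{4\top}\|^2_{\mathbf{L}^2(\Gamma_A)}$, then repeat the cascade of Proposition~\Rref{ker A1}, with the extra boundary information yielding $\boldsymbol{U}_4\in\mathbf{H}_{0,\Gamma_A}(\curl 0;\Omega)$ in the case $\alpha=0$. The paper's proof is just a terser version of exactly this argument.
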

\begin{proof}
Let $\alpha\in\xR$ and $\boldsymbol{U} = (\boldsymbol{U}_1,\boldsymbol{U}_2,\boldsymbol{U}_3,\boldsymbol{U}_4)^\top \in D(\xA_2)$ be such that 
\begin{equation}
(\ii\alpha\,\mathbb{I}+\xA_2)\,\boldsymbol{U}=0 ,
\label{0}
\end{equation}
which is equivalent, in~$\Omega$, to the system~\eqref{1injec}--\eqref{4injec}. Taking the inner product of \eqref{0} with~$\boldsymbol{U}$, one gets: 
\begin{equation*}
\Re(\ii\alpha\|\boldsymbol{U}\|_{\mathbf{X}}^2) = \Re\left(\xA_2 \boldsymbol{U}\mid\boldsymbol{U}\right)_{\mathbf{X}}=0.
\end{equation*}
By the monotonicity of $\xA_2$, see~\eqref{monotonie A2 sm homo}, one obtains
\begin{equation*}
\left( \frac{\nu_s\, \boldsymbol{U}_s}{\varepsilon_0\omega_{ps}} \Biggm| \frac{\boldsymbol{U}_s}{\omega_{ps}} \right) = 0, \quad s=1,\ 2,
\quad \text{and}\quad \| \boldsymbol{U}_{4\top} \|^2_{\mathbf{L}^2(\Gamma_A)}=0.
\end{equation*}
The rest of the proof follows the same arguments as Proposition~\Rref{ker A1}.
\end{proof}

\medbreak

The above Proposition states that $\ker \xA_2 $ coincides with the set of stationary solutions to the problem~\eqref{uu}--\eqref{init cond 0} with boundary condition \eqref{lk} and~\eqref{lm} (with $\boldsymbol{g}=0$). Similarly to the operator $\xA_1$, if we define  the operator ${\xA_2}_{|\mathbf{X}_2}: D(\xA_2)\cap \mathbf{X}_2\rightarrow \mathbf{X}_2$ as the restriction of $\xA_2$ to the space~$\mathbf{X}_2$, then we obtain
\begin{eqnarray*}
\ker({\xA_{2}}_{|\mathbf{X}_2})=\lbrace 0 \rbrace^3 \times \mathbf{Z}(\Omega;\Gamma_A), 
\end{eqnarray*}
where the kernel 
\begin{eqnarray*}
\mathbf{Z}(\Omega;\Gamma_A):=\mathbf{H}_{0,\Gamma_A}(\curl 0;\Omega)\cap \mathbf{H}_{0,\Gamma_P}(\divg 0;\Omega).
\end{eqnarray*}
Note that the set of stationary solution to Equations~\eqref{uu}--\eqref{init cond 0} and~\eqref{div B}, with boundary conditions \eqref{lk}--\eqref{lkm} and $\boldsymbol{g}=0$, is equal to $\ker({\xA_{2}}_{|\mathbf{X}_2})$.

\medbreak

The space $\mathbf{Z}(\Omega;\Gamma_A)$ has been studied by Fernandes and Gilardi in~\cite{FG97}. It is of finite dimension and from  \cite[Corollarly~5.2]{FG97} one has $\dim{\mathbf{Z}(\Omega;\Gamma_A)}\leq N+J-1$. 
(We recall that $N$ is the number of connected components of~$\Gamma_A \setminus \partial\Sigma$.)

\smallbreak

We now recall some orthogonal decompositions from~\cite{FG97}; we mostly keep the same notations. Picking a vector $\textbf{a}\in\xR^N$ such that $\sum_{i=1}^{N} a_i \neq 0$, we define the space 
\begin{eqnarray*}
\xHn{\frac1{2}}_{\text{const}\,\Gamma_A,\Sigma}(\partial\dot{\Omega};\textbf{a}):=\lbrace p\in\xHn{\frac1{2}}(\partial\dot{\Omega}):\, \exists\textbf{c}\in\xR^N, \, \exists\textbf{c}'\in \xR^J \,:\,  \textbf{c}\cdot\textbf{a}=0,\\
p_{|\Gamma_{A,i}}=c_i\, \text{ for } i=1,\ \ldots,\ N\,; \quad [p]_{\Sigma_j}=c'_j\, \text{ for }\, j=1,\ \ldots,\ J \rbrace.
\end{eqnarray*}
Moreover we introduce 
\begin{eqnarray*}
\mathbf{H}_{0,\Gamma_P;\text{flux},\Gamma_A,\Sigma}(\divg 0;\Omega) := \lbrace \boldsymbol{v}\in \mathbf{H}_{0,\Gamma_P}(\divg 0;\Omega)\,:\,\qquad\qquad \nonumber\\
~\mbox{ }\langle \boldsymbol{v} \cdot \boldsymbol{n}, p \rangle_{\xHn{\frac1{2}}(\Gamma_A\cup\Sigma)}=0\quad \forall\, p\in \xHn{\frac1{2}}_{\text{const}\,\Gamma_A,\Sigma}(\partial\dot{\Omega};\textbf{a}) \rbrace .
\end{eqnarray*}
For the proof, we refer the reader to \cite[Proposition~3.3 and Remark~2.1]{FG97}.
Then, we have the orthogonal decompositions in $\mathbf{L}^2(\Omega)$ which are proven in \cite[Propositions~6.3 and 6.4]{FG97}:
\begin{eqnarray}
\mathbf{H}_{0,\Gamma_P}(\divg 0;\Omega)&=&\qquad ~{\mathbf{Z}(\Omega;\Gamma_A)}~~\,\stackrel{\perp}{\oplus}{\mathbf{H}_{0,\Gamma_P;\text{flux},\Gamma_A,\Sigma}(\divg 0;\Omega)},
\label{orth decom Z GammaA}\\
\mathbf{L}^2(\Omega)&=&{\mathbf{H}_{0,\Gamma_A}(\curl 0;\Omega)}\stackrel{\perp}{\oplus}{\mathbf{H}_{0,\Gamma_P;\text{flux},\Gamma_A,\Sigma}(\divg 0;\Omega)}.
\label{orth decom Hrot GammaA}
\end{eqnarray}

\begin{prpstn}
For all $\alpha\in\xR\setminus\lbrace0\rbrace$, the operator $\ii\alpha\mathbb{I}+\xA_2$ is surjective, $i.e$
\begin{equation*}
\xim(\ii\alpha\mathbb{I}+\xA_2)=\mathbf{X}.
\end{equation*}
\label{surj I+A2}
\end{prpstn}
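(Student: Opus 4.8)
The plan is to mirror the proof of Proposition~\Rref{surjj*} (the perfectly conducting case), adapting the mixed variational formulation so as to incorporate the Silver--M\"uller boundary condition. Fix $\alpha\in\xR\setminus\{0\}$ and $\boldsymbol{F}=(\boldsymbol{F}_1,\boldsymbol{F}_2,\boldsymbol{F}_3,\boldsymbol{F}_4)^\top\in\mathbf{X}$; we seek $\boldsymbol{U}\in D(\xA_2)$ solving $(\ii\alpha\mathbb{I}+\xA_2)\boldsymbol{U}=\boldsymbol{F}$, which is the system \eqref{11tild}--\eqref{44tild} with the Silver--M\"uller constraint on $\Gamma_A$. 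Exactly as before, \eqref{11tild}, \eqref{22tild}, \eqref{44tild} let us eliminate $\boldsymbol{U}_1,\boldsymbol{U}_2,\boldsymbol{U}_4$ via \eqref{441tild}--\eqref{443tild}, and substituting into \eqref{33tild} gives the same reduced equation \eqref{55tild} for $\boldsymbol{U}_3$, but now posed on the space $\mathcal{V}$ of \eqref{def-space-V} and with the boundary term $\lambda c\,(\boldsymbol{w}_\top\mid\boldsymbol{v}_\top)_{\Gamma_A}$-type contribution (here $\frac{c^2}{\ii\alpha}\cdot\frac{\ii\alpha}{c}$, i.e.\ $c\,(\boldsymbol{U}_{3\top}\mid\boldsymbol{v}_\top)_{\Gamma_A}$ after expressing $\curl\boldsymbol{U}_4$ through \eqref{443tild} and using the Silver--M\"uller condition) coming from the integration by parts \eqref{green rot 0 Gamma p}.

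Next I would set up the constrained (mixed) formulation: find $(\boldsymbol{U}_3,p)\in\mathcal{V}\times\xHone_{\partial\Omega}(\Omega)$ (or the appropriate analogue $\xHone_{\partial\Omega,\Gamma_A}$ adapted to the connected components $\Gamma_{A,i}$ and cuts, as dictated by the dimension formula $\dim\mathbf{Z}(\Omega;\Gamma_A)\le N+J-1$ and the decompositions \eqref{orth decom Z GammaA}--\eqref{orth decom Hrot GammaA}) such that
\begin{eqnarray*}
a_\alpha(\boldsymbol{U}_3,\boldsymbol{v})+c_\alpha(\boldsymbol{U}_3,\boldsymbol{v})+c\,(\boldsymbol{U}_{3\top}\mid\boldsymbol{v}_\top)_{\Gamma_A}+\overline{b_\alpha(\boldsymbol{v},p)}&=&L_\alpha(\boldsymbol{v}),\quad\forall\boldsymbol{v}\in\mathcal{V},\\
b_\alpha(\boldsymbol{U}_3,q)&=&(\boldsymbol{G}\mid\grad q),
\end{eqnarray*}
with $a_\alpha,c_\alpha,b_\alpha,L_\alpha$ as in \eqref{a cond}--\eqref{l cond} and $\boldsymbol{G}$ chosen as in \eqref{choix de g}. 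The verification of the hypotheses of \cite[Theorem~4.5.9]{ACL+17} then proceeds as in Proposition~\Rref{surjj*}: continuity is clear; the coercivity on the kernel $\mathrm{K}$ of $b_\alpha$ follows from Corollary~\Rref{coro semi norm} applied to $\mathbb{B}=\mathbb{B}_\alpha$ (legitimate by Lemma~\Rref{lem minor B i alpha}) — here the key point is that the $\|\boldsymbol{v}_\top\|_{\mathbf{L}^2(\Gamma_A)}^2$ term of $a_\alpha+c\,(\cdot\mid\cdot)_{\Gamma_A}$ supplies exactly the missing boundary contribution of the $\mathbf{X}_{N,\Gamma}$-norm, and the compact embedding $\mathrm{K}\subset\mathbf{L}^2(\Omega)$ comes from Theorem~\Rref{compact}; the inf--sup condition follows by taking $\boldsymbol{v}=\grad q$ and using Lemma~\Rref{lem minor B i alpha} as in \eqref{b in'eg}--\eqref{bv in'eg}. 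The Fredholm alternative then reduces surjectivity to injectivity of the homogeneous problem on $\mathrm{K}$, and — just as in Proposition~\Rref{surjj*} — a solution $\boldsymbol{Z}_3$ of that homogeneous problem reconstructs, via the analogues of \eqref{s4}--\eqref{s3}, an element $\boldsymbol{Z}\in D(\xA_2)$ with $(\ii\alpha\mathbb{I}+\xA_2)\boldsymbol{Z}=0$, hence $\boldsymbol{Z}=0$ by Proposition~\Rref{inject sm stab}, so $\boldsymbol{Z}_3=0$. Finally one shows $p=0$ by testing with $\boldsymbol{v}=\grad p$ and invoking Lemma~\Rref{lem minor B i alpha}, and then checks that $(\boldsymbol{U}_1,\boldsymbol{U}_2,\boldsymbol{U}_3,\boldsymbol{U}_4)$ defined by \eqref{441tild}--\eqref{443tild} lies in $D(\xA_2)$ — the Silver--M\"uller condition being recovered exactly by the argument of Proposition~\Rref{maximonot 2} (using \eqref{green rot 0 Gamma p}, \eqref{green rot H1}, \eqref{integ by part 2 form} and a density argument, noting $\boldsymbol{U}_{3\top}\in\mathbf{L}^2(\Gamma_A)$ since $\boldsymbol{U}_3\in\mathcal{V}$).

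The main obstacle I anticipate is the bookkeeping of the topological/constraint space: in the perfectly conducting case the Lagrange multiplier lives in $\xHone_{\partial\Omega}(\Omega)$ and the kernel condition is $P_{\mathbf{Z}_N(\Omega;\mathbb{B}_\alpha)}\boldsymbol{v}=0$ with $\dim=K$, whereas here the relevant kernel $\mathbf{Z}(\Omega;\Gamma_A)$ from \cite{FG97} mixes the boundary components of $\Gamma_A\setminus\partial\Sigma$ and the cuts $\Sigma_j$, so one must choose the multiplier space (an $\xHn{\frac1{2}}$-type space on $\partial\dot\Omega$ with prescribed constants on the $\Gamma_{A,i}$ and prescribed jumps on the $\Sigma_j$, subject to the $\textbf{a}$-orthogonality) so that the kernel of $b_\alpha$ coincides with the complement of $\mathbf{Z}(\Omega;\Gamma_A)$ appearing in \eqref{orth decom Z GammaA}--\eqref{orth decom Hrot GammaA}, and so that Corollary~\Rref{coro semi norm} still applies to that kernel (one needs a version of the $\mathbf{X}_{N,\Gamma}$ compactness/equivalent-norm machinery adapted to the mixed boundary condition, which is legitimate because the absence of pathological vertices gives $\widetilde{\mathbf{TT}}(\Gamma_A)\cap\mathbf{TC}(\Gamma_A)\subset\mathbf{L}^2_t(\Gamma_A)$). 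Once the correct spaces are in place, the analytic content is a routine transcription of the perfectly conducting proof.
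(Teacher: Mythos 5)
Your proposal follows essentially the same route as the paper: the mixed formulation on $\mathcal{V}\times\xHone_{\partial\Omega}(\Omega)$ with $\widetilde{a}_\alpha(\boldsymbol{w},\boldsymbol{v}) = a_\alpha(\boldsymbol{w},\boldsymbol{v}) + c\,(\boldsymbol{w}_\top\mid\boldsymbol{v}_\top)_{\Gamma_A}$, coercivity on the kernel via Corollary~\Rref{coro semi norm}, the Fredholm alternative with injectivity supplied by Proposition~\Rref{inject sm stab}, and recovery of the Silver--M\"uller condition as in Proposition~\Rref{maximonot 2}. The obstacle you anticipate in your last paragraph does not arise: the paper keeps the multiplier space $\xHone_{\partial\Omega}(\Omega)$ and the kernel condition $P_{\mathbf{Z}_N(\Omega;\mathbb{B}_\alpha)}\boldsymbol{v}=0$ unchanged from the perfectly conducting case --- the space $\mathbf{Z}(\Omega;\Gamma_A)$ and the decompositions \eqref{orth decom Z GammaA}--\eqref{orth decom Hrot GammaA} are only needed later, for $\widetilde{\xA}_2$ and the stability analysis, not for the surjectivity of $\ii\alpha\mathbb{I}+\xA_2$ on all of~$\mathbf{X}$; and since every $\boldsymbol{v}\in\mathcal{V}$ has $\boldsymbol{v}_\top\in\mathbf{L}^2(\Gamma)$ (it vanishes on $\Gamma_P$), the kernel sits in $\mathbf{X}_{N,\Gamma}(\Omega;\mathbb{B}_\alpha)$ and Theorem~\Rref{compact} and Corollary~\Rref{coro semi norm} apply as stated, with no mixed-boundary adaptation required.
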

\begin{proof}
We follow the lines of the proof of Proposition~\Rref{surjj*}.
Let $\alpha\in\xR\setminus\lbrace0\rbrace$ and $\boldsymbol{F} = (\boldsymbol{F}_1,\boldsymbol{F}_2,\boldsymbol{F}_3,\boldsymbol{F}_4)^\top \in \mathbf{X}$; we look for $\boldsymbol{U} = (\boldsymbol{U}_1,\boldsymbol{U}_2,\boldsymbol{U}_3,\boldsymbol{U}_4)^\top \in D(\xA_2)$ which solves:
\begin{equation}
(\ii\alpha\,\mathbb{I}+\xA_2)\,\boldsymbol{U} = \boldsymbol{F},
\label{00}
\end{equation}
which is equivalent to the system~\eqref{11tild}--\eqref{44tild}, with different boundary conditions. Again, we eliminate $\boldsymbol{U}_1$, $\boldsymbol{U}_2$ and $\boldsymbol{U}_4$ by \eqref{441tild}, \eqref{442tild} and~\eqref{443tild} respectively, while $\boldsymbol{U}_3$ verifies the equation \eqref{55tild} in~$\Omega$. 
Given the Silver--M\"uller boundary condition, the mixed formulation of~\eqref{55tild} writes --- recall the space $\mathcal{V}$ from~\eqref{def-space-V}:\\[3pt]
\emph{Find $ (\boldsymbol{U}_3,p)\in\mathcal{V} \times\xHone_{\partial\Omega}(\Omega)$ such that}
\begin{eqnarray}
\widetilde{a}_\alpha(\boldsymbol{U}_3,\boldsymbol{v}) + c_\alpha(\boldsymbol{U}_3,\boldsymbol{v}) + b_\alpha(\boldsymbol{v},p)
&=& L_\alpha(\boldsymbol{v}),\quad\forall\boldsymbol{v}\in\mathcal{V},\label{555}\\
b_\alpha(\boldsymbol{U}_3,q) &=& \left(\boldsymbol{G} \mid \grad q \right),\quad \forall q\in\xHone_{\partial\Omega}(\Omega),\label{666}
\end{eqnarray}
where the sesquilinear form $\widetilde{a}_\alpha$ is defined on~$\mathcal{V}\times\mathcal{V}$ as:
\begin{equation}
\widetilde{a}_\alpha(\boldsymbol{w},\boldsymbol{v}):= a_{\alpha}(\boldsymbol{w},\boldsymbol{v})+  c\, \left(\boldsymbol{w}_\top\mid \boldsymbol{v}_\top\right)_{\Gamma_{A}},
\label{a coer surj stab sm}
\end{equation}
the form $a_{\alpha}$ being defined in~\eqref{a cond}; on the other hand, $b_\alpha,\ c_\alpha,\ L_\alpha$ are as in~\eqref{c cond}--\eqref{l cond}, except that the variable $\boldsymbol{v}$ now belongs to~$\mathcal{V}$. Again, $\boldsymbol{G}$ is an element of $\mathbf{L}^2(\Omega)$ which will be chosen later.

\medbreak

Checking the hypotheses of~\cite[Theorem 4.5.9]{ACL+17} proceeds as in Proposition~\Rref{surjj*}. 
\\[3pt]
\textit{\texttt{i) Continuity:}} obvious.\\[3pt]
\textit{\texttt{ii) Coercivity on the kernel:}} the kernel of  $b_\alpha(.,.)$ is defined by 
\begin{equation*}
\mathrm{K} = \lbrace\boldsymbol{v}\in\mathcal{V}~\text{ : }~ b_\alpha(\boldsymbol{v},q)=0,\quad  \forall q\in\xHone_{\partial\Omega}(\Omega)\rbrace
\end{equation*}
which, by the Green formulas \eqref{green div grad h 1 0} and~\eqref{green div grad}, can be written as:
\begin{eqnarray*}
\mathrm{K}
=\lbrace\boldsymbol{v}\in\mathcal{V} \cap \mathbf {H}(\divg\mathbb{B}_{\alpha}0;\Omega)~\text{ : }~ \langle \mathbb{B}_{\alpha}\boldsymbol{v}\cdot \boldsymbol{n} , 1 \rangle_{\xHn{\frac1{2}}({\Gamma_k})}=0,~ \forall 1 \leq k \leq K\rbrace \\
=\lbrace\boldsymbol{v}\in\mathcal{V} \cap \mathbf {H}(\divg\mathbb{B}_{\alpha}0;\Omega)~\text{ : }~ P_{\mathbf{Z}_{N}(\Omega;\mathbb{B}_\alpha)}\boldsymbol{v}=0 \rbrace.\qquad\qquad\qquad\quad\quad ~~\,\,\,
\end{eqnarray*}
This kernel is compactly embedded into~$\mathbf{L}^2(\Omega)$ by Theorem~\Rref{compact}. Furthermore, according to Corollary~\Rref{coro semi norm}, the sesquiliear form $\widetilde{a}_\alpha$ is coercive on  $\mathrm{K}\times\mathrm{K}$. Indeed, taking $\boldsymbol{v}\in \mathrm{K}$, we find
\begin{equation*}
|\widetilde{a}_\alpha(\boldsymbol{v},\boldsymbol{v})| = \left|(\ii\alpha)^{-1} c^2 \|\curl\boldsymbol{v}\|^2+ c \,\|\boldsymbol{v}_\top\|_{\mathbf{L}^2(\Gamma_A)}^2 \right| 
= \left( (|\alpha |^{-1} c^2 \|\curl\boldsymbol{v}\|^2)^2+( c\, \|\boldsymbol{v}_\top\|_{\mathbf{L}^2(\Gamma_A)}^2)^2\right)^{\frac{1}{2}}.
\end{equation*}
But, we have the inequality
\begin{eqnarray*}
(z^2+y^2)^{\frac{1}{2}}\geq \frac{1}{\sqrt{2}}|z+y|,\quad \forall (z,y)\in\xR^2.
\end{eqnarray*}
Consequently, 
\begin{eqnarray*}
|\widetilde{a}_{\alpha}(\boldsymbol{v},\boldsymbol{v})|
&\geq &\frac{1}{\sqrt{2}}\, \left( |\alpha|^{-1} c^2 \|\curl\boldsymbol{v}\|^2+c\, \|\boldsymbol{v}_\top\|_{\mathbf{L}^2(\Gamma_A)}^2\right)\\
&\geq &\frac{1}{\sqrt{2}}\,\min\lbrace |\alpha|^{-1} c^2,c \rbrace\left( \|\curl\boldsymbol{v}\|^2+\|\boldsymbol{v}_\top\|_{\mathbf{L}^2(\Gamma_A)}^2\right)
= C\,|\boldsymbol{v}|_{\mathbf{X}_{N,\Gamma}(\Omega;\mathbb{B}_{\alpha})}^2 \,.
\end{eqnarray*}
\textit{\texttt{iii) Inf-sup condition:}}  take any $q\in\xHone_{\partial\Omega}(\Omega)$ and set $\boldsymbol{v}=\grad q$. Then, we have $\curl\boldsymbol{v}=0\in\mathbf{L}^2(\Omega)$ and $\boldsymbol{v}_\top=0\in\mathbf{L}^2(\Gamma)$,  thus $\boldsymbol{v}\in\mathcal{V}$ and verifies $\|\boldsymbol{v}\|_{\mathcal{V}}=\|\boldsymbol{v}\|$. The conclusion follows from the inequalities \eqref{b in'eg} and~\eqref{bv in'eg}.

\medbreak

As in the perfect conductor case, we can apply the Fredholm alternative. So, we show that the variational formulation~\eqref{555} is injective on the kernel. Let $\boldsymbol{Z}_3$ be a solution to the variational formulation
\begin{equation}
\forall\boldsymbol{v}\in\mathrm{K},\quad \widetilde{a}_\alpha(\boldsymbol{Z}_3,\boldsymbol{v}) + c_\alpha(\boldsymbol{Z}_3,\boldsymbol{v}) = 0.
\label{initial silv}
\end{equation}
Since $\boldsymbol{Z}_3$ belongs to  $\mathrm{K}$, one has $\boldsymbol{Z}_3\in\mathbf{H}(\curl;\Omega)$  with $\divg\mathbb{B}_\alpha \boldsymbol{Z}_3=0$ in $\Omega$ and $\boldsymbol{Z}_3\times\boldsymbol{n}_{|{\Gamma_P}}=0$. As in Proposition~\Rref{surjj*}, we  obtain the existence of $\boldsymbol{Z}_1,\,\boldsymbol{Z}_2\in \mathbf{L}^2(\Omega)$ and $\boldsymbol{Z}_4\in \mathbf{H}(\curl;\Omega)$ such that $(\ii\alpha\mathbb{I}+\xA)\boldsymbol{Z} = 0$, with $\boldsymbol{Z} =(\boldsymbol{Z}_1,\boldsymbol{Z}_2,\boldsymbol{Z}_3,\boldsymbol{Z}_4)^\top$. 
To apply Proposition~\Rref{inject sm stab}, we must check that $\boldsymbol{Z} \in D(\xA_2)$, \ie, the Silver--M\"uller condition is satisfied. For $\boldsymbol{v}\in \mathrm{K}$, using the integration-by-parts formula~\eqref{green rot 0 Gamma p} in~\eqref{initial silv} and Equation~\eqref{55tildd}, we get 
\begin{equation}
\left({\boldsymbol{Z}_3}_\top\mid \boldsymbol{v}_\top\right)_{\Gamma_{A}}- c\, 
{}_{\gamma_{A}}\langle \boldsymbol{Z}_4 \times \boldsymbol{n}, \boldsymbol{v}_\top \rangle_{\pi^0_{A}}=0, \quad \forall \boldsymbol{v} \in \mathrm{K}.
\label{silver Muller kernel}
\end{equation}
Now, consider any $\boldsymbol{y}\in\mathcal{V}$.  Let $\varphi$ be the unique element of $\xHone_{\partial\Omega}(\Omega)$   such that 
\begin{equation*}
\left(\mathbb{B}_{\alpha}\grad \varphi\mid \grad \psi\right)=\left(\mathbb{B}_{\alpha}\boldsymbol{y}\mid \grad \psi\right), \quad \forall \psi\in\xHone_{\partial\Omega}(\Omega).  
\end{equation*} 
So, $\boldsymbol{v}:=\boldsymbol{y}-\grad \varphi$ belongs to $\mathrm{K}$ with $\boldsymbol{v}_\top=\boldsymbol{y}_\top$ on $\Gamma_A$. Using it as a test function in~\eqref{silver Muller kernel}, we find
\begin{equation*}
\left({\boldsymbol{Z}_3}_\top\mid \boldsymbol{y}_\top\right)_{\Gamma_{A}}- c\, 
{}_{\gamma_{A}}\langle \boldsymbol{Z}_4 \times \boldsymbol{n}, \boldsymbol{y}_\top \rangle_{\pi^0_{A}}=0, \quad \forall \boldsymbol{y} \in \mathcal{V}.
\end{equation*}
The above equation is the same as~\eqref{int}, thus we obtain the Silver--M\"uller boundary condition as in the proof of  Proposition~\Rref{maximonot 2}.   
Consequently, $\boldsymbol{Z}$ belongs to $D(\xA_2)$,  and from  Proposition \Rref{inject sm stab} we infer that  $\boldsymbol{Z}=0$,  so $\boldsymbol{Z}_3=0$: the formulation \eqref{initial silv} is injective.

\medbreak

We deduce by Theorem~4.5.9 and Proposition~4.5.8 of \cite{ACL+17} that the problem~\eqref{555}--\eqref{666} admits a unique solution $(\boldsymbol{U}_3,p)\in\mathcal{V}\times\xHone_{\partial\Omega}(\Omega)$. Choosing $\boldsymbol{G}$ as in~\eqref{choix de g}, we get once again $p=0$. Thus, $\boldsymbol{U}_3$ satisfies~\eqref{55tild}, or equivalently
\begin{equation}
\frac{c^2}{\ii\alpha}\, \curl(\curl\boldsymbol{U}_3- \boldsymbol{F}_4)+\mathbb{B}_{\alpha}\boldsymbol{U}_3
= \boldsymbol{F}_3  -\frac{1}{\varepsilon_0}\sum_s (\ii\alpha\mathbb{I} +\mathbb M_{s})^{-1}\boldsymbol{F}_s
\label{inter2}
\end{equation}
in the sense of distributions.
Defining $\boldsymbol{U}_1,\ \boldsymbol{U}_2,\ \boldsymbol{U}_4$ respectively by \eqref{441tild}, \eqref{442tild}, and~\eqref{443tild}, these fields clearly belong to~$\mathbf{L}^2(\Omega)$. Combining \eqref{inter2} and~\eqref{443tild} with the  definition of~$\mathbb{B}_{\alpha}$, one sees that $\boldsymbol{U}_4\in\mathbf{H}(\curl;\Omega)$. Thus, the quadruple $(\boldsymbol{U}_1,\boldsymbol{U}_2,\boldsymbol{U}_3,\boldsymbol{U}_4)$ verifies the system~\eqref{11tild}--\eqref{44tild}. For this quadruple to belong to $D(\xA_2)$, we have to check that the Silver--M\"uller condition holds. To this end, we use the Green formula~\eqref{green rot 0 Gamma p} in~\eqref{555}, and find that 
\begin{equation}
c \left(\boldsymbol{U}_{3\top}\mid \boldsymbol{v}_\top\right)_{\Gamma_{A}}- c^2 {}_{\gamma_{A}}\langle \boldsymbol{U}_4 \times \boldsymbol{n}, \boldsymbol{v}_\top \rangle_{\pi^0_{A}}=0, \quad \forall \boldsymbol{v} \in \mathcal{V}.
\label{u44}
\end{equation}
Following the same argument as in the proof of Proposition~\Rref{maximonot 2}, we deduce that  Equation~\eqref{u44} implies 
that $\boldsymbol{U}_{3\top}-c\, \boldsymbol{U}_4 \times \boldsymbol{n}=0$ in $\widetilde{\mathbf{H}}^{-\frac1{2}}(\Gamma_A)$ which is equivalent to $\boldsymbol{U}_3\times\boldsymbol{n}+c\,\boldsymbol{U}_{4\top}=0$ in 
$\widetilde{\mathbf{H}}^{-\frac1{2}}(\Gamma_A)$ and thus in $\mathbf{L}^2(\Gamma_A)$ because $\boldsymbol{U}_{3\top}$ belongs to $\mathbf{L}^2(\Gamma_A)$. The proof is complete.
\end{proof}

\medbreak

Let us introduce yet another closed subspace of~$\mathbf{X}$:
\begin{equation} 
\mathbf{\widetilde{X}}_2 := \mathbf{L}{^2}(\Omega)\times \mathbf{L}{^2}(\Omega) \times \mathbf{L}{^2}(\Omega) \times\mathbf{H}_{0,\Gamma_P;\text{flux},\Gamma_A,\Sigma}({\divg}0;\Omega).
\label{eq-def-Xtild2}
\end{equation}
It is a Hilbert space when endowed with the inner product of~$\mathbf{X}$.
\begin{prpstn}
The range $\xim(\xA_{2})$ of~$\xA_{2}$ is included in~$\mathbf{\widetilde{X}}_{2}$.
\label{range-Atild2}
\end{prpstn}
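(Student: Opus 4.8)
The plan is to mirror the proof of Proposition~\Rref{range-Atild1}, replacing the ``well-known fact'' used there by the Fernandes--Gilardi orthogonal decomposition~\eqref{orth decom Hrot GammaA}. Take an arbitrary $\boldsymbol{U}=(\boldsymbol{U}_1,\boldsymbol{U}_2,\boldsymbol{U}_3,\boldsymbol{U}_4)^\top\in D(\xA_2)$. First I would dispose of the three ``easy'' components of $\xA_2\boldsymbol{U}=\xA\boldsymbol{U}$: by Proposition~\Rref{lambda inverse}, Lemma~\Rref{lem-weight-basic} and the identification $\mathbf{L}^2_{(s)}(\Omega)=\mathbf{L}^2(\Omega)$ under Hypotheses~\Rref{hyp-1}--\Rref{hyp-2}, the first three entries of $\xA\boldsymbol{U}$ lie in~$\mathbf{L}^2(\Omega)$, and the fourth entry $\curl\boldsymbol{U}_3$ lies in $\mathbf{L}^2(\Omega)$ because $\boldsymbol{U}_3\in\mathbf{H}_{0,\Gamma_P}(\curl;\Omega)$. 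So the whole statement reduces to the single membership $\curl\boldsymbol{U}_3\in\mathbf{H}_{0,\Gamma_P;\text{flux},\Gamma_A,\Sigma}(\divg 0;\Omega)$.

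Rather than checking directly the defining conditions of that space (zero normal trace on $\Gamma_P$, vanishing total fluxes through the components $\Gamma_{A,i}$, and vanishing jumps across the cuts $\Sigma_j$), I would exploit the orthogonal decomposition~\eqref{orth decom Hrot GammaA}: since $\curl\boldsymbol{U}_3\in\mathbf{L}^2(\Omega)$, it belongs to $\mathbf{H}_{0,\Gamma_P;\text{flux},\Gamma_A,\Sigma}(\divg 0;\Omega)$ if and only if it is $\mathbf{L}^2(\Omega)$-orthogonal to $\mathbf{H}_{0,\Gamma_A}(\curl 0;\Omega)$. To establish this, pick any $\boldsymbol{z}\in\mathbf{H}_{0,\Gamma_A}(\curl 0;\Omega)$ (so that $\curl\boldsymbol{z}=0$ in $\Omega$ and $\boldsymbol{z}\times\boldsymbol{n}_{|\Gamma_A}=0$) and apply the integration-by-parts formula~\eqref{green rot 0 Gamma p} with $\boldsymbol{v}=\boldsymbol{z}\in\mathbf{H}(\curl;\Omega)$ and $\boldsymbol{w}=\boldsymbol{U}_3\in\mathbf{H}_{0,\Gamma_P}(\curl;\Omega)$:
\begin{equation*}
(\boldsymbol{z}\mid\curl\boldsymbol{U}_3)-(\curl\boldsymbol{z}\mid\boldsymbol{U}_3)={}_{\gamma_A}\langle\boldsymbol{z}\times\boldsymbol{n},\boldsymbol{U}_{3\top}\rangle_{\pi^0_A}.
\end{equation*}
The term $(\curl\boldsymbol{z}\mid\boldsymbol{U}_3)$ vanishes since $\curl\boldsymbol{z}=0$, and the boundary pairing vanishes since $\boldsymbol{z}\times\boldsymbol{n}$ is the zero element of $\mathbf{TT}(\Gamma_A)$; hence $(\curl\boldsymbol{U}_3\mid\boldsymbol{z})=0$ for every such $\boldsymbol{z}$, which is exactly what we needed. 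As $\boldsymbol{U}\in D(\xA_2)$ was arbitrary, this yields $\xim(\xA_2)\subset\widetilde{\mathbf{X}}_2$.

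I do not expect a genuine obstacle here; the only thing requiring a little care is the bookkeeping of which Green formula to use and why each term drops out — the key being that the complementary tangential trace conditions ($\boldsymbol{U}_3$ vanishing on $\Gamma_P$, $\boldsymbol{z}$ vanishing on $\Gamma_A$) are precisely what annihilates the boundary term, and that the Silver--M\"uller condition on $\Gamma_A$ relating $\boldsymbol{U}_3$ and $\boldsymbol{U}_4$ plays no role in this particular argument. An alternative would be to cite directly from~\cite{FG97} that $\curl$ maps $\mathbf{H}_{0,\Gamma_P}(\curl;\Omega)$ into $\mathbf{H}_{0,\Gamma_P;\text{flux},\Gamma_A,\Sigma}(\divg 0;\Omega)$, which would shorten things further; I would nonetheless prefer the self-contained version above, which relies only on material already established in the present article.
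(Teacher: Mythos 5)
Your proof is correct, and it takes a slightly different route from the paper's. The paper makes the same reduction as you do --- everything comes down to the single membership $\curl\boldsymbol{U}_3\in\mathbf{H}_{0,\Gamma_P;\text{flux},\Gamma_A,\Sigma}(\divg 0;\Omega)$ for $\boldsymbol{U}_3\in\mathbf{H}_{0,\Gamma_P}(\curl;\Omega)$ --- but then simply cites \cite[Lemma~7.7]{FG97} for that fact; that is, the paper adopts precisely the ``alternative'' you mention in your last sentence. What you do instead is derive the membership from material already recalled in the article: the orthogonal decomposition~\eqref{orth decom Hrot GammaA} reduces the claim to showing that $\curl\boldsymbol{U}_3$ is $\mathbf{L}^2$-orthogonal to $\mathbf{H}_{0,\Gamma_A}(\curl 0;\Omega)$, and the integration-by-parts formula~\eqref{green rot 0 Gamma p} applied to $(\boldsymbol{v},\boldsymbol{w})=(\boldsymbol{z},\boldsymbol{U}_3)$ kills both the volume term $(\curl\boldsymbol{z}\mid\boldsymbol{U}_3)$ and the boundary pairing, since $\boldsymbol{z}\times\boldsymbol{n}$ vanishes on~$\Gamma_A$ exactly where the pairing lives. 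The hypotheses of~\eqref{green rot 0 Gamma p} are verified, and you correctly observe that the Silver--M\"uller coupling between $\boldsymbol{U}_3$ and $\boldsymbol{U}_4$ is irrelevant here. Your version is more self-contained (it leans on~\cite{FG97} only through Proposition~6.4, already quoted as~\eqref{orth decom Hrot GammaA}, rather than through an additional lemma), at the cost of a few extra lines; the paper's version is shorter but outsources the key step to the reference. Both are valid.
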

\begin{proof}
Let $\boldsymbol{U}=(\boldsymbol{U}_1,\boldsymbol{U}_2,\boldsymbol{U}_3,\boldsymbol{U}_4)^\top$ be an element of $D(\xA_2)$. Then, by the definition \eqref{A2 definition} of~$\xA_2$,  $\xA_2\boldsymbol{U}$ belongs to $\mathbf{\widetilde{X}}_2$ if, and only if, $ \curl\boldsymbol{U}_3$ belongs to $\mathbf{H}_{0,\Gamma_P;\text{flux},\Gamma_A,\Sigma}({\divg}0;\Omega)$. Recall that $\boldsymbol{U}_3$ belongs to 
$\mathbf{H}_{0,\Gamma_P}(\curl;\Omega)$, therefore one can conclude by~\cite[Lemma~7.7]{FG97}. 
\end{proof}

The results of the spectral analysis of the operator $\xA_{2}$  lead us to introduce  the unbounded operator 
$(D({\widetilde{\xA}}_{2}), {\widetilde{\xA}}_{2})$ on 
${\mathbf{\widetilde{X}}}_{2}$ defined by
\begin{equation}
D({\widetilde{\xA}}_{2})=D(\xA_{2})\cap{\mathbf{\widetilde{X}}_{2}}
\quad\text{and}\quad
{\widetilde{\xA}}_{2}\boldsymbol{U}=\xA_{2}\boldsymbol{U},\quad \forall \boldsymbol{U}\in D({\widetilde{\xA}}_{2}).
\label{eq-def-Atild2}
\end{equation}
\begin{prpstn}
For all $\alpha\in\xR$, the operator $\ii\alpha\mathbb{I} + {\widetilde{\xA}}_{2}$ is surjective. For $\alpha\in\xR \setminus \{0\}$, it is injective.
\label{spec-Atild2}
\end{prpstn}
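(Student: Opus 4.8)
The plan is to follow the proof of Proposition~\ref{spec-Atild1}, with the roles of injectivity and surjectivity interchanged: the two statements for $\alpha\neq0$ will be immediate from results already proved, and only the surjectivity at $\alpha=0$ will require genuine work. I would begin with a remark used throughout: by the description of $\ker\xA_2$ in Proposition~\ref{inject sm stab} and the orthogonal decomposition~\eqref{orth decom Hrot GammaA}, one has the orthogonal splitting $\mathbf{X}=\ker\xA_2\stackrel{\perp}{\oplus}\widetilde{\mathbf{X}}_2$, with $\ker\xA_2\subset D(\xA_2)$; in particular $\ker\widetilde{\xA}_2=\ker\xA_2\cap\widetilde{\mathbf{X}}_2=\{0\}$ (even at $\alpha=0$). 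For $\alpha\in\xR\setminus\{0\}$, injectivity of $\ii\alpha\mathbb{I}+\widetilde{\xA}_2$ is then immediate, since $\ker(\ii\alpha\mathbb{I}+\widetilde{\xA}_2)\subset\ker(\ii\alpha\mathbb{I}+\xA_2)=\{0\}$ by Proposition~\ref{inject sm stab}; and surjectivity for $\alpha\neq0$ is obtained exactly as in Proposition~\ref{spec-Atild1}: given $\boldsymbol{F}\in\widetilde{\mathbf{X}}_2$, Proposition~\ref{surj I+A2} provides $\boldsymbol{U}\in D(\xA_2)$ with $(\ii\alpha\mathbb{I}+\xA_2)\boldsymbol{U}=\boldsymbol{F}$, whence $\ii\alpha\boldsymbol{U}=\boldsymbol{F}-\xA_2\boldsymbol{U}\in\widetilde{\mathbf{X}}_2$ (using $\boldsymbol{F}\in\widetilde{\mathbf{X}}_2$ and Proposition~\ref{range-Atild2}), so $\boldsymbol{U}\in D(\widetilde{\xA}_2)$.

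For the surjectivity at $\alpha=0$, I would first reduce it to showing $\xim(\xA_2)=\widetilde{\mathbf{X}}_2$: by Proposition~\ref{range-Atild2} the inclusion $\subset$ holds, and conversely if $\xA_2\boldsymbol{U}=\boldsymbol{F}\in\widetilde{\mathbf{X}}_2$ with $\boldsymbol{U}\in D(\xA_2)$, splitting $\boldsymbol{U}$ along $\ker\xA_2\stackrel{\perp}{\oplus}\widetilde{\mathbf{X}}_2$ yields a representative in $D(\xA_2)\cap\widetilde{\mathbf{X}}_2=D(\widetilde{\xA}_2)$ with the same image. So I must solve $\xA_2\boldsymbol{U}=\boldsymbol{F}$ for an arbitrary $\boldsymbol{F}=(\boldsymbol{F}_1,\boldsymbol{F}_2,\boldsymbol{F}_3,\boldsymbol{F}_4)^\top\in\widetilde{\mathbf{X}}_2$, so that $\boldsymbol{F}_4\in\mathbf{H}_{0,\Gamma_P;\text{flux},\Gamma_A,\Sigma}(\divg0;\Omega)$. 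Using the invertibility of $\mathbb{M}_s$ (Proposition~\ref{i alpha inverse stability} at $\alpha=0$) to eliminate $\boldsymbol{U}_1,\boldsymbol{U}_2$ through $\boldsymbol{U}_s=\mathbb{M}_s^{-1}(\boldsymbol{F}_s+\varepsilon_0\omega_{ps}^2\boldsymbol{U}_3)$, I am left with the static Maxwell system
\begin{equation*}
\mathbb{D}_0\,\boldsymbol{U}_3-c^2\curl\boldsymbol{U}_4=\boldsymbol{G},\qquad\curl\boldsymbol{U}_3=\boldsymbol{F}_4,
\end{equation*}
for $(\boldsymbol{U}_3,\boldsymbol{U}_4)\in\mathbf{H}_{0,\Gamma_P}(\curl;\Omega)\times\mathbf{H}(\curl;\Omega)$ with $\boldsymbol{U}_3\times\boldsymbol{n}+c\,\boldsymbol{U}_{4\top}=0$ on $\Gamma_A$, where $\mathbb{D}_0:=\sum_s\omega_{ps}^2\mathbb{M}_s^{-1}$ and $\boldsymbol{G}:=\boldsymbol{F}_3-\frac1{\varepsilon_0}\sum_s\mathbb{M}_s^{-1}\boldsymbol{F}_s\in\mathbf{L}^2(\Omega)$. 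Since $\mathbb{D}_0=\mathbb{B}_0$, Lemma~\ref{lem minor B i alpha} shows it is uniformly bounded and uniformly accretive, hence boundedly invertible with $\mathbb{D}_0^{-1}$ likewise satisfying condition~\eqref{minor}.

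To solve this reduced system I would proceed as in Propositions~\ref{surjj*} and~\ref{surj I+A2}, the only difference being the disappearance of the factor $(\ii\alpha)^{-1}$: eliminating $\boldsymbol{U}_3=\mathbb{D}_0^{-1}(c^2\curl\boldsymbol{U}_4+\boldsymbol{G})$ yields a constrained curl--curl equation for $\boldsymbol{U}_4$, which I would cast as a mixed formulation --- the analogue of~\eqref{555}--\eqref{666} --- with a Lagrange multiplier $p\in\xHone_{\partial\Omega}(\Omega)$ enforcing the divergence and flux constraints of $\widetilde{\mathbf{X}}_2$. Continuity being clear, coercivity on the kernel would follow from Corollary~\ref{coro semi norm} together with the compact embedding of Theorem~\ref{compact} (applied to $\mathbb{D}_0^{-1}$), and the inf--sup condition exactly as in Proposition~\ref{surj I+A2}, so that the Fredholm alternative of~\cite[Theorem~4.5.9]{ACL+17} applies; injectivity of the homogeneous reduced problem is precisely $\ker\widetilde{\xA}_2=\{0\}$, already observed. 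Choosing $\boldsymbol{G}$ as in~\eqref{choix de g} would force $p=0$, the Silver--M\"uller condition on $\Gamma_A$ would be recovered as in Proposition~\ref{maximonot 2}, and undoing the eliminations would give the desired $\boldsymbol{U}\in D(\xA_2)$ with $\xA_2\boldsymbol{U}=\boldsymbol{F}$. I expect this $\alpha=0$ case to be the main obstacle: unlike for $\alpha\neq0$, the zeroth-order operator in the reduced equation is not coercive (it annihilates gradients), so the argument cannot avoid the constrained formulation; moreover one must exploit carefully that $\boldsymbol{F}_4$ lies in the flux-free space $\mathbf{H}_{0,\Gamma_P;\text{flux},\Gamma_A,\Sigma}(\divg0;\Omega)$ --- this is exactly the compatibility condition making the static problem solvable --- while correctly handling the non-standard boundary conditions, which couple the tangential constraint on $\boldsymbol{U}_3$ along $\Gamma_P$ with the Silver--M\"uller link along $\Gamma_A$.
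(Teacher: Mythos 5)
Everything you do for $\alpha\neq0$, together with your observation that $\ker\widetilde{\xA}_2=\ker\xA_2\cap\widetilde{\mathbf{X}}_2=\{0\}$ via the splitting~\eqref{orth decom Hrot GammaA}, is correct and is exactly the paper's argument: its proof is a one-line transposition of Proposition~\ref{spec-Atild1}, invoking Proposition~\ref{inject sm stab} for injectivity when $\alpha\neq0$, the decomposition~\eqref{orth decom Hrot GammaA} to kill the kernel at $\alpha=0$, and Propositions~\ref{surj I+A2} and~\ref{range-Atild2} for surjectivity when $\alpha\neq0$. Note that these ingredients yield \emph{injectivity for all $\alpha$ and surjectivity for $\alpha\neq0$} --- the exact mirror of Proposition~\ref{spec-Atild1}, and also what the sequel actually uses (Theorem~\ref{strong stab X2til} needs $0$ not to be an eigenvalue); the printed statement appears to have ``injective'' and ``surjective'' transposed. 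Surjectivity at $\alpha=0$, \ie\ $0\in\rho(-\widetilde{\xA}_2)$, is \emph{not} proved at this point of the paper: it is obtained later, in Proposition~\ref{0 resolvent sm}, from a uniform resolvent bound near~$0$ (a contradiction argument using the compact embedding of the magnetic-type space of Fernandes--Gilardi) followed by a weak-limit argument.

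Your direct static construction for $\alpha=0$ is therefore extra work relative to the paper, and as sketched it has a genuine gap. After eliminating $\boldsymbol{U}_3=\mathbb{D}_0^{-1}(c^2\curl\boldsymbol{U}_4+\boldsymbol{G})$, the remaining unknown $\boldsymbol{U}_4$ lives in a \emph{magnetic}-type space: plain divergence-free, $\boldsymbol{U}_4\cdot\boldsymbol{n}=0$ on $\Gamma_P$, flux conditions on $\Gamma_A\cup\Sigma$, and a tangential trace controlled only on~$\Gamma_A$ through the Silver--M\"uller link. Theorem~\ref{compact} and Corollary~\ref{coro semi norm} do not cover this configuration --- they are built for $\mathbf{X}_{N,\Gamma}(\Omega;\mathbb{B})$, \ie\ fields whose tangential trace lies in $\mathbf{L}^2$ on the \emph{whole} of $\Gamma$ and whose constraint is $\divg\mathbb{B}\boldsymbol{w}\in\xLtwo(\Omega)$ --- so ``Theorem~\ref{compact} applied to $\mathbb{D}_0^{-1}$'' does not give the compactness and coercivity you need. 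Likewise the multiplier enforcing the constraints of $\widetilde{\mathbf{X}}_2$ cannot be taken in $\xHone_{\partial\Omega}(\Omega)$; it must live in a space adapted to $\Gamma_A\cup\Sigma$ (compare $\xHn{\frac1{2}}_{\text{const}\,\Gamma_A,\Sigma}$), so the inf--sup step is not ``exactly as in Proposition~\ref{surj I+A2}''. Finally, the eliminated boundary conditions read $[\mathbb{D}_0^{-1}(c^2\curl\boldsymbol{U}_4+\boldsymbol{G})]\times\boldsymbol{n}=0$ on $\Gamma_P$ and $[\mathbb{D}_0^{-1}(c^2\curl\boldsymbol{U}_4+\boldsymbol{G})]\times\boldsymbol{n}+c\,\boldsymbol{U}_{4\top}=0$ on $\Gamma_A$, which involve the (uncontrolled) tangential trace of $\mathbb{D}_0^{-1}\curl\boldsymbol{U}_4$; turning these into natural conditions of a well-posed mixed formulation, and verifying the compatibility of $\boldsymbol{F}_4$ through the magnetic decomposition~\eqref{orth decom Z GammaA} rather than the electric one, is precisely the work the sketch omits. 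The economical fix is to claim only injectivity for all $\alpha$ and surjectivity for $\alpha\neq0$ here, and to obtain $0\in\rho(-\widetilde{\xA}_2)$ as the paper does in Proposition~\ref{0 resolvent sm}.
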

\begin{proof}
Similar to Proposition~\Rref{spec-Atild1}, using Propositions \Rref{inject sm stab}, \Rref{surj I+A2}, \Rref{range-Atild2}, and the orthogonal decomposition~\eqref{orth decom Hrot GammaA}.
\end{proof}

Observe that $\widetilde{\mathbf{X}}_{2}$ is an invariant space for Problem~\eqref{evol prob sm homo}, see Lemma~\Rref{lemma project P2B}. We define  $\check{\mathit{T}_{2}}:={\mathit{T}_{2}}_{|D(\xA_{2})\cap\widetilde{\mathbf{X}}_{2}}$.
\begin{thrm}
The semigroup of contractions $(\check{\mathit{T}_{2}}(t))_{t\geq0}$ with generator 
$-{\widetilde{\xA}}_{2}$  is strongly stable on the energy space  $\widetilde{\mathbf{X}}_{2}$ in the sense that
\begin{equation*}
\lim\limits_{t \to +\infty}\| \check{\mathit{T}_{2}}(t)\widetilde{\boldsymbol{U}}_0\|_{\mathbf{X}}=0,\quad \forall \widetilde{\boldsymbol{U}}_0 \in \widetilde{\mathbf{X}}_2.
\end{equation*}
\label{strong stab X2til}
\end{thrm}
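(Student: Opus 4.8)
The plan is to transcribe the proof of Theorem~\Rref{stabi forte cp X1 til} to the Silver--M\"uller setting and to conclude with the Arendt--Batty / Lyubich--Vu criterion, Theorem~\Rref{A B, L V}. The ambient space $\widetilde{\mathbf{X}}_2$ is a Hilbert space, hence reflexive; by Lemma~\Rref{lemma project P2B} it is invariant under the contraction semigroup $\mathit{T}_2$ of Theorem~\Rref{bonne posi sm homo}, so $(\check{\mathit{T}_2}(t))_{t\ge0}$ is a bounded $C_0$-semigroup on $\widetilde{\mathbf{X}}_2$ whose generator is $-\widetilde{\xA}_2$. (The invariance is the only point calling for a separate verification; it is proved exactly as Lemma~\Rref{lemma project P1B}, by testing the magnetic equation~\eqref{pp2} against a basis of the relevant finite-dimensional space and integrating by parts.) Furthermore, $\widetilde{\xA}_2$ inherits the monotonicity~\eqref{monotonie A2 sm homo} on $D(\widetilde{\xA}_2)\subset D(\xA_2)$, so $-\widetilde{\xA}_2$ is dissipative.

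It remains to control $\sigma(-\widetilde{\xA}_2)\cap\ii\xR$. For $\alpha\in\xR\setminus\{0\}$, Proposition~\Rref{spec-Atild2} says that $\ii\alpha\mathbb{I}+\widetilde{\xA}_2$ is bijective; as $\widetilde{\xA}_2$ is closed (being the restriction of the closed operator $\xA_2$ to the closed invariant subspace $\widetilde{\mathbf{X}}_2$), the bounded-inverse theorem gives $\ii\alpha\notin\sigma(-\widetilde{\xA}_2)$. For $\alpha=0$, surjectivity of $\widetilde{\xA}_2$ is again furnished by Proposition~\Rref{spec-Atild2}, and injectivity follows as in Proposition~\Rref{spec-Atild1}: if $\widetilde{\xA}_2\boldsymbol{U}=0$ then $\boldsymbol{U}\in\widetilde{\mathbf{X}}_2$ and $\xA_2\boldsymbol{U}=0$, so Proposition~\Rref{inject sm stab} gives $\boldsymbol{U}=(0,0,0,\boldsymbol{U}_4)$ with $\boldsymbol{U}_4\in\mathbf{H}_{0,\Gamma_A}(\curl 0;\Omega)$, while $\boldsymbol{U}\in\widetilde{\mathbf{X}}_2$ forces $\boldsymbol{U}_4\in\mathbf{H}_{0,\Gamma_P;\text{flux},\Gamma_A,\Sigma}(\divg 0;\Omega)$; the orthogonal decomposition~\eqref{orth decom Hrot GammaA} then yields $\boldsymbol{U}_4=0$. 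Hence $\sigma(-\widetilde{\xA}_2)\cap\ii\xR=\varnothing$, which is in particular countable, and no eigenvalue of $-\widetilde{\xA}_2$ lies on the imaginary axis.

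All hypotheses of Theorem~\Rref{A B, L V} being met, $(\check{\mathit{T}_2}(t))_{t\ge0}$ is strongly stable on $\widetilde{\mathbf{X}}_2$, as claimed. I do not expect a genuine obstacle: the hard analytic work --- the surjectivity of $\ii\alpha\mathbb{I}+\xA_2$ and the inclusion $\xim(\xA_2)\subset\widetilde{\mathbf{X}}_2$ --- has already been done in Propositions~\Rref{surj I+A2} and~\Rref{range-Atild2}, and the remainder is a faithful copy of the perfectly conducting argument, the only genuinely new (and elementary) ingredient being the invariance Lemma~\Rref{lemma project P2B}.
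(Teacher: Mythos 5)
Your proof is correct and follows exactly the route the paper takes: its proof of this theorem simply says to repeat the argument of Theorem~\ref{stabi forte cp X1 til}, i.e.\ invariance of $\widetilde{\mathbf{X}}_2$, dissipativity, the spectral information from Propositions~\ref{inject sm stab}, \ref{surj I+A2} and~\ref{spec-Atild2}, and the Arendt--Batty / Lyubich--Vu criterion. Your explicit verification that $0$ is not an eigenvalue via the decomposition~\eqref{orth decom Hrot GammaA} is exactly the ingredient the paper invokes implicitly through Proposition~\ref{spec-Atild2}, so nothing is missing.
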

\begin{proof}
It is sufficient to repeat the proof of Theorem \Rref{stabi forte cp X1 til}.
\end{proof}
We denote by $P_2$ the orthogonal  projection in $\mathbf{L}^2(\Omega)$ onto $\mathbf{Z}(\Omega;\Gamma_A)$. 
\begin{lmm}
Let $\boldsymbol{U}_0\in\mathbf{X}_2$ and $\boldsymbol{U}$ is the solution of problem \eqref{evol prob sm homo}. It holds  that 
\begin{equation*}
P_2(\boldsymbol{B}(t))=P_2(\boldsymbol{B}_0),\quad \forall\, t>0.
\end{equation*}
\label{lemma project P2B}
\end{lmm}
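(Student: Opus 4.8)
The argument will mirror that of Lemma~\Rref{lemma project P1B}, the extra work coming only from the split boundary $\Gamma=\overline{\Gamma_A}\cup\overline{\Gamma_P}$. The plan is to fix an arbitrary $\boldsymbol{\psi}\in\mathbf{Z}(\Omega;\Gamma_A)=\mathbf{H}_{0,\Gamma_A}(\curl0;\Omega)\cap\mathbf{H}_{0,\Gamma_P}(\divg0;\Omega)$, to test the evolution equation~\eqref{pp2} against $\boldsymbol{\psi}$, and to show that $t\mapsto(\boldsymbol{B}(t)\mid\boldsymbol{\psi})$ is constant. Since $P_2$ is the orthogonal projection of $\mathbf{L}^2(\Omega)$ onto the finite-dimensional space $\mathbf{Z}(\Omega;\Gamma_A)$, establishing $(\boldsymbol{B}(t)\mid\boldsymbol{\psi})=(\boldsymbol{B}_0\mid\boldsymbol{\psi})$ for every such $\boldsymbol{\psi}$ is exactly the claim $P_2\boldsymbol{B}(t)=P_2\boldsymbol{B}_0$.

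First I would treat a regular datum, $\boldsymbol{U}_0\in D(\xA_2)$, so that (Theorem~\Rref{bonne posi sm homo}) one has $\boldsymbol{B}\in\xCone(\xR_{\ge0};\mathbf{L}^2(\Omega))$ and $\boldsymbol{E}(t)\in\mathbf{H}_{0,\Gamma_P}(\curl;\Omega)$ with $\curl\boldsymbol{E}(t)\in\mathbf{L}^2(\Omega)$ for each $t$. Then, using~\eqref{pp2}, $\frac{\xdif}{\xdif t}(\boldsymbol{B}(t)\mid\boldsymbol{\psi})=(\partial_t\boldsymbol{B}(t)\mid\boldsymbol{\psi})=-(\curl\boldsymbol{E}(t)\mid\boldsymbol{\psi})$. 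To see that the right-hand side vanishes, I would apply the integration-by-parts formula~\eqref{green rot 0 Gamma p} with $\boldsymbol{v}=\boldsymbol{\psi}\in\mathbf{H}(\curl;\Omega)$ and $\boldsymbol{w}=\boldsymbol{E}(t)\in\mathbf{H}_{0,\Gamma_P}(\curl;\Omega)$; since $\curl\boldsymbol{\psi}=0$, it reads $(\boldsymbol{\psi}\mid\curl\boldsymbol{E}(t))={}_{\gamma_A}\langle\boldsymbol{\psi}\times\boldsymbol{n},\boldsymbol{E}_\top(t)\rangle_{\pi^0_A}$. The crucial point is that $\boldsymbol{\psi}\in\mathbf{H}_{0,\Gamma_A}(\curl;\Omega)$, hence $\boldsymbol{\psi}\times\boldsymbol{n}$ vanishes on $\Gamma_A$ and the whole duality bracket is zero; therefore $(\curl\boldsymbol{E}(t)\mid\boldsymbol{\psi})=0$, and $(\boldsymbol{B}(t)\mid\boldsymbol{\psi})=(\boldsymbol{B}_0\mid\boldsymbol{\psi})$ for all $t$.

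For a general $\boldsymbol{U}_0\in\mathbf{X}_2$ I would conclude by density: pick $\boldsymbol{U}_0^n\in D(\xA_2)$ with $\boldsymbol{U}_0^n\to\boldsymbol{U}_0$ in $\mathbf{X}$; since $\mathit{T}_2$ is a contraction semigroup, the $\boldsymbol{B}$-component $\boldsymbol{B}^n(t)$ of $\mathit{T}_2(t)\boldsymbol{U}_0^n$ converges to $\boldsymbol{B}(t)$ in $\mathbf{L}^2(\Omega)$ for each $t$, and passing to the limit in $(\boldsymbol{B}^n(t)\mid\boldsymbol{\psi})=(\boldsymbol{B}_0^n\mid\boldsymbol{\psi})$ gives the identity for $\boldsymbol{U}_0$.

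The only genuine difficulty — and the reason this proof is slightly longer than that of Lemma~\Rref{lemma project P1B} — is justifying that the boundary contribution vanishes on the \emph{mixed} boundary. One cannot simply invoke the plain Green formula~\eqref{green rot rot} and argue that $\boldsymbol{E}_\top=0$ on all of $\Gamma$ (it only vanishes on $\Gamma_P$); one must rely on the refined formula~\eqref{green rot 0 Gamma p}, which is exactly tailored to the duality between $\mathbf{TT}(\Gamma_A)$ (to which $\boldsymbol{\psi}\times\boldsymbol{n}$ restricted to $\Gamma_A$ belongs, and where it equals $0$) and $\widetilde{\mathbf{TC}}(\Gamma_A)$ (to which $\boldsymbol{E}_\top$ restricted to $\Gamma_A$ belongs, $\boldsymbol{E}$ being in $\mathbf{H}_{0,\Gamma_P}(\curl;\Omega)$). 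Once this is in place, everything else is routine.
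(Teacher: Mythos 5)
Your proof is correct and follows exactly the route the paper intends: the paper states this lemma without proof, its twin (Lemma~\Rref{lemma project P1B}) being disposed of by ``multiply Equation~\eqref{pp2} by an element of the kernel and integrate by parts,'' which is precisely what you do. Your write-up simply makes explicit the two points the paper leaves implicit --- that the mixed boundary requires the refined formula~\eqref{green rot 0 Gamma p} with the bracket killed by $\boldsymbol{\psi}\times\boldsymbol{n}=0$ on $\Gamma_A$, and the density step from $D(\xA_2)$ to general $\boldsymbol{U}_0$ --- both of which are handled correctly.
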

Combining this result with Theorem \Rref{strong stab X2til}, we conclude
\begin{crllr}
It holds that 
\begin{equation*}
\lim\limits_{t \to +\infty} \| {\mathit{T}_{2}(t)\boldsymbol{U}_0}-  (0,0,0,P_2\boldsymbol{B}_0)^\top \|_{\mathbf{X}_2}=0,\quad\forall \boldsymbol{U}_{0} \in \mathbf{X}_2. 
\end{equation*}
\end{crllr}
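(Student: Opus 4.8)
The plan is to transcribe the proof of Corollary~\Rref{strong stab X1} to the Silver--M\"uller setting, replacing the orthogonal decomposition~\eqref{orth decom ZT} of $\mathbf{H}_0(\divg 0;\Omega)$ by the decomposition~\eqref{orth decom Z GammaA} of $\mathbf{H}_{0,\Gamma_P}(\divg 0;\Omega)$, and Theorem~\Rref{stabi forte cp X1 til} by Theorem~\Rref{strong stab X2til}. Fix $\boldsymbol{U}_0\in\mathbf{X}_2$ and let $\boldsymbol{U}(t)=\mathit{T}_2(t)\boldsymbol{U}_0=(\boldsymbol{J}_1(t),\boldsymbol{J}_2(t),\boldsymbol{E}(t),\boldsymbol{B}(t))^\top$ be the solution of~\eqref{evol prob sm homo}; by Remark~\Rref{stable space}, $\boldsymbol{U}(t)$ remains in $\mathbf{X}_2$ for all $t\geq0$.

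First I would split the magnetic initial datum along~\eqref{orth decom Z GammaA}: $\boldsymbol{B}_0 = P_2\boldsymbol{B}_0 + (\boldsymbol{B}_0 - P_2\boldsymbol{B}_0)$, with $P_2\boldsymbol{B}_0 \in \mathbf{Z}(\Omega;\Gamma_A)$ and $\boldsymbol{B}_0 - P_2\boldsymbol{B}_0 \in \mathbf{H}_{0,\Gamma_P;\text{flux},\Gamma_A,\Sigma}(\divg 0;\Omega)$. Since $(0,0,0,P_2\boldsymbol{B}_0)^\top$ belongs to $\ker({\xA_{2}}_{|\mathbf{X}_2})=\lbrace 0\rbrace^3\times\mathbf{Z}(\Omega;\Gamma_A)$, it is a stationary solution of~\eqref{evol prob sm homo}. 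Accordingly I would set $\widetilde{\boldsymbol{U}}(t):=\boldsymbol{U}(t)-(0,0,0,P_2\boldsymbol{B}_0)^\top$; by linearity $\widetilde{\boldsymbol{U}}$ solves the same evolution problem with initial datum $\widetilde{\boldsymbol{U}}_0=\boldsymbol{U}_0-(0,0,0,P_2\boldsymbol{B}_0)^\top$, whose last component $\boldsymbol{B}_0-P_2\boldsymbol{B}_0$ lies in $\mathbf{H}_{0,\Gamma_P;\text{flux},\Gamma_A,\Sigma}(\divg 0;\Omega)$, so that $\widetilde{\boldsymbol{U}}_0\in\widetilde{\mathbf{X}}_2$.

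Next I would invoke Lemma~\Rref{lemma project P2B}: $P_2(\boldsymbol{B}(t))=P_2(\boldsymbol{B}_0)$ for all $t>0$, hence $P_2$ applied to the fourth component $\boldsymbol{B}(t)-P_2\boldsymbol{B}_0$ of $\widetilde{\boldsymbol{U}}(t)$ vanishes identically; since the first three components are trivial, this shows $\widetilde{\boldsymbol{U}}(t)\in\widetilde{\mathbf{X}}_2$ for all $t\geq0$, \ie\ $\widetilde{\mathbf{X}}_2$ is invariant under $(\mathit{T}_2(t))_{t\geq0}$ and $\widetilde{\boldsymbol{U}}(t)=\check{\mathit{T}_{2}}(t)\widetilde{\boldsymbol{U}}_0$. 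Theorem~\Rref{strong stab X2til} then yields $\lim_{t\to+\infty}\|\widetilde{\boldsymbol{U}}(t)\|_{\mathbf{X}}=0$. As the $\mathbf{X}_2$-norm is the restriction of the $\mathbf{X}$-norm and $\widetilde{\boldsymbol{U}}(t)=\mathit{T}_2(t)\boldsymbol{U}_0-(0,0,0,P_2\boldsymbol{B}_0)^\top$, the claimed convergence follows.

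The only genuinely delicate point is the time-invariance of the $\mathbf{Z}(\Omega;\Gamma_A)$-component of $\boldsymbol{B}(t)$, \ie\ the invariance of $\widetilde{\mathbf{X}}_2$ under the semigroup; this is precisely the content of Lemma~\Rref{lemma project P2B} (obtained by testing~\eqref{pp2} against an element of $\mathbf{Z}(\Omega;\Gamma_A)\subset\mathbf{H}_{0,\Gamma_A}(\curl 0;\Omega)$ and integrating by parts), so no additional work is needed here. Everything else is a routine transcription of the perfectly conducting argument.
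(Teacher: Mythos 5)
Your proposal is correct and follows exactly the route the paper intends: decompose $\boldsymbol{B}_0$ along~\eqref{orth decom Z GammaA}, subtract the stationary solution $(0,0,0,P_2\boldsymbol{B}_0)^\top$, use Lemma~\Rref{lemma project P2B} for the invariance of $\widetilde{\mathbf{X}}_2$, and conclude with Theorem~\Rref{strong stab X2til} — a direct transcription of the proof of Corollary~\Rref{strong stab X1}.
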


\section{Stronger stability}
\label{sec-expoly-stab}
We now establish explicit decay rates (polynomial or exponential) for the energy. Our results are based on theorems relating the decay of the resolvent of an operator with respect to frequency and the the decay of the generated semigroup with respect to time.

Namely, exponential decay will be derived from the following Theorem~\cite{P83,H85}:
\begin{thrm}[Pr\"uss / Huang]
A $\xCzero$-semigroup $(\mathit{T}(t))_{t\geq0}$ of contractions on a Hilbert space 
$\mathit{X}$ generated by $\mathcal{L}$ is exponentially stable, \ie, it satisfies 
\begin{equation*}
\forall t\geq 0,\quad \forall u_0\in \mathit{X},\quad \| \mathit{T}(t) u_0\| \leq C\, \ee^{-\gamma t}\| u_0\|_{\mathit{X}}, 
\end{equation*}
for some positive constants $C$ and $\gamma$ if, and only if,
\begin{equation}
\ii\xR=\lbrace \ii\beta : \beta \in \xR\rbrace\subset \rho(\mathcal{L}),
\label{imaginer axe} 
\end{equation}
the resolvent set of the operator $\mathcal{L}$, and
\begin{equation}
\sup_{\beta\in\xR} \III (\ii\beta\, \mathbb{I}-\mathcal{L})^{-1} \III < +\infty.
\label{born'e resolvent}
\end{equation} 
\label{stab exp theor}
\end{thrm}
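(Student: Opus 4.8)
This is the classical theorem usually attributed to Pr\"uss and Huang, and the plan is to follow the argument of~\cite{P83,H85}, which splits into the two implications.

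The necessity ($\Rightarrow$) is immediate: if $\|\mathit{T}(t)\|\le C\ee^{-\gamma t}$, then the Laplace transform $R(\lambda,\mathcal{L})=\int_0^{+\infty}\ee^{-\lambda t}\mathit{T}(t)\,\xdif t$ converges in operator norm for every $\lambda$ with $\Re\lambda>-\gamma$; hence $\ii\xR\subset\rho(\mathcal{L})$ and $\III R(\ii\beta,\mathcal{L})\III\le C/\gamma$ for all $\beta\in\xR$, which is exactly~\eqref{imaginer axe} and~\eqref{born'e resolvent}.

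For the sufficiency ($\Leftarrow$), which is the substantial part and the one where the Hilbert structure of $\mathit{X}$ is genuinely used (the statement fails on general Banach spaces), I would proceed as follows. Set $M:=\sup_{\beta\in\xR}\III R(\ii\beta,\mathcal{L})\III$. First, a Neumann series expansion around points of the imaginary axis shows that $R(\cdot,\mathcal{L})$ extends analytically to the strip $\{|\Re\lambda|<1/(2M)\}$ with $\III R(\lambda,\mathcal{L})\III\le 2M$ there; combined with the contraction bound $\III R(\lambda,\mathcal{L})\III\le(\Re\lambda)^{-1}$ valid for $\Re\lambda>0$, this yields $\delta>0$ and $C_0>0$ such that $\III R(\lambda,\mathcal{L})\III\le C_0$ whenever $\Re\lambda\ge-\delta$. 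Next, fix $u_0\in\mathit{X}$; for $a>0$ the orbit $t\mapsto\ee^{-at}\mathit{T}(t)u_0$ belongs to $\mathrm{L}^2(0,+\infty;\mathit{X})$ (contraction plus the exponential weight), and its $\mathit{X}$-valued Fourier transform in $t$ is exactly $\beta\mapsto R(a+\ii\beta,\mathcal{L})u_0$, so Plancherel's theorem for Hilbert-valued functions gives
\begin{equation*}
\int_{\xR}\|R(a+\ii\beta,\mathcal{L})u_0\|^2\,\xdif\beta = 2\pi\int_0^{+\infty}\ee^{-2at}\|\mathit{T}(t)u_0\|^2\,\xdif t .
\end{equation*}
The crucial step is then to transfer this $\mathrm{L}^2$-estimate from a line $\Re\lambda=a>0$ to a line $\Re\lambda=-\delta'$ inside the strip: restricting to $u_0\in D(\mathcal{L})$, for which $R(\lambda,\mathcal{L})u_0=\lambda^{-1}u_0+\lambda^{-1}R(\lambda,\mathcal{L})\mathcal{L}u_0$ decays as $|\Im\lambda|\to+\infty$ uniformly on the strip, one shifts the contour by Cauchy's theorem; equivalently, the resolvent identity $R(-\delta'+\ii\beta,\mathcal{L})-R(\delta'+\ii\beta,\mathcal{L})=2\delta'\,R(-\delta'+\ii\beta,\mathcal{L})\,R(\delta'+\ii\beta,\mathcal{L})$ together with the bound $C_0$ lets one estimate $\|R(-\delta'+\ii\,\cdot\,,\mathcal{L})u_0\|_{\mathrm{L}^2(\xR;\mathit{X})}$ by $C\|u_0\|$, the constant independent of $u_0$. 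Reading the Plancherel identity backwards at $a=-\delta'$ gives $\int_0^{+\infty}\ee^{2\delta' t}\|\mathit{T}(t)u_0\|^2\,\xdif t\le C\|u_0\|^2$ on the dense set $D(\mathcal{L})$, hence on all of $\mathit{X}$; finally a standard averaging argument using the semigroup property and $\|\mathit{T}(t)\|\le 1$ upgrades this integral bound to the pointwise decay $\|\mathit{T}(t)u_0\|\le C'\ee^{-\delta' t}\|u_0\|$.

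The main obstacle is this contour-shift step: it is precisely where the uniform resolvent bound~\eqref{born'e resolvent} enters, and where the Hilbert setting is indispensable. Two technical subtleties there are worth flagging in a full write-up --- propagating the resolvent bound from the axis to a genuine vertical strip, and justifying the contour deformation (which is why one works first on $D(\mathcal{L})$ and closes by density at the end). The remaining ingredients, namely the Laplace representation, the Plancherel identity, and the averaging argument, are routine.
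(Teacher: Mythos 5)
The paper does not prove this statement: Theorem~\Rref{stab exp theor} is quoted as a classical result of Pr\"uss and Huang, with the argument delegated entirely to the cited references, so there is no in-paper proof to compare yours against. Your sketch is a correct outline of the standard Gearhart--Pr\"uss--Huang argument. The necessity direction via the Laplace representation $R(\lambda,\mathcal{L})u_0=\int_0^{+\infty}\ee^{-\lambda t}T(t)u_0\,\xdif t$ is fine, and the sufficiency direction has all the right ingredients in the right order: the Neumann-series propagation of the bound $M$ from the axis to the strip $|\Re\lambda|\le 1/(2M)$, the vector-valued Plancherel identity on a line $\Re\lambda=a>0$, the transfer of the $\mathrm{L}^2$ bound to a line $\Re\lambda=-\delta'$ via the resolvent identity and the uniform bound $C_0$, and the closing averaging argument, which in explicit form reads $\|T(t)u_0\|^2\int_0^t\ee^{2\delta's}\,\xdif s\le\int_0^t\ee^{2\delta's}\|T(s)u_0\|^2\,\xdif s$ by the contraction property, and does yield pointwise exponential decay. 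The one step you should write out in full in a complete proof is the identification of the function equal to $\ee^{\delta' t}T(t)u_0$ for $t>0$ and to $0$ for $t<0$ as the inverse Fourier transform of $\beta\mapsto R(-\delta'+\ii\beta,\mathcal{L})u_0$: this requires the complex inversion formula for the semigroup on $D(\mathcal{L})$ and the vanishing of the horizontal contour segments, justified by the uniform $O(1/|\Im\lambda|)$ decay of $R(\lambda,\mathcal{L})u_0$ in the strip for $u_0\in D(\mathcal{L})$; your parenthetical remarks show you are aware that this is precisely why one works first on $D(\mathcal{L})$ and closes by density. With that step made explicit, the proof is complete and is the same one found in the references the paper cites.
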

On the other hand, polynomial decay will follow from this other one~\cite[Theorem~2.4]{BoTo10}:
\begin{thrm}
\label{stab poly theor}
A $\xCzero$-semigroup $(\mathit{T}(t))_{t\geq0}$ of contractions on a Hilbert space 
$\mathit{X}$ generated by $\mathcal{L}$ satisfies
\begin{equation*}
\forall t>1,\quad \forall u_0\in \mathit{D}(\mathcal{L}),\quad \| \mathit{T}(t) u_0\| \leq C\, t^{-\frac1{\ell}}\| u_0\|_{\mathit{D}(\mathcal{L})}, 
\end{equation*}
as well as
\begin{equation*}
\forall t>1,\quad \forall u_0\in \mathit{D}(\mathcal{L}^{\ell}),\quad \| \mathit{T}(t) u_0\| \leq C\, t^{-1}\| u_0\|_{\mathit{D}(\mathcal{L}^{\ell})}, 
\end{equation*}
for some constant $C>0$ and for some positive integer $\ell$ if~\eqref{imaginer axe} holds and if 
\begin{equation}
\limsup_{|\beta|\rightarrow \infty}\frac1{\beta^{\ell}}\III (\ii\beta\, \mathbb{I}-\mathcal{L})^{-1} \III < +\infty.
\label{borne resolvent ply}
\end{equation} 
\end{thrm}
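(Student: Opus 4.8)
The statement is the Borichev--Tomilov characterisation of polynomial decay, and the plan is to prove it by the classical contour-deformation route, the Hilbert-space structure of $\mathit{X}$ being precisely what yields the \emph{sharp} exponent. First, since~\eqref{imaginer axe} holds and $(\mathit{T}(t))_{t\ge0}$ is bounded, one has $\sigma(\mathcal{L})\subset\{\Re\lambda\le0\}$ and $0\in\rho(\mathcal{L})$, so $\mathcal{L}^{-1}$ is bounded; the $\mathit{D}(\mathcal{L}^{\ell})$-estimate follows from the $\mathit{D}(\mathcal{L})$-one by iterating (using that $\mathit{T}(s)$ commutes with $\mathcal{L}$ on $\mathit{D}(\mathcal{L})$), so it suffices to show $\|\mathit{T}(t)\mathcal{L}^{-1}\|\le C\,t^{-1/\ell}$. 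For $u_0$ in the dense set $\mathit{D}(\mathcal{L}^{2})$ one starts from the absolutely convergent inverse-Laplace (Dunford) representation
\begin{equation*}
\mathit{T}(t)\,u_0=\frac1{2\pi\ii}\int_{\delta-\ii\infty}^{\delta+\ii\infty}\ee^{\lambda t}\,(\lambda\mathbb{I}-\mathcal{L})^{-1}u_0\,\xdif\lambda,\qquad\delta>0,
\end{equation*}
which is legitimate because on the line $\Re\lambda=\delta$ one has $\|(\lambda\mathbb{I}-\mathcal{L})^{-1}\|\le\delta^{-1}\sup_t\|\mathit{T}(t)\|$ uniformly, so $(\lambda\mathbb{I}-\mathcal{L})^{-1}\mathcal{L}^{-2}u_0$ is $O(|\lambda|^{-2})$ (resolvent identity applied twice).

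Next I would continue the resolvent across $\ii\xR$: the bound~\eqref{borne resolvent ply} for large $|\beta|$, continuity of $\beta\mapsto(\ii\beta\mathbb{I}-\mathcal{L})^{-1}$ near $0$, and a Neumann series show that $(\lambda\mathbb{I}-\mathcal{L})^{-1}$ extends holomorphically, with $\|(\lambda\mathbb{I}-\mathcal{L})^{-1}\|\le 2M\,(1+|\Im\lambda|)^{\ell}$, to a region $\Omega_\ell:=\{\Re\lambda>-c\,(1+|\Im\lambda|)^{-\ell}\}$ for some $c>0$. By Cauchy's theorem the contour is then replaced by a path $\Gamma\subset\Omega_\ell$, say $\Re\lambda=-c'(1+|\Im\lambda|)^{-\ell}$, the horizontal connecting segments at $|\Im\lambda|\to\infty$ contributing nothing by the $|\lambda|^{-2}$ decay. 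Now split $\Gamma$ at $|\Im\lambda|=\kappa$: on $|\Im\lambda|>\kappa$ one has $|\ee^{\lambda t}|\le\ee^{-c'\kappa^{-\ell}t}$, negligible once $\kappa\sim t^{1/\ell}$; on $|\Im\lambda|\le\kappa$ one has $|\ee^{\lambda t}|\le1$, and the remaining $\int\|(\lambda\mathbb{I}-\mathcal{L})^{-1}u_0\|\,|\xdif\lambda|$ must be controlled --- here is the crux --- \emph{not} through the crude pointwise bound $2M(1+|\Im\lambda|)^{\ell}$, but through a Cauchy--Schwarz/Plancherel argument valid in the Hilbert space $\mathit{X}$, using $\sup_t\|\mathit{T}(t)\|<\infty$ together with extra powers of $\mathcal{L}^{-1}$ suitably apportioned between the two pieces. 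Choosing $\kappa\sim t^{1/\ell}$ and tracking the powers yields $\|\mathit{T}(t)\mathcal{L}^{-1}u_0\|\le C\,t^{-1/\ell}\|u_0\|_{\mathit{D}(\mathcal{L})}$ on $\mathit{D}(\mathcal{L}^{2})$, hence on all of $\mathit{D}(\mathcal{L})$ by density and boundedness of $(\mathit{T}(t))_{t\ge0}$.

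The main obstacle is the sharpness of the exponent: a purely pointwise contour estimate produces a spurious factor $\log t$, and removing it is exactly what forces the genuinely Hilbert-space ingredient --- a square-function/Plancherel inequality, equivalently the Borichev--Tomilov lemma bounding a bounded $\mathit{X}$-valued analytic function near the boundary of a half-plane by its $(1+|\Im\lambda|)^{\ell}$-weighted size on $\ii\xR$. The secondary delicate point is the bookkeeping in the splitting step, i.e.\ how many powers of $\mathcal{L}^{-1}$ to place on the low- and on the high-frequency part so that both decay at the common rate $t^{-1/\ell}$. In the paper, of course, one simply invokes~\cite[Theorem~2.4]{BoTo10}.
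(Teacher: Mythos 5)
First, a point of comparison: the paper does not prove this statement at all --- Theorem~\Rref{stab poly theor} is quoted from Borichev--Tomilov and the text simply invokes \cite[Theorem~2.4]{BoTo10}, exactly as your closing sentence anticipates. So your sketch has to be judged on its own merits, and there it has a genuine gap at the decisive step.

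The skeleton you set up --- Laplace inversion on a vertical line, Neumann-series extension of the resolvent to $\Omega_\ell=\{\Re\lambda>-c(1+|\Im\lambda|)^{-\ell}\}$ with the bound $2M(1+|\Im\lambda|)^{\ell}$, deformation into $\Omega_\ell$, and splitting at $|\Im\lambda|=\kappa\sim t^{1/\ell}$ --- is the classical Batty--Duyckaerts route, which with pointwise estimates provably yields only $t^{-1/\ell}$ up to a logarithmic loss, as you note yourself. The whole content of the Borichev--Tomilov theorem beyond that is the removal of the logarithm on Hilbert spaces, and this is precisely the step you leave as an assertion: ``a Cauchy--Schwarz/Plancherel argument \dots with extra powers of $\mathcal{L}^{-1}$ suitably apportioned \dots tracking the powers yields'' is a promissory note for the one genuinely nontrivial ingredient, not an argument. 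What is actually needed is the Gomilko / Shi--Feng square-function characterisation of bounded semigroups on Hilbert spaces,
\[
\sup_{\xi>0}\ \xi\int_{\xR}\bigl\|(\xi+\ii\eta\,\mathbb{I}-\mathcal{L})^{-1}x\bigr\|^{2}\,\xdif\eta\ \le\ C\,\|x\|^{2},
\]
together with its adjoint version, inserted via Cauchy--Schwarz into well-chosen resolvent identities; Borichev--Tomilov's own proof keeps the contour in the right half-plane (at $\Re\lambda\sim 1/t$) rather than pushing it into $\Omega_\ell$, so it is not even clear that the particular splitting you describe can be completed to give the sharp rate. A secondary slip: for $u_0\in D(\mathcal{L}^{2})$ the integrand $(\lambda\mathbb{I}-\mathcal{L})^{-1}u_0$ still contains the term $\lambda^{-1}u_0$, which is not absolutely integrable on a vertical line, so the inversion formula must be taken as a principal value or written directly for $T(t)\mathcal{L}^{-1}u_0$. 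By contrast, your preliminary reductions are fine: the $D(\mathcal{L}^{\ell})$ estimate does follow from the $D(\mathcal{L})$ one via $T(t)=T(t/\ell)^{\ell}$ and commutation with $\mathcal{L}^{-1}$, and the final density argument is standard.
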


\subsection{Polynomial stability, perfectly conducting case}
\begin{prpstn}
Let $\rho(-\widetilde{{\xA}}_1)$ denote the resolvent set of $-\widetilde{{\xA}}_1$. Then,  $0\in\rho(-\widetilde{{\xA}}_1)$.
\label{0 resolvent cp}
\end{prpstn}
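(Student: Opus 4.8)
The plan is to show that $\widetilde{\xA}_1:D(\widetilde{\xA}_1)\to\widetilde{\mathbf{X}}_1$ is a bijection with bounded inverse, whence $0\in\rho(\widetilde{\xA}_1)=\rho(-\widetilde{\xA}_1)$. Injectivity is already furnished by Proposition~\ref{spec-Atild1} (case $\alpha=0$), and $\widetilde{\xA}_1$ is closed because $-\widetilde{\xA}_1$ generates a $\xCzero$-semigroup (Theorem~\ref{stabi forte cp X1 til}); so, once surjectivity is established, the closed graph theorem gives the boundedness of $\widetilde{\xA}_1^{-1}$. Everything thus reduces to solving $\xA_1\boldsymbol{U}=\boldsymbol{F}$ for an arbitrary $\boldsymbol{F}=(\boldsymbol{F}_1,\boldsymbol{F}_2,\boldsymbol{F}_3,\boldsymbol{F}_4)^\top\in\widetilde{\mathbf{X}}_1$, with the solution lying in $D(\xA_1)\cap\widetilde{\mathbf{X}}_1$; the structurally crucial fact is that $\boldsymbol{F}_4\in\mathbf{H}_0^\Sigma(\divg 0;\Omega)$.

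I would first eliminate $\boldsymbol{U}_1,\boldsymbol{U}_2$ by $\boldsymbol{U}_s=\mathbb{M}_s^{-1}(\boldsymbol{F}_s+\varepsilon_0\omega_{ps}^2\boldsymbol{U}_3)$, which is legitimate since $\mathbb{M}_s=\ii\cdot0\cdot\mathbb{I}+\mathbb{M}_s$ is invertible with bounded inverse by Proposition~\ref{i alpha inverse stability}. This reduces the system to the static pair $\mathbb{D}_0\boldsymbol{U}_3-c^2\curl\boldsymbol{U}_4=\boldsymbol{G}$ and $\curl\boldsymbol{U}_3=\boldsymbol{F}_4$, for $\boldsymbol{U}_3\in\mathbf{H}_0(\curl;\Omega)$ and $\boldsymbol{U}_4\in\mathbf{H}(\curl;\Omega)$, where $\mathbb{D}_0:=\sum_s\omega_{ps}^2\mathbb{M}_s^{-1}=\mathbb{B}_0$ and $\boldsymbol{G}:=\boldsymbol{F}_3-\varepsilon_0^{-1}\sum_s\mathbb{M}_s^{-1}\boldsymbol{F}_s\in\mathbf{L}^2(\Omega)$. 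By Lemma~\ref{lem minor B i alpha} (with $\alpha=0$) the matrix $\mathbb{B}_0=\mathbb{D}_0$ is coercive, hence invertible with bounded, likewise coercive, inverse; so I can further substitute $\boldsymbol{U}_3=\mathbb{D}_0^{-1}(\boldsymbol{G}+c^2\curl\boldsymbol{U}_4)$, the constraint $\boldsymbol{U}_3\times\boldsymbol{n}_{|\Gamma}=0$ then becoming a natural boundary condition. The resulting variational problem is: find $\boldsymbol{U}_4\in\mathbf{W}$ such that $c^2(\mathbb{D}_0^{-1}\curl\boldsymbol{U}_4\mid\curl\boldsymbol{v})=(\boldsymbol{F}_4\mid\boldsymbol{v})-(\mathbb{D}_0^{-1}\boldsymbol{G}\mid\curl\boldsymbol{v})$ for all $\boldsymbol{v}\in\mathbf{W}$, where $\mathbf{W}:=\mathbf{H}(\curl;\Omega)\cap\mathbf{H}_0^\Sigma(\divg 0;\Omega)$.

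The choice of $\mathbf{W}$ is what makes the degenerate $\curl$--$\curl$ form coercive: on $\mathbf{W}$ one has a Friedrichs--Poincaré inequality $\|\boldsymbol{v}\|\le C\|\curl\boldsymbol{v}\|$, which follows from the compact embedding of $\mathbf{H}_0(\divg 0;\Omega)\cap\mathbf{H}(\curl;\Omega)$ into $\mathbf{L}^2(\Omega)$ (in the circle of ideas of Theorem~\ref{compact}, with $\mathbb{B}=\mathbb{I}$) together with the fact that an element of $\mathbf{Z}_T(\Omega)$ with vanishing fluxes through the cuts $\Sigma_j$ is zero; combined with the coercivity of $\mathbb{D}_0^{-1}$, Lax--Milgram yields a unique $\boldsymbol{U}_4\in\mathbf{W}$. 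The equation then extends from test functions in $\mathbf{W}$ to all of $\mathbf{H}(\curl;\Omega)$: by the orthogonal decomposition~\eqref{orth decom Hrot}, any $\boldsymbol{v}\in\mathbf{H}(\curl;\Omega)$ splits as $\boldsymbol{v}_0+\boldsymbol{v}_W$ with $\boldsymbol{v}_0\in\mathbf{H}(\curl 0;\Omega)$ and $\boldsymbol{v}_W\in\mathbf{W}$; the form annihilates $\boldsymbol{v}_0$, and so does the right-hand side, because $(\boldsymbol{F}_4\mid\boldsymbol{v}_0)=0$ since $\boldsymbol{F}_4\in\mathbf{H}_0^\Sigma(\divg 0;\Omega)=\mathbf{H}(\curl 0;\Omega)^\perp$ --- and this is precisely where $\boldsymbol{F}\in\widetilde{\mathbf{X}}_1$ enters. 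Setting $\boldsymbol{U}_3:=\mathbb{D}_0^{-1}(\boldsymbol{G}+c^2\curl\boldsymbol{U}_4)$, testing against $\boldsymbol{\mathcal{D}}(\Omega)$ gives $\curl\boldsymbol{U}_3=\boldsymbol{F}_4\in\mathbf{L}^2(\Omega)$, so $\boldsymbol{U}_3\in\mathbf{H}(\curl;\Omega)$; then the Green formula~\eqref{green rot rot} with arbitrary test functions forces $\boldsymbol{U}_3\times\boldsymbol{n}_{|\Gamma}=0$, i.e. $\boldsymbol{U}_3\in\mathbf{H}_0(\curl;\Omega)$. Recovering $\boldsymbol{U}_1,\boldsymbol{U}_2\in\mathbf{L}^2(\Omega)=\mathbf{L}^2_{(s)}(\Omega)$ as above and substituting back shows that $\boldsymbol{U}=(\boldsymbol{U}_1,\boldsymbol{U}_2,\boldsymbol{U}_3,\boldsymbol{U}_4)^\top\in D(\xA_1)$ solves $\xA_1\boldsymbol{U}=\boldsymbol{F}$, and $\boldsymbol{U}\in\widetilde{\mathbf{X}}_1$ because $\boldsymbol{U}_4\in\mathbf{W}\subset\mathbf{H}_0^\Sigma(\divg 0;\Omega)$; hence $\widetilde{\xA}_1$ is onto. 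The main obstacle is exactly the degeneracy of this static $\curl$--$\curl$ problem: one must select the functional space $\mathbf{W}$ so that coercivity holds \emph{and} the solution automatically lands in $\widetilde{\mathbf{X}}_1$, and recognise that solvability is conditioned precisely by $\boldsymbol{F}\in\widetilde{\mathbf{X}}_1$; the Friedrichs inequality on $\mathbf{W}$ in a topologically non-trivial domain with possibly disconnected boundary is the technical heart, relying on the geometry set up in \S\ref{sec-prelim-2}.
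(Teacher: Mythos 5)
Your proof is correct, but it takes a genuinely different route from the paper's. The paper argues by contradiction: it first shows that the resolvent is uniformly bounded on a punctured neighbourhood of $0$ (normalised sequences $\boldsymbol{U}^n$ with $(\ii\beta_n+\xA_1)\boldsymbol{U}^n\to0$; monotonicity kills $\boldsymbol{U}^n_1,\boldsymbol{U}^n_2$, then $\boldsymbol{U}^n_3$ and $\curl\boldsymbol{U}^n_4$; the compact embedding of $\mathcal{J}_1(\Omega)=\mathbf{H}(\curl;\Omega)\cap\mathbf{H}_0^{\Sigma}(\divg0;\Omega)$ into $\mathbf{L}^2(\Omega)$ together with $\mathbf{Z}_T(\Omega)\cap\mathbf{H}_0^{\Sigma}(\divg0;\Omega)=\{0\}$ forces $\boldsymbol{U}_4=0$, contradicting $\varepsilon_0c^2\|\boldsymbol{U}_4\|^2=1$), and only then extracts surjectivity at $0$ by a weak-limit argument on the solutions of $(\ii k^{-1}+\xA_1)\boldsymbol{U}^k=-\boldsymbol{F}$. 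You instead solve $\xA_1\boldsymbol{U}=\boldsymbol{F}$ outright, reducing to a coercive $\curl$--$\curl$ problem for $\boldsymbol{U}_4$ on $\mathbf{W}=\mathbf{H}(\curl;\Omega)\cap\mathbf{H}_0^{\Sigma}(\divg0;\Omega)$ and invoking Lax--Milgram; your Friedrichs inequality on $\mathbf{W}$ rests on exactly the same two geometric ingredients (Weber-type compactness and the decomposition~\eqref{orth decom ZT}) that drive the paper's contradiction, and your extension of the test space via~\eqref{orth decom Hrot} correctly isolates $\boldsymbol{F}_4\in\mathbf{H}_0^{\Sigma}(\divg0;\Omega)$ as the solvability condition. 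The paper itself acknowledges (in the remark following its proof) that such a direct argument exists, and says it chose the contradiction scheme because it serves as a template for the later resolvent estimates (Propositions~\ref{betta resolvant borne cp}, \ref{0 resolvent sm}, \ref{betta resolvant borne sm}); that is what your approach gives up, while gaining a constructive, self-contained solution of the static problem. Two small points of hygiene: $\rho(-\widetilde{\xA}_1)=-\rho(\widetilde{\xA}_1)$ rather than $\rho(\widetilde{\xA}_1)$, though this is immaterial at the point $0$; and closedness of $\widetilde{\xA}_1$ can be had either from the semigroup generation you cite or, more primitively, from the fact that it is the part of the maximal monotone (hence closed) operator $\xA_1$ in the closed invariant subspace $\widetilde{\mathbf{X}}_1$. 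Once $0\in\rho(-\widetilde{\xA}_1)$ is established, the uniform bound of the resolvent near $0$ that the paper proves explicitly follows anyway from the openness of the resolvent set, so nothing needed downstream is lost.
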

\begin{proof}
By Proposition~\Rref{spec-Atild1}, we know that $0\in\ii\xR$ is not an eigenvalue, so in order to prove that $0\in\rho(-\widetilde{{\xA}}_1)$, we need to check that $\widetilde{{\xA}}_1$ is surjective and has a bounded inverse. Both properties follow from the fact that the resolvent of~$-\widetilde{{\xA}}_1$ is uniformly bounded in the neighborhood of~$0$, which we shall now prove by a contradiction argument. 

\medbreak

Suppose the above condition is false, then there exists a sequence $(\beta_n)_{n\in\xN}$ on~$\xR\setminus\lbrace 0 \rbrace$ with $\beta_n\rightarrow 0$ as $n\rightarrow +\infty$, and a sequence of vector fields $(\boldsymbol{U}^n)_{n\in\xN}= \left( (\boldsymbol{U}_1^n,\boldsymbol{U}_2^n,\boldsymbol{U}_3^n,\boldsymbol{U}_4^n)^\top \right)_n$ on~$D(\widetilde{{\xA}}_1)$, with
\begin{eqnarray}
\| \boldsymbol{U}^n\|_{\mathbf{X}}=1,\quad \forall n,
\label{unitaire}
\end{eqnarray}
such that 
\begin{eqnarray}
\| (\ii\beta_n\mathbb{I} + \xA_1)\, \boldsymbol{U}^n \|_{\mathbf{X}} \rightarrow  0 ~\text{ as } n \to +\infty,
\label{resol uni}
\end{eqnarray}
which is equivalent to
\begin{eqnarray}
\ii\beta_n\,\boldsymbol{U}_1^n + \mathbb{M}_1\boldsymbol{U}_1^n - \varepsilon_0\omega_{p1}^2\, \boldsymbol{U}_3^n
&\rightarrow & 0 \, \text{ in } \, \mathbf{L}^2(\Omega),
\label{1exp}\\
\ii\beta_n\,\boldsymbol{U}_2^n + \mathbb{M}_2\boldsymbol{U}_2^n - \varepsilon_0\omega_{p2}^2\, \boldsymbol{U}_3^n
&\rightarrow &0 \, \text{ in }\, \mathbf{L}^2(\Omega),
\label{2exp}\\
\ii\beta_n\varepsilon_0\,\boldsymbol{U}_3^n + \boldsymbol{U}_1^n + \boldsymbol{U}_2^n - \varepsilon_0 c^2\curl\boldsymbol{U}_4^n
&\rightarrow &0 \, \text{ in }\,  \mathbf{L}^2(\Omega),
\label{3exp}\\
\ii\varepsilon_0 c^2\beta_n\,\boldsymbol{U}_4^n + \varepsilon_0 c^2 \curl\boldsymbol{U}_3^n
&\rightarrow &0 \, \text{ in }\,  \mathbf{L}^2(\Omega).
\label{4exp}
\end{eqnarray}

\medbreak

Since, by \eqref{resul monot A1} and~\eqref{unitaire},
\begin{equation}
\sum_s \left( \frac{\nu_s\, \boldsymbol{U}_s^n}{{\varepsilon_0}\omega_{ps}} \Biggm| \frac{\boldsymbol{U}_s^n}{\omega_{ps}} \right) = \Re((\ii\beta_n\mathbb{I} +\xA_1)\,\boldsymbol{U}^n,\boldsymbol{U}^n)_{\mathbf{X}}\leq \| (\ii\beta_n\mathbb{I} +\xA_1)\,\boldsymbol{U}^n \|_{\mathbf{X}},
\end{equation}
we obtain from~\eqref{resol uni} that
\begin{equation}
\left( \frac{\nu_s\, \boldsymbol{U}_s^n}{{\varepsilon_0}\omega_{ps}} \Biggm| \frac{\boldsymbol{U}_s^n}{\omega_{ps}} \right) \rightarrow 0, \text{ as } n\to +\infty,\quad s=1,\ 2
\end{equation}
which leads by \eqref{6injec} to
\begin{equation}
\|\boldsymbol{U}_s^n\|_{(s)} \rightarrow 0 , \text{ as } n\to +\infty,
\quad s=1,\ 2.
\label{snnn}
\end{equation}
The matrix $\mathbb{M}_s$ is bounded on~$\Omega$, so it follows from to \eqref{snnn} and~\eqref{1exp} that
\begin{equation}
\boldsymbol{U}_3^n \rightarrow  0 ~\text{ in } \,
\mathbf{L}^2(\Omega),  \text{ as } n\to +\infty
\label{converg uc 0 cp}
\end{equation}
and then we deduce from~\eqref{3exp} that
\begin{equation}
\curl \boldsymbol{U}_4^n \rightarrow  0 ~\text{ in }\, 
\mathbf{L}^2(\Omega), \text{ as } n\to +\infty.
\label{converg ud 0 cp}
\end{equation}
This shows that $(\curl\boldsymbol{U}_4^n)_n$ is bounded in~$\mathbf{L}^2(\Omega)$. Taking account of~\eqref{unitaire}, the sequence $(\boldsymbol{U}_4^n)_n$ is bounded in~$\mathbf{H}(\curl;\Omega)$, and more specifically in the closed subspace $\mathcal{J}_1(\Omega):=\mathbf{H}(\curl;\Omega)\cap\mathbf{H}^{\Sigma}_0(\divg0;\Omega)$ to which all its terms belong given the definition of~$D(\widetilde{{\xA}}_1)$, see \eqref{eq-def-Atild1} and~\eqref{eq-def-Xtild1}. But $\mathcal{J}_1(\Omega)$~is also a closed subspace of $\mathbf{H}(\curl;\Omega)\cap\mathbf{H}_0(\divg;\Omega)$, which is compactly embedded into~$\mathbf{L}^2(\Omega)$  \cite[Theorem~3.5.4]{ACL+17}; thus, we can extract a subsequence still denoted by $(\boldsymbol{U}_4^n)_n$ which converges strongly in~$\mathbf{L}^2(\Omega)$ to some $\boldsymbol{U}_4\in \mathcal{J}_1(\Omega)$. As a consequence, $(\curl\boldsymbol{U}_4^n)_n$ converges in the sense of distributions to~$\curl\boldsymbol{U}_4$; this combined with~\eqref{converg ud 0 cp} implies that
\begin{equation*}
\curl \boldsymbol{U}_4^n \rightarrow  \curl\boldsymbol{U}_4=0 ~\text{ in }\, \mathbf{L}^2(\Omega), \text{ as } n\to +\infty.
\end{equation*}
So, $\boldsymbol{U}_4\in\mathbf{H}(\curl0;\Omega)$. Together with $\boldsymbol{U}_4\in \mathcal{J}_1(\Omega)$, this means  that $\boldsymbol{U}_4$ belongs to $\mathbf{Z}_{T}(\Omega)=\mathbf{H}(\curl\,0;\Omega)\cap\mathbf{H}_0(\divg0;\Omega)$ and its orthogonal $\mathbf{H}^{\Sigma}_0(\divg0;\Omega)$ (\cf~\eqref{orth decom ZT}), whence $\boldsymbol{U}_4=0$.
\smallbreak

On the other hand, \eqref{unitaire}, \eqref{snnn} and~\eqref{converg uc 0 cp} imply that  
\begin{equation*}
1=\lim_{n \to +\infty} \| \boldsymbol{U}^n \|^2_{\mathbf{X}}=\lim_{n \to +\infty} {\varepsilon_0\,} c^2\| \boldsymbol{U}_4^n \|^2 ={\varepsilon_0\,} c^2\| \boldsymbol{U}_4 \|^2,
\end{equation*}
in particular, $\boldsymbol{U}_4\neq0$, and the above conclusion is contradicted. Hence the resolvent is uniformly bounded in the neighborhood of~$0$:
\begin{equation}
\exists C>0,\quad \forall \beta \in [-1,1] \setminus \{0\},\quad \III ( \ii\beta\, \mathbb{I} + \widetilde{{\xA}}_1 )^{-1} \III \leq C.
\end{equation}

\medbreak

The surjectivity of~$-\widetilde{{\xA}}_1$ and the boundedness of its inverse then follow from a standard argument. Pick any $\boldsymbol{F} \in \widetilde{\mathbf{X}}_1$. By Proposition~\Rref{spec-Atild1}, for any  $k\in\xN \setminus \{0\}$ there exists a unique $\boldsymbol{U}^k \in D(\widetilde{{\xA}}_1)$ such that $(\ii k^{-1} + \xA)\, \boldsymbol{U}^k = -\boldsymbol{F}$, and $\| \boldsymbol{U}^k \|_{\mathbf{X}} \leq C\, \| \boldsymbol{F} \|_{\mathbf{X}}$. 

\smallbreak

Being bounded, the sequence $(\boldsymbol{U}^k)_k$ admits a  subsequence (still denoted~$(\boldsymbol{U}^k)_k$) that converges weakly toward $\boldsymbol{U} \in \widetilde{\mathbf{X}}_1$, as the latter is a closed subspace of~$\mathbf{X}$, which still satisfies $\| \boldsymbol{U} \|_{\mathbf{X}} \leq C\, \| \boldsymbol{F} \|_{\mathbf{X}}$. 
Moreover, $-\xA\,\boldsymbol{U}^k \rightharpoonup -\xA\,\boldsymbol{U}$ in the sense of distributions. But, on the other hand 
\begin{equation*}
-\xA\,\boldsymbol{U}^k = \boldsymbol{F} + \ii k^{-1}\, \boldsymbol{U}^k \to \boldsymbol{F} \quad \text{in } \mathbf{X}.
\end{equation*}
Hence, $-\xA\,\boldsymbol{U} = \boldsymbol{F}$, \ie, $\boldsymbol{U} \in D(\widetilde{{\xA}}_1)$. As $\boldsymbol{F}$ is arbitrary, this proves that $-\widetilde{{\xA}}_1$ is surjective, hence bijective, between $D(\widetilde{{\xA}}_1)$ and~$\widetilde{\mathbf{X}}_1$, and its inverse is bounded: $\III (-\widetilde{{\xA}}_1)^{-1} \III \le C$ and $\III (-\widetilde{{\xA}}_1)^{-1} \III_{\widetilde{\mathbf{X}}_1 \to D(\widetilde{{\xA}}_1)} \le C+1$.
\end{proof}
\begin{rmrk}
The surjectivity of~$-\widetilde{{\xA}}_1$ could have been easily obtained by a direct argument. We gave this proof because it provides a pattern for subsequent ones.
\end{rmrk}

\begin{prpstn}
The resolvent of the operator $-\widetilde{{\xA}}_1$ satisfies the condition \eqref{borne resolvent ply} with $\ell=2$.
\label{betta resolvant borne cp}
\end{prpstn}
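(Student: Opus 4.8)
The plan is to verify the resolvent bound~\eqref{borne resolvent ply} (with $\ell=2$) by contradiction, along the lines of the contradiction argument used in Proposition~\Rref{0 resolvent cp}. Since $\ii\beta\mathbb{I}+\widetilde{\xA}_1$ is bijective with bounded inverse for every $\beta\in\xR\setminus\{0\}$ (Proposition~\Rref{spec-Atild1} and the closed graph theorem) and $0\in\rho(-\widetilde{\xA}_1)$ (Proposition~\Rref{0 resolvent cp}), the resolvent is finite and continuous on the whole of $\ii\xR$, so the only way~\eqref{borne resolvent ply} can fail is at infinity. Assuming it fails, one obtains sequences $\beta_n\in\xR$ with $|\beta_n|\to\infty$ and $\boldsymbol{U}^n=(\boldsymbol{U}^n_1,\boldsymbol{U}^n_2,\boldsymbol{U}^n_3,\boldsymbol{U}^n_4)^\top\in D(\widetilde{\xA}_1)$ with $\|\boldsymbol{U}^n\|_{\mathbf{X}}=1$ such that, setting $\boldsymbol{F}^n:=(\ii\beta_n\mathbb{I}+\xA_1)\,\boldsymbol{U}^n$, one has $\beta_n^2\,\|\boldsymbol{F}^n\|_{\mathbf{X}}\to0$. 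Componentwise this is the system~\eqref{11tild}--\eqref{44tild} with $\alpha=\beta_n$ and right-hand side $\boldsymbol{F}^n$; in particular $\|\boldsymbol{F}^n_j\|_{\mathbf{L}^2(\Omega)}=o(\beta_n^{-2})$ for each $j$. The goal is to deduce $\|\boldsymbol{U}^n\|_{\mathbf{X}}\to0$, contradicting the normalisation.

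First I would extract the damping. Taking the real part of $(\boldsymbol{F}^n,\boldsymbol{U}^n)_{\mathbf{X}}=(\ii\beta_n\,\boldsymbol{U}^n+\xA_1\boldsymbol{U}^n,\boldsymbol{U}^n)_{\mathbf{X}}$ and using~\eqref{resul monot A1} gives $\frac1{\varepsilon_0}\sum_s\|\sqrt{\nu_s}\,\boldsymbol{U}^n_s/\omega_{ps}\|^2=\Re(\xA_1\boldsymbol{U}^n,\boldsymbol{U}^n)_{\mathbf{X}}\le\|\boldsymbol{F}^n\|_{\mathbf{X}}$, hence by Hypothesis~\Rref{hyp-2} and~\eqref{4}, $\|\boldsymbol{U}^n_s\|_{(s)}$ and $\|\boldsymbol{U}^n_s\|$ are $o(\beta_n^{-1})$ for $s=1,2$. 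This is exactly where the factor $\beta_n^2$ is consumed: it is the loss of one power of $\beta_n$ in passing from $\|\boldsymbol{F}^n\|$ to $\|\boldsymbol{U}^n_s\|$ that forces $\ell=2$ rather than $\ell=1$. Next, rearranging~\eqref{11tild} gives $\varepsilon_0\,\omega_{ps}^2\,\boldsymbol{U}^n_3=(\ii\beta_n\mathbb{I}+\mathbb{M}_s)\,\boldsymbol{U}^n_s-\boldsymbol{F}^n_s$ pointwise a.e.; since $\III\ii\beta_n\mathbb{I}+\mathbb{M}_s\III_{\mathcal{M}}\le|\beta_n|+\III\mathbb{M}_s\III_{\mathcal{M}}$ with $\III\mathbb{M}_s\III_{\mathcal{M}}$ uniformly bounded (Hypothesis~\Rref{hyp-1}) and $\omega_{ps}\ge\omega_*>0$, one obtains $\|\boldsymbol{U}^n_3\|\le C\big((|\beta_n|+C)\,\|\boldsymbol{U}^n_1\|+\|\boldsymbol{F}^n_1\|\big)=o(1)$, so $\boldsymbol{U}^n_3\to0$ in $\mathbf{L}^2(\Omega)$.

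It remains to show $\boldsymbol{U}^n_4\to0$, which carries the bulk of the energy; rather than attempting to prove strong $\mathbf{L}^2$-compactness of $(\boldsymbol{U}^n_4)_n$ (which would require an $\mathbf{L}^2$ bound on $\curl\boldsymbol{U}^n_4$, i.e. on $\beta_n\boldsymbol{U}^n_3$, not available from $\beta_n^2\|\boldsymbol{F}^n\|_{\mathbf{X}}\to0$ alone), I would use an energy-flux identity. Multiplying~\eqref{33tild} by $\overline{\boldsymbol{U}^n_3}$, integrating over $\Omega$, and using the Green formula~\eqref{green rot rot} with the boundary condition $\boldsymbol{U}^n_3\times\boldsymbol{n}=0$ together with~\eqref{44tild} in the form $\curl\boldsymbol{U}^n_3=\boldsymbol{F}^n_4-\ii\beta_n\boldsymbol{U}^n_4$, one finds $\ii\beta_n\big(\|\boldsymbol{U}^n_3\|^2-c^2\|\boldsymbol{U}^n_4\|^2\big)=(\boldsymbol{F}^n_3\mid\boldsymbol{U}^n_3)+c^2(\boldsymbol{U}^n_4\mid\boldsymbol{F}^n_4)-\tfrac1{\varepsilon_0}(\boldsymbol{U}^n_1+\boldsymbol{U}^n_2\mid\boldsymbol{U}^n_3)$. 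Taking imaginary parts and bounding the right-hand side by $o(\beta_n^{-1})$ (using $\|\boldsymbol{F}^n_3\|,\|\boldsymbol{F}^n_4\|=o(\beta_n^{-2})$, $\|\boldsymbol{U}^n_s\|=o(\beta_n^{-1})$ and $\|\boldsymbol{U}^n_3\|,\|\boldsymbol{U}^n_4\|\le C$) yields $\big|\,\|\boldsymbol{U}^n_3\|^2-c^2\|\boldsymbol{U}^n_4\|^2\,\big|=o(\beta_n^{-2})\to0$. Since $\|\boldsymbol{U}^n_3\|\to0$, this forces $\|\boldsymbol{U}^n_4\|\to0$, whence $\|\boldsymbol{U}^n\|_{\mathbf{X}}^2=\tfrac1{\varepsilon_0}\sum_s\|\boldsymbol{U}^n_s\|_{(s)}^2+\varepsilon_0\|\boldsymbol{U}^n_3\|^2+c^2\varepsilon_0\|\boldsymbol{U}^n_4\|^2\to0$, the desired contradiction. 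The only genuinely delicate points are this clean bookkeeping of the powers of $\beta_n$ and the observation that, because the dissipation acts only on the hydrodynamic variables $\boldsymbol{J}_s$, the smallness must be propagated first to $\boldsymbol{E}=\boldsymbol{U}_3$ through the ODEs~\eqref{11tild}--\eqref{22tild} and then to $\boldsymbol{B}=\boldsymbol{U}_4$ through the flux identity.
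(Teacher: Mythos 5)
Your proof is correct and follows essentially the same route as the paper: a contradiction argument in which the dissipation yields $\|\boldsymbol{U}_s^n\| = o(|\beta_n|^{-1})$ for the hydrodynamic variables (the step that fixes $\ell=2$), the pointwise ODE then gives $\boldsymbol{U}_3^n \to 0$, and a single integration by parts with the perfectly conducting boundary condition controls $\boldsymbol{U}_4^n$. The only (harmless) variation is in that last step: the paper tests the Faraday equation against $\boldsymbol{U}_4^n$ and uses $\beta_n^{-1}\curl\boldsymbol{U}_4^n \to 0$, obtained from the Amp\`ere equation (so no compactness and no genuine $\mathbf{L}^2$ bound on $\curl\boldsymbol{U}_4^n$ is needed there either), whereas you test the Amp\`ere equation against $\boldsymbol{U}_3^n$ and substitute Faraday to reach the equipartition identity $\bigl|\,\|\boldsymbol{U}_3^n\|^2 - c^2\|\boldsymbol{U}_4^n\|^2\,\bigr| \to 0$; both give $\|\boldsymbol{U}_4^n\| \to 0$ and the desired contradiction.
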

\begin{proof}
We again use a contradiction argument, \ie, we assume that~\eqref{borne resolvent ply} is false for some $\ell\in\xN$, which will be specified later. Then, there exists a sequence $(\beta_n)_{n\in\xN}$ on~$\xR$ with $|\beta_n|\rightarrow +\infty$ as $n\rightarrow +\infty$, and a sequence $({\boldsymbol{U}}^n)_{n\in\xN}= \left( ({\boldsymbol{U}}_1^n,{\boldsymbol{U}}_2^n,{\boldsymbol{U}}_3^n,{\boldsymbol{U}}_4^n)^\top \right)_n$ of elements of~$D(\widetilde{{\xA}}_1)$, such that
\begin{eqnarray}
\| {\boldsymbol{U}}^n\|_{\mathbf{X}}=1,\quad \forall n,
\label{unitaire poly}
\end{eqnarray}
and
\begin{eqnarray}
\beta_n^\ell\, \| (\ii\beta_n\mathbb{I} + \xA_1)\, {\boldsymbol{U}}^n \|_{\mathbf{X}} \rightarrow  0 ~\text{ as } n \to +\infty,
\label{resol uni poly}
\end{eqnarray}
which is equivalent to
\begin{eqnarray}
\beta^{\ell}_n\, (\ii\beta_n\,{\boldsymbol{U}}_1^n + \mathbb{M}_1{\boldsymbol{U}}_1^n - \varepsilon_0\omega_{p1}^2\, {\boldsymbol{U}}_3^n)
&\rightarrow & 0 \, \text{ in } \, \mathbf{L}^2(\Omega),
\label{1poly}\\
\beta^{\ell}_n\, (\ii\beta_n\,{\boldsymbol{U}}_2^n + \mathbb{M}_2{\boldsymbol{U}}_2^n - \varepsilon_0\omega_{p2}^2\, {\boldsymbol{U}}_3^n)
&\rightarrow &0 \, \text{ in }\, \mathbf{L}^2(\Omega),
\label{2poly}\\
\beta^{\ell}_n\, (\ii\beta_n\varepsilon_0\,{\boldsymbol{U}}_3^n + {\boldsymbol{U}}_1^n + {\boldsymbol{U}}_2^n - \varepsilon_0 c^2\curl{\boldsymbol{U}}_4^n)
&\rightarrow &0 \, \text{ in }\,  \mathbf{L}^2(\Omega),
\label{3poly}\\
\beta^{\ell}_n\, (\ii\varepsilon_0 c^2\beta_n\,{\boldsymbol{U}}_4^n + \varepsilon_0 c^2 \curl{\boldsymbol{U}}_3^n)
&\rightarrow &0 \, \text{ in }\,  \mathbf{L}^2(\Omega).
\label{4poly}
\end{eqnarray}

\medbreak

As, according to \eqref{resul monot A1} and~\eqref{unitaire poly},
\begin{equation}
|\beta_n^{\ell}| \sum_s \left( \frac{\nu_s\, {\boldsymbol{U}}_s^n}{{\varepsilon_0}\omega_{ps}} \Biggm| \frac{{\boldsymbol{U}}_s^n}{\omega_{ps}} \right) = |\beta_n^{\ell}|\, \Re((\ii\beta_n\mathbb{I} +\xA_1)\,{\boldsymbol{U}}^n,{\boldsymbol{U}}^n)_{\mathbf{X}}\leq |\beta_n^{\ell}|\; \| (\ii\beta_n\mathbb{I} +\xA_1)\,{\boldsymbol{U}}^n \|_{\mathbf{X}},
\end{equation}
we infer from~\eqref{resol uni poly} that
\begin{equation*}
|\beta_n^{\ell}|\, \left( \frac{\nu_s\, {\boldsymbol{U}}_s^n}{{\varepsilon_0}\omega_{ps}} \Biggm| \frac{{\boldsymbol{U}}_s^n}{\omega_{ps}} \right) \rightarrow 0\quad \text{as } n\to +\infty,\quad s=1,\ 2,
\end{equation*}
which implies, by Hypotheses~\ref{hyp-1} and~\ref{hyp-2},
\begin{equation}
\beta^{\frac{\ell}{2}}_n\, {\boldsymbol{U}}_s^n \rightarrow 0\quad \text{in } \mathbf{L}^2(\Omega), \text{ as } n\to +\infty,\quad s=1,\ 2.
\label{Us poly}
\end{equation}
(For the sake of simplicity, one may assume $\ell$ even, so that $\beta^{\frac{\ell}{2}}_n$ is unambiguously defined; otherwise, one may choose a principal determination for the square root of a negative real number. This is of little importance, as all the limits we consider are zero.)

\smallbreak

Multiplying \eqref{1poly} by $\beta^{-\frac{\ell}{2}-1}_n$, we get:
\begin{equation*}
\ii\beta^{\frac{\ell}{2}}_n\,{\boldsymbol{U}}_1^n + \beta_n^{\frac{\ell}{2}-1}\mathbb{M}_1{\boldsymbol{U}}_1^n - \varepsilon_0\omega_{p1}^2\, \beta_n^{\frac{\ell}{2}-1}{\boldsymbol{U}}_3^n \rightarrow 0\quad \text{in } \mathbf{L}^2(\Omega),
\end{equation*}
together with~\eqref{Us poly}, this yields
\begin{equation}
\beta^{\frac{\ell}{2}-1}_n\, {\boldsymbol{U}}_3^n \rightarrow 0, \text{ in } \mathbf{L}^2(\Omega)\quad \text{as } n\to +\infty.
\label{U3 poly}
\end{equation}
Similarly, multiplying \eqref{3poly} by $\beta^{-\frac{\ell}{2}-2}_n$ yields:
\begin{equation*}
\ii\beta^{\frac{\ell}{2}-1}_n\, \varepsilon_0\, {\boldsymbol{U}}_3^n + \beta^{\frac{\ell}{2}-2}_n\, {\boldsymbol{U}}_1^n + \beta^{\frac{\ell}{2}-2}_n\, {\boldsymbol{U}}_2^n - \varepsilon_0 c^2\beta^{\frac{\ell}{2}-2}_n\, \curl{\boldsymbol{U}}_4^n \rightarrow 0\quad \text{in } \mathbf{L}^2(\Omega),
\end{equation*}
taking \eqref{Us poly} and~\eqref{U3 poly} into account, we arrive at:
\begin{equation}
\beta^{\frac{\ell}{2}-2}_n\, \curl{\boldsymbol{U}}_4^n \rightarrow 0\quad \text{in } \mathbf{L}^2(\Omega),\text{ as } n\to +\infty.
\label{rotU4 poly}
\end{equation}
Now, let us multiply \eqref{4poly} by $\beta^{-\frac{\ell}{2}-2}_n$:
\begin{equation*}
\ii\varepsilon_0 c^2\beta^{\frac{\ell}{2}-1}_n\,{\boldsymbol{U}}_4^n + \varepsilon_0 c^2 \beta^{\frac{\ell}{2}-2}_n\, \curl{\boldsymbol{U}}_3^n \rightarrow 0 \quad \text{in } \mathbf{L}^2(\Omega),
\end{equation*}
then we take the inner product (on the right side) of this equation by~${\boldsymbol{U}}_4^n$ and find
\begin{equation}
\ii\varepsilon_0 c^2\beta^{\frac{\ell}{2}-1}_n\, \|{\boldsymbol{U}}_4^n\|^2 + \varepsilon_0 c^2 \beta^{\frac{\ell}{2}-2}_n\, \left(\curl{\boldsymbol{U}}_3^n  \mid {\boldsymbol{U}}_4^n \right) \rightarrow 0,\quad \text{as } n\to +\infty.
\label{sca U4}
\end{equation}
On the other hand, using the Green formula~\eqref{green rot rot} and the condition ${\boldsymbol{U}}_3^n\times\boldsymbol{n}=0$ on~$\Gamma$, we obtain: 
\begin{equation}
\beta^{\frac{\ell}{2}-2}_n\, \left(\curl{\boldsymbol{U}}_3^n  \mid {\boldsymbol{U}}_4^n \right) =  \left({\boldsymbol{U}}_3^n  \Bigm| \beta^{\frac{\ell}{2}-2}_n\curl{\boldsymbol{U}}_4^n \right)
\leq  \|{\boldsymbol{U}}_3^n\| \, \|\beta^{\frac{\ell}{2}-2}_n\curl{\boldsymbol{U}}_4^n\|.
\label{integ U4 rotU3}
\end{equation}
Assuming $\ell\geq 2$, we deduce from~\eqref{U3 poly} that:
\begin{equation}
\|{\boldsymbol{U}}_3^n\|\rightarrow 0,\quad \text{as } n\to +\infty.
\label{U3 converge sup2}
\end{equation}
Thus, from Equations \eqref{integ U4 rotU3}, \eqref{U3 converge sup2} and~\eqref{rotU4 poly}, we deduce
\begin{equation}
\beta^{\frac{\ell}{2}-2}_n\, \left(\curl{\boldsymbol{U}}_3^n  \mid {\boldsymbol{U}}_4^n \right) \rightarrow 0,\quad \text{as } n\to +\infty.
\label{integ U4 rotU3 conv}
\end{equation}
Together with \eqref{sca U4}, the latter property implies
\begin{equation}
\beta^{\frac{\ell}{2}-1}_n\, \|{\boldsymbol{U}}_4^n\|^2 \rightarrow 0,\quad \text{as } n\to +\infty 
\label{U4 poly}
\end{equation}
if $\ell\geq2$. In this case, the exponent $\frac{\ell}{2}-1\geq 0$, and there holds \afortiori: 
\begin{equation}
\|{\boldsymbol{U}}_4^n\| \rightarrow 0,\quad \text{as } n\to +\infty. 
\label{U4 converge sup2}
\end{equation}
As \eqref{Us poly} implies, for all $\ell>0$, that
\begin{equation}
\|{\boldsymbol{U}}_s^n\| \rightarrow 0,\quad \text{as } n\to +\infty,\quad s=1,\ 2,
\label{Us converge poly}
\end{equation}
taking \eqref{U3 converge sup2} and~\eqref{U4 converge sup2} into account and using the equivalence of norms, we obtain $\| {\boldsymbol{U}}^n\|_{\mathbf{X}}\rightarrow 0$ as $n\to +\infty$, which contradicts~\eqref{unitaire poly}.
\end{proof}

\medbreak

Hence, according to Theorem \Rref{stab poly theor} we conclude\dots
\begin{thrm}
\label{thm-poly-decay-cp}
The semigroup of contractions $(\check{\mathit{T}_1}(t))_{t\geq0}$, with generator  $-{\widetilde{\xA}}_{1}$, is polynomially stable on~${\mathbf{\widetilde{X}}_{1}}$, \ie, there exist a  constant $C >0$ such that 
\begin{equation}
\forall t>1,\quad
\| \check{\mathit{T}_1}(t) \widetilde{{\boldsymbol{U}}}_0 \|_{\mathbf{\widetilde{X}}_{1}} \leq C\, t^{-\frac1{2}} \| \widetilde{{\boldsymbol{U}}}_0  \|_{D({\widetilde{\xA}}_{1})},\quad \forall\widetilde{{\boldsymbol{U}}}_0 \in D({\widetilde{\xA}}_{1}).
\label{poly til X1}
\end{equation}
Furthermore, under the assumptions of Theorem~\Rref{divergence B} on~$\boldsymbol{B}_0$, there exists a constant $M>0$ such that the solution to Problem~\eqref{evol prob cp} satisfies 
\begin{equation}
\forall t>1,\quad
\| \mathit{T}_1(t){{\boldsymbol{U}}_0}- \sum_{1\leq j \leq J} \xi_j \, (0,0,0,\widetilde{\grad \dot{q}_j})^\top \|_{\mathbf{X}_1} \leq M\, t^{-\frac1{2}} \| {\boldsymbol{U}}_{0} \|_{D(\xA_{1})\cap\mathbf{X}_1},
\label{poly X1}
\end{equation}
for all ${\boldsymbol{U}}_0 \in D(\xA_{1})\cap\mathbf{X}_1$, where $\xi_j=\langle \boldsymbol{B}_0 \cdot \boldsymbol{n}, 1 \rangle_{\Sigma_j},\quad \text{for } j=1,\ \ldots,\  J$.
\end{thrm}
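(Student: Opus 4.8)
The plan is to obtain the first assertion as a direct application of the abstract polynomial‑stability criterion, Theorem~\Rref{stab poly theor}, to the $\xCzero$‑semigroup of contractions $(\check{\mathit{T}_1}(t))_{t\geq0}$ whose generator is $\mathcal{L}=-\widetilde{\xA}_1$, and then to transfer the estimate from $\widetilde{\mathbf{X}}_1$ to the full energy space $\mathbf{X}_1$ by peeling off the time‑independent stationary component of the solution.

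First I would check the two hypotheses of Theorem~\Rref{stab poly theor}. The spectral condition~\eqref{imaginer axe}, namely $\ii\xR\subset\rho(-\widetilde{\xA}_1)$, is assembled from results already in hand: Proposition~\Rref{0 resolvent cp} gives $0\in\rho(-\widetilde{\xA}_1)$, while Proposition~\Rref{spec-Atild1} gives that $\ii\beta\mathbb{I}+\widetilde{\xA}_1$ is a bijection from $D(\widetilde{\xA}_1)$ onto $\widetilde{\mathbf{X}}_1$ for every $\beta\in\xR\setminus\lbrace0\rbrace$; since $\widetilde{\xA}_1$ is closed (it generates a $\xCzero$‑semigroup), the closed graph theorem upgrades this bijection to one with bounded inverse, so $\ii\beta\in\rho(-\widetilde{\xA}_1)$ for all $\beta\in\xR$. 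The resolvent growth condition~\eqref{borne resolvent ply} with $\ell=2$ is precisely the statement of Proposition~\Rref{betta resolvant borne cp}. Theorem~\Rref{stab poly theor} then yields~\eqref{poly til X1} verbatim, with the $D(\widetilde{\xA}_1)$‑graph norm on the right‑hand side.

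For the second assertion I would reuse the decomposition from the proof of Corollary~\Rref{strong stab X1}: using the orthogonal decomposition~\eqref{orth decom ZT} together with Lemma~\Rref{lemma project P1B}, the solution of~\eqref{evol prob cp} starting from $\boldsymbol{U}_0\in D(\xA_1)\cap\mathbf{X}_1$ writes $\boldsymbol{U}(t)=\widetilde{\boldsymbol{U}}(t)+(0,0,0,P_1\boldsymbol{B}_0)^\top$, with $P_1\boldsymbol{B}_0=\sum_{1\leq j\leq J}\xi_j\,\widetilde{\grad\dot{q}_j}$ and $\xi_j=\langle\boldsymbol{B}_0\cdot\boldsymbol{n},1\rangle_{\Sigma_j}$, where $\widetilde{\boldsymbol{U}}$ solves $\partial_t\widetilde{\boldsymbol{U}}+\widetilde{\xA}_1\widetilde{\boldsymbol{U}}=0$ with datum $\widetilde{\boldsymbol{U}}_0=\boldsymbol{U}_0-(0,0,0,P_1\boldsymbol{B}_0)^\top$. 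The bookkeeping points to verify are: that $\widetilde{\boldsymbol{U}}_0\in D(\widetilde{\xA}_1)$, which holds because its fourth component lies in $\mathbf{H}_0^{\Sigma}(\divg0;\Omega)$ (the cut‑fluxes cancel by the choice of the $\xi_j$) and the first three components are untouched; that $\widetilde{\xA}_1\widetilde{\boldsymbol{U}}_0=\xA_1\boldsymbol{U}_0$ because the subtracted vector belongs to $\ker\xA_1$, whence $\|\widetilde{\boldsymbol{U}}_0\|_{D(\widetilde{\xA}_1)}\leq C'\,\|\boldsymbol{U}_0\|_{D(\xA_1)\cap\mathbf{X}_1}$; and that $\widetilde{\boldsymbol{U}}(t)=\check{\mathit{T}_1}(t)\widetilde{\boldsymbol{U}}_0$, so that~\eqref{poly til X1} applied to $\widetilde{\boldsymbol{U}}_0$ produces~\eqref{poly X1} with a new constant $M$. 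The hypotheses of Theorem~\Rref{divergence B} on $\boldsymbol{B}_0$ intervene only to guarantee $\boldsymbol{U}_0\in\mathbf{X}_1$ and the invariance of $\mathbf{X}_1$ under the semigroup (Remark~\Rref{stable space}), so that the decomposition is legitimate.

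I do not anticipate a genuine obstacle here: the two analytically substantial inputs --- boundedness of the resolvent in a neighbourhood of $0$ and its $O(\beta^{2})$ growth at infinity --- are already established in Propositions~\Rref{0 resolvent cp} and~\Rref{betta resolvant borne cp}, and the rest is an invocation of the Borichev--Tomilov‑type theorem plus the splitting of the solution. The only delicate points are notational: fixing the sign so that the generator is $-\widetilde{\xA}_1$, hence reading~\eqref{imaginer axe} and~\eqref{borne resolvent ply} for this operator, and tracking the equivalence between the graph norm of $\widetilde{\xA}_1$ on $\widetilde{\mathbf{X}}_1$ and that of $\xA_1$ on $D(\xA_1)\cap\mathbf{X}_1$ across the decomposition.
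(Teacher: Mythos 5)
Your proposal is correct and follows exactly the route the paper takes: estimate~\eqref{poly til X1} is obtained by feeding Propositions~\Rref{0 resolvent cp} and~\Rref{betta resolvant borne cp} (with $\ell=2$) into Theorem~\Rref{stab poly theor}, and estimate~\eqref{poly X1} then follows from the stationary-component splitting of Corollary~\Rref{strong stab X1}. The bookkeeping you spell out (closedness of $\widetilde{\xA}_{1}$, membership of $\widetilde{\boldsymbol{U}}_0$ in $D(\widetilde{\xA}_{1})$, and the comparison of graph norms via $\xA_1(0,0,0,P_1\boldsymbol{B}_0)^\top=0$) is exactly what the paper leaves implicit.
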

\begin{proof}
Equation~\eqref{poly X1} is a consequence of \eqref{poly til X1} and Corollary~\Rref{strong stab X1}.
\end{proof}

\subsection{Polynomial stability, homogeneous Silver--M\"uller case}
\label{poly-sm}
\begin{prpstn}
Let $\rho(-\widetilde{{\xA}}_2)$ denote the resolvent set of $-\widetilde{{\xA}}_2$. Then,  $0\in\rho(-\widetilde{{\xA}}_2)$.
\label{0 resolvent sm}
\end{prpstn}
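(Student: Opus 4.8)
The plan is to transpose the proof of Proposition~\Rref{0 resolvent cp} to the Silver--M\"uller setting. First, $0$ is not an eigenvalue of $\widetilde{\xA}_2$: arguing as in the case $\alpha=0$ of Proposition~\Rref{spec-Atild1}, if $\widetilde{\xA}_2\boldsymbol{U}=0$ then Proposition~\Rref{inject sm stab} gives $\boldsymbol{U}_1=\boldsymbol{U}_2=\boldsymbol{U}_3=0$ and $\boldsymbol{U}_4\in\mathbf{H}_{0,\Gamma_A}(\curl0;\Omega)$, while membership in $\widetilde{\mathbf{X}}_2$ forces $\boldsymbol{U}_4\in\mathbf{H}_{0,\Gamma_P;\text{flux},\Gamma_A,\Sigma}(\divg0;\Omega)$; by the orthogonal decomposition~\eqref{orth decom Hrot GammaA} these two subspaces meet only at $0$, so $\boldsymbol{U}_4=0$. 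Hence, exactly as in Proposition~\Rref{0 resolvent cp}, it suffices to prove that the resolvent of $-\widetilde{\xA}_2$ is uniformly bounded on a punctured neighbourhood of $0$; the surjectivity of $-\widetilde{\xA}_2$ and the boundedness of its inverse then follow from the \emph{verbatim} weak-limit argument of Proposition~\Rref{0 resolvent cp}, invoking Proposition~\Rref{spec-Atild2} to produce, for each $k\in\xN\setminus\{0\}$, the unique solution of $(\ii k^{-1}\mathbb{I}+\xA_2)\boldsymbol{U}^k=-\boldsymbol{F}$.

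So suppose the resolvent is not uniformly bounded near $0$: there exist $\beta_n\to0$ in $\xR\setminus\{0\}$ and $\boldsymbol{U}^n=(\boldsymbol{U}_1^n,\boldsymbol{U}_2^n,\boldsymbol{U}_3^n,\boldsymbol{U}_4^n)^\top\in D(\widetilde{\xA}_2)$ with $\|\boldsymbol{U}^n\|_{\mathbf{X}}=1$ and $(\ii\beta_n\mathbb{I}+\xA_2)\boldsymbol{U}^n\to0$ in $\mathbf{X}$; componentwise this reads as in~\eqref{1exp}--\eqref{4exp}, except that the boundary conditions are now those of $D(\xA_2)$, namely $\boldsymbol{U}_3^n\times\boldsymbol{n}=0$ on $\Gamma_P$ and $\boldsymbol{U}_3^n\times\boldsymbol{n}+c\,\boldsymbol{U}_{4\top}^n=0$ on $\Gamma_A$. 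Taking the real part of the inner product with $\boldsymbol{U}^n$ and using the monotonicity identity~\eqref{monotonie A2 sm homo}, both nonnegative terms on its right-hand side tend to $0$; with Hypothesis~\Rref{hyp-2} and~\eqref{6injec} this gives $\|\boldsymbol{U}_s^n\|_{(s)}\to0$ for $s=1,2$ and $\|\boldsymbol{U}_{4\top}^n\|_{\mathbf{L}^2(\Gamma_A)}\to0$. Since the matrices $\mathbb{M}_s$ are bounded and $\beta_n\to0$, the first equation then yields $\boldsymbol{U}_3^n\to0$ in $\mathbf{L}^2(\Omega)$ (dividing by $\omega_{ps}^2\ge\omega_*^2$), after which the third yields $\curl\boldsymbol{U}_4^n\to0$ in $\mathbf{L}^2(\Omega)$.

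It remains to extract from $(\boldsymbol{U}_4^n)_n$ a strongly convergent subsequence with limit $0$. By the above, $(\boldsymbol{U}_4^n)_n$ is bounded in $\mathbf{H}(\curl;\Omega)\cap\mathbf{H}_{0,\Gamma_P}(\divg0;\Omega)$ with tangential traces $\boldsymbol{U}_{4\top}^n|_{\Gamma_A}$ bounded in $\mathbf{L}^2(\Gamma_A)$, and all its terms lie in the closed subspace $\mathbf{H}_{0,\Gamma_P;\text{flux},\Gamma_A,\Sigma}(\divg0;\Omega)$ by definition of $\widetilde{\mathbf{X}}_2$. This is the mixed-boundary analogue of the space $\mathbf{X}_{N,\Gamma}(\Omega;\mathbb{I})$, which is compactly embedded in $\mathbf{L}^2(\Omega)$ by (the adaptation to mixed boundary conditions of) Theorem~\Rref{compact}, see~\cite{FG97}; moreover, as in that proof, the traces on $\Gamma_A$ converge in $\mathbf{L}^2(\Gamma_A)$ along the extracted subsequence. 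Passing to the limit, $\boldsymbol{U}_4^n\to\boldsymbol{U}_4$ in $\mathbf{L}^2(\Omega)$ with $\boldsymbol{U}_4\in\mathbf{H}_{0,\Gamma_P;\text{flux},\Gamma_A,\Sigma}(\divg0;\Omega)$, $\curl\boldsymbol{U}_4=0$ and $\boldsymbol{U}_{4\top}|_{\Gamma_A}=0$, so $\boldsymbol{U}_4\in\mathbf{H}_{0,\Gamma_A}(\curl0;\Omega)$; by~\eqref{orth decom Hrot GammaA} again, $\boldsymbol{U}_4=0$. Finally $1=\|\boldsymbol{U}^n\|_{\mathbf{X}}^2\to\varepsilon_0 c^2\|\boldsymbol{U}_4\|^2=0$, a contradiction. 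Hence the resolvent is uniformly bounded near $0$, and the proof concludes as indicated in the first paragraph.

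The delicate step is the compactness of the magnetic variable in the mixed-boundary case. In Proposition~\Rref{0 resolvent cp} one has $\Gamma_P=\Gamma$, so $\boldsymbol{U}_4^n$ sits in $\mathbf{H}(\curl;\Omega)\cap\mathbf{H}_0(\divg0;\Omega)$, which is compact by \cite[Theorem~3.5.4]{ACL+17}; here only the normal trace on $\Gamma_P$ and the $\mathbf{L}^2$ tangential trace on $\Gamma_A$ are available, so one must rely on the finer Weber-type compactness theorems for mixed boundary conditions (\cite{FG97}, or the straightforward analogue of Theorem~\Rref{compact}), and in particular on the convergence of boundary traces these arguments provide, which is what allows one to identify the limit $\boldsymbol{U}_4$ as an element of $\mathbf{H}_{0,\Gamma_A}(\curl0;\Omega)$.
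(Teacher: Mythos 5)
Your proof is correct and follows essentially the same route as the paper's: a contradiction argument using the dissipation identity~\eqref{monotonie A2 sm homo} to kill $\boldsymbol{U}_1^n,\boldsymbol{U}_2^n$ and $\boldsymbol{U}_{4\top}^n$, then $\boldsymbol{U}_3^n\to0$ and $\curl\boldsymbol{U}_4^n\to0$, followed by the Weber-type compact embedding for mixed boundary conditions from~\cite{FG97} (the paper cites Proposition~7.3 there, with $\mathcal{J}_2(\Omega)$ sitting inside $\{\boldsymbol{w}\in\mathbf{H}(\curl;\Omega)\cap\mathbf{H}(\divg;\Omega):\boldsymbol{w}\cdot\boldsymbol{n}_{|\Gamma_P}\in\xLtwo(\Gamma_P),\ \boldsymbol{w}\times\boldsymbol{n}_{|\Gamma_A}\in\mathbf{L}^2_t(\Gamma_A)\}$) and identification of the limit $\boldsymbol{U}_4$ as an element of two orthogonal subspaces. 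The only cosmetic difference is that the paper concludes $\boldsymbol{U}_4=0$ via $\mathbf{Z}(\Omega;\Gamma_A)$ and the decomposition~\eqref{orth decom Z GammaA} rather than~\eqref{orth decom Hrot GammaA}, which is equivalent.
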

\begin{proof}
We follow the same argument in the proof of Proposition~\Rref{0 resolvent cp}, and we prove that the resolvent of~$-\widetilde{{\xA}}_2$ is uniformly bounded in the neighborhood of~$0$ by using a contradiction argument. Suppose it is not the case, 
then there exists a sequence $(\beta_n)_{n\in\xN}$ on~$\xR\setminus\lbrace 0 \rbrace$ with $\beta_n\rightarrow 0$ as $n\rightarrow +\infty$, and a sequence of vectors fields $(\boldsymbol{U}^n)_{n\in\xN}= \left( (\boldsymbol{U}_1^n,\boldsymbol{U}_2^n,\boldsymbol{U}_3^n,\boldsymbol{U}_4^n)^\top \right)_n$ on~$D(\widetilde{{\xA}}_2)$, with
\begin{equation}
\| \boldsymbol{U}^n\|_{\mathbf{X}}=1,\quad \forall n,
\label{A-unitaire}
\end{equation}
such that 
\begin{equation}
\| (\ii\beta_n\mathbb{I} + \xA_2)\, \boldsymbol{U}^n \|_{\mathbf{X}} \rightarrow  0 ~\text{ as } n \to +\infty,
\label{A-resol uni}
\end{equation}
this again implies the system~\eqref{1exp}--\eqref{4exp}, with different boundary conditions.

\medbreak

By the monotonicity of $\xA_2$ (Equation~\eqref{monotonie A2 sm homo}) and~\eqref{A-unitaire}:
\begin{eqnarray*}
\sum_s \left( \frac{\nu_s\, \boldsymbol{U}_s^n}{{\varepsilon_0}\omega_{ps}} \Biggm| \frac{\boldsymbol{U}_s^n}{\omega_{ps}} \right)  + \varepsilon_0 c^3\, \| \boldsymbol{U}_{4\top}^n \|_{\mathbf{L}^2(\Gamma_A)}^2 &=& \Re\left((\ii\beta_n\mathbb{I} +\xA_2)\,\boldsymbol{U}^n,\boldsymbol{U}^n\right)_{\mathbf{X}}
\nonumber\\
&\leq & \| (\ii\beta_n\mathbb{I} +\xA_2)\,\boldsymbol{U}^n \|_{\mathbf{X}},
\end{eqnarray*}
we obtain from (\Rref{A-resol uni})  
\begin{equation}
\left( \frac{\nu_s\, \boldsymbol{U}_s^n}{{\varepsilon_0}\omega_{ps}} \Biggm| \frac{\boldsymbol{U}_s^n}{\omega_{ps}} \right)  \rightarrow 0, \text{ as } n\to +\infty \quad \forall s=1,\ 2
\label{A-u1,2}
\end{equation} 
and 
\begin{equation} 
\| \boldsymbol{U}_{4\top}^n \|_{\mathbf{L}^2(\Gamma_A)}^2 \rightarrow 0, \text{ as } n\to +\infty. \qquad 
\label{A-bore}
\end{equation} 
As already said, the condition $\boldsymbol{U}_{4\top}^n\in\mathbf{L}^2(\Gamma_A)$ follows from the Silver--M\"uller boundary condition, in the absence of pathological vertices.

\medbreak

Reasoning as in Proposition~\Rref{0 resolvent cp}, we deduce that
\begin{eqnarray}
&&\|\boldsymbol{U}_s^n\|_{(s)} \rightarrow 0 , \text{ as }  n\to +\infty,
\quad s=1,\ 2,\ 3\,;
\label{0A-sn & converg uc}
\\
&&\curl \boldsymbol{U}_4^n \rightarrow  0 \text{ in } 
\mathbf{L}^2(\Omega), \text{ as } n\to +\infty.
\label{converg ud A}
\end{eqnarray}
Hence, $(\curl\boldsymbol{U}_4^n)_n$ is bounded in~$\mathbf{L}^2(\Omega)$. Taking account of \eqref{A-unitaire} and~\eqref{A-bore}, the sequence $(\boldsymbol{U}_4^n)_n$ is bounded in~$\mathcal{W} = \{ \boldsymbol{w} \in \mathbf{H}(\curl;\Omega) : \boldsymbol{w}\times\boldsymbol{n}_{|_{\Gamma_A}} \in \mathbf{L}^2_t(\Gamma_A) \}$, and more specifically in the closed subspace $\mathcal{J}_2(\Omega) := \mathcal{W} \cap {\mathbf{H}_{0,{\Gamma}_P;\text{flux},{\Gamma}_A,\Sigma}({\divg}0;\Omega)}$ to which all its terms belong given the definition of~$D(\widetilde{{\xA}}_2)$, see \eqref{eq-def-Atild2} and~\eqref{eq-def-Xtild2}. Yet, $\mathcal{J}_2(\Omega)$~also appears as a closed subspace of
\begin{equation*}
\left\{ \boldsymbol{w} \in \mathbf{H}(\curl;\Omega)\cap\mathbf{H}(\divg;\Omega) : \boldsymbol{w}\cdot\boldsymbol{n}_{|_{\Gamma_P}} \in \xLtwo(\Gamma_P) \text{ and } \boldsymbol{w}\times\boldsymbol{n}_{|_{\Gamma_A}} \in \mathbf{L}^2_t(\Gamma_A) \right\},
\end{equation*}
which is compactly embedded into~$\mathbf{L}^2(\Omega)$ \cite[Proposition~7.3]{FG97}.
Therefore, we can extract a subsequence, still denoted~$(\boldsymbol{U}_4^n)_n$, which converges strongly in $\mathbf{L}^2(\Omega)$, and weakly in~$\mathcal{J}_2(\Omega)$, to some $\boldsymbol{U}_4\in \mathcal{J}_2(\Omega)$. Combining the weak convergence in~$\mathcal{J}_2(\Omega)$  with \eqref{converg ud A} and~\eqref{A-bore}, we find
\begin{equation*}
\curl \boldsymbol{U}_4^n \rightarrow  \curl\boldsymbol{U}_4=0 \text{ in } \mathbf{L}^2(\Omega),
\quad \boldsymbol{U}_{4\top}^n \rightarrow {\boldsymbol{U}_4}_\top = 0 \text{ in } \mathbf{L}^2_t(\Gamma_A),
\quad \text{as } n\to +\infty.
\end{equation*}
So, $\boldsymbol{U}_4\in\mathbf{H}_{0,\Gamma_A}(\curl0;\Omega)$. Together with $\boldsymbol{U}_4\in \mathcal{J}_2(\Omega)$, this means  that $\boldsymbol{U}_4$ belongs both to $ \mathbf{Z}(\Omega;\Gamma_A) = \mathbf{H}_{0,\Gamma_A}(\curl\,0;\Omega)\cap\mathbf{H}_{0,\Gamma_P}(\divg0;\Omega)$ and to its orthogonal $\mathbf{H}_{0,\Gamma_P;\text{flux},\Gamma_A,\Sigma}(\divg 0;\Omega)$ (\cf~\eqref{orth decom Z GammaA}), whence $\boldsymbol{U}_4=0$.

\medbreak

On the other hand, \eqref{A-unitaire} and~\eqref{0A-sn & converg uc} imply that  
\begin{equation*}
1=\lim_{n \to +\infty} \| \boldsymbol{U}^n \|^2_{\mathbf{X}}=\lim_{n \to +\infty} {\varepsilon_0\,} c^2\| \boldsymbol{U}_4^n \|^2 ={\varepsilon_0\,} c^2\| \boldsymbol{U}_4 \|^2,
\end{equation*}
in particular, $\boldsymbol{U}_4\neq0$, and the above conclusion is contradicted. The proof is complete. 
\end{proof}

\medbreak

\begin{prpstn}
The resolvent of the operator $-\widetilde{{\xA}}_2$ satisfies the condition \eqref{borne resolvent ply} with $\ell=2$.
\label{betta resolvant borne sm}
\end{prpstn}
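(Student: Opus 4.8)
The plan is to imitate the contradiction argument of Proposition~\Rref{betta resolvant borne cp}, the only structural novelty being the extra boundary term in the dissipation identity~\eqref{monotonie A2 sm homo}. Suppose~\eqref{borne resolvent ply} fails for some $\ell\in\xN$ (we will see that $\ell=2$ suffices; as before one may take $\ell$ even so that $\beta_n^{\ell/2}$ is unambiguous). One then obtains sequences $(\beta_n)_n\subset\xR$ with $|\beta_n|\to+\infty$ and $(\boldsymbol{U}^n)_n=\big((\boldsymbol{U}_1^n,\boldsymbol{U}_2^n,\boldsymbol{U}_3^n,\boldsymbol{U}_4^n)^\top\big)_n\subset D(\widetilde{\xA}_2)$ with $\|\boldsymbol{U}^n\|_{\mathbf{X}}=1$ and $\beta_n^\ell\|(\ii\beta_n\mathbb{I}+\xA_2)\boldsymbol{U}^n\|_{\mathbf{X}}\to0$, i.e.\ the system~\eqref{1poly}--\eqref{4poly} holds, now with the boundary conditions attached to $D(\xA_2)$. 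First I would take real parts and apply~\eqref{monotonie A2 sm homo} to get $|\beta_n^\ell|\big(\tfrac1{\varepsilon_0}\sum_s\|\sqrt{\nu_s}\,\boldsymbol{U}_s^n/\omega_{ps}\|^2+\varepsilon_0c^3\|\boldsymbol{U}_{4\top}^n\|^2_{\mathbf{L}^2(\Gamma_A)}\big)\le|\beta_n^\ell|\,\|(\ii\beta_n\mathbb{I}+\xA_2)\boldsymbol{U}^n\|_{\mathbf{X}}\to0$; by Hypotheses~\Rref{hyp-1}--\Rref{hyp-2} this yields both $\beta_n^{\ell/2}\boldsymbol{U}_s^n\to0$ in $\mathbf{L}^2(\Omega)$ for $s=1,2$ and the genuinely new estimate $\beta_n^{\ell/2}\boldsymbol{U}_{4\top}^n\to0$ in $\mathbf{L}^2(\Gamma_A)$, the extra boundary control absent from the perfectly conducting case.

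Next I would run the same cascade of rescalings as in Proposition~\Rref{betta resolvant borne cp}: dividing~\eqref{1poly} by $\beta_n^{\ell/2+1}$ and using $\beta_n^{\ell/2}\boldsymbol{U}_1^n\to0$ together with the boundedness of~$\mathbb{M}_1$ gives $\beta_n^{\ell/2-1}\boldsymbol{U}_3^n\to0$ in $\mathbf{L}^2(\Omega)$; dividing~\eqref{3poly} by $\beta_n^{\ell/2+2}$ then gives $\beta_n^{\ell/2-2}\curl\boldsymbol{U}_4^n\to0$ in $\mathbf{L}^2(\Omega)$. For $\ell\ge2$ these already imply $\|\boldsymbol{U}_3^n\|\to0$, and trivially $\|\boldsymbol{U}_s^n\|\to0$ for $s=1,2$, so it would only remain to prove $\|\boldsymbol{U}_4^n\|\to0$, which would contradict $\|\boldsymbol{U}^n\|_{\mathbf{X}}=1$ by equivalence of norms.

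The new computation — and the step I expect to be the crux — concerns $\boldsymbol{U}_4^n$. I would divide~\eqref{4poly} by $\beta_n^{\ell/2+2}$ and pair with $\boldsymbol{U}_4^n$, obtaining $\ii\varepsilon_0c^2\beta_n^{\ell/2-1}\|\boldsymbol{U}_4^n\|^2+\varepsilon_0c^2\beta_n^{\ell/2-2}(\curl\boldsymbol{U}_3^n\mid\boldsymbol{U}_4^n)\to0$. Since $\boldsymbol{U}_3^n\in\mathbf{H}_{0,\Gamma_P}(\curl;\Omega)$ and $\boldsymbol{U}_4^n\in\mathbf{H}(\curl;\Omega)$, the Green formula~\eqref{green rot 0 Gamma p} rewrites the second term as $(\boldsymbol{U}_3^n\mid\curl\boldsymbol{U}_4^n)$ plus the boundary pairing $\overline{{}_{\gamma_A}\langle\boldsymbol{U}_4^n\times\boldsymbol{n},\boldsymbol{U}_{3\top}^n\rangle_{\pi^0_A}}$. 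Here — exactly as in the proof of Proposition~\Rref{maximonot 2} — the Silver--M\"uller condition $\boldsymbol{U}_3^n\times\boldsymbol{n}+c\,\boldsymbol{U}_{4\top}^n=0$ together with the inclusion $\widetilde{\mathbf{TT}}(\Gamma_A)\cap\mathbf{TC}(\Gamma_A)\subset\mathbf{L}^2_t(\Gamma_A)$ (valid under the no-pathological-vertex assumption) puts $\boldsymbol{U}_{3\top}^n$ and $\boldsymbol{U}_4^n\times\boldsymbol{n}$ in $\mathbf{L}^2_t(\Gamma_A)$, so the pairing is an $\mathbf{L}^2(\Gamma_A)$ inner product bounded by $\|\boldsymbol{U}_{4\top}^n\|_{\mathbf{L}^2(\Gamma_A)}\|\boldsymbol{U}_{3\top}^n\|_{\mathbf{L}^2(\Gamma_A)}=c\,\|\boldsymbol{U}_{4\top}^n\|^2_{\mathbf{L}^2(\Gamma_A)}$. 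Multiplied by $\beta_n^{\ell/2-2}$ this equals $\beta_n^{-\ell/2-2}\|\beta_n^{\ell/2}\boldsymbol{U}_{4\top}^n\|^2\to0$ thanks to the new estimate, while the interior term is bounded by $\|\boldsymbol{U}_3^n\|\,\|\beta_n^{\ell/2-2}\curl\boldsymbol{U}_4^n\|\to0$ for $\ell\ge2$. Hence $\beta_n^{\ell/2-1}\|\boldsymbol{U}_4^n\|^2\to0$, and since the exponent $\ell/2-1$ is nonnegative for $\ell\ge2$, this forces $\|\boldsymbol{U}_4^n\|\to0$. Collecting the limits gives $\|\boldsymbol{U}^n\|_{\mathbf{X}}\to0$, the sought contradiction; as the whole argument already works for $\ell=2$, the resolvent of $-\widetilde{\xA}_2$ satisfies~\eqref{borne resolvent ply} with $\ell=2$.

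The main obstacle, compared with the perfectly conducting case, is precisely this boundary term in the last integration by parts: in Proposition~\Rref{betta resolvant borne cp} it vanished identically because $\boldsymbol{U}_3^n\times\boldsymbol{n}=0$ on all of $\Gamma$, whereas here one must both (i) justify the square-integrability on $\Gamma_A$ of the tangential traces involved — which rests on the no-pathological-vertex hypothesis and the description of $\widetilde{\mathbf{TT}}(\Gamma_A)\cap\mathbf{TC}(\Gamma_A)$ recalled in~\S\Rref{sec-prelim} — and (ii) verify that the boundary dissipation encoded in~\eqref{monotonie A2 sm homo} is quantitatively strong enough, namely that $\beta_n^{\ell/2}\boldsymbol{U}_{4\top}^n\to0$, to render this term negligible at the scale $\beta_n^{\ell/2-2}$.
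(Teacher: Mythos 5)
Your proposal is correct and follows essentially the same route as the paper: the same contradiction setup, the same cascade of rescalings yielding $\beta_n^{\ell/2-1}\boldsymbol{U}_3^n\to0$ and $\beta_n^{\ell/2-2}\curl\boldsymbol{U}_4^n\to0$, and the same treatment of the boundary term via the Green formula~\eqref{green rot 0 Gamma p}, the Silver--M\"uller condition, and the dissipation estimate $\beta_n^{\ell}\|\boldsymbol{U}_{4\top}^n\|^2_{\mathbf{L}^2(\Gamma_A)}\to0$. The crux you identify --- controlling the extra boundary pairing at the scale $\beta_n^{\ell/2-2}$ --- is exactly the point the paper's proof turns on.
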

\begin{proof}
We follow the lines of the proof of Proposition~\ref{betta resolvant borne cp}, only insisting on the differences. Assume that \eqref{borne resolvent ply} does not hold, with $\ell\in\xN$ to be specified later, then there exists a sequence  $(\beta_n)_{n\in\xN}$ on~$\xR$ with $|\beta_n|\rightarrow \infty$ as $n\rightarrow +\infty$, and a sequence $({\boldsymbol{U}}^n)_{n\in\xN}= \left( ({\boldsymbol{U}}_1^n,{\boldsymbol{U}}_2^n,{\boldsymbol{U}}_3^n,{\boldsymbol{U}}_4^n)^\top \right)_n$ on~$D(\widetilde{{\xA}}_2)$, such that:
\begin{equation}
\| {\boldsymbol{U}}^n\|_{\mathbf{X}}=1,\quad \forall n,
\label{A-unitaire poly}
\end{equation}
and
\begin{equation}
\beta^{\ell}_n\, \| (\ii\beta_n\mathbb{I} + \xA_2)\, {\boldsymbol{U}}^n \|_{\mathbf{X}} \rightarrow  0 \quad \text{as } n \to +\infty;
\label{A-resol uni poly}
\end{equation}
again, the latter condition is equivalent to the system~\eqref{1poly}--\eqref{4poly}, with different boundary conditions.

\medbreak

\noindent Using the monotonicity of $\xA_2$ (Equation~\eqref{monotonie A2 sm homo}) and~\eqref{A-unitaire poly}:
\begin{eqnarray*}
|\beta^{\ell}_n|\, \sum_s \left( \frac{\nu_s\, {\boldsymbol{U}}_s^n}{{\varepsilon_0}\omega_{ps}} \Biggm| \frac{{\boldsymbol{U}}_s^n}{\omega_{ps}} \right)  + \varepsilon_0 c^3\,|\beta^{\ell}_n|\,  \| {\boldsymbol{U}}_{4\top}^n \|_{\mathbf{L}^2(\Gamma_A)}^2 
&=& |\beta^{\ell}_n|\, \Re\left((\ii\beta_n\mathbb{I} +\xA_2)\,{\boldsymbol{U}}^n,{\boldsymbol{U}}^n\right)_{\mathbf{X}}
\nonumber\\
&\leq & |\beta^{\ell}_n|\;  \| (\ii\beta_n\mathbb{I} +\xA_2)\,{\boldsymbol{U}}^n \|_{\mathbf{X}},
\end{eqnarray*}
we infer from~\eqref{A-resol uni poly} that:
\begin{equation}
\beta^{\frac{\ell}{2}}_n\, {\boldsymbol{U}}_s^n \rightarrow 0,\, \text{ in } \mathbf{L}^2(\Omega)\quad \text{as } n\to +\infty,\quad s=1,\ 2,
\label{Us poly sm}
\end{equation} 
and
\begin{equation} 
\beta^{\ell}_n\, \| {\boldsymbol{U}}_{4\top}^n \|_{\mathbf{L}^2(\Gamma_A)}^2 \rightarrow 0, \quad \text{as } n\to +\infty.  
\label{A-bore poly sm}
\end{equation}
Reasoning as in Proposition~\ref{betta resolvant borne cp}, we deduce:
\begin{eqnarray}
&&\beta^{\frac{\ell}{2}-1}_n\, {\boldsymbol{U}}_3^n \rightarrow 0\quad \text{in } \mathbf{L}^2(\Omega), \text{ as } n\to +\infty,
\label{U3 poly sm}
\\
&&\beta^{\frac{\ell}{2}-2}_n\, \curl{\boldsymbol{U}}_4^n \rightarrow 0,\quad \text{ in } \mathbf{L}^2(\Omega) \text{ as } n\to +\infty,
\label{rotU4 poly sm}
\end{eqnarray}
which yields~\eqref{sca U4} again. Then, using the Green formula~\eqref{green rot 0 Gamma p} and the Silver--M\"uller boundary condition, we find: 
\begin{eqnarray}
\beta^{\frac{\ell}{2}-2}_n\, \, \, \left(\curl{\boldsymbol{U}}_3^n  \mid {\boldsymbol{U}}_4^n \right) 
&=& \beta^{\frac{\ell}{2}-2}_n\,  \left({\boldsymbol{U}}_3^n  \mid \curl{\boldsymbol{U}}_4^n \right) - \beta^{\frac{\ell}{2}-2}_n{}_{{\gamma}^0_A}\langle {\boldsymbol{U}}_3^n \times \boldsymbol{n}, {\boldsymbol{U}}_{4\top}^n \rangle_{{\pi}_{A}}
\nonumber\\
&=&  \left({\boldsymbol{U}}_3^n  \Bigm| \beta^{\frac{\ell}{2}-2}_n\, \curl{\boldsymbol{U}}_4^n \right) + c\,\beta^{\frac{\ell}{2}-2}_n\,  \| {\boldsymbol{U}}_{4\top}^n \|^2_{\mathbf{L}^2(\Gamma_A)}.
\label{integ U4 rotU3 sm}
\end{eqnarray}
But, on the other hand, according to~\eqref{A-bore poly sm}
\begin{equation*}
\beta^{\frac{\ell}{2}-2}_n\,  \| {\boldsymbol{U}}_\top^n \|^2_{\mathbf{L}^2(\Gamma_A)}=\beta^{-\frac{\ell}{2}-2}_n \times \left(\beta^{\ell}_n\,  \| {\boldsymbol{U}}_{4\top}^n \|^2_{\mathbf{L}^2(\Gamma_A)}\right) \rightarrow 0\quad \text{as } n\to +\infty,
\end{equation*}
for all~$\ell>0$. Moreover, Equation~\eqref{U3 poly sm} implies that the first term on the right-hand side of~\eqref{integ U4 rotU3 sm} converges to~$0$ if~$\ell\geq 2$.  As a consequence,
\begin{equation}
\beta^{\frac{\ell}{2}-2}_n\, \left(\curl{\boldsymbol{U}}_3^n  \mid {\boldsymbol{U}}_4^n \right)\rightarrow 0,\quad \text{as } n\to +\infty
\label{integ U4 rotU3 conv sm}
\end{equation}
for $\ell\geq 2$, which together with~\eqref{sca U4} implies~\eqref{U4 converge sup2}.
Again, this implies $\| {\boldsymbol{U}}^n\|_{\mathbf{X}}\rightarrow 0$, contradicting the assumption~\eqref{A-unitaire poly}. 
\end{proof}

\medbreak

The above results allow us to conclude\dots
\begin{thrm}
\label{thm-poly-decay-sm}
The semigroup of contractions $(\check{\mathit{T}_2}(t))_{t\geq0}$, with generator  $-{\widetilde{\xA}}_{2}$, is polynomially  stable on~${\mathbf{\widetilde{X}}_{2}}$,\, i.e., there exist a  constant $C >0$ such that 
\begin{equation}
\forall t>1,\quad
\| \check{\mathit{T}_2}(t) \widetilde{{\boldsymbol{U}}}_0 \|_{\mathbf{\widetilde{X}}_{2}} \leq C\, t^{-\frac1{2}} \| \widetilde{{\boldsymbol{U}}}_0  \|_{D(\widetilde{{\xA}}_2)},\quad \forall\widetilde{{\boldsymbol{U}}}_0 \in D(\widetilde{{\xA}}_2).
\label{poly til X2}
\end{equation}
Furthermore, under the assumptions of Theorem~\Rref{divergence B} on~$\boldsymbol{B}_0$, there exists a constant $M>0$ such that the solution to Problem~\eqref{evol prob sm homo} satisfies 
\begin{equation}
\forall t>1,\quad
\| \mathit{T}_2(t){{\boldsymbol{U}}_0} -  (0,0,0,P_2\boldsymbol{B}_0)^\top \|_{\mathbf{X}_2} \leq M\, t^{-\frac1{2}} \| {\boldsymbol{U}}_{0} \|_{D({\xA}_2)\cap\mathbf{X}_2} ,\quad \forall {\boldsymbol{U}}_{0} \in D({\xA}_2)\cap\mathbf{X}_2.
\label{poly X2}
\end{equation}
\end{thrm}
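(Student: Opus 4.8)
The plan is to obtain \eqref{poly til X2} as a direct application of the Borichev--Tomilov criterion (Theorem~\Rref{stab poly theor}) to the generator $\mathcal{L}=-\widetilde{\xA}_2$ with exponent $\ell=2$, and then to deduce \eqref{poly X2} from \eqref{poly til X2} by exactly the decomposition argument that turned \eqref{poly til X1} into \eqref{poly X1} in Theorem~\Rref{thm-poly-decay-cp}.

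For the first part, recall from Theorem~\Rref{strong stab X2til} that $(\check{\mathit{T}_2}(t))_{t\geq0}$ is a $\xCzero$-semigroup of contractions on the Hilbert space $\widetilde{\mathbf{X}}_2$ with generator $-\widetilde{\xA}_2$. To invoke Theorem~\Rref{stab poly theor} I must check that $\ii\xR\subset\rho(-\widetilde{\xA}_2)$ and that the resolvent obeys the growth bound~\eqref{borne resolvent ply} with $\ell=2$. The value $\beta=0$ is covered by Proposition~\Rref{0 resolvent sm}. For $\beta=\alpha\in\xR\setminus\{0\}$, Proposition~\Rref{spec-Atild2} says that $\ii\alpha\mathbb{I}+\widetilde{\xA}_2$ is injective and surjective from $D(\widetilde{\xA}_2)$ onto $\widetilde{\mathbf{X}}_2$; since $\widetilde{\xA}_2$ is closed (being a $\xCzero$-semigroup generator), the closed graph theorem gives a bounded inverse, hence $\ii\alpha\in\rho(-\widetilde{\xA}_2)$. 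Thus $\ii\xR\subset\rho(-\widetilde{\xA}_2)$, and~\eqref{borne resolvent ply} with $\ell=2$ is precisely Proposition~\Rref{betta resolvant borne sm}. Theorem~\Rref{stab poly theor} then yields~\eqref{poly til X2}.

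For the second part, given $\boldsymbol{U}_0\in D(\xA_2)\cap\mathbf{X}_2$ I would split it by the orthogonal decomposition~\eqref{orth decom Z GammaA} as $\boldsymbol{U}_0=\widetilde{\boldsymbol{U}}_0+(0,0,0,P_2\boldsymbol{B}_0)^\top$ with $\widetilde{\boldsymbol{U}}_0\in\widetilde{\mathbf{X}}_2$. As $P_2\boldsymbol{B}_0\in\mathbf{Z}(\Omega;\Gamma_A)$ is curl-free and has vanishing tangential trace on~$\Gamma_A$, one checks that $(0,0,0,P_2\boldsymbol{B}_0)^\top\in D(\xA_2)$ with $\xA_2(0,0,0,P_2\boldsymbol{B}_0)^\top=0$; hence $\widetilde{\boldsymbol{U}}_0\in D(\widetilde{\xA}_2)$, $\widetilde{\xA}_2\widetilde{\boldsymbol{U}}_0=\xA_2\boldsymbol{U}_0$, and $\|\widetilde{\boldsymbol{U}}_0\|_{D(\widetilde{\xA}_2)}\leq C\,\|\boldsymbol{U}_0\|_{D(\xA_2)\cap\mathbf{X}_2}$. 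Under the hypotheses of Theorem~\Rref{divergence B} the constraint equations are preserved, so Lemma~\Rref{lemma project P2B} together with the invariance of $\widetilde{\mathbf{X}}_2$ gives $\mathit{T}_2(t)\boldsymbol{U}_0=\check{\mathit{T}_2}(t)\widetilde{\boldsymbol{U}}_0+(0,0,0,P_2\boldsymbol{B}_0)^\top$; inserting this into~\eqref{poly til X2} and using the equivalence of $\|\cdot\|_{\mathbf{X}}$ and $\|\cdot\|_{\widetilde{\mathbf{X}}_2}$ on $\widetilde{\mathbf{X}}_2$ produces~\eqref{poly X2}.

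I do not anticipate a genuine obstacle here: all the analytic difficulty has been absorbed into the earlier propositions. The only delicate inputs are the spectral information on the imaginary axis (Propositions~\Rref{0 resolvent sm} and~\Rref{spec-Atild2}) and the high-frequency resolvent bound (Proposition~\Rref{betta resolvant borne sm}), whose contradiction argument is where the dissipative Silver--M\"uller term and the compact embedding of Theorem~\Rref{compact} actually enter. The one point worth flagging is that $\ell=2$, rather than $\ell=1$, is forced by the structure of that contradiction argument: the dissipation controls $\beta_n^{\ell/2}\boldsymbol{U}_s^n$ but only $\beta_n^{\ell/2-1}\boldsymbol{U}_3^n$ and $\beta_n^{\ell/2-2}\curl\boldsymbol{U}_4^n$, so two powers of $\beta_n$ are lost before one reaches the purely electromagnetic variables; this is exactly why one obtains polynomial, and not exponential, decay in general.
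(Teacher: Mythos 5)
Your proof is correct and follows exactly the route the paper intends: estimate~\eqref{poly til X2} is Theorem~\Rref{stab poly theor} applied with $\ell=2$, the hypothesis $\ii\xR\subset\rho(-\widetilde{\xA}_2)$ being supplied by Propositions \Rref{0 resolvent sm} and~\Rref{spec-Atild2} (plus the closed graph theorem) and the resolvent bound by Proposition~\Rref{betta resolvant borne sm}, while \eqref{poly X2} follows from the orthogonal decomposition~\eqref{orth decom Z GammaA} and Lemma~\Rref{lemma project P2B}, mirroring the proof of Theorem~\Rref{thm-poly-decay-cp}. The paper leaves all of this implicit (``The above results allow us to conclude\dots''), so your write-up, including the verification that $(0,0,0,P_2\boldsymbol{B}_0)^\top\in\ker\xA_2$, is if anything more detailed than the original.
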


\subsection{Conditional exponential stability in the Silver--M\"uller case}
\begin{prpstn}
Suppose that the divergence-free, source-free Maxwell system with Silver--M\"uller, or mixed, boundary condition:
\begin{eqnarray}
\left\lbrace \begin{array}{lll}
\partial_t\boldsymbol{E}=c^2\,\curl\boldsymbol{B},\quad \partial_t\boldsymbol{B}=-\curl\boldsymbol{E}, \quad \text{in } \Omega\times\xR_{>0},
\\[2pt]
\divg\boldsymbol{E}=0,\quad \divg\boldsymbol{B}=0, \quad \text{in } \Omega\times\xR_{>0},
\\[2pt]
\boldsymbol{E}\times\boldsymbol{n}=0,\quad \boldsymbol{B}\cdot\boldsymbol{n}=0, \quad \text{on } \Gamma_P\times\xR_{>0},
\\[2pt]
\boldsymbol{E}\times\boldsymbol{n}+ c\,\boldsymbol{B}_{\top}=0, \quad \text{on } \Gamma_A\times\xR_{>0}.
\end{array} \right.
\label{Maxwell SlM}
\end{eqnarray} 
is exponentially stable. Then, the resolvent of the operator~$-\widetilde{{\xA}}_2$ satisfies:
\begin{equation}
\sup_{\beta\in\xR} \III (\ii\beta + \widetilde{{\xA}}_2)^{-1} \III < \infty.
\label{borne resolvent 0}
\end{equation} 
\end{prpstn}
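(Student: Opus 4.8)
\emph{Outline of the plan.} I would invoke the Pr\"uss--Huang criterion (Theorem~\Rref{stab exp theor}) for the source-free Maxwell system~\eqref{Maxwell SlM}, and feed the resulting uniform resolvent bound into a contradiction argument modelled on the proofs of Propositions~\Rref{0 resolvent sm} and~\Rref{betta resolvant borne sm}. First note that $\ii\xR\subset\rho(-\widetilde{{\xA}}_2)$: indeed $0\in\rho(-\widetilde{{\xA}}_2)$ by Proposition~\Rref{0 resolvent sm}, while for $\beta\neq0$ the operator $\ii\beta\mathbb{I}+\widetilde{{\xA}}_2$ is, by Proposition~\Rref{spec-Atild2}, a closed bijection from $D(\widetilde{{\xA}}_2)$ onto $\widetilde{\mathbf{X}}_2$, hence boundedly invertible by the open mapping theorem. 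Therefore $\beta\mapsto\III(\ii\beta+\widetilde{{\xA}}_2)^{-1}\III$ is continuous on $\xR$ and bounded on every compact interval, so if~\eqref{borne resolvent 0} failed there would exist $(\beta_n)\subset\xR$ with $|\beta_n|\to\infty$ and $\boldsymbol{U}^n=(\boldsymbol{U}_1^n,\boldsymbol{U}_2^n,\boldsymbol{U}_3^n,\boldsymbol{U}_4^n)^\top\in D(\widetilde{{\xA}}_2)$, $\|\boldsymbol{U}^n\|_{\mathbf{X}}=1$, with $\boldsymbol{F}^n:=(\ii\beta_n\mathbb{I}+\xA_2)\boldsymbol{U}^n\to0$ in $\mathbf{X}$; the latter is equivalent to the system~\eqref{1exp}--\eqref{4exp} (with the Silver--M\"uller/mixed boundary conditions). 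I will derive a contradiction.

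Taking the real part of $(\boldsymbol{F}^n,\boldsymbol{U}^n)_{\mathbf{X}}$ and using the dissipation identity~\eqref{monotonie A2 sm homo} together with $\|\boldsymbol{U}^n\|_{\mathbf{X}}=1$ gives $\sum_s\|\sqrt{\nu_s}\,\boldsymbol{U}^n_s/\omega_{ps}\|^2+\varepsilon_0c^3\|\boldsymbol{U}^n_{4\top}\|^2_{\mathbf{L}^2(\Gamma_A)}\le\varepsilon_0\|\boldsymbol{F}^n\|_{\mathbf{X}}\to0$, whence, by Hypothesis~\Rref{hyp-2}, $\boldsymbol{U}^n_1\to0$ and $\boldsymbol{U}^n_2\to0$ in $\mathbf{L}^2(\Omega)$, and $\boldsymbol{U}^n_{4\top}\to0$ in $\mathbf{L}^2(\Gamma_A)$. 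Since $\boldsymbol{U}^n_1+\boldsymbol{U}^n_2\to0$, the third and fourth components of the resolvent system say that the electromagnetic pair satisfies $\ii\beta_n\boldsymbol{U}^n_3-c^2\curl\boldsymbol{U}^n_4=\boldsymbol{G}^n_3$ and $\ii\beta_n\boldsymbol{U}^n_4+\curl\boldsymbol{U}^n_3=\boldsymbol{G}^n_4$ with $\boldsymbol{G}^n_3,\boldsymbol{G}^n_4\to0$ in $\mathbf{L}^2(\Omega)$; moreover $\boldsymbol{U}^n_3\in\mathbf{H}_{0,\Gamma_P}(\curl;\Omega)$ with $\boldsymbol{U}^n_3\times\boldsymbol{n}+c\boldsymbol{U}^n_{4\top}=0$ on $\Gamma_A$, and $\boldsymbol{U}^n_4\in\mathbf{H}_{0,\Gamma_P;\text{flux},\Gamma_A,\Sigma}(\divg 0;\Omega)$ by~\eqref{eq-def-Xtild2}--\eqref{eq-def-Atild2}. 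Taking the divergence of the first equation yields $\|\divg\boldsymbol{U}^n_3\|_{\xHn{-1}(\Omega)}=O(|\beta_n|^{-1})$, and projecting it onto $\mathbf{Z}_N(\Omega)=\mathbf{H}_0(\curl 0;\Omega)\cap\mathbf{H}(\divg 0;\Omega)$ — using $\curl\boldsymbol{U}^n_4\perp\mathbf{Z}_N(\Omega)$ (Green's formula~\eqref{green rot rot}) — shows the $\mathbf{Z}_N(\Omega)$-component of $\boldsymbol{U}^n_3$ is $O(|\beta_n|^{-1})$ as well.

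Next I would correct $\boldsymbol{U}^n_3$ to land exactly in the energy space of~\eqref{Maxwell SlM}. Writing the Helmholtz decomposition $\boldsymbol{U}^n_3=\grad\phi_n+\boldsymbol{w}_n$ of Lemma~\Rref{lem0} with $\mathbb{B}=\mathbb{I}$ (so $\phi_n\in\xHone_0(\Omega)$, $\divg\boldsymbol{w}_n=0$), one has $\curl\boldsymbol{w}_n=\curl\boldsymbol{U}^n_3$, $\boldsymbol{w}_n\times\boldsymbol{n}=\boldsymbol{U}^n_3\times\boldsymbol{n}$ on $\Gamma$ (as $\phi_n$ vanishes on $\Gamma$), and $\|\grad\phi_n\|\le C\|\divg\boldsymbol{U}^n_3\|_{\xHn{-1}(\Omega)}$, so $\|\beta_n\grad\phi_n\|\to0$; subtracting also the small $\mathbf{Z}_N(\Omega)$-component of $\boldsymbol{w}_n$ produces a pair $(\widehat{\boldsymbol{U}}^n_3,\boldsymbol{U}^n_4)$ with $\widehat{\boldsymbol{U}}^n_3=\boldsymbol{U}^n_3+o(1)$ in $\mathbf{L}^2(\Omega)$, lying in the domain of the generator $\mathcal{A}_{\mathrm M}$ of~\eqref{Maxwell SlM} on the space $\bigl(\mathbf{H}(\divg 0;\Omega)\ominus\mathbf{Z}_N(\Omega)\bigr)\times\mathbf{H}_{0,\Gamma_P;\text{flux},\Gamma_A,\Sigma}(\divg 0;\Omega)$, and such that $(\ii\beta_n\mathbb{I}-\mathcal{A}_{\mathrm M})(\widehat{\boldsymbol{U}}^n_3,\boldsymbol{U}^n_4)\to0$ in $\mathbf{L}^2(\Omega)^2$. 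By hypothesis the semigroup generated by $\mathcal{A}_{\mathrm M}$ is exponentially stable, hence Theorem~\Rref{stab exp theor} gives $\sup_{\beta\in\xR}\III(\ii\beta\mathbb{I}-\mathcal{A}_{\mathrm M})^{-1}\III<\infty$, so $\|(\widehat{\boldsymbol{U}}^n_3,\boldsymbol{U}^n_4)\|\to0$, i.e. $\boldsymbol{U}^n_3\to0$ and $\boldsymbol{U}^n_4\to0$ in $\mathbf{L}^2(\Omega)$. Combined with $\boldsymbol{U}^n_1,\boldsymbol{U}^n_2\to0$ and the equivalence of $\|\cdot\|_{\mathbf{X}}$ with the canonical norm of $\mathbf{L}^2(\Omega)^4$ (Hypotheses~\Rref{hyp-1} and~\Rref{hyp-2}), this forces $\|\boldsymbol{U}^n\|_{\mathbf{X}}\to0$, contradicting $\|\boldsymbol{U}^n\|_{\mathbf{X}}=1$; hence~\eqref{borne resolvent 0} holds.

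\emph{The main obstacle} is the reduction in the third step: one must check carefully that the corrected pair $(\widehat{\boldsymbol{U}}^n_3,\boldsymbol{U}^n_4)$ genuinely belongs to the state space and domain on which~\eqref{Maxwell SlM} is assumed exponentially stable — that all homogeneous boundary conditions on $\Gamma_P$ and $\Gamma_A$, the two divergence constraints, the orthogonality to the static fields, and the flux conditions across the $\Gamma_{A,i}$ and the cuts $\Sigma_j$ survive the gradient and harmonic corrections — and that each correction is $o(1)$ in $\mathbf{L}^2(\Omega)$ uniformly in $n$. That $\boldsymbol{U}^n_{4\top}\in\mathbf{L}^2(\Gamma_A)$, which makes the Maxwell energy space the appropriate one, again rests on the absence of pathological vertices. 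The remaining steps are routine.
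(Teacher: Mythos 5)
Your proposal follows essentially the same route as the paper's proof: a contradiction argument in which the dissipation identity kills $\boldsymbol{U}_1^n$, $\boldsymbol{U}_2^n$ and $\boldsymbol{U}_{4\top}^n$, the electric component is corrected by a gradient $\grad\varphi_n$ with $\varphi_n\in\xHone_0(\Omega)$ shown to be $o(1/|\beta_n|)$ in $\mathbf{L}^2(\Omega)$, and the uniform resolvent bound for the exponentially stable Maxwell system (via the Pr\"uss--Huang criterion) then forces $(\hat{\boldsymbol{U}}_3^n,\boldsymbol{U}_4^n)\to0$, contradicting $\|\boldsymbol{U}^n\|_{\mathbf{X}}=1$. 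The only addition is your extra subtraction of the (already $O(|\beta_n|^{-1})$) $\mathbf{Z}_N(\Omega)$-component, a point of rigour about the exact state space of the Maxwell semigroup that the paper's proof passes over silently; otherwise the arguments coincide.
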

\begin{proof}
Again, we use a contradiction argument. Assume there exists a sequence $(\beta_n)_n$ on~$\xR$, with $|\beta_n| \to +\infty$ as $n\to +\infty$,
and a sequence of fields $({\boldsymbol{U}}^n)_n = \left( ({\boldsymbol{U}}_1^n,{\boldsymbol{U}}_2^n,{\boldsymbol{U}}_3^n,{\boldsymbol{U}}_4^n)^\top \right)_n$ on~$D(\widetilde{{\xA}}_2)$ satisfying \eqref{A-unitaire} and~\eqref{A-resol uni}, the latter being equivalent to the system~\eqref{1exp}--\eqref{4exp}. Following the same reasoning as in Proposition~\ref{0 resolvent sm}, we get 
\begin{eqnarray} 
&&\|{\boldsymbol{U}}_s^n\|\,\rightarrow 0 , \text{ as } n\to +\infty,\quad  s=1,\ 2 \,;
\label{A-sn correc}
\\
&&\| {\boldsymbol{U}}_{4\top}^n \|_{\mathbf{L}^2(\Gamma_A)}^2 \rightarrow 0, \text{ as } n\to +\infty. \qquad\qquad  
\label{AA-bore correc}
\end{eqnarray}
In order to use the exponential stability of~\eqref{Maxwell SlM}, we need to correct ${\boldsymbol{U}}_3^n$ since it does not satisfy $\divg{\boldsymbol{U}}_3^n=0$ in~$\Omega$. Let $\varphi\in\xHone_0(\Omega)$ be the unique solution to 
\begin{eqnarray}
\left(\grad \varphi_n\mid \grad \psi\right)=\left({\boldsymbol{U}}_3^n\mid \grad \psi\right), \quad \forall \psi\in\xHone_0(\Omega).
\label{var form divEE}
\end{eqnarray}
Now, define 
\begin{equation*}
{\hat{\boldsymbol{U}}}_3^n={\boldsymbol{U}}_3^n-\grad \varphi_n, \quad \text{in } \Omega.
\end{equation*}
Then, ${\hat{\boldsymbol{U}}}_3^n$ belongs to $\mathbf{H}_{0,\Gamma_P}(\curl;\Omega)$ and it satisfies
\begin{equation}
\divg{\hat{\boldsymbol{U}}}_3^n=0\quad \text{in } \Omega.
\end{equation} 
Introduce
\begin{equation*}
\boldsymbol{L}_n:=\ii\beta_n\varepsilon_0\,{\boldsymbol{U}}_3^n + \sum_s{\boldsymbol{U}}_s^n - \varepsilon_0 c^2\curl{\boldsymbol{U}}_4^n,
\end{equation*}
the l.h.s.~of~\eqref{3exp}; by assumption $\|\boldsymbol{L}_n\|\to0$.
By choosing $\psi=\varphi_n$ in \eqref{var form divEE} and using the Green formula~\eqref{green rot rot}, we find
\begin{eqnarray*}
\|\grad \varphi_n\|^2 &=& \frac1{\ii\beta_n\varepsilon_0}\int_{\Omega}(\boldsymbol{L}_n-\sum_s{\boldsymbol{U}}_s^n+\varepsilon_0 c^2\curl{\boldsymbol{U}}_4^n)\cdot \grad \overline{\varphi_n}\, \xdif\Omega\\
&=& \frac1{\ii\beta_n\varepsilon_0}\int_{\Omega}(\boldsymbol{L}_n-\sum_s{\boldsymbol{U}}_s^n)\cdot \grad \overline{\varphi_n}\, \xdif\Omega\\
&\leq & \frac{C}{|\beta_n|} (\|\boldsymbol{L}_n\| + \sum_s\|{\boldsymbol{U}}_s^n\|)\,\|\grad \varphi_n\|. 
\end{eqnarray*}
Then, by \eqref{3exp} and \eqref{A-sn correc} we deduce that 
\begin{equation}
\|\beta_n\grad\varphi_n\|\,\rightarrow 0 , \text{ as } n\to +\infty.
\label{lim gra fi n}
\end{equation}
We now introduce 
\begin{eqnarray*}
\hat{\boldsymbol{L}}_n &=& \boldsymbol{L}_n-\sum_s{\boldsymbol{U}}_s^n-\ii\beta_n\varepsilon_0\grad\varphi_n,\\
\boldsymbol{Q}_n &=& \ii\varepsilon_0 c^2\beta_n\,{\boldsymbol{U}}_4^n + \varepsilon_0 c^2 \curl{\boldsymbol{U}}_3^n.
\end{eqnarray*}
By \eqref{4exp}, \eqref{A-sn correc} and~\eqref{lim gra fi n}, it holds that:
\begin{equation}
\hat{\boldsymbol{L}}_n,\, \boldsymbol{Q}_n \,\rightarrow 0 \quad \in \mathbf{L}^2(\Omega), \text{ as } n\to +\infty.
\label{lim l Q}
\end{equation}
To summarize, the pair $({\hat{\boldsymbol{U}}}_3^n,{\boldsymbol{U}}_4^n)$ satisfies the perfectly conducting boundary condition on $\Gamma_P$, the Silver--M\"uller boundary condition on $\Gamma_A$, and the divergence-free harmonic Maxwell problem in~$\Omega$:
\begin{eqnarray}
\left\lbrace \begin{array}{lll}
\ii\beta_n\varepsilon_0\,{\hat{\boldsymbol{U}}}_3^n  - \varepsilon_0 c^2\curl{\boldsymbol{U}}_4^n=\hat{\boldsymbol{L}}_n,
\\[2pt]
\ii\varepsilon_0 c^2\beta_n\,{\boldsymbol{U}}_4^n + \varepsilon_0 c^2 \curl{\hat{\boldsymbol{U}}}_3^n = \boldsymbol{Q}_n .
\end{array} \right.
\label{sys L Q}
\end{eqnarray}
By assumption, the system~\eqref{Maxwell SlM} is exponentially stable; thus, according to Theorem~\ref{stab exp theor}, its resolvent is uniformly bounded on the imaginary axis. In other words, there exists a positive constant~$C$, independent of~$n$, such that the solution $({\hat{\boldsymbol{U}}}_3^n,{\boldsymbol{U}}_4^n)$ to~\eqref{sys L Q} satisfies 
\begin{equation*}
\|{\hat{\boldsymbol{U}}}_3^n\| + \|{\boldsymbol{U}}_4^n\| \leq C\, \left( \|\hat{\boldsymbol{L}}_n\| + \|\boldsymbol{Q}_n\| \right).
\end{equation*}
From~\eqref{lim l Q}, we deduce that 
\begin{equation*}
\|{\hat{\boldsymbol{U}}}_3^n\| \,+\, \|{\boldsymbol{U}}_4^n\|  \,\rightarrow 0 ,\quad \text{as } n\to +\infty,
\end{equation*}
and finally by \eqref{lim gra fi n} we get
\begin{equation*}
\|{\boldsymbol{U}}_3^n\| \,+\, \|{\boldsymbol{U}}_4^n\|  \,\rightarrow 0 ,\quad \text{as } n\to +\infty,
\end{equation*}
which together with \eqref{A-sn correc} gives the desired contradiction of~\eqref{unitaire}.
\end{proof}

\medbreak

Using again Theorem~\Rref{stab exp theor}, we obtain a conditional improved version of the decay Theorem~\ref{thm-poly-decay-sm}.
\begin{thrm}
\label{thm-expo-decay-sm}
Assume that the divergence-free Maxwell system~\eqref{Maxwell SlM} is exponentially stable. Then, the semigroup of contractions $(\check{\mathit{T}_2}(t))_{t\geq0}$, with generator  $-{\widetilde{\xA}}_{2}$, is exponentially stable on~${\mathbf{\widetilde{X}}_{2}}$,\, \ie, there exist two  constants $C,\gamma >0$ such that 
\begin{equation}
\forall t\ge0,\quad
\| \check{\mathit{T}_2}(t) \widetilde{\boldsymbol{U}}_0 \|_{\mathbf{\widetilde{X}}_{2}} \leq C\, \ee^{-\gamma t} \| \widetilde{\boldsymbol{U}}_0  \|_{\mathbf{\widetilde{X}}_{2}},\quad \forall\widetilde{\boldsymbol{U}}_0 \in \mathbf{\widetilde{X}}_{2}.
\label{expo til X2}
\end{equation}
Furthermore, under the assumptions of Theorem~\Rref{divergence B} on~$\boldsymbol{B}_0$, there exists a constant $M>0$ such that the solution to Problem~\eqref{evol prob sm homo} satisfies 
\begin{equation*}
\forall t\ge0,\quad
\| \mathit{T}_2(t){\boldsymbol{U}_0} -  (0,0,0,P_2\boldsymbol{B}_0)^\top \|_{\mathbf{X}_2} \leq M\, \ee^{-\gamma t} \| \boldsymbol{U}_{0} \|_{\mathbf{X}_2} \,,\quad \forall \boldsymbol{U}_{0} \in \mathbf{X}_2.
\end{equation*}
\end{thrm}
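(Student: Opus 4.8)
The plan is to apply the Pr\"uss--Huang criterion, Theorem~\Rref{stab exp theor}, to the $C_0$-semigroup of contractions $(\check{\mathit{T}_2}(t))_{t\geq0}$ generated by $-\widetilde{\xA}_2$ on the Hilbert space $\widetilde{\mathbf{X}}_2$. Two conditions must be verified: the inclusion $\ii\xR\subset\rho(-\widetilde{\xA}_2)$, that is~\eqref{imaginer axe}, and the uniform resolvent estimate~\eqref{born'e resolvent}. Once both hold, Theorem~\Rref{stab exp theor} furnishes constants $C,\gamma>0$ with $\|\check{\mathit{T}_2}(t)\widetilde{\boldsymbol{U}}_0\|_{\widetilde{\mathbf{X}}_2}\leq C\,\ee^{-\gamma t}\|\widetilde{\boldsymbol{U}}_0\|_{\widetilde{\mathbf{X}}_2}$ for all $t\geq0$ and $\widetilde{\boldsymbol{U}}_0\in\widetilde{\mathbf{X}}_2$, which is exactly~\eqref{expo til X2}.

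To obtain~\eqref{imaginer axe} I would assemble the spectral facts already established: $0\in\rho(-\widetilde{\xA}_2)$ by Proposition~\Rref{0 resolvent sm}, while for $\alpha\in\xR\setminus\{0\}$ Proposition~\Rref{spec-Atild2} gives that $\ii\alpha\mathbb{I}+\widetilde{\xA}_2$ is both injective and surjective; being closed (since $-\widetilde{\xA}_2$ generates a $C_0$-semigroup), it is a closed bijection and hence boundedly invertible by the closed graph theorem, so $\ii\alpha\in\rho(-\widetilde{\xA}_2)$. The uniform bound~\eqref{born'e resolvent} is precisely~\eqref{borne resolvent 0} from the preceding Proposition, valid under the standing hypothesis that the uncoupled divergence-free Maxwell system~\eqref{Maxwell SlM} is exponentially stable; there the contradiction argument controls $\III(\ii\beta\mathbb{I}+\widetilde{\xA}_2)^{-1}\III$ for $|\beta|$ large, and on any bounded interval the bound follows from the continuity (indeed analyticity) of $\beta\mapsto(\ii\beta\mathbb{I}+\widetilde{\xA}_2)^{-1}$ on $\rho(-\widetilde{\xA}_2)$ together with the inclusion just proved.

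For the last assertion, concerning Problem~\eqref{evol prob sm homo}, I would use the kernel splitting exactly as in the strong-stability section. Under the hypotheses of Theorem~\Rref{divergence B} the solution remains in $\mathbf{X}_2$ (Remark~\Rref{stable space}), and by the orthogonal decomposition~\eqref{orth decom Z GammaA} one has $\mathbf{X}_2=\bigl(\{0\}^3\times\mathbf{Z}(\Omega;\Gamma_A)\bigr)\stackrel{\perp}{\oplus}\widetilde{\mathbf{X}}_2$. Writing $\boldsymbol{U}_0=\widetilde{\boldsymbol{U}}_0+(0,0,0,P_2\boldsymbol{B}_0)^\top$ with $\widetilde{\boldsymbol{U}}_0\in\widetilde{\mathbf{X}}_2$ its orthogonal projection, Lemma~\Rref{lemma project P2B} and the identity $\ker({\xA_2}_{|\mathbf{X}_2})=\{0\}^3\times\mathbf{Z}(\Omega;\Gamma_A)$ yield $\mathit{T}_2(t)\boldsymbol{U}_0=\check{\mathit{T}_2}(t)\widetilde{\boldsymbol{U}}_0+(0,0,0,P_2\boldsymbol{B}_0)^\top$; subtracting the stationary part, applying~\eqref{expo til X2}, and using $\|\widetilde{\boldsymbol{U}}_0\|_{\widetilde{\mathbf{X}}_2}\leq\|\boldsymbol{U}_0\|_{\mathbf{X}_2}$ (an orthogonal projection being norm non-increasing) gives the stated estimate with $M=C$ and the same~$\gamma$. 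I expect no serious obstacle inside this proof: all the substance has been concentrated in the preceding Proposition, which performs the delicate transfer of the exponential bound from the uncoupled system~\eqref{Maxwell SlM} to $\widetilde{\xA}_2$ via a Helmholtz correction of $\boldsymbol{U}_3^n$ and the resolvent estimate of Theorem~\Rref{stab exp theor} applied to~\eqref{Maxwell SlM}; what remains here is the bookkeeping of hypotheses for Theorem~\Rref{stab exp theor} and the kernel decomposition.
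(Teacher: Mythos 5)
Your proposal is correct and follows essentially the same route as the paper: the exponential bound on $\widetilde{\mathbf{X}}_2$ is exactly the Pr\"uss--Huang criterion (Theorem~\Rref{stab exp theor}) fed with Propositions \Rref{0 resolvent sm}, \Rref{spec-Atild2} and the uniform resolvent estimate~\eqref{borne resolvent 0}, and the second estimate is the same kernel splitting via $P_2$ and Lemma~\Rref{lemma project P2B} already used for strong and polynomial stability. Your extra remarks (closed graph theorem for the points $\ii\alpha$, continuity of the resolvent on compact subsets of $\ii\xR$, and norm non-increase of the orthogonal projection) are correct fillings-in of details the paper leaves implicit.
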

\begin{rmrk}
Examples of sufficient conditions for the exponential stability of the Maxwell system with pure Silver--M\"uller or mixed boundary conditions are given in the seminal papers~\cite{Komo94,Phung00}. 
\end{rmrk}
\begin{rmrk}
On the other hand, our model with perfectly conduction boundary condition everywhere ($\Gamma_A = \varnothing$) is \emph{never} exponentially stable: there exists no improved version of Theorem~\Rref{thm-poly-decay-cp}. In this case, the Maxwell operator has an infinite number of eigenvalues on the imaginary axis: the associated evolution operator cannot be exponentially stable.

\smallbreak

Consider the eigenvalue problem with perfectly conducting boundary condition:
\begin{equation}
c^2\,\curl\curl \boldsymbol{E}_k = \lambda_k^2\,\boldsymbol{E}_k, \quad \divg\boldsymbol{E}_k = 0 \quad \text{in } \Omega,\quad \boldsymbol{E}_k \times \boldsymbol{n} = 0 \quad \text{on } \Gamma.
\label{eigenproblem-maxwell-cp}
\end{equation}
It is well-known (see, \vg,~\cite[\S8.2.1]{ACL+17}) that it admits a nondecreasing sequence of eigenvalues tending to infinity. Assume that the corresponding eigenvectors are normalized by $\| \boldsymbol{E}_k \|=1$, and introduce the sequence $\left( \boldsymbol{U}^k \right)_{k\in\xN}$ on~$D({\widetilde{\xA}}_{1})$ as:
\begin{eqnarray*}
{\boldsymbol{U}}_s^k = (\ii\lambda_k\, \mathbb{I} + \mathbb{M}_s)^{-1}\, \varepsilon_0\omega_{ps}^2\,\boldsymbol{E}_k,\ s =1,\ 2,\quad
{\boldsymbol{U}}_3^k = \boldsymbol{E}_k,\quad
{\boldsymbol{U}}_4^k = -\frac{1}{\ii\lambda_k}\, \curl\boldsymbol{E}_k.
\end{eqnarray*}
Thanks to~\S\ref{sec-spectral}, it is easily shown that $\lambda_k\to+\infty$ implies $\III (\ii\lambda_k\, \mathbb{I} + \mathbb{M}_s)^{-1} \III_{\mathcal{M}} \to 0$. Therefore, $\| {\boldsymbol{U}}_s^k \| \to 0$ as $k\to+\infty$, and:
\begin{eqnarray*}
(\ii\lambda_k\, \mathbb{I} + {\widetilde{\xA}}_{1})\, {\boldsymbol{U}}^k = \left(0, 0, \tfrac1{\varepsilon_0}\,({\boldsymbol{U}}_1^k + {\boldsymbol{U}}_2^k), 0 \right)^\top
\to 0 \quad \text{in } \mathbf{L}^2(\Omega), \text{ as } k\to+\infty.
\end{eqnarray*}
On the other hand, taking $\boldsymbol{E}_k$ as a test function in~\eqref{eigenproblem-maxwell-cp}, one finds $\| {\boldsymbol{U}}_4^k \| = 1/c$. All in all, $0 < U_* \le \| {\boldsymbol{U}}^k \| \le U^* < +\infty$, for some $U_*,\ U^*$ independent of~$k$. This shows that the counterpart of \eqref{born'e resolvent} or~\eqref{borne resolvent 0} cannot hold for~$-{\widetilde{\xA}}_{1}$.
\end{rmrk}

\subsection{Convergence to the harmonic regime}
A time-harmonic solution to the model~\eqref{uu}--\eqref{lm} is a particular solution such that $\boldsymbol{U}(t,\boldsymbol{x}) = \Re \left[ \mathbf{U}(\boldsymbol{x})\, \ee^{-\ii\omega t} \right]$. Such a solution may only exist if two conditions are satisfied: 
(i)~the forcing or Silver--M\"uller data is time-harmonic ($\boldsymbol{g}(t,\boldsymbol{x})=\Re \left[ \mathbf{g}(\boldsymbol{x})\,\ee^{-\ii\omega t} \right]$) and (ii)~the initial data match ($\boldsymbol{U}_0(\boldsymbol{x}) = \Re \left[ \mathbf{U}(\boldsymbol{x}) \right]$). Of course, the general condition~\eqref{initial cond nonho} must also hold.

\medbreak

The time-harmonic version of~\eqref{uu}--\eqref{pp2}, \ie, with $\partial_t \mapsto - \ii\omega$, has been studied in~\cite{BHL+15}. Under Hypotheses \Rref{hyp-1} and~\Rref{hyp-2}, its well-posedness has been established with slightly different boundary conditions, but the adaptation to the Silver--M\"uller case is not difficult. Indeed, the solution to time-harmonic problem, supplemented with boundary conditions formally similar to~\eqref{lk}--\eqref{lm}, can be expressed with the tools introduced in this paper. The harmonic variables will be denoted with upright bold letters: $\mathbf{J}_s,\ \mathbf{E},\ \mathbf{B}$, \etc.

\medbreak

Let $\mathbf{g} \in \widetilde{\mathbf{TT}}(\Gamma_{A}) + \mathbf{TC}(\Gamma_{A})$. As in \S\Rref{sm-nonhomo}, set:
\begin{eqnarray*}
(\mathbf{g}_3,\mathbf{g}_4) \in \mathbf{H}_{0,\Gamma_P}(\curl;\Omega) \times \mathbf{H}(\curl;\Omega) \quad\text{s.t.}\quad \mathbf{g}_3\times \boldsymbol{n} + c\,\mathbf{g}_{4\top} = \mathbf{g} \text{ on } \Gamma_A, 
\\
\mathbf{J}^\star_1 = \mathbf{J}_1,\quad \mathbf{J}^\star_2 = \mathbf{J}_2,\quad \mathbf{E}^\star = \mathbf{E}-\mathbf{g}_3,\quad \mathbf{B}^\star = \mathbf{B}-\mathbf{g}_4.
\end{eqnarray*}
The variable $\mathbf{U}^\star \in D(\xA_2)$ is solution to
\begin{equation}
-\ii\omega\, \mathbf{U}^\star  + \xA_2 \mathbf{U}^\star = 
\left(\varepsilon_0\omega_{p1}^2\,\mathbf{g}_3,\, \varepsilon_0\omega_{p2}^2\,\mathbf{g}_3,\, \ii\omega\, \mathbf{g}_3 + c^2 \curl \mathbf{g}_4,\, \ii\omega\, \mathbf{g}_4 - \curl \mathbf{g}_3 \right)^\top \,,
\label{UU*-harmo}
\end{equation}
a well-posed equation according to Proposition~\Rref{surj I+A2}. Then $\mathbf{U} := \mathbf{U}^\star + (0,0,\mathbf{g}_3,\mathbf{g}_4)^\top$ is solution to the time-harmonic problem. Obviously, the difference of two solutions belongs to $D(\xA_2)$ and satisfies $-\ii\omega\, \mathbf{U}  + \xA \mathbf{U} = 0$, hence it vanishes by Proposition~\Rref{inject sm stab}.
\begin{dfntn}
For any $\mathbf{g} \in \widetilde{\mathbf{TT}}(\Gamma_{A}) + \mathbf{TC}(\Gamma_{A})$, we denote $H[\mathbf{g}] := \mathbf{U}$, the unique solution to the time-harmonic problem constructed by the above procedure.
\end{dfntn}
By uniqueness, \emph{any} lifting $(\mathbf{g}_3,\mathbf{g}_4)$ of the boundary data~$\mathbf{g}$ can actually be used. For instance, it is possible to take $\mathbf{g}_4 \in \mathbf{H}_{0,\Gamma_P;\text{flux},\Gamma_A,\Sigma}(\divg 0;\Omega)$. Starting with $(\mathbf{g}_3^0,\mathbf{g}_4^0) = R_A[\mathbf{g}]$, one defines $\varphi\in \xHone_{0,\Gamma_A}(\Omega) := \{ w \in \xHone(\Omega) : w_{|_{\Gamma_A}} = 0 \}$ as the solution to
$$( \grad \varphi \mid \grad \psi ) = ( \mathbf{g}_4^0 \mid \grad\psi), \quad \forall \psi\in \xHone_{0,\Gamma_A}(\Omega),$$
and $\mathbf{g}_4^1 := \mathbf{g}_4^0 - \grad\varphi$. By~\eqref{green div grad}, $\mathbf{g}_4^1 \in \mathbf{H}_{0,\Gamma_P}(\divg 0;\Omega)$ and $\mathbf{g}_{4\top}^1 = \mathbf{g}_{4\top}^0$ on~$\Gamma_A$. Then one sets:
\begin{equation*}
\mathbf{g}_3 = \mathbf{g}_3^0,\quad \mathbf{g}_4 = \mathbf{g}_4^1 - P_2\, \mathbf{g}_4^1  \,;
\end{equation*}
recall that $P_2$ is the orthogonal projection onto~$\mathbf{Z}(\Omega;\Gamma_A)$. By~\eqref{orth decom Z GammaA}, $\mathbf{g}_4 \in \mathbf{H}_{0,\Gamma_P;\text{flux},\Gamma_A,\Sigma}(\divg 0;\Omega)$, and $\mathbf{g}_{4\top} = \mathbf{g}_{4\top}^1 = \mathbf{g}_{4\top}^0$ on~$\Gamma_A$, \ie, $\mathbf{g}_3\times \boldsymbol{n} + c\,\mathbf{g}_{4\top} = \mathbf{g}$.

\medbreak

\begin{prpstn}
The range of the mapping $H$ is included in~$\mathbf{\widetilde{X}}_{2}$.
\label{pro-range-H}
\end{prpstn}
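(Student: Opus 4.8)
The strategy is to reduce the statement to Proposition~\Rref{range-Atild2} via the time-harmonic Faraday equation, just as the proof of Proposition~\Rref{range-Atild2} itself was reduced to \cite[Lemma~7.7]{FG97}.

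First I would observe that, for any $\mathbf{U}=(\mathbf{J}_1,\mathbf{J}_2,\mathbf{E},\mathbf{B})^\top\in\mathbf{X}$, the first three components automatically lie in $\mathbf{L}^2(\Omega)$ (for $\mathbf{J}_1,\mathbf{J}_2$ this is Lemma~\Rref{lem-weight-basic}). Hence, by the definition~\eqref{eq-def-Xtild2} of $\mathbf{\widetilde{X}}_2$, proving $H[\mathbf{g}]\in\mathbf{\widetilde{X}}_2$ amounts to proving that its magnetic component $\mathbf{B}$ belongs to $\mathbf{H}_{0,\Gamma_P;\text{flux},\Gamma_A,\Sigma}(\divg 0;\Omega)$.

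Next I would use the construction of $H$: writing $\mathbf{U}=H[\mathbf{g}]=\mathbf{U}^\star+(0,0,\mathbf{g}_3,\mathbf{g}_4)^\top$ with $\mathbf{U}^\star\in D(\xA_2)$ a solution of~\eqref{UU*-harmo}, the fourth line of~\eqref{UU*-harmo} together with the expression~\eqref{matrix A} of~$\xA$ (equivalently, substituting $\partial_t\mapsto-\ii\omega$ in~\eqref{pp2}) yields the strong relation $\ii\omega\,\mathbf{B}=\curl\mathbf{E}$ in~$\Omega$, with $\omega\neq0$ since the solvability of~\eqref{UU*-harmo} rests on Proposition~\Rref{surj I+A2}. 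Moreover $\mathbf{E}=\mathbf{E}^\star+\mathbf{g}_3$, where $\mathbf{E}^\star$, being the third component of an element of $D(\xA_2)$, belongs to $\mathbf{H}_{0,\Gamma_P}(\curl;\Omega)$, and $\mathbf{g}_3\in\mathbf{H}_{0,\Gamma_P}(\curl;\Omega)$ as well; hence $\mathbf{E}\in\mathbf{H}_{0,\Gamma_P}(\curl;\Omega)$. By \cite[Lemma~7.7]{FG97} --- exactly the ingredient used in the proof of Proposition~\Rref{range-Atild2} --- it follows that $\curl\mathbf{E}\in\mathbf{H}_{0,\Gamma_P;\text{flux},\Gamma_A,\Sigma}(\divg 0;\Omega)$, and therefore $\mathbf{B}=(\ii\omega)^{-1}\curl\mathbf{E}$ lies in that space. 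This establishes $H[\mathbf{g}]\in\mathbf{\widetilde{X}}_2$.

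I do not expect any genuine obstacle here; the only points needing a word of care are that the Faraday relation holds in the $\mathbf{L}^2$ sense (which is guaranteed because $\mathbf{U}^\star\in D(\xA_2)$, so $\curl\mathbf{E}=\curl\mathbf{E}^\star+\curl\mathbf{g}_3\in\mathbf{L}^2(\Omega)$) and that $\omega\neq0$. An alternative route, not relying on Faraday's law, is to rewrite~\eqref{UU*-harmo} as $\xA_2\mathbf{U}^\star=\mathbf{F}+\ii\omega\,\mathbf{U}^\star$: choosing the lifting with $\mathbf{g}_4\in\mathbf{H}_{0,\Gamma_P;\text{flux},\Gamma_A,\Sigma}(\divg 0;\Omega)$ exhibited just above the statement (and using \cite[Lemma~7.7]{FG97} once more for the $-\curl\mathbf{g}_3$ entry of~$\mathbf{F}$), one checks that $\mathbf{F}\in\mathbf{\widetilde{X}}_2$, whence $\mathbf{U}^\star\in\mathbf{\widetilde{X}}_2$ by Proposition~\Rref{range-Atild2}, and finally $\mathbf{U}\in\mathbf{\widetilde{X}}_2$.
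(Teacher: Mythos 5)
Your argument is correct, but your primary route is genuinely different from the paper's; your ``alternative route'' at the end is, almost word for word, the proof the paper actually gives. The paper works with the \emph{specific} lifting $\mathbf{g}_4 \in \mathbf{H}_{0,\Gamma_P;\text{flux},\Gamma_A,\Sigma}(\divg 0;\Omega)$ constructed just before the statement, observes (via the same fact you quote, \cite[Lemma~7.7]{FG97}) that $\curl \mathbf{g}_3$ also lies in that space, concludes that the right-hand side of~\eqref{UU*-harmo} belongs to $\mathbf{\widetilde{X}}_{2}$, and then invokes Proposition~\Rref{spec-Atild2} to place $\mathbf{U}^\star$, hence $\mathbf{U}$, in $\mathbf{\widetilde{X}}_{2}$. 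Your main argument instead reads off the time-harmonic Faraday relation $\ii\omega\,\mathbf{B}=\curl\mathbf{E}$ from the full solution and applies \cite[Lemma~7.7]{FG97} once to $\mathbf{E}\in\mathbf{H}_{0,\Gamma_P}(\curl;\Omega)$. What this buys you: the conclusion is independent of the choice of lifting (no need for the special $\mathbf{g}_4$), and you bypass Propositions \Rref{range-Atild2} and~\Rref{spec-Atild2} entirely, at the price of having to note explicitly that $\omega\neq0$ --- which you do, and which is anyway built into the definition of $H$ since its construction rests on Proposition~\Rref{surj I+A2}. Both proofs are sound; yours is the more economical for this particular statement, while the paper's phrasing keeps the argument aligned with the invariance machinery ($\xim(\xA_2)\subset\mathbf{\widetilde{X}}_{2}$, surjectivity of $\ii\alpha\mathbb{I}+\widetilde{\xA}_2$) that it reuses elsewhere.
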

\begin{proof}
Let $\mathbf{g} \in \widetilde{\mathbf{TT}}(\Gamma_{A}) + \mathbf{TC}(\Gamma_{A})$.
Take $\mathbf{g}_4 \in \mathbf{H}_{0,\Gamma_P;\text{flux},\Gamma_A,\Sigma}(\divg 0;\Omega)$ as above. As $\mathbf{g}_3 \in \mathbf{H}_{0,\Gamma_P}(\curl;\Omega)$ by definition, its curl also belongs to~$\mathbf{H}_{0,\Gamma_P;\text{flux},\Gamma_A,\Sigma}(\divg 0;\Omega)$. Hence, the right-hand side of~\eqref{UU*-harmo} actually belongs to~$\mathbf{\widetilde{X}}_{2}$. By Proposition~\Rref{spec-Atild2} the solution also belongs to this space, and so does finally $\mathbf{U} = \mathbf{U}^\star + (0,0,\mathbf{g}_3,\mathbf{g}_4)^\top$. 
\end{proof}

\medbreak

By uniqueness of the solution to the time-dependent model~\eqref{uu}--\eqref{lm}, $\boldsymbol{U}^\omega(t,\boldsymbol{x}) = \Re\left[ H[\mathbf{g}](\boldsymbol{x})\, \ee^{-\ii\omega t} \right]$ is the solution to the said system, with the Silver--M\"uller data $\boldsymbol{g}(t,\boldsymbol{x}) = \Re\left[ \mathbf{g}(\boldsymbol{x})\,\ee^{-\ii\omega t} \right]$ and the ``well-prepared'' initial condition $\boldsymbol{U}^\omega_0(\boldsymbol{x}) = \Re\left[ H[\mathbf{g}](\boldsymbol{x}) \right]$. Thus, the necessary conditions stated at the beginning of this Subsection are actually sufficient.

\medbreak

On the other hand, if the forcing is still time-harmonic, but the initial condition is arbitrary, the solution to~\eqref{uu}--\eqref{lm} does not have a time-harmonic form. However, if the initial condition satisfies both the compatibility condition and the physical requirements for a magnetic field, then the solution converges to the time-harmonic one as fast as the solution to the homogeneous system converges to~$0$.
\begin{thrm}
Let $\boldsymbol{U} = (\boldsymbol{J}_1,\boldsymbol{J}_2,\boldsymbol{E},\boldsymbol{B})^\top$ be the solution to~\eqref{uu}--\eqref{lm} with the time-harmonic  Silver--M\"uller data $\boldsymbol{g}(t,\boldsymbol{x}) = \Re\left[ \mathbf{g}(\boldsymbol{x})\,\ee^{-\ii\omega t} \right]$, where $\mathbf{g} \in \widetilde{\mathbf{TT}}(\Gamma_{A}) + \mathbf{TC}(\Gamma_{A})$, and the initial data $\boldsymbol{U}_0 = (\boldsymbol{J}_{1,0},\boldsymbol{J}_{2,0},\boldsymbol{E}_0,\boldsymbol{B}_0)^\top$ satisfying~\eqref{initial cond nonho} and $\boldsymbol{B}_0 \in \mathbf{H}_{0,\Gamma_P;\text{flux},\Gamma_A,\Sigma}(\divg 0;\Omega)$. There exists $K(\mathbf{g},\boldsymbol{U}_0)$ such that:
\begin{equation}
\left\| \boldsymbol{U}(t) - \boldsymbol{U}^\omega(t) \right\|_{\mathbf{X}} \leq K(\mathbf{g},\boldsymbol{U}_0)\, \phi(t),
\end{equation}
where $\boldsymbol{U}^\omega(t,\boldsymbol{x}) = \Re\left[ H[\mathbf{g}](\boldsymbol{x})\, \ee^{-\ii\omega t} \right]$, and the decay function $\phi(\cdot)$ can be taken in all cases as $\phi(t) = t^{-\frac12}$ for $t>1$, and as $\phi(t) = \ee^{-\gamma\,t}$ if the divergence-free Maxwell system~\eqref{Maxwell SlM} is stable.
\end{thrm}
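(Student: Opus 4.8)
The plan is to write $\boldsymbol{W} := \boldsymbol{U} - \boldsymbol{U}^\omega$ and to recognize it as an orbit of the homogeneous semigroup $(\check{\mathit{T}_2}(t))_{t\ge0}$ acting on the ``good'' subspace $\widetilde{\mathbf{X}}_2$, so that the claimed estimate becomes a direct application of the decay Theorems~\Rref{thm-poly-decay-sm} and~\Rref{thm-expo-decay-sm}. As recalled before the statement, $\boldsymbol{U}^\omega(t,\cdot) = \Re[H[\mathbf{g}]\,\ee^{-\ii\omega t}]$ solves \eqref{uu}--\eqref{lm} with the same Silver--M\"uller data $\boldsymbol{g}$ and initial value $\boldsymbol{U}^\omega_0 = \Re[H[\mathbf{g}]]$. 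Since the system \eqref{uu}--\eqref{pp2} and the boundary relation~\eqref{lm} are affine in $(\boldsymbol{U},\boldsymbol{g})$, the difference $\boldsymbol{W}$ solves the \emph{homogeneous} problem~\eqref{evol prob sm homo} with initial value $\boldsymbol{W}_0 := \boldsymbol{U}_0 - \Re[H[\mathbf{g}]]$. Conversely, defining $\boldsymbol{U} := \boldsymbol{U}^\omega + \mathit{T}_2(\cdot)\,\boldsymbol{W}_0$ both produces the solution announced in the statement and yields its uniqueness, via Proposition~\Rref{inject sm stab}; this route avoids invoking Theorem~\Rref{non-homo}, whose time-integrability hypothesis~\eqref{regu g sm} the purely oscillatory forcing $\boldsymbol{g}$ does not meet. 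Hence $\boldsymbol{W}(t) = \mathit{T}_2(t)\,\boldsymbol{W}_0$ for every $t\ge0$, and everything reduces to estimating $\|\mathit{T}_2(t)\,\boldsymbol{W}_0\|_{\mathbf{X}}$.

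The key step is to check that $\boldsymbol{W}_0 \in D(\widetilde{\xA}_2) = D(\xA_2)\cap\widetilde{\mathbf{X}}_2$, \cf~\eqref{eq-def-Xtild2}--\eqref{eq-def-Atild2}. On one hand, hypothesis~\eqref{initial cond nonho} states exactly that $\boldsymbol{U}_0\in D(\xA_2)$; writing $H[\mathbf{g}] = (\mathbf{J}_1,\mathbf{J}_2,\mathbf{E},\mathbf{B})^\top$, its two last components lie in $\mathbf{H}_{0,\Gamma_P}(\curl;\Omega)\times\mathbf{H}(\curl;\Omega)$ and, by construction of $H[\mathbf{g}]$, satisfy $\Re[\mathbf{E}]\times\boldsymbol{n} + c\,\Re[\mathbf{B}]_\top = \Re[\mathbf{g}] = \boldsymbol{g}(0)$ on $\Gamma_A$; subtracting this from the analogous identity in~\eqref{initial cond nonho} shows that $\boldsymbol{W}_0$ satisfies the \emph{homogeneous} Silver--M\"uller condition, i.e.\ $\boldsymbol{W}_0\in D(\xA_2)$. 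On the other hand, Proposition~\Rref{pro-range-H} gives that the magnetic component of $H[\mathbf{g}]$ --- hence its real part --- belongs to $\mathbf{H}_{0,\Gamma_P;\text{flux},\Gamma_A,\Sigma}(\divg 0;\Omega)$; together with the standing assumption $\boldsymbol{B}_0\in\mathbf{H}_{0,\Gamma_P;\text{flux},\Gamma_A,\Sigma}(\divg 0;\Omega)$, this puts the fourth component of $\boldsymbol{W}_0$ into that same space, so $\boldsymbol{W}_0\in\widetilde{\mathbf{X}}_2$. Since $\widetilde{\mathbf{X}}_2$ is invariant under $\mathit{T}_2$ (Lemma~\Rref{lemma project P2B}), we get $\boldsymbol{W}(t) = \check{\mathit{T}_2}(t)\,\boldsymbol{W}_0$ with $\|\boldsymbol{W}(t)\|_{\mathbf{X}} = \|\boldsymbol{W}(t)\|_{\widetilde{\mathbf{X}}_2}$.

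To conclude, I would apply Theorem~\Rref{thm-poly-decay-sm} (valid unconditionally, with $\boldsymbol{W}_0\in D(\widetilde{\xA}_2)$) to obtain $\|\boldsymbol{W}(t)\|_{\mathbf{X}}\le C\,t^{-1/2}\,\|\boldsymbol{W}_0\|_{D(\widetilde{\xA}_2)}$ for $t>1$, and Theorem~\Rref{thm-expo-decay-sm} (under the extra hypothesis that~\eqref{Maxwell SlM} is stable, using only $\boldsymbol{W}_0\in\widetilde{\mathbf{X}}_2$) to obtain $\|\boldsymbol{W}(t)\|_{\mathbf{X}}\le C\,\ee^{-\gamma t}\,\|\boldsymbol{W}_0\|_{\widetilde{\mathbf{X}}_2}$. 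In each case one sets $K(\mathbf{g},\boldsymbol{U}_0)$ to be $C$ times the appropriate graph norm of $\boldsymbol{W}_0 = \boldsymbol{U}_0 - \Re[H[\mathbf{g}]]$, which is finite (and depends continuously on the data) by continuity of the lifting $R_A$ and of the resolvent $(\,-\ii\omega\,\mathbb{I} + \xA_2)^{-1}$ entering the construction of $H[\mathbf{g}]$.

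The main obstacle --- really the only substantial point --- is the bookkeeping of the second paragraph: one must verify that $\boldsymbol{W}_0$ carries \emph{no} component along the stationary kernel $\mathbf{Z}(\Omega;\Gamma_A)$, since such a component would be frozen by the flow and would preclude any decay. This is exactly where both the flux hypothesis on $\boldsymbol{B}_0$ and Proposition~\Rref{pro-range-H} (itself relying on the freedom to choose a divergence-free, flux-free lifting $\mathbf{g}_4$ of the boundary data) come into play. The verification of the homogeneous Silver--M\"uller condition for $\boldsymbol{W}_0$ is routine but must be carried out at the level of traces, using the matching of $\boldsymbol{g}(0)$ with $\Re[\mathbf{g}]$; likewise, checking that the superposition $\boldsymbol{U}^\omega + \mathit{T}_2(\cdot)\,\boldsymbol{W}_0$ indeed solves~\eqref{uu}--\eqref{lm} in the required regularity class is immediate from Theorem~\Rref{bonne posi sm homo} and the explicit form of $\boldsymbol{U}^\omega$.
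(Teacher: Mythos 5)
Your proof is correct and follows essentially the same route as the paper: write $\boldsymbol{U}-\boldsymbol{U}^\omega$ as an orbit of the homogeneous semigroup, verify that the initial difference $\boldsymbol{U}_0-\Re H[\mathbf{g}]$ lies in $D(\widetilde{\xA}_2)$ using Proposition~\Rref{pro-range-H} together with the flux hypothesis on $\boldsymbol{B}_0$, and invoke Theorems~\Rref{thm-poly-decay-sm} and~\Rref{thm-expo-decay-sm}. Your additional observation that the purely oscillatory data $\boldsymbol{g}$ fails the integrability hypothesis~\eqref{regu g sm}, so that existence and uniqueness of $\boldsymbol{U}$ are better obtained by superposition of $\boldsymbol{U}^\omega$ with the homogeneous evolution than by citing Theorem~\Rref{non-homo}, is a worthwhile refinement that the paper leaves implicit.
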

\begin{proof}
The difference $\boldsymbol{U}-\boldsymbol{U}^\omega$ is solution to~\eqref{uu}--\eqref{lm} with homogeneous Silver--M\"uller boundary condition. Furthermore, the  initial data $\boldsymbol{U}_0 - \Re H[\mathbf{g}]$ satisfies the same boundary condition; by construction, it belongs to~$\mathbf{\widetilde{X}}_{2}$ by Proposition~\ref{pro-range-H}; while its third and fourth components belong to~$\mathbf{H}_{0,\Gamma_P}(\curl;\Omega) \times \mathbf{H}(\curl;\Omega)$. All in all, $\boldsymbol{U}_0 - \Re H[\mathbf{g}] \in D(\widetilde{{\xA}}_2)$: one can apply the estimate~\eqref{poly til X2}, and even~\eqref{expo til X2} under the assumptions of Theorem~\ref{thm-expo-decay-sm}.
\end{proof}

\begin{acknowledgement}
The authors are indebted to Serge Nicaise for useful remarks and discussions.
\end{acknowledgement}


\end{document}